\definecolor{my_red}{rgb}{0.5,0.0,0.0}
\definecolor{my_blue}{rgb}{0.0,0.0,0.6}
\definecolor{my_green}{rgb}{0.0,0.5,0.0}
\definecolor{light_gray}{gray}{0.6}
	\newcommand{\one}{\mathds{1}}
\numberwithin{equation}{section}
\newcommand{\eq}[1]{\begin{linenomath}\postdisplaypenalty=0\begin{align*} #1 \end{align*}\end{linenomath}}
\newcommand{\eeq}[1]{\begin{linenomath}\postdisplaypenalty=0\begin{align} \begin{split} #1 \end{split} \end{align}\end{linenomath}}
\newcommand{\eeqs}[1]{\begin{linenomath}\postdisplaypenalty=0\begin{align} #1 \end{align}\end{linenomath}}
\newcommand{\stackref}[2]{
\readlist*\mylist{#1}
\stackrel{\mbox{\footnotesize\foreachitem\x\in\mylist[]{\ifnum\xcnt=1\else,\fi\eqref{\x}}}}{#2}
}
\newcommand{\stackrefp}[2]{
\readlist*\mylist{#1}
\stackrel{\hphantom{\mbox{\footnotesize\foreachitem\x\in\mylist[]{\ifnum\xcnt=1\else,\fi\eqref{\x}}}}}{#2}
}
\newcommand{\stackrefpp}[3]{
\readlist*\mylist{#1}
\readlist*\mylistt{#2}
\stackrel{\parbox{\widthof{\footnotesize\foreachitem\x\in\mylistt[]{\ifnum\xcnt=1\else,\fi\eqref{\x}}}}{\centering\footnotesize\foreachitem\x\in\mylist[]{{\ifnum\xcnt=1\else,\fi\eqref{\x}}}}}{#3}
}
\def\eps{\varepsilon}
\def\vphi{\varphi}
\newcommand{\E}{\mathbb{E}}
\renewcommand{\P}{\mathbb{P}}
\newcommand{\R}{\mathbb{R}}
\newcommand{\Z}{\mathbb{Z}}
\renewcommand{\AA}{\mathcal{A}}
\newcommand{\BB}{\mathcal{B}}
\newcommand{\CC}{\mathcal{C}}
\newcommand{\DD}{\mathcal{D}}
\newcommand{\EE}{\mathcal{E}}
\newcommand{\FF}{\mathcal{F}}
\newcommand{\GG}{\mathcal{G}}
\newcommand{\JJ}{\mathcal{J}}
\newcommand{\MM}{\mathcal{M}}
\newcommand{\NN}{\mathcal{N}}
\newcommand{\PP}{\mathcal{P}}
\newcommand{\UU}{\mathcal{U}}
\newcommand{\XX}{\mathcal{X}}
\newcommand{\YY}{\mathcal{Y}}
\newcommand{\Asf}{\mathsf{A}}
\newcommand{\Gsf}{\mathsf{G}}
\newcommand{\Hsf}{\mathsf{H}}
\newcommand{\Tsf}{\mathsf{T}}
\newcommand{\ve}{\varepsilon}
\newcommand{\vc}[1]{{\boldsymbol #1}}
\newcommand{\wt}[1]{\widetilde{#1}}
\newcommand{\wh}[1]{\widehat{#1}}
\DeclareMathOperator{\dist}{dist}
\DeclareMathOperator{\Geo}{Geo}
\DeclareMathOperator{\e}{e} 
\newcommand{\ann}{\mathsf{ann}}
\newcommand{\crc}{\mathsf{cir}}
\renewcommand{\b}{b}
\newcommand{\chem}{\mathsf{ch}}
\newcommand{\givenk}[3][]{#1[ #2 \: #1| \: #3 #1]} 
\newcommand{\cc}{\mathsf{c}} 
\newcommand{\dd}{\mathrm{d}} 
\newcommand\mydots{\hbox to 1em{.\hss.\hss.}}
\newcommand{\f}{\frac}
\newcommand{\lng}{\mathcal N}
\newcommand{\intr}{\mathsf{int}}
\newcommand{\ext}{\mathsf{ext}}
\newcommand{\dualBD}{\operatorname{Bd}^\star}
\newcommand{\primBD}{\operatorname{Bd}}
\newcommand{\dualp}{\zeta}
\newcommand{\primalp}{\gamma}
\newcommand{\incir}{\mathcal I}
\newcommand{\outcir}{\mathcal I}
\newcommand{\dincir}{\mathcal U}
\newcommand{\doutcir}{\mathcal U}
\DeclarePairedDelimiter\ceil{\lceil}{\rceil}
\DeclarePairedDelimiter\floor{\lfloor}{\rfloor}
            \DeclareFontFamily{OMX}{MnSymbolE}{}
            \DeclareSymbolFont{MnLargeSymbols}{OMX}{MnSymbolE}{m}{n}
            \DeclareFontShape{OMX}{MnSymbolE}{m}{n}{
                <-6>  MnSymbolE5
               <6-8.5>  MnSymbolE6
               <7-8.5>  MnSymbolE7
               <8-8.5>  MnSymbolE8
               <9-10> MnSymbolE9
              <10-12> MnSymbolE10
              <12->   MnSymbolE12
            }{}
            \DeclareFontShape{OMX}{MnSymbolE}{b}{n}{
                <-6>  MnSymbolE-Bold5
               <6-8.5>  MnSymbolE-Bold6
               <7-8.5>  MnSymbolE-Bold7
               <8-8.5>  MnSymbolE-Bold8
               <9-10> MnSymbolE-Bold9
              <10-12> MnSymbolE-Bold10
              <12->   MnSymbolE-Bold12
            }{}
            \let\llangle\@undefined
            \let\rrangle\@undefined
            \DeclareMathDelimiter{\llangle}{\mathopen}%
                                 {MnLargeSymbols}{'164}{MnLargeSymbols}{'164}
            \DeclareMathDelimiter{\rrangle}{\mathclose}%
                                 {MnLargeSymbols}{'171}{MnLargeSymbols}{'171}
    \DeclareFontFamily{U}{matha}{\hyphenchar\font45}
    \DeclareFontShape{U}{matha}{m}{n}{ <-6> matha5 <6-8.5> matha6 <7-8.5>
    matha7 <8-8.5> matha8 <9-10> matha9 <10-12> matha10 <12-> matha12 }{}
    \DeclareSymbolFont{matha}{U}{matha}{m}{n}
    \DeclareFontFamily{U}{mathx}{\hyphenchar\font45}
    \DeclareFontShape{U}{mathx}{m}{n}{ <-6> mathx5 <6-8.5> mathx6 <7-8.5>
    mathx7 <8-8.5> mathx8 <9-10> mathx9 <10-12> mathx10 <12-> mathx12 }{}
    \DeclareSymbolFont{mathx}{U}{mathx}{m}{n}
    \DeclareMathDelimiter{\llbrack} {4}{matha}{"76}{mathx}{"30}
    \DeclareMathDelimiter{\rrbrack} {5}{matha}{"77}{mathx}{"38}
\newcommand{\ball}{\mathsf{Ball}}
\newcommand{\jor}{\mathcal{J}}
\newcommand{\opc}{\mathcal{P}}
\newcommand{\cdc}{\mathcal{D}}
\newcommand{\cir}{\mathcal{C}}
\newcommand{\meet}{\wedge}
\newcommand{\join}{\vee}
\newtheorem{theorem}{Theorem}[section]
\newtheorem{proposition}[theorem]{Proposition}
\newtheorem{lemma}[theorem]{Lemma}
\newtheorem{claim}[theorem]{Claim}
\newtheorem{theirthm}{Theorem} 
\theoremstyle{definition}
\newtheorem{definition}[theorem]{Definition}
\newtheorem{example}[theorem]{Example}
\newtheorem{remark}[theorem]{Remark}
\newenvironment{proofclaim}[1][Proof]
	{\begin{proof}[#1]\renewcommand{\qedsymbol}{$\square$ (Claim)}}
	{\end{proof}\renewcommand{\qedsymbol}{$\square$}}
\newcommandx{\addmath}[2][1=]{\todo[linecolor=red,backgroundcolor=red!25,bordercolor=red,#1]{#2}}
\newcommandx{\fixtext}[2][1=]{\todo[linecolor=blue,backgroundcolor=blue!25,bordercolor=blue,#1]{#2}}
\newcommandx{\note}[2][1=]{\todo[linecolor=yellow,backgroundcolor=yellow!25,bordercolor=yellow,#1]{#2}}
\definecolor{darkblue}{rgb}{0.0,0.0,0.7}
\definecolor{darkred}{rgb}{0.5,0.0,0.0}
\definecolor{darkgreen}{rgb}{0.0,0.5,0.0}
\definecolor{indigo}{rgb}{0.3,0,0.5}
\title[Geodesic length in critical FPP]{An upper bound on geodesic length in 2D critical first-passage percolation}
\subjclass[2020]{60K35, 
60K37, 
82B27, 
82B43} 
\keywords{critical first-passage percolation, geodesic length, square lattice}
\author[E. Bates]{Erik Bates$^*$\,\orcidlink{0000-0002-3472-036X}}
\address{$^*$Department of Mathematics, North Carolina State University}
\email{ebates@ncsu.edu}
\author[D. Harper]{David Harper$^{\dagger}$}
\address{$^\dagger$Department of Mathematics, Georgia Institute of Technology}
\email{dharper40@gatech.edu}
\author[X. Shen]{Xiao Shen$^{*}$}
\email{xshen9@ncsu.edu}
\author[E. Sorensen]{Evan Sorensen$^\ddagger$}
\address{$^\ddagger$Department of Mathematics, Columbia University}
\email{es4203@columbia.edu}
\begin{document}


\begin{abstract}
We consider i.i.d.~first-passage percolation (FPP) on the two-dimensional square lattice, in the critical case where edge-weights take the value zero with probability $\tfrac{1}{2}$.
Critical FPP is unique in that the Euclidean lengths of geodesics are superlinear---rather than linear---in the distance between their endpoints.
This fact was speculated by Kesten in 1986 but not confirmed until 2019 by Damron and Tang, who showed a lower bound on geodesic length that is polynomial with degree strictly greater than $1$.
In this paper we establish the first nontrivial upper bound.
Namely, we prove that for a large class of critical edge-weight distributions, the shortest geodesic from the origin to a box of radius $R$ uses at most $R^{2+\eps}\pi_3(R)$ edges with high probability, for any $\eps > 0$.
Here $\pi_3(R)$ is the polychromatic 3-arm probability from classical Bernoulli percolation; upon inserting its conjectural asymptotic, our bound converts to $R^{4/3 + \eps}$. In any case, it is known that $\pi_3(R) \lesssim R^{-\delta}$ for some $\delta > 0$, so our bound gives an exponent strictly less than $2$.
In the special case of Bernoulli($\tfrac{1}{2}$) edge-weights, we replace the additional factor of $R^\eps$ with a constant and give an expectation bound.
\end{abstract}

\maketitle
\setcounter{tocdepth}{1} 


\tableofcontents 
\pagebreak
\section{Introduction}

\subsection{The model of critical first-passage percolation (FPP)}

Let $E(\Z^2)$ denote the edge set of the square lattice $\Z^2$.
Consider a family of i.i.d.~random variables $(t_e)_{e\in E(\Z^2)}$ defined on some probability space $(\Omega,\FF,\P)$ such that
\eeq{ \label{critical_assumption}
\P(t_e<0) = 0 \quad \text{and} \quad \P(t_e=0) = \tfrac{1}{2}.
}
We say that $t_e$ is the \textit{weight} of edge $e$. For each pair $x,y\in\Z^2$, let $\PP(x,y)$
denote the collection of all self-avoiding nearest-neighbor paths starting at $x$ and ending at $y$.
The \textit{passage time} between $x$ and $y$ is the random quantity
\eeq{ \label{fpp_def}
T(x,y) = \inf_{\primalp \in \PP(x,y)} T(\primalp), \quad \text{where} \quad T(\primalp) = \sum_{e\in\primalp} t_e.
}
When $x=y$, we allow the empty path so that $T(x,x)=0$.
The map $T(\cdot\,,\cdot)$ is thus a pseudometric on $\Z^2$, and it naturally extends to sets:
for $\mathcal A,\mathcal B\subseteq\Z^2$, we define
\eq{
T(\mathcal A,\mathcal B)=\inf_{x\in \mathcal A,\, y\in \mathcal B} T(x,y).
}
A path $\primalp$ is said to be a \textit{geodesic} between $\mathcal{A}$ and $\mathcal{B}$ if $\primalp$ starts at a vertex in $\mathcal{A}$, ends at a vertex in $\mathcal{B}$, and achieves the minimal passage time $T(\primalp) = T(\mathcal A,\mathcal B)$.
It is known that with probability one, geodesics exist between every pair of points \cite[Cor.~1.3]{wierman_reh78}. 
Therefore, geodesics exist between any two finite sets $\mathcal{A}$ and $\mathcal{B}$.

This model can be projected to classical Bernoulli percolation by declaring that all edges $e$ with $t_e = 0$ are \textit{open}, while those with $t_e>0$ are \textit{closed}. 
The assumption \eqref{critical_assumption} means that the resulting projection is \textit{critical} percolation; in particular, there is no infinite connected cluster of open edges.
Consequently, for $x$ and $y$ far apart, a geodesic between $x$ and $y$ will typically use a large number of open edges without penalty, but will also need to traverse a small number of closed edges to go between distinct open clusters.
On the criticality assumption \eqref{critical_assumption}, Kesten \cite[Sec.~9.24]{kesten86a} conjectured that the length of a geodesic (i.e.~the total number of edges it contains) between the origin and $x$ grows superlinearly in $\|x\|$.
This conjecture was verified by Damron and Tang \cite{damron_tang19}.
To state their result precisely, we let $\lng_{\vc 0,x}$ denote the minimum length of a geodesic between the origin (denoted $\vc 0 = (0,0)$) and $x$.

\begin{theirthm} \textup{\cite[Thm.~1]{damron_tang19}} \label{damron_tang_thm}
Assume \eqref{critical_assumption}. 
There exist constants $C,c>0$ and $\beta>1$ such that
\eeq{ \label{damron_tang_thm_eq}
\P(\lng_{\vc 0,x}\le \|x\|^\beta_1) \leq C\exp(-\|x\|_1^c) \quad \text{for all $x\in\Z^2$}.
}
\end{theirthm}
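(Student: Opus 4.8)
The plan is to localize the geodesic: to show that a length-minimizing geodesic is forced to detour around closed obstructions on a whole range of scales, and that these detours accumulate to a polynomial excess over the straight-line length. Write $n=\|x\|_1$. The basic structural fact is that a subpath of a length-minimizing geodesic is again a length-minimizing geodesic between its endpoints (after loop-erasure); in particular, if $\gamma$ attains $\lng_{0,x}$ and $B$ is a box of side $s$ through which $\gamma$ passes from one side to the opposite one, then that portion of $\gamma$ is a minimal-passage-time path between its endpoints on those two sides. On the event $\{\lng_{0,x}\le n^\beta\}$ the geodesic is confined to a box of polynomial radius, and, crossing the $\lfloor n/2s\rfloor$ or so disjoint columns of width $s$ between $0$ and $x$, it traverses that many disjoint side-$s$ boxes. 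Writing $\Lambda(B)$ for the minimal length, over pairs of points on opposite sides of $B$, of a minimal-passage-time path between them inside $B$, we obtain
\begin{equation*}
\lng_{0,x}\ \ge\ \Big\lfloor\frac{n}{2s}\Big\rfloor\cdot\min_{B}\Lambda(B),
\end{equation*}
the minimum running over the polynomially many side-$s$ boxes in the relevant region. Everything then reduces to a high-probability, well-concentrated lower bound $\Lambda(B)\ge s^{1+\delta}$ for a single box of side $s$, with some fixed $\delta>0$ and per-box failure probability $\le e^{-s^{c_0}}$: feeding $s=\sqrt n$ into the display converts this into $\lng_{0,x}\ge \tfrac12 n^{1+\delta/2}$ off an event of probability at most $\mathrm{poly}(n)\,e^{-n^{c_0/2}}$, so $\beta=1+\delta/2$ works.

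The heart of the matter is thus the percolation statement: with probability $\ge 1-e^{-s^{c_0}}$, every crossing of a side-$s$ box having minimal passage time between its endpoints has length $\ge s^{1+\delta}$. Since an open left--right crossing exists with probability $\tfrac12$ by self-duality, on that event such a minimal crossing is \emph{open}, so this assertion contains a superlinearity-of-chemical-distance statement. The mechanism one expects to exploit is that the cheapest crossing, being essentially forced to stay open, must detour around closed ``fingers'' that, by RSW and FKG, appear inside the box with probability bounded below at every scale $2^j\le s$; and crucially a closed finger is a \emph{local} obstruction that blocks every cheap crossing through its region, not merely the typical one, so it survives the renormalization union bound over coarse-grained crossings. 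Making the detours \emph{compound} multiplicatively over the $\asymp \log s$ scales is what upgrades the improvement from polylogarithmic to genuinely polynomial, while the polynomially many disjoint sub-boxes available at the finer scales are what supply, via Chernoff estimates, the concentration needed for the stretched-exponential tail. Polynomial decay of the three-arm probability $\pi_3$ is the natural input guaranteeing both that detour-forcing fingers are cheap enough to realize and that the bad crossings stay sparse.

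The main obstacle is precisely this estimate, and within it the tension between its two requirements: obtaining a fixed exponent gain $\delta>0$ needs the detours to reinforce across a logarithmic number of scales (a single scale, or any bounded number, yields only a constant-factor or polylogarithmic improvement), whereas obtaining the $e^{-n^{c}}$ error needs each scale's contribution to be witnessed by $n^{\Omega(1)}$ independent local configurations. Reconciling these — proving a quantitative, scale-invariant ``the cheapest crossing is forced to wander'' lemma with exponentially small error — is where the real work lies, and where the (small, non-explicit) value of $\beta-1$ is determined. By contrast, the reductions in the first paragraph, the passage from Bernoulli$(\tfrac12)$ to a general critical weight distribution via the percolation projection, and the bookkeeping of the union bounds are all routine once the core lemma is in hand.
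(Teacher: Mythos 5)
You correctly observe that the paper does not re-prove this statement but imports it from Damron--Tang; so the right question is whether your sketch is itself a proof. It is not, and the gap is precisely the one you flag yourself. The outer reduction is essentially fine: the hereditary property of length-minimizing geodesics (a subpath of a length-minimizing geodesic from $0$ to $x$ is a length-minimizing geodesic between its endpoints, because splicing in a shorter geodesic and loop-erasing would give a shorter geodesic from $0$ to $x$) is correct, and confining $\gamma$ to a polynomial box on the event $\{\lng_{0,x}\le n^\beta\}$ is trivial. There is a small unaddressed point in the tiling step --- the piece of $\gamma$ crossing a given width-$s$ column need not sit inside a single side-$s$ box, so you need an extra dichotomy (either the transverse height exceeds $s^{1+\delta}$, which is fine directly, or the piece lies in $O(s^\delta)$ stacked boxes, one of which it fully crosses) --- but this is routine.

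The genuine gap is the per-box estimate $\Lambda(B)\ge s^{1+\delta}$ with failure probability $e^{-s^{c_0}}$, which you present as the ``heart of the matter'' and then describe only heuristically. Two specific problems. First, this is not simply a chemical-distance lower bound for open crossings: in critical FPP the passage time $T(0,\partial B_s)$ grows like $\log s$, so the minimal-$T$ crossing of a side-$s$ box uses $\asymp\log s$ closed edges, and each closed edge can tunnel between otherwise-distant open clusters, potentially short-circuiting detours. Any valid proof must bound the number and effect of these closed shortcuts; your sketch says the crossing is ``essentially forced to stay open,'' which is not a bound. Second, the multiplicative compounding of detours across $\asymp\log s$ scales is stated as an expectation, not derived: a single long detour around the \emph{outermost} obstruction can simultaneously avoid all nested finer-scale fingers, so it is nontrivial that the forced excess accumulates as a product rather than a maximum. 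This is exactly where a quantitative, scale-by-scale argument is needed, and it is also where the stretched-exponential error rate must be won via $n^{\Omega(1)}$ independent witnesses per scale. None of this is supplied. For context, the Damron--Tang proof does not proceed by detour-compounding at all; it counts edges $e$ with $U_e\in(p_\cc,p]$ satisfying a four-arm-type event along the geodesic, using Kiss's machinery (the paper imports the key counting lemma as its Lemma~\ref{lem : kiss}). So the route you sketch is plausible in outline but its central lemma is unproven, and it is not the route the cited proof takes.
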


It should be emphasized that this result is special to the critical case, and at least superficially to two dimensions.
Indeed, if $\P(t_e=0)\neq \tfrac{1}{2}$, then $\NN_{\vc 0,x}$ grows linearly in $\|x\|$ and is even known in some cases to  satisfy a law of large numbers if $x$ is brought to infinity along a fixed direction (see \cite[Sec.~1.5]{bates24} and references therein, including \cite[Thm.~4.9]{auffinger_damron_hanson17} 
for the subcritical case, and \cite[Thm.~4]{zhang95} for supercritical).
What remains unsettled by Theorem \ref{damron_tang_thm} is the exact growth rate of $\lng_{\vc 0,x}$ in critical FPP, as there is no matching upper bound.
In fact, until now, no upper bound whatsoever has been established.


\subsection{Main results for Bernoulli weights} \label{ber_res_sec}
The most well-studied case of critical FPP is that of Bernoulli weights:
\eeq{
\label{Ber}
\P(t_e = 0) = \P(t_e = 1) = \tfrac{1}{2}.
}
Our results are strongest in this case.
The estimates we provide for geodesic length are given in terms of arm events, which are of fundamental interest in the study of Bernoulli percolation and not immediately connected to FPP.
We now recall the relevant definitions.


Define the dual lattice $\wh \Z^2 = \Z^2 + (\f{1}{2},\f{1}{2})$.
For clarity, we will often refer to $\Z^2$ as the primal lattice.
The shift by $(\f{1}{2},\f{1}{2})$ means that for each primal edge $e\in E(\Z^2)$, there is a unique dual edge $e^\star\in E(\wh\Z^2)$ that bisects it.
We say that the endpoints of $e^\star$ are the \textit{dual neighbors} of $e$, and similarly, the endpoints of $e$ are the dual neighbors of $e^\star$. We also define the dual neighbors of a \textit{vertex} $v$, which are the four points on the dual lattice closest to $v$. 
Once the edges of $\Z^2$ are given open or closed status, their dual edges are given the same status.
That is, we declare $e^\star$ to be open if $e$ is open, or closed if $e$ is closed.
We can then speak of open or closed paths; all open paths we consider are on the primal lattice, while all closed paths are on the dual lattice.
Furthermore, it will often be useful to identify self-avoiding paths with the simple curves their edges trace out in the plane.
For instance, this identification makes defining 
disjointness 
very intuitive:
two paths are \textit{disjoint} if their associated curves are disjoint.\footnote{If the two paths are on the same lattice, then disjointness is equivalent to sharing no vertices; if on different lattices, it is equivalent to sharing no dual pair of edges (i.e.~never crossing).} 

Consider the box $B_R = [-R,R]^2\cap\Z^2$.
The boundary of $B_R$ is written $\partial B_R = B_R \setminus B_{R-1}$. 
Let $\pi_3(R)$ denote the probability of the following event depicted in Figure \ref{fig:3arm}:
\begin{figure}[t]

\begin{center}

\tikzset{every picture/.style={line width=0.75pt}} 

\begin{tikzpicture}[x=0.75pt,y=0.75pt,yscale=-1,xscale=1]

\draw  [color={rgb, 255:red, 155; green, 155; blue, 155 }  ,draw opacity=1 ] (154.33,80.15) -- (315.25,80.15) -- (315.25,241.07) -- (154.33,241.07) -- cycle ;
\draw [line width=2.25]    (230.12,160.12) -- (240.12,160.17) ;
\draw    (230.12,160.12) .. controls (201.41,164.3) and (184.33,112.68) .. (154.15,122.31) ;
\draw    (240.12,160.17) .. controls (276.81,138.76) and (285.52,214.96) .. (315.07,230.07) ;
\draw  [dash pattern={on 0.84pt off 2.51pt}]  (235.64,85.97) .. controls (213.09,97.97) and (241.27,138.33) .. (234.91,156) ;

\draw (230.12,162.52) node [anchor=north west][inner sep=0.75pt]    {$e$};
\draw (157.31,222.87) node [anchor=north west][inner sep=0.75pt]    {$\partial B_{R}$};

\end{tikzpicture}
\caption{The polychromatic 3-arm event at edge $e = \{(0,0), (1,0)\}$: two open primal paths (shown solid) and one closed dual path (shown dashed).}\label{fig:3arm}
\end{center}
\end{figure}
there exist two primal paths and one dual path that are disjoint and satisfy the following conditions: 
\begin{itemize}
    \item The two primal paths are open, start at $(0,0)$ and $(1,0)$ respectively, and both end at $\partial B_R$.
    \item The dual path is closed, starts at either $(\f{1}{2},\frac{1}{2})$ or $(\f{1}{2},-\frac{1}{2})$, and eventually reaches a dual neighbor of some vertex belonging to $\partial B_R$.
\end{itemize}
In the percolation literature, this is called a 3-arm event, and $\pi_3(R)$ is the 3-arm probability at distance $R$.
Our bounds are given in terms of this quantity.

Let $\lng_R$ denote the length of the shortest geodesic from the origin to $\partial B_R$.


\begin{theorem}[Bernoulli weights, point-to-box] \label{thm:Ber_ptb}
 
Assume \eqref{Ber}. Then there exists a constant $C$ such that for all $R\geq1$,
\eeq{ \label{ber_expectation_bound}
\E[\lng_R] \le CR^2 \pi_3(R).
}
\end{theorem}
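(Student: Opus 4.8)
The plan is to bound $\E[\lng_R]$ by summing, over all edges in the box, the probability that a fixed shortest geodesic passes through that edge, and to estimate each such probability by a polychromatic $3$-arm probability — so that the total matches the expected length of a lowest crossing à la Morrow--Zhang. Let $\gamma^\star$ be the shortest geodesic from $0$ to $\partial B_R$ (with a deterministic tie-breaking rule), so $\lng_R=\sum_{e\in E(B_R)}\one\{e\in\gamma^\star\}$ and $\E[\lng_R]=\sum_e\P(e\in\gamma^\star)$. Two reductions are immediate. The \emph{closed} edges on $\gamma^\star$ number exactly $T(0,\partial B_R)$, which is of order $\log R$ in expectation, hence negligible compared to $R^2\pi_3(R)$; and the edges within distance $O(1)$ of $0$ or of $\partial B_R$ number $O(R)=o(R^2\pi_3(R))$, so they may be discarded. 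Finally, being shortest, $\gamma^\star$ traces out (after loop-erasure) a shortest path inside each open cluster it enters; this structural fact is the main tool below.

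The crux is the following geometric estimate: for an open edge $e$ with $d(e):=\dist(e,\{0\})\wedge\dist(e,\partial B_R)$ not too small, if $e\in\gamma^\star$ then the polychromatic $3$-arm event occurs from $e$ out to distance $c\,d(e)$ --- two disjoint open primal arms from the endpoints of $e$ and one disjoint closed dual arm from a dual neighbor of $e$, all reaching $\partial B(e,c\,d(e))$. Granting this, translation invariance and quasi-multiplicativity of arm events give $\P(e\in\gamma^\star)\le C\pi_3(d(e))$. The reason it should hold: put $B=B(e,d(e)/2)$, which lies in $B_R$ and avoids $0$; since $\gamma^\star$ runs from $0\notin B$ to $\partial B_R\notin B$ through $e\in B$, its sub-path through $e$ inside the open cluster $\mathcal C\ni e$ --- a self-avoiding open path, being a shortest path in $\mathcal C$ --- meets $\partial B$ on both sides of $e$, supplying the two open arms, provided $\gamma^\star$ neither enters nor leaves $\mathcal C$ inside $B$ (which requires a closed edge of $\gamma^\star$ to sit in $B$). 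For the closed dual arm one argues by contradiction: if none exists, then $e$ is encircled by an open circuit inside $B$, and that circuit can be used to reroute $\gamma^\star$ into a strictly shorter geodesic --- using that within $\mathcal C$ the geodesic is already a shortest path, so the detour out to $e$ is genuinely wasteful --- contradicting the minimality of $|\gamma^\star|$.

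This estimate is where I expect the real difficulty to lie, for two reasons. First, one must separately control the open edges $e$ for which a closed edge of $\gamma^\star$ intrudes into $B(e,d(e)/2)$ and spoils the two open arms; since $\gamma^\star$ carries only $\asymp\log R$ closed edges, the natural fix is to charge such an $e$ to the smaller scale $\rho$ at which the nearest closed edge of $\gamma^\star$ sits, where that edge forces an extra arm and hence an extra factor of $\pi_1(\rho)$, enough to make the sum over scales $\rho\le d(e)$ converge (without an extra logarithm) by the polynomial decay of $\pi_1$. Second, the rerouting argument that produces the closed dual arm must be carried out carefully in the planar setting. Both steps use only standard percolation tools (quasi-multiplicativity and separation of arms), but they require bookkeeping.

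Given $\P(e\in\gamma^\star)\le C\pi_3(d(e))$ for the relevant open edges, the summation is routine. Grouping by the dyadic size of $d(e)$ --- there are $O(2^{2j})$ edges in an annulus of radius $\asymp 2^j$ about $0$ and $O(R\,2^{j})$ edges in a frame at distance $\asymp 2^j$ from $\partial B_R$, so $O(R\,2^{j})$ edges in all with $d(e)\asymp 2^j$ --- one gets
\[
\sum_{e\in E(B_R)}\pi_3(d(e))\ \le\ C\sum_{j=0}^{\lceil\log_2 R\rceil}R\,2^{j}\,\pi_3(2^{j})\ \le\ C' R^2\pi_3(R),
\]
the last inequality because $r\mapsto r^2\pi_3(r)$ grows at least geometrically along dyadic scales (a consequence of quasi-multiplicativity together with the uniform positive lower bound for a $3$-arm crossing of a fixed annulus), so the series is dominated by its top term. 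Combining this with the negligible contributions of the closed and near-endpoint edges yields $\E[\lng_R]\le CR^2\pi_3(R)$.
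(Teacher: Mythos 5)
Your framework is the right one and matches the paper's: write $\E[\lng_R]$ as $\sum_e \P(e\in\text{geodesic})$, bound each term by $\pi_3(\dist(e,0)\wedge\dist(e,\partial B_R))$, and sum. But there is a genuine gap at the crucial step, and it is not the bookkeeping you flag but the deterministic claim that every open edge of the \emph{shortest} geodesic $\gamma^\star$ admits a closed dual arm.

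The ``rerouting'' argument you propose does not actually produce a shorter path. Suppose there is no closed arm from $e$ out to distance $c\,d(e)$, so (by Kesten-style separation) there is an open circuit $\CC'$ around $e$ inside $B(e,c\,d(e))$. The geodesic enters $\CC'$ at some $u$, passes through $e$, and exits at $v$; your proposed replacement is to follow an arc of $\CC'$ from $u$ to $v$. Nothing forces that arc to be shorter. For a concrete obstruction, picture a long thin open tube: a shortest geodesic crossing the tube lengthwise uses interior edges $e$ that have no nearby closed arm; small open circuits around such $e$ exist, but rerouting around them is \emph{longer} than cutting straight through, so $\gamma^\star$ really does use those edges. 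Hence there is no contradiction with minimality of $|\gamma^\star|$, and $\P(e\in\gamma^\star)\le C\pi_3(d(e))$ does not follow. Morrow--Zhang's argument for the \emph{lowest} crossing is different in kind: ``lowest'' is a monotone, directional choice that deterministically supplies the closed arm (downward), and there is no analogous choice available for the shortest geodesic. The paper circumvents this by \emph{not} analyzing $\gamma^\star$ at all: it constructs a special geodesic $\gamma$ (Proposition \ref{prop:geod_cons}) that is, between consecutive closed edges, chosen to minimize the area of an associated Jordan curve bordering a fixed dual path $\zeta$ from $0$ to $\partial B_R$. That minimality --- the radial analogue of ``lowest'' --- is exactly what forces, for each open edge of $\gamma$ off the open circuits, a closed dual path to $\zeta$ (Proposition \ref{prop:geod_cons}\ref{itm:dualconn}). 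One then uses $\lng_R\le|\gamma|$ and bounds $\E[|\gamma|]$ instead. This construction occupies Sections \ref{sec:separation}--\ref{sec:geod_cons_proof} and is where the real work is.

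Two smaller points. First, even once the closed arm is in hand, the arms do not simply go out to distance $M$: they are interrupted where the geodesic or the dual path crosses the nested circuits, and a multiscale patching argument (the paper's Lemma \ref{lemmm0}, using quasi-multiplicativity and Proposition \ref{arm_calc}) is needed to stitch the pieces into a single $\pi_3(M)$ factor. Your proposal to ``charge such $e$ to the scale of the nearest closed edge'' is in the right spirit but considerably underestimates the case analysis. Second, your justification of the summation --- that $r^2\pi_3(r)$ grows geometrically along dyadic scales --- does not follow from quasi-multiplicativity alone, since that only gives $\pi_3(2r)\ge c\,\pi_3(r)$ with an unknown constant $c$. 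The sum $\sum_{k\le R} k\,\pi_3(k)\le CR^2\pi_3(R)$ does hold, but one should use the lower bound $\pi_3(a,b)\ge c(b/a)^{-(2-\delta)}$ coming from the universal 5-arm exponent (this is Lemma \ref{lem:sum_arm}, which cites \cite{Damron-Hanson-Sosoe-17}), not a geometric-growth shortcut.
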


Our next result considers a more general setting.
For finite sets $\mathcal{A},\mathcal{B}\subseteq\Z^2$, let $\lng_{\mathcal A,\mathcal B}$ denote the minimum length of a geodesic from $\mathcal{A}$ to $\mathcal{B}$. 
Denote the $\ell^\infty$ Euclidean distance between the two sets by
\eeq{ \label{dist_def}
\dist(\mathcal A,\mathcal B) = \inf\{\|x-y\|_\infty:\, x\in\mathcal A, y\in\mathcal B\}.
}
When $\mathcal A$ and $\mathcal B$ are single points, the following gives a result for point-to-point geodesics.  


\begin{theorem}[Bernoulli weights, set-to-set] \label{thm:Ber_ptp}
Assume \eqref{Ber}.
Let $\mathcal{A}$ and $\mathcal{B}$ be disjoint finite connected sets of vertices, and let $R = \dist(\mathcal A, \mathcal{B})$. 
There exist constants $C,c > 0$, independent of $\mathcal A$ and $\mathcal B$, such that
\eeq{ \label{cm33}
\P(\lng_{\mathcal A,\mathcal B} \ge \theta R^2\pi_3(R)) \le C(|\mathcal A| + |\mathcal B|)^3\theta^{-c} \quad \text{for all $\theta >  0$}.
}
\end{theorem}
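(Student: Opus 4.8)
\medskip
\noindent\textbf{Proof sketch.}
The plan is to establish the point-to-point case first and then pass to general sets by a union bound. Suppose we know that there are constants $C,c>0$, which we may take with $c\le\tfrac12$, such that
\begin{equation}
\P\bigl(\lng_{x,y}\ge\mu\,r^2\pi_3(r)\bigr)\le C\mu^{-c}\qquad\text{for all }x\neq y\in\Z^2,\ r:=\|x-y\|_1,\ \mu>0.\tag{$\star$}
\end{equation}
Because $\mathcal A$ and $\mathcal B$ are connected, every $a\in\mathcal A$, $b\in\mathcal B$ satisfies $\|a-b\|_1\le(|\mathcal A|-1)+d+(|\mathcal B|-1)<|\mathcal A|+|\mathcal B|+d$. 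The minimal-length $\mathcal A$-to-$\mathcal B$ geodesic has some (random) endpoints $(a_0,b_0)\in\mathcal A\times\mathcal B$, and since any $a_0$-to-$b_0$ geodesic is also an $\mathcal A$-to-$\mathcal B$ geodesic we get $\lng_{\mathcal A,\mathcal B}\le\lng_{a_0,b_0}$, hence
\begin{equation*}
\P\bigl(\lng_{\mathcal A,\mathcal B}\ge\lambda d^2\pi_3(d)\bigr)\le\sum_{a\in\mathcal A}\sum_{b\in\mathcal B}\P\bigl(\lng_{a,b}\ge\lambda d^2\pi_3(d)\bigr).
\end{equation*}
Apply $(\star)$ to each term with $r=\|a-b\|_1$ and $\mu=\lambda d^2\pi_3(d)/(r^2\pi_3(r))$; since $\pi_3$ is nonincreasing, $\mu\ge\lambda(d/r)^2$, so the term is at most $C\lambda^{-c}(r/d)^{2c}\le C\lambda^{-c}\bigl(1+(|\mathcal A|+|\mathcal B|)/d\bigr)^{2c}$, which—using $d\ge1$, $|\mathcal A|+|\mathcal B|\ge2$, and $c\le\tfrac12$—is at most $C'(|\mathcal A|+|\mathcal B|)\lambda^{-c}$. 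Summing over the $\le|\mathcal A|\,|\mathcal B|\le(|\mathcal A|+|\mathcal B|)^2$ pairs yields the estimate \eqref{cm33}.

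\medskip
\noindent To prove $(\star)$ the plan is to run the mechanism behind Theorem \ref{thm:Ber_ptb} in the point-to-point geometry. Fix $x\neq y$, put $r=\|x-y\|_1$, and let $\gamma^\ast$ be a minimal-length $x$-to-$y$ geodesic. The key structural fact, used throughout, is that \emph{every subpath of $\gamma^\ast$ is itself a minimal-length geodesic between its own endpoints}: if some subpath could be replaced (within $\gamma^\ast$) by a path of strictly smaller passage time, or of equal passage time and strictly smaller length, then loop-erasing the spliced walk would produce an $x$-to-$y$ path contradicting $T(\gamma^\ast)=T(x,y)$ or the minimality of $\ell(\gamma^\ast)$. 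The argument then has three parts. \emph{(i) Localization.} Off an event of probability $\lesssim\mu^{-c_1}$ one has $\gamma^\ast\subset B(x,Mr)$ for a mild threshold $M=M(\mu)$; this comes from a union-over-scales estimate showing that a passage-time-optimal path gains nothing by detouring past the $\asymp\log(L/r)$ nested closed dual circuits separating $\{x,y\}$ from $\partial B(x,L)$. \emph{(ii) Endpoint clusters.} Since reaching $x$ (or $y$) is free once inside its open cluster while each crossing of that cluster's boundary costs $1$, $\gamma^\ast$ enters the clusters $C_x,C_y$ of $x,y$ exactly once and traverses each by a shortest open path, so $|\gamma^\ast\cap(C_x\cup C_y)|$ is at most the sum of the chemical radii of $C_x$ and $C_y$; as a self-avoiding open path of length $k$ reaches Euclidean distance $\gtrsim\sqrt k$, the one-arm estimate gives this quantity tail $\lesssim k^{-c_2}$ for a fixed $c_2>0$, hence it exceeds $\mu r^2\pi_3(r)$ with probability $\lesssim\mu^{-c_2}$. \emph{(iii) The bulk.} On the localization event, decompose $\gamma^\ast$, between its first exit of $C_x$ and last entry of $C_y$, at the first-crossing points of the dyadic shells $\partial B(x,2^k)$ and $\partial B(y,2^k)$ for $2\le2^k\le r$, together with the $O(1)$ pieces remaining near scale $r$. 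By the structural fact, each piece is a minimal-length geodesic between two points at mutual distance $\asymp2^k$; bounding the expected length of each such piece by $C\,4^k\pi_3(2^k)$ (this is where Theorem \ref{thm:Ber_ptb} and its translates enter—see the obstacle below) and using that standard $\pi_3$ estimates give $\sum_{2^k\le r}4^k\pi_3(2^k)\le Cr^2\pi_3(r)$, one gets that the conditional expectation of $\ell(\gamma^\ast)$ on the good event is $\le Cr^2\pi_3(r)$ up to the endpoint correction. Markov's inequality combined with (i) and (ii) then yields $(\star)$ with $c=\min(c_1,c_2,\tfrac12)$.

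\medskip
\noindent The main obstacle is step (iii): Theorem \ref{thm:Ber_ptb} controls the \emph{shortest} geodesic from a point to an entire \emph{box}, whereas (iii) requires the analogous bound for a geodesic between two \emph{points}. These differ in two substantive ways. First, reaching a prescribed point rather than a full sphere forces the geodesic through the target's own critical open cluster, whose chemical size has only a heavy polynomial tail; this is precisely why (ii) produces a power of $\mu$ rather than a finite mean, and why Theorem \ref{thm:Ber_ptp} is stated as a polynomial tail bound instead of the expectation bound of Theorem \ref{thm:Ber_ptb}. Second—and more delicately—carrying out the localization (i) for a critical-FPP geodesic has no simple analogue of the sub/supercritical confinement arguments, since no positive speed pins the geodesic near the straight segment; one must instead quantify how costly excursions to scale $L$ are, through the additional closed dual circuits they must recross, in order to rule them out with the required probability. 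Making these precise—and, underlying them, extending the arm-event analysis behind Theorem \ref{thm:Ber_ptb} from a box target to a point target at each dyadic scale—is where the real work of Theorem \ref{thm:Ber_ptp} lies.
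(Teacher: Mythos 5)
Your reduction of the set-to-set bound to a point-to-point estimate $(\star)$ via a union bound over pairs $(a,b)\in\mathcal A\times\mathcal B$ is valid, and the accounting $(|\mathcal A|+|\mathcal B|)^2$ pairs times $(|\mathcal A|+|\mathcal B|)^{2c}$ from the distance distortion, with $c\le 1/2$, does give the stated cube. But $(\star)$ itself is left unproved: you explicitly flag that step (iii) of your sketch requires a point-to-point version of the box estimate at each dyadic scale, and that this is ``where the real work lies.'' That is a genuine gap, and it is compounded by a further issue you do not mention: the dyadic pieces of $\gamma^\ast$ have random endpoints determined by a global optimization, so bounding the expected length of a piece by invoking a fixed-endpoint theorem (such as Theorem~\ref{thm:Ber_ptb} or your hoped-for point-to-point analogue) is not a direct application and needs its own argument. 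Your localization step (i) is also asserted without a mechanism; merely observing that long detours must recross nested closed circuits does not, by itself, rule them out, since the geodesic is also accumulating open edges for free.

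The paper does not perform the point-to-point reduction at all, and in fact the extension you regard as ``the real work'' is already done in a form that subsumes both cases. Theorem~\ref{M_est} (via Lemma~\ref{lemmm0}) gives the per-edge estimate $\P(e\in\gamma)\le C\,\pi_3\bigl(\min\{\dist(e,\mathcal A),\dist(e,\mathcal B)\}\bigr)$ for the specific geodesic of Proposition~\ref{prop:geod_cons} between \emph{arbitrary} finite connected disjoint sets, singletons included; there is no box-to-point gap to close. The proof of Theorem~\ref{thm:Ber_ptp} then combines this with an explicit localization: for each dyadic scale, define $D_k$ to be the event that the annulus $\mathfrak b_{2k}\setminus\mathfrak b_{2k-2}$ contains a nested pair consisting of an open circuit and a closed dual circuit outside it. On $D_k$ any geodesic that exits $\mathfrak b_{2k}$ must cross the closed dual circuit, taking a closed edge in the exterior of the open circuit which could be replaced for free by following the open circuit, contradicting optimality. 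RSW and independence give $\P(T>n)\le e^{-cn}$ for $T=\min\{k:D_k\}$; Markov's inequality on $\E[\lng_{\mathcal A,\mathcal B}\,\one(\gamma\subseteq\mathfrak b_{2n})]$, with the per-edge sum bounded by $C(4^n d)^2(|\mathcal A|+|\mathcal B|)^3\pi_3(d)$ via Lemma~\ref{lem:sum_arm}, and the choice $n\asymp\log_{16}\lambda$, yield \eqref{cm33}. If you want to salvage your route, what you need is precisely Theorem~\ref{M_est} for $|\mathcal A|=|\mathcal B|=1$: with that, $(\star)$ follows from the same Markov plus localization argument applied to singletons, but at that point the direct set-to-set argument is both cleaner and proves more.
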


The exact decay rate of $\pi_3(R)$ as $R\to\infty$ has not been established on the square lattice, but for site percolation on the triangular lattice, it is known that $\pi_3(R) = R^{-2/3+o(1)}$ \cite[Thm.~4]{smirnov_werner01} (see also \cite[Thm.~21]{nolin08}). 
It is widely believed that the same asymptotic holds on the square lattice, although we are not aware of any rigorous estimates better than those 
mentioned in Remark~\ref{rmk:3armprob}.

\begin{remark}[For fixed $R$, the tail must be heavy] \label{2m_rmk}
In the case $|\mathcal A| = |\mathcal B| = 1$, Theorem \ref{thm:Ber_ptp} is analogous to \cite[Cor.~2]{damron_hanson_sosoe16} concerning chemical distance in critical percolation. We do not obtain a precise value of $c$ in the proof, but we explain here why $c$ cannot exceed the value $2$ when $\mathcal{A} = \{(0,0)\}$ and $\mathcal{B} = \{(1,0)\}$. Indeed, it was shown in \cite[Prop.~3]{damron_hanson_sosoe16} that
\[
\mathbb E\givenk[\big]{(\lng_{(0,0),(1,0)})^2}{(0,0) \leftrightarrow (1,0)} = \infty,
\]
where $(0,0) \leftrightarrow (1,0)$ denotes the event that the two points are connected by an open path.
In particular, $\E\big[(\lng_{(0,0),(1,0)})^2\big]=\infty$.
But if $c$ in \eqref{cm33} were greater than $2$, then $\lng_{(0,0),(1,0)}$ would have a finite second moment.
\end{remark}

\subsection{Main results for general edge-weights} \label{rsl_general}
To obtain bounds for geodesic length beyond the Bernoulli case, we require either of two possible assumptions on the distribution function $F$ of the edge-weights. 
These are \eqref{eq : intro_limit_bigger_than_one} and \eqref{eq : intro_limit_equal_zero} stated below.
These conditions involve quantities $(p_R)_{R\geq1}$ which are standard in near-critical percolation.
We now recall their definition.

Consider i.i.d.~random variables $(U_e)_{e \in E(\Z^2)}$, each uniformly distributed on $[0,1]$. 
For a specific choice of distribution function $F$, we can realize the edge-weights as $t_e = F^{-1}(U_e)$, where $F^{-1}$ is the usual quantile function:
\eq{
F^{-1}(u) = \inf\{t\in\R:\, F(t)\geq u\}, \quad u\in[0,1].
}
We say a path $\gamma$ is \textit{$p$-open} if $U_e \le p$ for each edge $e \in \gamma$. 
With $p_\cc=\tfrac{1}{2}$, the open paths we have defined previously are the same as $p_\cc$-open paths defined here.
%
 For a positive integer $R\ge1$ and $p \in (p_\cc , 1 ]$, let 
	\eeq{ \label{cross_prob}
	\sigma(R,p) = \P( \text{$\exists$ $p$-open left-right crossing of } [-R,R] \times [-R,R] ), 
	}
	where ``left-right crossing of $[a,b] \times [m,n]$'' means a path in $[a,b] \times [m,n]$ that starts in $\{a\} \times [m,n]$ and ends in $\{b\} \times [m,n]$. For $ \eps > 0 $ and $p > p_\cc ,$ define
	\eeq{ \label{corr_L}
		L(p,\eps) = \min \{ R\geq 1:\, \sigma(R,p) \ge 1 - \eps \} .
	}
	This $L(p,\eps)$ is called the (finite-size scaling) correlation length, measuring the scale up to which the $p$-open model ``looks critical.''
    Indeed, at criticality the Russo--Seymour--Welsh (RSW) theorem \cite[Chap.~6]{kesten82} states that for any aspect ratio $\alpha\in(0,\infty)$, there exists $\eps_\alpha>0$ such that for every $R\ge1$,
    \eeq{ \label{RSW}
    \eps_\alpha < \P( \text{$\exists$ $p_\cc$-open left-right crossing of } \big[-\ceil{\alpha R},\ceil{\alpha R}\big] \times [-R,R] ) < 1-\eps_\alpha.
    }
    We will always assume the $\eps$ in \eqref{corr_L} is less than $\eps_{1}$, so that 
    \eeq{ \label{L_lim}
    \lim_{p\searrow p_\cc}L(p,\eps) = \infty.
    }
    
    It is shown in \cite[disp.~(1.24)]{kesten87} that there exist $C,c,\eps_\star > 0$ such that for all $0 < \eps , \eps' \leq \eps_\star$ and  $p\in(p_\cc,p_\cc+\eps_\star]$, we have $cL(p,\eps)\le L(p,\eps') \le CL(p,\eps)$.
 We therefore write $L(p) = L(p,\eps_\star) $ with this fixed $\eps_\star$, as is customary. For $R \geq 1 $, define
	\begin{equation} \label{def_pn}
		p_R = \inf \{ p > p_\cc :\, L(p) \leq R \}.
	\end{equation}
In other words, $p_R$ gives the largest value of $p$ such that the $p$-open model ``looks critical'' up to scale $R$.
 Therefore, the two maps $R\mapsto p_R$ and $p\mapsto L(p)$ should be thought of as roughly inverse to each other; see \eqref{eq: L_p_n}.
 Note that \eqref{L_lim} implies
 \eeq{ \label{k3r78b}
 p_R > p_\cc \quad \text{for all $R\ge1$.}
 }

\begin{theorem}[General weights, point-to-box]
\label{thm:general_thm}
Assume that the edge-weight distribution function $F$ satisfies \eqref{critical_assumption} and one of the following two conditions:
\eeqs{
\limsup_{n\to\infty} F^{-1}(p_{\b^{n+1}})/F^{-1}(p_{\b^{n}}) < 1  \label{eq : intro_limit_bigger_than_one} \quad \text{for some integer $\b\ge2$,} \\
\text{or} \quad \liminf_{n\to\infty} F^{-1}(p_{\b^{2n}})/F^{-1}(p_{\b^{n}}) > 0 \quad \text{for some integer $\b\ge2$}. \label{eq : intro_limit_equal_zero}     
}
Then for any $\ve > 0$, there exist constants $C,c,s>0$ such that for all $\theta \ge 1$ and $R\ge 1$, 
\eeq{ \label{mxi4}
\P( \lng_R \geq \theta R^{ 2 + \ve} \pi_3(R) ) \leq C \e^{-c (\log (\theta R))^s }  .
}
\end{theorem}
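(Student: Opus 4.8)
The plan is to reduce the general-weight statement to the Bernoulli result, Theorem \ref{thm:Ber_ptp}, via a multiscale decomposition of the edge-weights that mimics the near-critical coupling. Recall the realization $t_e = F^{-1}(U_e)$ with $(U_e)$ i.i.d.\ uniform on $[0,1]$. The idea is to split the annuli $B_{2^{n+1}}\setminus B_{2^n}$ dyadically and, on scale $2^n$, to treat edges with $U_e\le p_{2^n}$ as ``effectively open'' for the purposes of crossing that annulus: by definition of the correlation length $L(p)$ and of $p_R$ in \eqref{def_pn}, at scale $R=2^n$ a $p_{2^n}$-open crossing of a box of side $\asymp 2^n$ exists with probability at least $1-\eps_1$. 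The geodesic from $0$ to $\partial B_R$ can be built by concatenating, across each dyadic annulus, a near-geodesic that uses $p_{2^n}$-open edges for free and pays only for the (few) edges of weight exceeding $F^{-1}(p_{2^n})$ that it must cross. On each annulus, running Theorem \ref{thm:Ber_ptp} (or rather its box-to-box analogue, applied to the percolation configuration $\{U_e\le p_{2^n}\}$, which is Bernoulli$(p_{2^n})$ with $p_{2^n}\searrow 1/2$) bounds the length of such a crossing by $\lambda\, (2^n)^2\,\pi_3^{(p_{2^n})}(2^n)$ with a stretched-exponential tail, where $\pi_3^{(p)}$ is the near-critical $3$-arm probability. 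The standard near-critical stability estimates (Kesten's scaling relations, as in \cite{kesten-87}) give $\pi_3^{(p_{2^n})}(2^n) \asymp \pi_3(2^n)$, and by quasimultiplicativity of arm probabilities $\sum_n (2^n)^2 \pi_3(2^n) \lesssim R^2\pi_3(R)$. Summing the per-scale length bounds and per-scale tail bounds over $n=0,\dots,\log_2 R$ then yields \eqref{mxi4}, with the extra $R^\ve$ absorbing the logarithmically many scales and the polynomial prefactors $(|\mathcal A|+|\mathcal B|)^3$ from each invocation of Theorem \ref{thm:Ber_ptp}.

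The role of hypotheses \eqref{eq : intro_limit_bigger_than_one} and \eqref{eq : intro_limit_equal_zero} is to control the \emph{cost} side — the weights $F^{-1}(p_{2^n})$ that the geodesic actually pays — rather than the length. Under \eqref{eq : intro_limit_bigger_than_one}, the sequence $F^{-1}(p_{2^n})$ decays geometrically, so the total passage time of the concatenated path is dominated by its coarsest (largest-weight) increments and is finite and comparable to $F^{-1}(p_1)$; this makes it easy to certify that the constructed path is within a constant of optimal, because any competing path crossing from $0$ to $\partial B_R$ must cross each dyadic annulus and hence incur at least one closed-edge penalty at a comparable scale. Under the complementary regime \eqref{eq : intro_limit_equal_zero}, the weights decay at most polynomially (in $\log$-scale), and one instead uses a dyadic-in-$\log\log$ grouping: the condition $\liminf F^{-1}(p_{2^{2n}})/F^{-1}(p_{2^n})>0$ says that doubling the scale exponent changes the weight by only a bounded factor, so one can afford to compare the constructed path's passage time against the true geodesic by a coarser telescoping argument. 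In both cases the output is the same: with high probability the shortest geodesic to $\partial B_R$ is realized (up to constants) by a path of the multiscale form above, whose length obeys the claimed bound.

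I would carry this out in the following order. First, establish the per-scale building block: a ``crossing lemma'' stating that for $p\in(1/2, 1]$ with $L(p)\le N$, the shortest $p_\cc$-geodesic crossing an annulus of inner radius $N$ and outer radius $2N$ has length $\le \lambda N^2\pi_3(N)$ except with probability $Ce^{-c(\log\lambda N)^s}$ — this is Theorem \ref{thm:Ber_ptp} run on the Bernoulli$(p)$ field $\{U_e\le p\}$ together with near-critical arm stability to translate $\pi_3^{(p)}$ into $\pi_3$. Second, prove the concatenation/optimality step: under \eqref{eq : intro_limit_bigger_than_one} or \eqref{eq : intro_limit_equal_zero}, show that gluing these per-scale crossings produces a path from $0$ to $\partial B_R$ whose $T$-value is within a constant of $T(0,\partial B_R)$, hence the true geodesic's length is at most that of the glued path. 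Third, sum the length and probability bounds over the $O(\log R)$ scales, using quasimultiplicativity for the $\sum (2^n)^2\pi_3(2^n) \lesssim R^2\pi_3(R)$ estimate and a union bound for the tails, and absorb everything into $R^\ve$. The main obstacle I anticipate is the second step — the near-optimality of the multiscale path. Theorem \ref{thm:Ber_ptp} controls the length of a \emph{specific} good crossing, not of the genuine geodesic, so one must argue that no geodesic can be dramatically shorter; this requires a lower bound on the passage-time cost of \emph{any} crossing of a dyadic annulus at the relevant near-critical level, and it is precisely here that the two dichotomous hypotheses on $F^{-1}(p_{2^n})$ are needed to make the telescoping of costs close. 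Handling the $L(p)\asymp L(p,\eps')$ flexibility and the $p_R$-vs-$2^n$ matching cleanly (so that each annulus really is at a scale where a $p_{2^n}$-open crossing is likely) is a secondary technical point that the correlation-length machinery of \cite{kesten-87} should settle.
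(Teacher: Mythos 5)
Your proposal takes a genuinely different route from the paper, but it has a logical gap that I believe is fatal as stated. The paper's actual proof does not attempt to reduce to the Bernoulli case at all; instead it constructs a single distinguished geodesic $\primalp_n$ via Proposition~\ref{prop:geod_cons}, and then bounds $\P(e\in\primalp_n)$ for each edge $e$ by a \emph{3-arm event with defects}: the two open arms along $\primalp_n$ and the closed arm to $\dualp$ may each pass through a small number of wrong-colour edges, and \cite[Prop.~18]{nolin-08} is used to pay a $(C\log M)^k$ penalty per $k$ defects. The entire machinery of Sections~\ref{sec_ptgaa}--\ref{sec_cebc} (passage times $T^{\max}_{k,n}$ across annuli, Kiss's lemma, and the quantities $D_{k,n}$, $X_j$, $X^n$, $\mathfrak X_n$) exists to show that $\mathfrak X_n$, the number of such defects per annular block, has stretched-exponential tails; this is precisely where conditions \eqref{eq : intro_limit_bigger_than_one} and \eqref{eq : intro_limit_equal_zero} enter, via Lemmas~\ref{lem:param} and~\ref{lem:Tkn0assumption}. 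The $R^\ve$ absorbs the $(\log M)^{\mathfrak X_n}$ penalty, not a union over scales.

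The gap in your proposal is in the ``near-optimality'' step, and you flag it yourself, but the telescoping remedy you sketch does not close it. You aim to build a multiscale path whose $T$-value is within a \emph{constant factor} of $T(0,\partial B_R)$, and then conclude ``hence the true geodesic's length is at most that of the glued path.'' That inference is false: $\lng_R$ is the minimum length over paths achieving the \emph{exact} minimum passage time, and a path whose passage time is merely within a constant of optimal need not be a geodesic, so its length places no upper bound on $\lng_R$. To make your reduction work you would have to exhibit an actual geodesic of controlled length, which is exactly what Proposition~\ref{prop:geod_cons} is engineered to do and which a scale-by-scale concatenation of near-critical crossings cannot deliver directly (the concatenated path has no reason to achieve the global infimum, and one cannot ``repair'' a factor-$C$ excess in passage time without potentially changing the length uncontrollably). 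Secondly, invoking Theorem~\ref{thm:Ber_ptp} on the supercritical field $\{U_e\le p_{2^n}\}$ is not a routine substitution: that theorem and its proof rely on the critical structure (the sequence of \emph{closed} dual circuits $\dincir_1,\ldots,\dincir_V$, the Bernoulli$(1/2)$ identity between passage time and number of closed circuits, and the critical arm-probability comparisons), none of which carry over verbatim at $p_{2^n}>1/2$ where there is an infinite open cluster and a different near-critical arm behaviour; a separate near-critical version of Proposition~\ref{prop:geod_cons} and of the $3$-arm counting would need to be developed. Finally, your interpretation of conditions \eqref{eq : intro_limit_bigger_than_one}/\eqref{eq : intro_limit_equal_zero} as bounding the cost of the concatenated path is not how they function in the paper: there they are used to control the \emph{number of nonzero-weight edges} along the chosen geodesic (Proposition~\ref{prop:Dknbd}), which in turn controls the number of defects in the arm events, not the total passage time of a competitor path.
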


\begin{remark}[Discussion of assumptions]
Since $p_{R} \searrow \f{1}{2}$ as $R \to \infty$ (see \eqref{pn-pcbd}), the conditions \eqref{eq : intro_limit_bigger_than_one} and \eqref{eq : intro_limit_equal_zero} depend only on the behavior of $F$ near its atom at $0$.
Similar conditions have been used in \cite{damron_lam_wang17} for characterizing the asymptotic behavior of $T(\vc 0, \partial B_R)$ as $R\to\infty$.
In \eqref{eq : intro_limit_bigger_than_one} and \eqref{eq : intro_limit_equal_zero}, we demand that $b$ is an integer simply for convenience.
Otherwise one must round powers to $\b$ to nearby integers, and our proofs could be modified provided $\b>1$.
Allowing general $b$ (as opposed to fixing $\b=2$, say) ensures greater generality.
While it would be ideal to eliminate these assumptions entirely, what is striking is that \eqref{eq : intro_limit_bigger_than_one} and \eqref{eq : intro_limit_equal_zero} represent opposite---indeed, mutually exclusive---regimes for the behavior of $F$ near $0$.
Somewhat surprisingly, either of these opposite possibilities is enough to prove the same upper bound \eqref{mxi4}.
\end{remark}

\begin{example}[Allowable edge-weight distributions] \label{gnwig}
Here we provide a broad class of edge-weight distributions satisfying either \eqref{eq : intro_limit_bigger_than_one} or \eqref{eq : intro_limit_equal_zero}.
In each example below, it is assumed that $F(t) = 0$ for $t < 0$ and $F(0) = p_\cc=\tfrac12$. 
We specify how $F(t)$ behaves for small positive $t$.
Roughly speaking, \eqref{eq : intro_limit_bigger_than_one} suggests that as $\eps\searrow0$, the value of $F^{-1}(\tfrac{1}{2}+\eps)$ decreases towards $0$ relatively quickly.
In other words, the graph of $F$ is ``not too flat'' to the right of $0$.
By contrast, \eqref{eq : intro_limit_equal_zero} indicates that the graph of $F$ is quite flat to the right of $0$, so that $F^{-1}(\tfrac{1}{2}+\eps)$ vanishes relatively slowly (or possibly not at all) as $\eps\searrow0$.
\begin{enumerate}[label=\textup{(\alph*)}]

\item \label{gnwig_a}
Suppose there are constants $\alpha,\beta,h>0$ such that $F(t) = p_\cc + \alpha t^\beta$ for all $t\in[0,h]$; see Figure \ref{fig:assump} (left).
Then $F$ satisfies \eqref{eq : intro_limit_bigger_than_one}.

\item \label{gnwig_b}
Suppose the support of the edge-weight distribution has a gap above $0$, i.e.\ there exists $h>0$ such that $F(0)=F(h)=p_\cc$; see Figure \ref{fig:assump} (right).
Then $F$ satisfies \eqref{eq : intro_limit_equal_zero}.
In particular, the Bernoulli distribution \eqref{Ber} satisfies \eqref{eq : intro_limit_equal_zero}.

\item \label{gnwig_c}
Suppose there are constants $\alpha,\beta,h> 0$ such that $F(t) = p_\cc + \alpha\e^{-t^{-\beta}}$  for all $t \in (0,h]$. 
Then $F$ satisfies \eqref{eq : intro_limit_equal_zero}.

\item \label{gnwig_d}
Not all distributions satisfy one of the conditions. 
For example, since $L(p_R)\nearrow\infty$ as $R\to\infty$, there exists a distribution $F$ with $F(0) = p_\cc$ and $F^{-1}(p_R)=\e^{-\sqrt{\log_2 L(p_R)}}$ for all large $R$. 
Such a distribution satisfies neither \eqref{eq : intro_limit_bigger_than_one} nor \eqref{eq : intro_limit_equal_zero}. 
\end{enumerate}
The proof that these examples have the stated properties is given in Section \ref{sec:corr_length}. 
\end{example}

\begin{figure}[t]
\begin{center}

\tikzset{every picture/.style={line width=0.75pt}} 

\begin{tikzpicture}[x=0.75pt,y=0.75pt,yscale=-1,xscale=1]

\draw  (114,194.81) -- (300.1,194.81)(138.38,64.4) -- (138.38,208.5) (293.1,189.81) -- (300.1,194.81) -- (293.1,199.81) (133.38,71.4) -- (138.38,64.4) -- (143.38,71.4)  ;
\draw [line width=2.25]    (67.27,193.9) -- (138.35,194.09) ;
\draw  [fill={rgb, 255:red, 255; green, 255; blue, 255 }  ,fill opacity=1 ] (133.9,194.09) .. controls (133.9,191.63) and (135.89,189.64) .. (138.35,189.64) .. controls (140.81,189.64) and (142.8,191.63) .. (142.8,194.09) .. controls (142.8,196.55) and (140.81,198.54) .. (138.35,198.54) .. controls (135.89,198.54) and (133.9,196.55) .. (133.9,194.09) -- cycle ;
\draw [color={rgb, 255:red, 155; green, 155; blue, 155 }  ,draw opacity=1 ] [dash pattern={on 0.84pt off 2.51pt}]  (138.31,149.26) -- (282.06,149.28) ;
\draw  [fill={rgb, 255:red, 0; green, 0; blue, 0 }  ,fill opacity=1 ] (133.86,149.26) .. controls (133.86,146.8) and (135.85,144.81) .. (138.31,144.81) .. controls (140.76,144.81) and (142.76,146.8) .. (142.76,149.26) .. controls (142.76,151.72) and (140.76,153.71) .. (138.31,153.71) .. controls (135.85,153.71) and (133.86,151.72) .. (133.86,149.26) -- cycle ;
\draw  (403.22,194.81) -- (589.33,194.81)(427.61,64.4) -- (427.61,208.5) (582.33,189.81) -- (589.33,194.81) -- (582.33,199.81) (422.61,71.4) -- (427.61,64.4) -- (432.61,71.4)  ;
\draw [line width=2.25]    (356.5,193.9) -- (427.57,194.09) ;
\draw  [fill={rgb, 255:red, 255; green, 255; blue, 255 }  ,fill opacity=1 ] (423.12,194.09) .. controls (423.12,191.63) and (425.12,189.64) .. (427.57,189.64) .. controls (430.03,189.64) and (432.03,191.63) .. (432.03,194.09) .. controls (432.03,196.55) and (430.03,198.54) .. (427.57,198.54) .. controls (425.12,198.54) and (423.12,196.55) .. (423.12,194.09) -- cycle ;
\draw [color={rgb, 255:red, 155; green, 155; blue, 155 }  ,draw opacity=1 ] [dash pattern={on 0.84pt off 2.51pt}]  (427.57,149.09) -- (571.33,149.11) ;
\draw [line width=2.25]    (427.53,149.26) -- (517.42,148.99) ;
\draw  [fill={rgb, 255:red, 255; green, 255; blue, 255 }  ,fill opacity=1 ] (512.91,148.91) .. controls (512.91,146.45) and (514.91,144.46) .. (517.36,144.46) .. controls (519.82,144.46) and (521.81,146.45) .. (521.81,148.91) .. controls (521.81,151.37) and (519.82,153.36) .. (517.36,153.36) .. controls (514.91,153.36) and (512.91,151.37) .. (512.91,148.91) -- cycle ;
\draw [line width=2.25]    (517.24,99.34) -- (589.57,99.19) ;
\draw [shift={(594.57,99.18)}, rotate = 179.88] [fill={rgb, 255:red, 0; green, 0; blue, 0 }  ][line width=0.08]  [draw opacity=0] (14.29,-6.86) -- (0,0) -- (14.29,6.86) -- cycle    ;
\draw  [fill={rgb, 255:red, 0; green, 0; blue, 0 }  ,fill opacity=1 ] (512.79,99.34) .. controls (512.79,96.89) and (514.79,94.89) .. (517.24,94.89) .. controls (519.7,94.89) and (521.69,96.89) .. (521.69,99.34) .. controls (521.69,101.8) and (519.7,103.79) .. (517.24,103.79) .. controls (514.79,103.79) and (512.79,101.8) .. (512.79,99.34) -- cycle ;
\draw  [fill={rgb, 255:red, 0; green, 0; blue, 0 }  ,fill opacity=1 ] (423.08,149.26) .. controls (423.08,146.8) and (425.07,144.81) .. (427.53,144.81) .. controls (429.99,144.81) and (431.98,146.8) .. (431.98,149.26) .. controls (431.98,151.72) and (429.99,153.71) .. (427.53,153.71) .. controls (425.07,153.71) and (423.08,151.72) .. (423.08,149.26) -- cycle ;
\draw [line width=2.25]    (138.31,149.26) .. controls (241.64,149.17) and (219.33,108.08) .. (268.1,106.22) ;
\draw [shift={(272.83,106.17)}, rotate = 180.69] [fill={rgb, 255:red, 0; green, 0; blue, 0 }  ][line width=0.08]  [draw opacity=0] (14.29,-6.86) -- (0,0) -- (14.29,6.86) -- cycle    ;
\draw [line width=2.25]    (138.31,149.26) .. controls (176.92,123.97) and (200.77,107.1) .. (226.91,104.76) ;
\draw [shift={(231.5,104.5)}, rotate = 178.64] [fill={rgb, 255:red, 0; green, 0; blue, 0 }  ][line width=0.08]  [draw opacity=0] (14.29,-6.86) -- (0,0) -- (14.29,6.86) -- cycle    ;
\draw [line width=2.25]    (138.31,149.26) .. controls (138.49,106.87) and (166.48,104.92) .. (190.88,105.11) ;
\draw [shift={(195.5,105.17)}, rotate = 180.75] [fill={rgb, 255:red, 0; green, 0; blue, 0 }  ][line width=0.08]  [draw opacity=0] (14.29,-6.86) -- (0,0) -- (14.29,6.86) -- cycle    ;

\draw (119.67,134.4) node [anchor=north west][inner sep=0.75pt]    {$\frac{1}{2}$};
\draw (124.5,43.4) node [anchor=north west][inner sep=0.75pt]    {$F( t)$};
\draw (305.67,183.57) node [anchor=north west][inner sep=0.75pt]    {$t$};
\draw (408.89,134.4) node [anchor=north west][inner sep=0.75pt]    {$\frac{1}{2}$};
\draw (413.73,43.4) node [anchor=north west][inner sep=0.75pt]    {$F( t)$};
\draw (594.89,183.57) node [anchor=north west][inner sep=0.75pt]    {$t$};

\end{tikzpicture}

\caption{\small An illustration of conditions \eqref{eq : intro_limit_bigger_than_one} and \eqref{eq : intro_limit_equal_zero}. \textit{Left:} Three distribution functions that behave near $t=0$ as $\tfrac{1}{2} + t^{1/10}$, $\tfrac{1}{2} + t$, and $\tfrac{1}{2} + t^{10}$. 
All three distributions satisfy \eqref{eq : intro_limit_bigger_than_one}. 
\textit{Right:} A distribution function that is constant on $[0,h]$ for some $h>0$. 
Any such distribution satisfies \eqref{eq : intro_limit_equal_zero}.
}\label{fig:assump}

\end{center}

\end{figure}

\subsection{Three-arm heuristic and organization of the paper}
The appearance of $\pi_3(R)$ in \eqref{ber_expectation_bound}, \eqref{cm33}, and \eqref{mxi4} can be intuitively explained by the following heuristic.
From every edge $e$ in a geodesic $\primalp$ between two sets, the geodesic itself provides two paths: one to the starting set and another to the ending set.
These subpaths of $\gamma$ consist mostly of open edges, making it plausible that they resemble open arms.
By duality, there is also a closed dual path $\dualp$ from the starting set to the ending set that consists mostly of closed edges. 
We show that if $\primalp$ is chosen ``as close as possible'' to $\dualp$, then for each edge $e$ in $\gamma$, there must exist a closed path from (a dual neighbor of) $e$ to $\dualp$.
Joining this closed path with $\dualp$ itself, we obtain a path resembling a closed arm.
Hence $e$ resembles an edge admitting a 3-arm event.

Carrying out this argument has two main challenges.
The first is to overcome the obvious issue that all three ``arms'' described above have some defect edges of the wrong type.
This is dealt with in two essentially independent ways. 
In the Bernoulli case, Section \ref{bernoulli_section} employs a patching argument that stitches together arm events on different scales in exchange for only constant factors in front of probabilities. 
The main technical devices are found in the proof of Proposition~\ref{M_est}, which is used 
to prove Theorems~\ref{thm:Ber_ptb} and \ref{thm:Ber_ptp}.
Remark~\ref{rem_no_bernoulli} clarifies exactly where the Bernoulli assumption is used.

For more general edge-weight distributions, Section \ref{sec:general_proof} provides a quite different approach. 
Instead of avoiding the defects by patching as before, we allow the two open arms to actually use the defects.
Each defect incurs a small probabilistic penalty thanks to a result of \cite{nolin08} (this is the reason for the additional factor of $R^\eps$ in \eqref{mxi4}).
In order for these penalties to not accumulate too much, we must control the number of closed edges used by a geodesic within a given annulus (Proposition~\ref{prop:Dknbd}) and between two consecutive open circuits (Proposition~\ref{prop:Xjbd}).
These intermediate results are interesting in their own right, as they shed light on the structure of geodesics in critical FPP.
In the case of \eqref{eq : intro_limit_equal_zero}, the heuristic goes as follows.
Because $F$ is quite flat to the right of $0$, it is rare to find nonzero edges with passage time close to $0$.
Therefore, if there were many nonzero edges on the geodesic, then its passage time would be relatively large, but we know a priori that $T(\vc 0,\partial B_R)$ is small (Proposition~\ref{lem:Tkn0assumption}).

The argument under \eqref{eq : intro_limit_bigger_than_one} is more subtle.
In general, every closed edge on a geodesic generates a pseudo-4-arm event: 2 mostly open arms from the geodesic itself, and 2 genuinely closed arms from the fact that the geodesic must be breaking through a closed circuit whenever it uses a closed edge (Proposition~\ref{prop:geod_cons}\ref{itm:dualcir}).
On the other hand, because $F$ is not too flat to the right of $0$, there is a relatively abundant supply of nonzero edges with passage time close to $0$.
Therefore, the passage time is very small (Proposition~\ref{lem:param}), and the geodesic uses no large edge-weights.
This in turn means that the 2 mostly open arms mentioned before are in fact $p$-open for a value of $p$ that is close to $p_\cc$, but we know a priori that the resulting 4-arm event is rare (Lemma~\ref{lem : kiss}).
Once again, the conclusion is that the geodesic does not contain many closed edges.

The second main challenge is to supply all of these arguments with rigorous topological constructions.
The paths involved in arm events are obtained by particular interactions of the geodesic with open and closed circuits.
Proving these interactions do occur requires a careful treatment of the relevant topological objects.
So as to not distract from the probabilistic arguments, we state the requisite definitions in Section \ref{sec:prelim} and postpone the finer topological details until Sections \ref{sec:separation}, \ref{sec:outer_circuits}, and \ref{sec:geod_cons_proof}. 
The logical order of the sections is \ref{sec:separation} $\to$ \ref{sec:outer_circuits} $\to$ \ref{sec:geod_cons_proof} $\to$ \ref{sec:prelim} $\to$ \ref{bernoulli_section} $\to$ \ref{sec:general_proof}.

Section \ref{sec:separation} recalls a result from \cite{kesten82} regarding percolation on planar graphs, and then collects various consequences for bond percolation on $\Z^2$.
For instance, a key fact is that a bounded open cluster on $\Z^2$ is enclosed by a closed dual circuit (Lemma \ref{lemma:separation}). 
This and other results in Section \ref{sec:separation} are well-known but not conveniently quotable from the literature, hence the inclusion of this section for completeness. Section \ref{sec:outer_circuits} provides some general facts about Jordan curves. Finally, Section \ref{sec:geod_cons_proof} constructs the desired geodesic. 
The outcome is summarized in Proposition~\ref{prop:geod_cons}, which gives a circuit-based description of this distinguished geodesic in any critical FPP model on $\Z^2$.
This and other topological results we develop are quite general and could be useful for future studies.
It should be noted that our construction performs several modifications of the percolation environment (in order to use the separation result following from \cite{kesten82}) that are similar
to techniques used in \cite{kesten_sidoravicius_zhang98} on the triangular lattice. 
In that setting, topological considerations are somewhat simpler because there is no auxiliary dual lattice.
We work on the square lattice to highlight the general applicability of our approach, as our methods work just as well on the triangular lattice. 

\subsection{Related literature and methodology}

The study of the shortest geodesic in critical FPP is similar  to the concept of chemical distance. The chemical distance $\dist_\chem(x,y)$  between $x,y\in\Z^2$ is the minimum number of edges in an open path starting at $x$ and ending at $y$. 
If no such open path exists, then the chemical distance is infinite.

A natural question is how the chemical distance scales with the Euclidean distance. 
For instance, if we condition on the event $\{x\leftrightarrow y\}$, then how does $\E\givenk{\dist_\chem(x,y)}{x\leftrightarrow y}$ depend on $\|x-y\|$? 
It is generally believed that there is some $\beta>1$ such that
\eq{
\E\givenk{\dist_\chem(x,y)}{x\leftrightarrow y} = \|x-y\|^{\beta+o(1)}.
}
%
Establishing the existence of $\beta$, let alone computing its value, remains a very challenging open problem. 
In fact, it was suggested in \cite[Prob.~3.3]{schramm07} that even on the triangular lattice, the value of $\beta$ may not be obtainable through SLE methods; on the other hand, see \cite{pose_schrenk_araujo_herrmann14} for a numerical comparison.
Currently, there is no widely accepted prediction for the exact value of $\beta$. 
Numerical results \cite{herrmann_stanley88, grassberger99, deng_zhang_garoni_sokal_sportiello10, zhou_yang_deng_ziff12} have suggested $\beta\approx 1.13$.

Another version of the chemical distance question is to ask about the minimal length $\mathcal{S}_R$ of an open left-right crossing of the box $B_R$, conditional on the event $\mathsf{Cross}_R$ that such a crossing exists.
Aizenman and Burchard \cite{aizenman_burchard99} give a sense in which $\beta>1$ by proving
\eeq{ \label{cj3x}
\E\givenk{\mathcal{S}_R}{\mathsf{Cross}_R} \ge R^{1+c} \quad \text{for some $c>0$}.
}
It turns out that a more amenable quantity is the length $\mathcal{L}_R$ of the \textit{lowest} such crossing.
This is because the lowest crossing consists exclusively of $3$-arm edges. 
%
Using this fact, Morrow and Zhang \cite{morrow_zhang05} showed (for critical site percolation on the triangular lattice) that 
\eeq{ \label{zne3}
\mathbb{E}\givenk{\mathcal{L}_R}{\mathsf{Cross}_R} = R^{4/3+o(1)}=R^{2+o(1)}\pi_3(R).
}
Since $\mathcal{S}_R \le \mathcal{L}_R$, this result effectively provides an upper bound on the exponent $\beta$, namely $\beta\leq4/3$.
Our arguments in the Bernoulli case \eqref{Ber} can be viewed as a generalization of this approach, where ``lowest'' is replaced by ``closest to a certain closed dual path.''

More recently, it was shown that the shortest crossing is asymptotically much shorter than the lowest crossing: first in a little-O sense \cite{damron_hanson_sosoe17,damron_hanson_sosoe16}, then as a strict inequality of exponents \cite{damron_hanson_sosoe21}:
\eeq{ \label{f283}
\E\givenk{\mathcal{S}_R}{\mathsf{Cross}_R} \le CR^{2-c}\pi_3(R) \quad \text{for some $C,c>0$}.
}
The high-level idea is to identify shortcuts around an edge $e$, conditional on $e$ belonging to the lowest crossing.
This line of work was extended to the point-to-box case in \cite{sosoe_reeves22}, again showing $\beta < 4/3$ for a suitable interpretation of $\beta$.



\subsection{Open problems}
Collected here are some open problems suggested by our work:
\begin{enumerate}
\item Can Theorem \ref{thm:general_thm} be generalized to \textit{any} critical FPP edge-weight distribution? 
\item Does the rate of growth of geodesic length in critical FPP depend on the edge-weight distribution?
\item Find the exact order of growth of geodesic length in some critical FPP model, for instance \eqref{Ber}. 
Does there exist $\beta > 1$ such that $\mathcal N_{R} = R^{\beta + o(1)}$? 
This is unknown even on the triangular lattice, despite knowledge of the arm exponents there.
\item Our results give length bounds for a specific choice of geodesic that is constructed in Proposition \ref{prop:geod_cons}. 
Is $R^2\pi_3(R)$ optimal for this geodesic, as in \eqref{zne3}?
With a different choice of geodesic, can the ideas of \cite{damron_hanson_sosoe17,damron_hanson_sosoe21, sosoe_reeves22, reeves23_arxiv} be adapted to FPP in order to replace the bound $R^{2 + \ve} \pi_3(R)$ with $R^{2 - \delta}\pi_3(R)$ for some small $\delta> 0$? 

\item Related to Remark \ref{2m_rmk}, \cite{damron_hanson_sosoe16} asks whether the expected chemical distance between two points is finite.
In our notation, this is the problem of determining whether, in the setting \eqref{Ber}, we have $\mathbb E\givenk[\big]{\lng_{(0,0),(1,0)}}{(0,0) \leftrightarrow (1,0)}<\infty$.
\end{enumerate}

\section{Topological preliminaries: circuits and associated regions}

\label{sec:prelim}

Critical FPP is intimately connected to the structure of open and closed circuits.
In this section we collect some definitions and conventions (Section~\ref{subsec_term}) and use them to describe a specially chosen geodesic (Section~\ref{sec:Ber_geo_construction}) that will be studied in Sections~\ref{bernoulli_section} and \ref{sec:general_proof}.

\subsection{Terminology and conventions} \label{subsec_term}

\begin{definition} \label{circuit_def}
A \textit{circuit} is a path of length at least $4$ that starts and ends at the same vertex but is otherwise self-avoiding. An \textit{open circuit} is a circuit consisting of open primal edges. A \textit{closed circuit} is a circuit consisting of closed dual edges. 
\end{definition} 
A circuit $\CC$ will often be naturally identified 
with the Jordan curve its edges trace out in the plane; in particular, we can speak of its interior and exterior, and we will denote these two disjoint sets by $\intr(\CC)$ and $\ext(\CC)$.


\begin{definition} \label{def:enclose}
Given a set of vertices $\mathcal{A}$ (either $\mathcal A\subseteq\Z^2$ or $\mathcal{A}\subseteq\wh\Z^2$),
we say a circuit $\cir$ \textit{encloses} $\mathcal{A}$ if $\mathcal A\subseteq\intr(\cir)$. 
We say a circuit $\cir$ \textit{surrounds} another circuit $\cir'$ if $\intr(\cir') \subseteq \intr(\cir)$.
\end{definition} 

We stress that if $\mathcal{A}$ is the vertex set of a circuit $\cir'$, then $\cir$ enclosing $\mathcal{A}$ is a stronger notion than $\cir$ surrounding $\cir'$, as the latter still allows $\cir$ and $\cir'$ to touch.
We will often impose \textit{edge-disjointness}, meaning the two circuits have no common edges. 
A stronger condition is \textit{vertex-disjointness}, which means the two circuits share no vertices.
We will always use ``surround'' when treating the relevant objects as two circuits, and ``enclose'' when thinking of the enclosed object as a set of vertices.

Under the criticality assumption \eqref{critical_assumption}, it is a straightforward application of the RSW theorem \eqref{RSW} together with the FKG inequality, to show that the following event occurs with probability one:

\begin{definition} \label{def:omegainf}
Let $\Omega_\infty$ be the full-probability event on which, for every $R \ge 1$, there exist both an open circuit enclosing $B_R$ and a closed circuit enclosing $B_R$.
\end{definition}




In the context of planar FPP, circuits are useful objects to consider for the simple reason that paths must cross them to go from their interior to their exterior.
However, when we speak of paths going from one circuit to another, there is potential for confusion depending on which lattice---primal or dual---each object belongs to.
Instead of exhausting the reader with clarifications every time, we simply rely on the conventions specified here to maintain precision:

\begin{definition} \label{open_closed_paths_def}
Let $\gamma$ be a primal path. 
\begin{itemize}
\item The path $\primalp$ is \textit{open} if all its edges are open.
    \item If $\mathcal{A}\subseteq\Z^2$, then we say $\primalp$ starts (ends) at $\mathcal{A}$ if its first (last) vertex is an element of $\mathcal{A}$.
    \item If $\EE$ is a collection of primal edges, then we say $\primalp$ starts (ends) at $\EE$ if its first (last) vertex is an endpoint of some element of $\EE$.
    \item If $\wh\EE$ is a collection of dual edges, then we say $\primalp$ starts (ends) at $\wh\EE$ if its first (last) vertex is a dual neighbor of some element of $\wh\EE$.
\end{itemize}
Let $\dualp$ be a dual path. 
\begin{itemize}
\item The path $\dualp$ is \textit{closed} if all its edges are closed.
    \item If $\mathcal A\subseteq\Z^2$, then we say $\dualp$ starts (ends) at $\mathcal{A}$ if its first (last) vertex is a dual neighbor of some element of $\mathcal{A}$.
    \item If $\EE$ is a collection of primal edges, then we say $\dualp$ starts (ends) at $\EE$ if its first (last) vertex is a dual neighbor of some element of $\EE$.
    \item If $\wh\EE$ is a collection of dual edges, then we say $\dualp$ starts (ends) at $\wh\EE$ if its first (last) vertex is an endpoint of some element of $\wh\EE$.
\end{itemize}
\end{definition}

To streamline the exposition, certain topological facts regarding circuits and paths are postponed to Sections \ref{sec:separation} and \ref{sec:outer_circuits}.

\subsection{Constructing the geodesic} \label{sec:Ber_geo_construction}
In the present paper, we are concerned with bounding the length of the shortest geodesic. This is achieved by obtaining an upper bound for the length of a specific geodesic.
Rigorously constructing this geodesic requires careful treatment of the topological details; this is done in Section \ref{sec:geod_cons_proof}. 
Proposition \ref{prop:geod_cons} below states the outcomes of the construction that we need.
Here, we outline how this geodesic is constructed.

Let $\mathcal{A}$ and $\mathcal{B}$ be disjoint finite connected sets of vertices in $\Z^2$. 
We first construct a sequence of edge-disjoint circuits, each of which either encloses $\mathcal{A}$ or $\mathcal{B}$. 
Let $\incir_1$ be the innermost open circuit enclosing $\mathcal{A}$ such that $\mathcal{B}$ is contained in $\ext(\incir_1)$ (if such a circuit exists). 
Next, let $\incir_2$ be the innermost open circuit surrounding and edge-disjoint from $\incir_1$ such that $\mathcal{B}$ is contained in $\ext(\incir_2)$. 
Continue this way, obtaining a finite sequence $\incir_1,\ldots,\incir_L$ of nested open circuits, until we can no longer create any more such circuits. 
Then, let $\outcir_{L + 1}$ be the outermost open circuit that is edge-disjoint from $\incir_L$, encloses $\mathcal{B}$, and keeps $\intr(\incir_L)$ in its exterior. 
Next let $\incir_{L + 2}$ be the outermost open circuit enclosing $\mathcal{B}$ that is surrounded by and edge-disjoint from $\incir_{L + 1}$. Continue this way, obtaining a nested sequence of open circuits $\incir_{L + 1},\incir_{L + 2},\ldots,\incir_P$, each enclosing $\mathcal{B}$. 

Any dual path from $\mathcal{A}$ to $\mathcal{B}$ must cross each of the open circuits $\incir_1,\dots,\incir_P$, thanks to their nested structure.
Therefore, every dual path from $\mathcal{A}$ to $\mathcal{B}$ contains at least $P$ open edges.
In fact, because of the innermost/outermost specifications in the previous paragraph, there exists a dual path $\dualp$ from $\mathcal{A}$ to $\mathcal{B}$ that contains \textit{exactly} $P$ open edges.

Now consider any geodesic $\primalp$ between $\mathcal{A}$ and $\mathcal{B}$. 
We prove that for any closed edge $e$ along the geodesic, the dual edge $e^\star$ belongs to a closed circuit that either encloses $\mathcal{A}$ and keeps $\mathcal{B}$ in its exterior, or encloses $\mathcal{B}$ and keeps $\mathcal{A}$ in its exterior. 
So in exact contrast with $\dualp$, the primal path $\primalp$ is entirely open except for when it crosses a closed circuit.
Moreover, by possibly rerouting $\gamma$ along open primal circuits, and similarly rerouting $\dualp$ along closed dual circuits, we can choose $\primalp$ and $\dualp$ to be disjoint. 

Finally, between any two of its closed edges, the geodesic $\primalp$ takes only open edges, so we may replace that portion of the path with any other path consisting entirely of open edges. 
In this way, we will choose $\gamma$ to be the geodesic that is \textit{closest} (in an appropriate sense) to $\dualp$. 
It will follow that for each open edge $e \in \gamma$, there exists a closed dual path from $e$ to $\dualp$. Then, the edge $e$ satisfies a 3-arm event: one closed arm to $\dualp$, and two open arms by following the geodesic in each direction to the next closed edges. 
Quantitatively controlling the likelihood of this arm event is done in Sections~\ref{bernoulli_section} and \ref{sec:general_proof}.  

The properties of this construction are summarized in the following proposition, whose proof may be found in Section \ref{sec:geod_cons_proof}. 

\begin{proposition} \label{prop:geod_cons}
Let $\mathcal{A}$ and $\mathcal{B}$ be disjoint finite connected subsets of $\Z^2$.  On the full-probability event $\Omega_\infty$ from Definition~\ref{def:omegainf}, there exists a (possibly empty) sequence of edge-disjoint open circuits $\incir_1,\ldots,\incir_L,\outcir_{L + 1},\ldots,\outcir_P$ satisfying the following:
\begin{enumerate}[label=\textup{(\roman*)}]
\item \label{itm:Iinc} $\mathcal{A} \subseteq \intr(\incir_1) \subseteq \intr(\incir_2) \subseteq \cdots \subseteq \intr(\incir_L) \subseteq \intr(\incir_L) \cup \incir_L \subseteq \mathcal{B}^\complement $. 
\item \label{itm:Idisj} $\intr(\incir_{L}) \subseteq \ext(\outcir_{L +1})$.
\item \label{itm:Idec} $\mathcal{A}^\complement \supseteq \intr(\outcir_{L + 1}) \cup \outcir_{L+1} \supseteq  \intr(\outcir_{L + 1}) \supseteq \intr(\outcir_{L + 2}) \supseteq \cdots \supseteq \intr(\outcir_P) \supseteq \mathcal{B}$.
\item \label{itm:2donttouch} For $j \in \{1,\ldots,P - 2\}$, the circuits $\incir_j$ and $\incir_{j + 2}$ are vertex-disjoint. 
\item  \label{itm:alledual} For $j \in \{1,\ldots,P\}$ and every $e \in \incir_j$, there exists a dual path $\dualp_e$ from $e$ to $\mathcal{A}$ that has exactly $j - 1$ open edges, one crossing each of the circuits $\incir_1,\ldots,\incir_{j - 1}$. 
\end{enumerate}
 Furthermore, there exist
a geodesic $\primalp$ from $\mathcal{A}$ to $\mathcal{B}$, and a disjoint dual path $\dualp$ from $\mathcal{A}$ to $\mathcal{B}$, satisfying the following properties (here we note that the paths $\dualp_e$ in Item \ref{itm:alledual} are not necessarily disjoint from $\primalp$): 
\begin{enumerate}[resume,label=\textup{(\roman*)}]
\item \label{itm:zetaopen} $\dualp$ has exactly $P$ open edges, one crossing each of the circuits $\incir_1,\ldots,\incir_L,\outcir_{L +1},\ldots, \outcir_P$.
\item \label{itm:gamma_on_circuit} For each circuit $\incir_j$, let $x_j$ and $y_j$ be the first and last vertices of $\primalp$ on that circuit. Then, the portion of $\primalp$ between $x_j$ and $y_j$ lies entirely on $\incir_j$. If $\incir_j$ and $\incir_{j + 1}$ are not vertex-disjoint, then $y_j = x_{j+1}$.
\item \label{itm:dualconn} For every open edge $e \in \primalp$ with $e \notin \incir_1\cup \cdots \cup \incir_L \cup \outcir_{L + 1}\cup \cdots \cup \outcir_P$, there exists a closed dual path from $e$ to $\dualp$ that is disjoint from $\gamma$.
\item \label{itm:dualcir} The dual of each closed edge along $\primalp$ belongs to a closed circuit $\dincir$ that either contains $\mathcal{A}$ in its interior and $\mathcal{B}$ in its exterior, or vice versa. The circuit $\dincir$ does not contain the dual of any other edges along $\primalp$. 
\item \label{itm:Csequence} With $\{0,1\}$-valued edge-weights, the closed circuits $\dincir$ from Item \ref{itm:dualcir} can be chosen to form a edge-disjoint collection $\dincir_1,\ldots,\dincir_V$. 
For $j\in\{1,\dots,V-2\}$, the circuits $\dincir_j$ and $\dincir_{j+2}$ are vertex-disjoint.
The union of the circuits $\incir_1,\ldots,\incir_P$ and the circuits $\dincir_1,\ldots,\dincir_V$ forms a sequence $\CC_1,\ldots,\CC_K$, which is ordered so that, for some index $W \in \{0,\ldots,K\}$, $\intr(\CC_W) \cap \intr(\CC_{W + 1}) = \varnothing$ (with the convention $\intr(\CC_0) = \intr(\CC_{K +1}) = \varnothing)$, and we have the following inclusions
\[
\mathcal A \subseteq \intr(\CC_1) \subseteq \cdots \subseteq \intr(\CC_W),\quad\text{and} \quad \intr(\CC_{W + 1}) \supseteq \cdots \supseteq\intr(\CC_K) \supseteq \mathcal{B}.
\]
\end{enumerate}
\end{proposition}

Figure \ref{fig:not_ss_cont} shows an example when circuits $\incir_i$ and $\incir_{i+1}$ are not vertex-disjoint. The only way this can happen is if the two circuits touch at a corner. Since each vertex is only seen in four edges on $\Z^2$, $\incir_i$ and  $\incir_{i + 2}$ must always be vertex-disjoint, as stated in Item \ref{itm:2donttouch}.
\begin{figure}
    \centering
    \includegraphics[height = 2in]{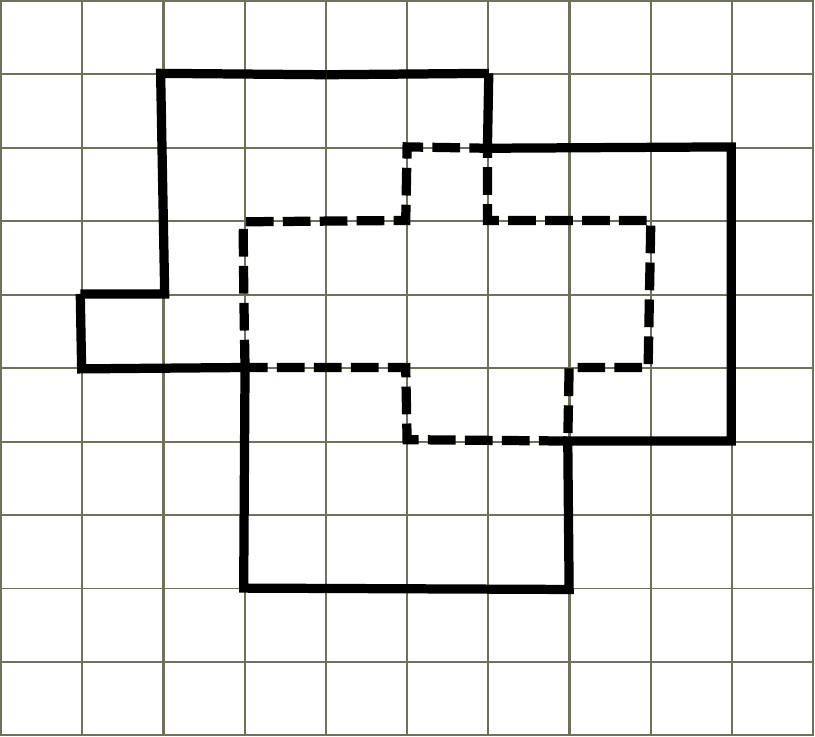}
    \caption{\small Two primal circuits (one in solid, the other dashed) which are edge-disjoint but not vertex-disjoint, and such that one surrounds the other.}
    \label{fig:not_ss_cont}
\end{figure}

\section{Proofs for Bernoulli edge-weights} \label{bernoulli_section}
In this section, we prove Theorems \ref{thm:Ber_ptb} and \ref{thm:Ber_ptp}. 
So we assume the edge-weights have the Bernoulli($\f{1}{2}$) distribution from \eqref{Ber}.
In this case, all closed edges have weight $1$, so the passage time along any path $\gamma$ is simply the number of closed edges in $\gamma$.
Hence the passage time between two sets $\mathcal A,\mathcal B\subseteq\Z^2$ is the minimal number of closed edges that are encountered by a path starting at $\mathcal{A}$ and ending at $\mathcal{B}$. 
By Proposition \ref{prop:geod_cons}\ref{itm:Csequence}, that number of closed edges is equal to the number of edge-disjoint closed circuits $\dincir_1,\ldots,\dincir_V$ that separate $\mathcal A$ and $\mathcal B$.

Throughout our arguments, the symbols $c$ and $C$ will denote positive constants that are chosen sufficiently small or sufficiently large, respectively.
Their values may change from line to line, but they will never depend on the radii of arm events, or on the sets $\AA$ and $\BB$.

\subsection{Preliminaries on arm events} \label{sec:arm_event}

For an edge $e$, let $x_e$ denote the endpoint of $e$ which has the smaller norm, and define 
$B_R(e) = x_e + B_R$.
Recall the definition of  ``start'' and ``end'' from Definition \ref{open_closed_paths_def}.
We now define various types of arm events, for integers $R\ge r \ge 1$.
An \textit{open arm} is a open primal path, and a \textit{closed arm} is a closed dual path.
\begin{itemize}
    \item A \textit{$1$-arm event} from $\partial B_r(e)$ to $\partial B_R(e)$ is the event that there exists an open arm starting at $\partial B_r(e)$ and ending at $\partial B_R(e)$. 
    We denote its probability by $\pi_1(r,R)$.
    \item For $k\ge2$, a \textit{polychromatic $k$-arm event} from $\partial B_r(e)$ to $\partial B_R(e)$ is the event that there exist $k$ disjoint arms---one closed and $k-1$ open---starting at $\partial B_r(e)$ and ending at $\partial B_R(e)$.
    We denote its probability by $\pi_k(r,R)$.
    The probability of any other polychromatic sequence, e.g. (closed, open, closed, open) versus (closed, open, open, open), is within an absolute constant factor of $\pi_k(r,R)$ \cite[Prop.~1.1]{reeves_sosoe22_arxiv}.
    \item For $k\ge2$, a \textit{monochromatic $k$-arm event} from $\partial B_r(e)$ to $\partial B_R(e)$ is the event that there exist either $k$ disjoint open arms starting at $\partial B_r(e)$ and ending at $\partial B_R(e)$, or $k$ disjoint closed arms starting at $\partial B_r(e)$ and ending at $\partial B_R(e)$.
    We denote the probability of the open version by $\pi_k'(r,R)$, and the probability of the closed version by $\pi_k''(r,R)$.
    \item Finally, we say there is a \textit{(polychromatic) $3$-arm event} from edge $e$ to distance $R$ if there exist two disjoint open arms that start at the two endpoints of $e$ and end at $\partial B_R(e)$, and also a closed arm starting at one of the endpoints of $e^\star$ and ending at $\partial B_R(e)$.
    We denote its probability by $\pi_3(R)$.
\end{itemize}
Notice that there is a slight difference between $\pi_3(R)$ and $\pi_3(1,R)$.
We will use both for expositional precision, but the numerical difference is inconsequential.
This is because $\pi_3(R) \le \pi_3(1,R)$ by inclusion of events, while 
\eeq{ \label{two_3_arm}
\pi_3(1,R) \le C\pi_3(R)
}
for some constant $C$, by a simple argument conditioning on finitely edges near $e$.

We prepare for the analysis ahead by quoting three propositions for the arm events. The first proposition is known as quasi-multiplicativity, which is widely used in near-critical percolation. 
The main content is the second inequality in \eqref{quasi_j}, as the first inequality is trivial.




\begin{proposition}[Quasi-multiplicativity, \textup{\cite[Prop.~17]{nolin08}}]\label{qm}
Fix $k\geq 1$ and let $r_0 = r_0(k)$ be the smallest integer such that $\pi_k(r_0,r_0+1)>0$. 
There exists a constant $C = C(k)$ such that
\eeq{ \label{quasi_j}
\pi_k(r,R) \le \pi_k(r,r')\pi_k(r',R) \le C\pi_k(r,R) \quad \text{for all $R \ge r' \ge r \ge r_0$}.
}
The same statement holds for monochromatic arm events:
\eeq{ \label{quasi_mono}
\pi_k'(r,R) \le \pi_k'(r,r')\pi_k'(r',R) \le C\pi_k'(r,R), \\
\pi_k''(r,R) \le \pi_k''(r,r')\pi_k''(r',R) \le C\pi_k''(r,R).
}
\end{proposition}

\begin{remark}[Scaling the terminal distance only costs a constant] \label{rmk:cNquasi}
It follows from the RSW theorem \eqref{RSW} that for any fixed constant $\alpha>1$, there is some constant $c = c(\alpha,k) > 0$ such that $\pi_k(R,\lceil \alpha R\rceil) \ge c$ for all $R\geq r_0(k)$. 
Using this observation and then quasi-multiplicativity, one obtains that for $R \ge r \ge r_0(k)$,
\eeq{ \label{quasi_c}
\pi_k(r,R)
\leq \pi_k(r,R)\frac{\pi_k(R,\lceil\alpha R\rceil)}{c}
\stackref{quasi_j}{\leq} C\pi_k(r,\lceil \alpha R\rceil).
}
\end{remark}


The next result is an upper bound on the 1-arm probability.
It follows from the RSW theorem \eqref{RSW}.
Although \cite[Lem.~8.5]{kesten82} only considers the point-to-box arm event, the argument can be immediately adapted for our box-to-box arm event.
\begin{proposition}\textup{\cite[Lem.~8.5]{kesten82}} \label{prop:1-arm}
There exist constants $C,c>0$ such that
\eeq{ \label{ncb3}
\pi_1(r,R) \leq C(R/r)^{-c} \quad \text{for all $R\geq r \geq 1$.}
}
\end{proposition}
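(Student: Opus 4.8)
The plan is to run the standard dyadic-annulus argument for one-arm decay, using only the RSW theorem (already invoked in Remark~\ref{rmk:cNquasi}) together with the planar separation facts to be developed in Section~\ref{sec:separation}. Recall that $\pi_1(a,b)$ is the probability that there is a single closed dual arm crossing the annulus $A[a,b)$ from $\partial B_a$ to $\partial B_b$; the open analogue $\pi_1'(a,b)$ is bounded in exactly the same way with the roles of the two lattices interchanged, so I would treat the closed case.

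First I would record a uniform-in-scale circuit estimate: there is a constant $\delta\in(0,1)$, independent of $n$, such that for every $n\ge1$ the annulus $A[n,2n)$ contains an \emph{open} primal circuit surrounding $B_n$ with probability at least $\delta$. This is obtained by gluing a bounded number of open left--right and top--bottom crossings of overlapping sub-rectangles of the annulus, each crossing having probability bounded below by RSW (exactly as in the bound $\pi_j(n,\lceil\alpha n\rceil)\ge c$ recalled in Remark~\ref{rmk:cNquasi}), and then applying the FKG inequality. Next I would note the \textbf{blocking mechanism}: whenever $A[n,2n)\subseteq A[a,b)$ contains such an open circuit $\mathcal O$, that circuit surrounds $\partial B_a$ and is itself surrounded by $\partial B_b$, and a closed dual path cannot cross the open primal circuit $\mathcal O$ --- this is precisely the planar separation statement provided by Section~\ref{sec:separation}. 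Hence a closed $1$-arm crossing of $A[a,b)$ forces the \emph{absence} of a surrounding open circuit in every dyadic sub-annulus of $A[a,b)$.

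I would then conclude by independence over scales. Put $m=\lfloor\log_2(b/a)\rfloor$; the annuli $A[2^k a,2^{k+1}a)$ for $0\le k<m$ are pairwise edge-disjoint and all lie inside $A[a,b)$. By the blocking observation, $\pi_1(a,b)$ is at most the probability that none of these $m$ annuli carries a surrounding open circuit, and since these $m$ events depend on disjoint edge sets they are independent; the circuit estimate then yields $\pi_1(a,b)\le(1-\delta)^m$. Since $m\ge\tfrac12\log_2(b/a)$ whenever $b\ge2a$, this is at most $(b/a)^{-c}$ with $c=\tfrac12\log_2\tfrac1{1-\delta}>0$ in that range, and the remaining short range $a\le b<2a$ is handled by the trivial bound $\pi_1\le1$ after adjusting the constant in the usual way.

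Given that RSW and the planar separation lemmas are available, there is no real obstacle here. The only points requiring a little care are the passage from RSW \emph{crossing} probabilities to a uniform-in-scale \emph{circuit} probability (a one-line FKG gluing of finitely many crossings at each scale), and the topological assertion that an open primal circuit cannot be crossed by a closed dual path; both are routine consequences of the cited material, and the rest of the argument is bookkeeping.
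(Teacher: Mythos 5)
Your proof is correct, and it is the classical RSW--dyadic--blocking argument that underlies the cited reference (the paper itself supplies no proof here, delegating entirely to~\cite[Lem.~8.5]{kesten82}, whose argument is essentially the one you give). One small remark: the ``blocking mechanism'' does not actually need the machinery of Section~\ref{sec:separation} --- it is the elementary duality observation that a closed dual path would have to pass through the midpoint of some edge of the open primal circuit, forcing that edge to be simultaneously open and closed; the separation lemmas of Section~\ref{sec:separation} go in the opposite direction (from a bounded cluster to the existence of a blocking circuit), which is not what is needed.
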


Meanwhile, the polychromatic 5-arm exponent is known to be equal to $2$:
\begin{proposition} \textup{\cite[Thm.~24(3)]{nolin08}} \label{pi5} 
There exist constants $C,c>0$ such that
\eeq{ \label{dm8i}
c(R/r)^{-2} \leq \pi_5(r,R) \leq C(R/r)^{-2} \quad \text{for all $R\geq r\geq 1$}.
}
\end{proposition}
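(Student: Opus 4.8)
This is a classical theorem of Kesten \cite{kesten-87}, here recorded in the form of \cite[Thm.~24(3)]{nolin-08}; the plan is to recall the standard proof. \emph{Step 1 (reduce to one box).} I would first invoke quasi-multiplicativity \eqref{quasi_j} with $j=5$ and $(n_1,n_2,n_3)=(1,a,b)$, which gives $\pi_5(a,b)\asymp\pi_5(1,b)/\pi_5(1,a)$; this is $\asymp (a/b)^2$ as soon as one has the single-box estimate $\pi_5(1,n)\asymp n^{-2}$. To prove the latter I would let $N_n$ denote the number of edges $e\subseteq B_{n/2}$ at which a polychromatic $5$-arm event reaches $\partial B_{n/2}(e)$ (any fixed admissible colour sequence will do, all being comparable), and observe that by translation invariance and \eqref{quasi_c}, $\E[N_n]\asymp n^2\pi_5(1,n)$. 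So everything reduces to showing $\E[N_n]\asymp 1$.

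\emph{Step 2 (lower bound).} For $\E[N_n]\ge c>0$ it is enough to exhibit a positive-probability event on which $N_n\ge1$: using RSW (cf.\ Remark \ref{rmk:cNquasi}) and FKG to prescribe open and closed crossings in a bounded number of sub-boxes around a fixed edge, one forces the $5$-arm structure there with probability bounded below uniformly in $n$, so $\E[N_n]\ge\P(N_n\ge1)\ge c$. Together with Step 3 and quasi-multiplicativity, this gives the lower bound of \eqref{dm8i}.

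\emph{Step 3 (upper bound, the crux).} The substantive step is $\E[N_n]\le C$, i.e.\ that $5$-arm edges are sparse. Following Kesten, a $5$-arm edge $e$ is forced into a ``doubly pivotal'' position: $e$ lies on an open crossing of its surrounding box that is pinched at $e$, while $\hat e$ lies on a closed dual crossing pinched at $e$, and the fifth arm forces one of these crossings to be traversed twice near $e$. A topological/combinatorial argument --- using the arm-separation machinery underlying \eqref{quasi}, together with RSW to install open and closed circuits in $B_n\setminus B_{n/2}$ so as to localise the picture --- then shows that only $O(1)$ edges can occupy such a position, uniformly in the configuration. This sparsity, which is exactly the ``universality'' that pins the $5$-arm exponent to the integer value $2$ with no conformal-invariance input, is the step I expect to be the main obstacle, and in practice one simply quotes it from \cite{kesten-87,nolin-08}. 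Feeding $\pi_5(1,n)\le Cn^{-2}$ back into \eqref{quasi_j} then closes both inequalities in \eqref{dm8i}.
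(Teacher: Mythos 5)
The paper does not prove this proposition at all: it is stated purely as a citation to \cite[Thm.~24(3)]{nolin-08}, with the remark immediately following it noting only that this particular argument (unlike the conformal-invariance-based derivations of the other exponents) carries over from the triangular lattice to $\Z^2$. Your sketch is a correct outline of the standard Kesten/Nolin argument behind that citation --- quasi-multiplicativity reduces to $\E[N_n]\asymp 1$, RSW/FKG gives the lower bound, and the deterministic sparsity of 5-arm (doubly pivotal) edges gives the upper bound --- and since you likewise quote the crucial sparsity step from \cite{kesten-87,nolin-08} rather than reproving it, you are in effect taking the same route as the paper, namely deferral to those references.
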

\begin{remark}[Triangular vs.~square lattice]
It should be noted that the result in \cite{nolin08} is stated for the triangular lattice instead of the square lattice.
In fact, on the triangular lattice, all polychromatic arm exponents are known exactly: $\pi_1(1,R) = R^{-5/48+o(1)}$ \cite{lawler_schramm_werner02} while $\pi_k(r_0(k),R) = R^{-(k^2-1)/12+o(1)}$ for $k\geq2$ \cite{smirnov_werner01}.
But unlike the derivations of the other arm exponents on the triangular lattice that use conformal invariance, the proof of Proposition~\ref{pi5} works on the square lattice as well; see the discussion above \cite[Thm.~24]{nolin08}, including \cite[Rmk.~23]{nolin08}.
Alternatively, one can use the argument outlined in \cite[Chap.~1]{werner09} or the more general result \cite[Prop.~6.6]{duminilcopin_manolescu_tassion21}, together with the equivalence of polychromatic arm probabilities \cite[Prop.~1.1]{reeves_sosoe22_arxiv}.
\end{remark}

\begin{remark}[Arm exponents on the square lattice]  \label{rmk:3armprob}
Although the arm exponents in our setting are widely believed to be identical to those on the triangular lattice, here are the best estimates of which we are aware, where $C,c>0$ and $R\ge r\ge 1$:
\begin{subequations}
\eeqs{
c(R/r)^{-1/6} &\le \pi_1(r,R) \le C(R/r)^{-c}, \label{best_1_arm}\\
c(R/r)^{-1/3} &\leq \pi_2(r,R) \leq C(R/r)^{-c}, \label{best_2_arm}\\
c(R/r)^{-(1-c)} &\leq \pi_3(r,R) \leq C(R/r)^{-c}, \label{best_3_arm} \\
c(R/r)^{-4/3} &\leq \pi_4(r,R) \leq C(R/r)^{-(1+c)}. \label{best_4_arm}
}
\end{subequations}
The lower bounds for $\pi_1$ and $\pi_2$ are from \cite[p.~435]{duminilcopin_manolescu_tassion21}, together with quasi-multiplicativity.
The lower bound for $\pi_4$ is also from \cite[p.~435]{duminilcopin_manolescu_tassion21}, once we appeal to equivalence of different polychromatic color sequences \cite[Prop.~1.1]{reeves_sosoe22_arxiv}.
The lower bound for $\pi_3$ can be found by combining \eqref{cj3x} and \eqref{f283} with quasi-multiplicativity, or by appealing to \cite[Lem.~3.1]{damron_hanson_sosoe17} for a direct argument.
The upper bound for $\pi_2$ and $\pi_3$ is immediate from \eqref{ncb3} since $\pi_3(r,R)\le \pi_2(r,R) \le \pi_1(r,R)$.
For a more precise comparison of the 2-arm and 1-arm probabilities, see \cite{gassman_manolescu24_arxiv,ramananradhakrishnan_tassion24_arxiv}.
Finally, the upper bound for $\pi_4$ can be implicitly inferred from \cite{kesten87}, but is thoroughly treated in \cite{vandenberg_nolin21}.
\end{remark}

A key consequence of the lower bound in \eqref{best_3_arm} is the following estimate, in which we take the convention that $\pi_3(0)=1$.


\begin{lemma} \label{lem:sum_arm}
There exists a constant $C$ such that
\eeq{ \label{3r8bbc}
\sum_{r=0}^R \pi_3(r) \leq CR \pi_3(R) \quad \text{for all $R\ge1$}.
}
\end{lemma}
\begin{proof}
We allow the value of $C$ to change with each inequality.
For any $r\in\{1,\dots,R\}$ and some $c\in(0,1)$, we have
\eeq{ \label{kb20x}
\pi_3(r) \le \pi_3(1,r) 
\stackref{quasi_j}{\le} C\frac{\pi_3(1,R)}{\pi_3(r,R)} 
\stackref{two_3_arm}{\le} C\frac{\pi_3(R)}{\pi_3(r,R)}
\stackref{best_3_arm}{\leq} C\frac{\pi_3(R)}{(R/r)^{-(1-c)}}.
}
Furthermore, we recognize the following expression as a lower Riemann sum:
\eeq{ \label{kb20y}
\sum_{r=1}^R (r/R)^{-(1-c)}
\leq \frac{\int_0^R x^{-(1-c)}\, \dd x}{R^{-(1-c)}} = \frac{R}{c}.
}
Combining \eqref{kb20x} and \eqref{kb20y} yields \eqref{3r8bbc} but with the starting index of $r=1$.
We can include $\pi_3(0)$ after increasing $C$.
\end{proof}

Monochromatic arm events are generally believed to be asymptotically less likely than their polychromatic counterparts; this is known on the triangular lattice \cite[Thm.~5]{beffara_nolin11} (see also \cite[Rmk.~5]{ramananradhakrishnan_tassion24_arxiv}), but not on the square lattice.
The following lemma is a much weaker statement which suffices for our purposes.

\begin{lemma}\label{mono_poly}
There exists a constant $C$ and a large integer $k_\star\ge3$ such that
\eeq{ \label{mono_poly_ineq}
\pi'_{k_\star}(r,R) \leq C\pi_3(r,R)\cdot \frac{r}{R} \quad \text{for all $R\ge r\ge 1$.}
}
\end{lemma}

\begin{proof}
Using the van den Berg--Kesten--Reimer (BKR) inequality followed by Proposition~\ref{prop:1-arm}, we have 
\eq{
\pi_{k_\star}'(r,R) \leq \pi_1(r,R)^{k_\star} \stackref{ncb3}{\leq} C_0(r/R)^{ck_\star}.
}
On the other hand, by \eqref{best_3_arm} we have $\pi_3(r,R) \ge \delta(r/R)^{1-\delta}$ for some $\delta>0$. 
Now choose $k_\star$ large enough that ${ck_\star} \ge {2-\delta}$, in order to obtain \eqref{mono_poly_ineq} with $C = C_0/\delta$.
\end{proof}


The next result shows that a closed monochromatic $k$-arm event has comparable probability to an open monochromatic $k$-arm event.

\begin{lemma} \label{lem_closed_like_open}
For any $k\ge1$, there exists a constant $C>0$ such that
\eeq{ \label{closed_like_open}
\pi_k'(r,R)\le \pi_k''(r,R) \le C\pi_k'(r,R) \quad \text{for all $R\ge r\ge r_0(k)$}.
}
\end{lemma}

\begin{proof}
If $R = r$, then $\pi_k'(r,R) = \pi_k''(r,R)=1$, so the claim holds.

Suppose $R = r+1$.  
We have $\pi_k''(r,r+1)=1$ since $\partial B_r$ and $\partial B_{r+1}$ have common dual neighbors.
Meanwhile, it is straightforward to see that
\eeq{ \label{weru7b}
\inf_{r\ge r_0(k)}\pi_k'(r,r+1)>0.
}
So the claim again holds by choosing $C$ sufficiently large.

For the remainder of the proof, we assume $R \ge r+2$ so that $R-1 \ge r+1$.
After accounting for the lattice shift of $(\tfrac12,\tfrac12)$ and using duality, we have
\eq{ 
\pi_k''(r,R)
&= \P\big(\text{$\exists$ $k$ disjoint open arms from $\partial [-r,r+1]^2$ to $\partial[-R+1,R]^2$}\big).
}
The desired lower bound is now evident, since we can pull the boundaries farther apart to obtain
\eq{
&\P\big(\text{$\exists$ $k$ disjoint open arms from $\partial [-r,r+1]^2$ to $\partial[-R+1,R]^2$}\big) \\
&\ge\P\big(\text{$\exists$ $k$ disjoint open arms from $\partial [-r,r]^2$ to $\partial[-R,R]^2$}\big)
=\pi_k'(r,R).
}
The upper bound is obtained by pushing the boundaries closer together:
\eq{
&\P\big(\text{$\exists$ $k$ disjoint open arms from $\partial [-r,r+1]^2$ to $\partial[-R+1,R]^2$}\big) \\
&\stackrefp{quasi_j}{\le} \P\big(\text{$\exists$ $k$ disjoint open arms from $\partial [-r-1,r+1]^2$ to $\partial[-R+1,R-1]^2$}\big) \\
&\stackrefp{quasi_mono}{=}\pi_k'(r+1,R-1) \\
&\stackref{quasi_mono}{\le}\frac{C\pi_k'(r,R)}{\pi_k'(r,r+1)\pi_k'(R-1,R)}
\stackref{weru7b}{\le}\frac{C\pi_k'(r,R)}{c}.
}
To complete the proof, absorb the factor of $1/c$ into $C$.
\end{proof}

Our final preparatory result gives an upper bound for the probability of a sequence of polychromatic arm events, in terms of a single polychromatic 3-arm event.

\begin{proposition}\label{arm_calc}
For any positive integer $\ell$, there exists a constant $C_\ell$ such that the following statement holds.
For any integers $k_1,\ldots, k_{\ell}\ge4$ and $M\ge2$, we have 
\eeq{ \label{arm_calc_ineq}
\sum_{i_1 =0}^{\floor{\log_2 M}-1}\sum_{i_2=1+i_1}^{\floor{\log_2 M}-1} \cdots \sum_{i_\ell = 1+i_{\ell-1}}^{\floor{\log_2 M}-1} \pi_3(2^{i_1}) \pi_{k_1}(2^{1+i_1}, 2^{i_2}) \cdots \pi_{k_\ell}(2^{1+i_\ell}, M) \leq C_\ell \pi_3(M).
}
Furthermore, for any $\eps>0$, there exists a constant $C_{\ell,\eps}$ such that the following statement holds. 
For any integers $k_1, \dots, k_{\ell-1} \geq 4$ and $M\ge2$, we have
\eeq{ \label{arm_calc_ineq2}
\sum_{i_1 =0}^{\floor{\log_2 M}-1}\sum_{i_2=1+i_1}^{\floor{\log_2 M}-1} \cdots &\sum_{i_\ell = 1+i_{\ell-1} }^{\floor{\log_2 M}-1}\bigg[\pi_3(2^{i_1}) \pi_{k_1}(2^{1+i_1}, 2^{i_2}) \cdots \\
&\cdots\pi_{k_{\ell-1}}(2^{1+i_{\ell-1}}, 2^{i_\ell}) \pi_3(2^{1+i_\ell}, M)\Big(\frac{2^{1+i_\ell}}{M} \Big)^\ve\bigg]
\leq C_{\ell,\eps} \pi_3(M).
}
\end{proposition}


\begin{proof}
Throughout the proof, the value of $C$ can change with each inequality but remains independent of $\ell$.
First consider any fixed $0\le i_1 < i_2 < \cdots < i_\ell \le \floor{\log_2 M}-1$.
By the BKR inequality, we have
$\pi_4(r,R) \leq \pi_3(r,R)\pi_1(r,R)$ for any $b\ge a\ge 1$, which justifies the third inequality below:
\eq{
&\pi_3(2^{i_1}) \pi_{k_1}(2^{1+i_1}, 2^{i_2}) \cdots \pi_{k_\ell}(2^{1+i_\ell },M) \\
&\stackrefp{quasi_j}{\le} \pi_3(1,2^{i_1}) \pi_{4}(2^{1+i_1}, 2^{i_2}) \cdots \pi_{4}(2^{1+i_\ell}, M) \\
&\stackref{quasi_c}{\le} C^\ell\pi_3(1,2^{1+i_1}) \pi_{4}(2^{1+i_1}, 2^{1+i_2}) \cdots \pi_{4}(2^{1+i_\ell}, M) \\
&\stackrefp{quasi_j}{\le} C^\ell\pi_3(1,2^{1+i_1})\pi_{3}(2^{1+i_1}, 2^{1+i_2}) \cdots \pi_{3}(2^{1+i_\ell}, M) \cdot
\pi_{1}(2^{1+i_1}, 2^{1+i_2}) \cdots \pi_{1}(2^{1+i_\ell}, M) \\
&\stackref{quasi_j}{\le} C^{3\ell}\pi_3(1,M)\cdot\pi_{1}(2^{1+i_1},M) 
\stackref{two_3_arm,ncb3}{\le} C^{3\ell+2}\pi_3(M)\Big(\frac{2^{1+i_1}}{M}\Big)^c.
}
Applying this inequality to each summand on the left-hand side of \eqref{arm_calc_ineq}, we obtain
\eq{
\text{L.H.S. of \eqref{arm_calc_ineq}}
&\le C^{3\ell+2}\pi_3(M)\sum_{i_1 =0}^{\floor{\log_2 M}-1}\sum_{i_2=1+i_1}^{\floor{\log_2 M}-1} \cdots \sum_{i_\ell = 1+i_{\ell-1}}^{\floor{\log_2 M}-1} \Big(\frac{2^{1+i_1}}{M}\Big)^c \\
& \leq C^{3\ell+2} \pi_3(M)\sum_{i_1 =0}^{\floor{\log_2 M}-1}(\floor{\log_2 M} - i_1-1)^{\ell-1} \Big(\frac{2^{1+i_1}}{M}\Big)^c \quad  \\
&\le C^{3\ell+2} \pi_3(M)\int_0^{\floor{\log_2 M}} (\floor{\log_2 M}-x)^{\ell-1} \Big(\frac{2^{1+x}}{M}\Big)^c\, \dd x \\ 
&\le C^{3\ell+2} \pi_3(M) \int_{0}^{\floor{\log_2 M}} z^{\ell-1}\Big(\frac{2}{2^z}\Big)^c\, \dd z \\
&\le 2^c C^{3\ell+2} \pi_3(M) \int_0^\infty z^{\ell-1}\e^{-(c\log 2)z}\, \dd z \\
&= 2^c C^{3\ell+2}\frac{(\ell-1)!}{(c\log 2)^\ell}\pi_3(M).
}
This proves \eqref{arm_calc_ineq}.
For \eqref{arm_calc_ineq2}, we simply replace the upper bound used before,
\eq{
\pi_{k_\ell}(2^{1+i_\ell},M) \le \pi_3(2^{1+i_\ell},M)\pi_1(2^{1+i_\ell},M),
}
with the quantity $\pi_3(2^{1+i_\ell},M)\cdot\big(\frac{2^{1+i_\ell}}{M} \big)^\ve$, and then replace $c$ with $c \wedge \eps$ in the above argument.
\end{proof}

\subsection{Three-arm events along the specially chosen geodesic}

Notice that the 3-arm event in \eqref{ber_expectation_bound} and \eqref{cm33} requires arms that reach all the way to distance $R$.
But the arms in Proposition \ref{prop:geod_cons} only extend as far as the nearby circuits.
To furnish the quantity $\pi_3(R)$ from these shorter arms, we will strategically ``cover up'' problematic edges and patch together what remains using Proposition~\ref{arm_calc}. 
The goal is to establish the following theorem.

\begin{proposition}\label{M_est}
Assume \eqref{Ber}.
Let $\mathcal{A}$ and $\mathcal{B}$ be disjoint finite connected subsets of $\Z^2$. Let $\gamma$ be the geodesic between $\mathcal{A}$ and $\mathcal{B}$ defined in Proposition \ref{prop:geod_cons}. There exists a constant $C$ (independent of $\mathcal{A}$ and $\mathcal{B}$) such that for every edge $e \in E(\Z^2)$, we have
\eeq{ \label{key3}
\mathbb{P}(e\in \primalp) \leq C\pi_3(M),
}
where $M =  \min\{\dist(e,\mathcal{A}), \dist(e, \mathcal{B})\}$, with $\dist$ defined in \eqref{dist_def}.
\end{proposition}


\begin{proof} 
Throughout the argument, the value of $C$ is allowed to change with each inequality.
By choosing $C\ge\pi_3(1)^{-1}$, we may assume $M\ge2$.  We will write $\llbrack a,b\rrbrack$ for the set of integers $k$ such that $a\le k\le b$.

To prove \eqref{key3}, we split the event $\{e\in \primalp\}$ into various disjoint events listed below.
Let $\dualp$ be the dual path from $\AA$ to $\BB$ as in Proposition \ref{prop:geod_cons}\ref{itm:Csequence}.
Recall from the same proposition the sequence of edge-disjoint circuits $\CC_1,\ldots,\CC_K$, where $\{K=0\}$ means the sequence is empty.
Each one of these circuits is either open primal or closed dual; both $\gamma$ and $\dualp$ intersect or cross all of these circuits in succession.
The only closed edges of $\gamma$ occur when it crosses a closed $\CC_k$, while the only open edges of $\dualp$ occur when it crosses an open $\CC_k$.
We say that an edge $e\in\gamma$ is \textit{between} $\XX$ and $\YY$ if, as we proceed along $\gamma$ from $\AA$ to $\BB$, the midpoint of $e$ occurs strictly after the last intersection with $\XX$ and strictly before the first intersection with $\YY$.
We will show all of the following inequalities:
\begin{subequations} \label{M_est_cases}
\eeqs{
\label{lemmm1}
&\mathbb{P}(e\in \primalp,\ K=0) \leq C\pi_3(M),\\
\label{lemmm4}
&\mathbb{P}(e\in \primalp,\ \text{$e$ is between $\mathcal{A}$ and $\mathcal{C}_{1}$, $\CC_1$ is open}) \le C\pi_3(M),\\
\label{lemmm44}
&\mathbb{P}(e\in \primalp,\ \text{$e$ is between $\mathcal{C}_K$ and $\mathcal{B}$, $\CC_K$ is open})\le C\pi_3(M), \\
\label{lemmm6}
&\mathbb{P}(e\in \primalp\cap\CC_1) \le C\pi_3(M),\\
\label{lemmm55}
&\mathbb{P}(\text{$e \in \primalp\cap\CC_K$}) \le C\pi_3(M), \\
\label{lemmm5}
&\mathbb{P}(\text{$e \in \primalp\cap\CC_k$ for some $k\in\llbrack 2,K-1\rrbrack$, $\CC_{k+1}$ is open}) \le C\pi_3(M), \\
\label{lemmm8}
&\mathbb{P}(\text{$e \in \primalp\cap\CC_k$ for some $k\in\llbrack2,K-1\rrbrack$, $\CC_{k+1}$ is closed}) \le C\pi_3(M), \\
\label{lemmm2}
&\mathbb{P}\Big(\, \pbox{0.8\textwidth}{$e\in\primalp$, $e$ is between $\mathcal{C}_k$ and $\mathcal{C}_{k+1}$ for some $k\in\llbrack1,K-1\rrbrack$, \\ both $\CC_{k}$ and $\CC_{k+1}$ are open}\Big)\leq C\pi_3(M),\\
\label{lemmm3}
&\mathbb{P}\Big(\, \pbox{0.8\textwidth}{$e\in\primalp$, $e$ is between $\mathcal{C}_k$ and $\mathcal{C}_{k+1}$ for some $k\in\llbrack1,K-1\rrbrack$, \\ at least one of $\CC_{k}$ and $\CC_{k+1}$ is closed}\Big)\leq C\pi_3(M),\\
\label{lemmm33}
&\mathbb{P}(e\in \primalp,\ \text{$e$ is between $\mathcal{A}$ and $\mathcal{C}_{1}$, $\CC_1$ is closed}) \le C\pi_3(M),\\
\label{lemmm333}
&\mathbb{P}(e\in \primalp,\ \text{$e$ is between $\mathcal{C}_K$ and $\mathcal{B}$, $\CC_K$ is closed})\le C\pi_3(M), \\
\label{lemmm3333}
&\mathbb{P}(e\in \primalp,\ \text{$e^\star\in\CC_k$ for some $k\in\llbrack1,K\rrbrack$}) \leq C\pi_3(M).
}
\end{subequations}
Note that \eqref{M_est_cases} exhausts the event $\{e\in\gamma\}$, so it suffices to prove these twelve cases.
Before we begin, note that
\eeq{ \label{relax_to_2}
\pi_3(2^{\floor{\log_2 M}}) \le \pi_3(1,2^{\floor{\log_2 M}}) \stackref{quasi_c}{\leq}  C\pi_3(1,M) \stackref{two_3_arm}{\le} C\pi_3(M).
}
Therefore, it suffices to bound the probabilities in \eqref{M_est_cases} by any of the first three quantities instead of $C\pi_3(M)$.

\medskip

\noindent \textbf{Proof of \eqref{lemmm1}.}
If $K=0$, then Proposition \ref{prop:geod_cons}\ref{itm:dualcir} implies that $\gamma$ is entirely open, while Proposition \ref{prop:geod_cons}\ref{itm:zetaopen} implies that $\dualp$ is entirely closed.
In addition, Proposition~\ref{prop:geod_cons}\ref{itm:dualconn} gives a closed dual path $\dualp_e$ that connects $e$ to $\dualp$.
So on the event $\{e\in\gamma\}\cap\{K=0\}$, we obtain a genuine 3-arm event from $e$ to distance $M$: The two open arms are obtained by following $\gamma$ in both directions from $e$ until reaching $\mathcal{A}$ and $\mathcal{B}$, respectively.
The closed arm is obtained by following $\dualp_e$ from $e$ to $\dualp$, and then following $\dualp$ to either $\mathcal{A}$ or $\mathcal{B}$.
This proves \eqref{lemmm1}.

\medskip \noindent \textbf{Proof of \eqref{lemmm4} and \eqref{lemmm44}.} 
We prove only \eqref{lemmm4}, since the argument for \eqref{lemmm44} is completely analogous.
So we assume $e$ is an edge of $\gamma$ between $\mathcal{A}$ and $\mathcal{C}_1$, and that $\CC_1$ is open, i.e.\ $\CC_1=\incir_1$.
We claim that there exists a 3-arm event from $e$ to distance $M$.
One open arm exists because $e$ is connected to $\mathcal{A}$ by $\primalp$, and the portion of $\gamma$ between $\AA$ and $\CC_1$ is open.
The other open arm is obtained by following $\gamma$ in the other direction until reaching $\CC_1$, and then following $\CC_1$ around $\AA$ or $\BB$.
Finally, the closed arm exists because there is a closed dual path from $e$ to $\dualp$ (Proposition \ref{prop:geod_cons}\ref{itm:dualconn}), and the portion of $\dualp$ between $\mathcal{A}$ and $\incir_1$ is closed by Proposition~\ref{prop:geod_cons}\ref{itm:zetaopen}.

\medskip \noindent \textbf{Proof of \eqref{lemmm6}.}
Assume $e\in\gamma\cap\CC_1$, which implies $\CC_1$ is open, i.e.~$\CC_1=\incir_1$.
We claim that there exists a 3-arm event from $e$ to distance $M$.
The two open arms are obtained by following $\CC_1$ in both directions.
The closed arm is from Proposition \ref{prop:geod_cons}\ref{itm:alledual}, which gives a closed dual path $\dualp_e$ from $e$ to $\mathcal{A}$. 
This finishes the proof of \eqref{lemmm6}.


\medskip
In the remaining cases, encounters with certain circuits may prevent some arms from extending all the way to distance $M$.
Therefore, we now introduce notation for identifying where the problematic edges are located.
Recall the notation $B_R(e) = x_e + B_R$ defined in Section~\ref{sec:arm_event}.
Let $E(B_{2^i}(e))$ denote the set of primal edges with both endpoints in $B_{2^i}(e)$.
Let $E^\star(B_{2^i}(e))$ denote the set of dual edges whose associated primal edge belongs to $E(B_{2^i}(e))$.
Let $\overline E(B_{2^i}(e)) = E(B_{2^i}(e)) \cup E^\star(B_{2^i}(e))$.
Finally, define
\eq{
\EE^{(e)}_0 = \overline E(B_{2^1}(e)) \qquad \text{and} \qquad
\EE^{(e)}_i = \overline E(B_{2^{1+i}}(e))\setminus \overline E(B_{2^{i}}(e)) \quad \text{for $i\ge1$}.
}
So $\EE^{(e)}_0,\dots,\EE^{(e)}_{i-1}$ form a partition of all possible edges of a path (primal or dual) from $e$ until it first reaches $\partial B_{2^{i}}(e)$.
Moreover, whenever $1\le i<i'$, the disjoint union $\EE^{(e)}_{i}\cup\cdots\cup\EE^{(e)}_{i'-1}$ contains all the edges needed for a path from $\partial B_{2^i}(e)$ to $\partial B_{2^{i'}}(e)$ (in the sense of Definition~\ref{open_closed_paths_def}).

\medskip \noindent \textbf{Proof of \eqref{lemmm55} and \eqref{lemmm5}.}
We assume $e\in\CC_k$ for some $k$ (which implies $\CC_k$ is open), and either $\{K=k\}$ or $\{\text{$K>k$ and $\CC_{k+1}$ is open}\}$.
Let $\dualp_e$ be the dual path from Proposition \ref{prop:geod_cons}\ref{itm:alledual}, which starts at $e$, ends at $\AA$, and is closed except where it passes through the open circuits among $\CC_{k-1},\CC_{k-2},\dots,\CC_1$.
Let $I_1<\cdots<I_J$ be the random (possibly empty) sequence of $i\in\{0,\dots,\floor{\log_2 M}-1\}$ such that $\EE^{(e)}_i$ contains an open edge belonging to $\dualp_e$.
If the sequence is empty, we set $J=0$.
Our definitions lead to the following two claims.

\begin{claim}[Three-arm event from $e$] \label{claim_co}
The following statements hold:
\begin{enumerate}[label=\textup{(\alph*)}]

\item \label{claim_co_a}
There are $2$ disjoint open arms from $e$ to $\partial B_M(e)$.

\item \label{claim_co_b}
If $J=0$, then there is $1$ closed arm from $e$ to $\partial B_{2^{\floor{\log_2 M}}}(e)$.

\item \label{claim_co_c}
If $J\ge1$, then there is $1$ closed arm from $e$ to $\partial B_{2^{I_1}}(e)$.
\end{enumerate}
\end{claim}

\begin{claim}[Arms across annuli] \label{2claim_co}
Assume $J\ge1$. The following statements hold:
\begin{enumerate}[label=\textup{(\alph*)}] \setcounter{enumi}{3}
\item \label{2claim_co_d}
For each $j\in\llbrack1,J\rrbrack$, there are $\ceil{j/2}\cdot 2+1$ disjoint open arms from $\partial B_{2^{1 + I_j}}(e)$ to $\partial B_{M}(e)$.

\item \label{2claim_co_e}
For each $j\in\llbrack1,J-1\rrbrack$, there is $1$ closed arm from $\partial B_{2^{1 + I_j}}(e)$ to $\partial B_{2^{I_{j+1}}}(e)$.

\item \label{2claim_co_f}
There is $1$ closed arm from $\partial B_{2^{1 + I_{J}}}(e)$ to $\partial B_{2^{\floor{\log_2 M}}}(e)$.

\end{enumerate}
\end{claim}

\begin{proofclaim}
Part~\ref{claim_co_a}:
Since $e\in\CC_k$ and $\CC_k$ is open, we obtain two disjoint open arms to distance $M$ by following $\CC_k$ in both directions.

\medskip \noindent Parts~\ref{claim_co_b} and \ref{claim_co_c}: 
If $J=0$, then the dual path $\dualp_e$ reaches $\partial B_{2^{\floor{\log_2 M}}}(e)$ without using any open edges.
If instead $J\ge1$, then this path reaches $\partial B_{2^{I_1}}(e)$  without using any open edges.

\medskip \noindent Part~\ref{2claim_co_d}: 
Consider any $j\in\llbrack1,J\rrbrack$.
Recall that every open edge of $\dualp_e$ intersects a different open circuit among $\CC_{1},\dots,\CC_{k-1}$ (Proposition~\ref{prop:geod_cons}\ref{itm:alledual}).
There are at least $j$ such edges in $\EE^{(e)}_{I_1}\cup\EE^{(e)}_{I_2}\cup\cdots\cup\EE^{(e)}_{I_j}$, and each one yields two disjoint open arms from $\partial B_{2^{1 + I_j}}(e)$ to $\partial B_{M}(e)$, obtained by following the associated open circuit $\incir_i$.
We have thus identified $j\cdot 2$ open arms; however, if both $\incir_i$ and $\incir_{i+1}$ are used in this process, the open arms they produce may share vertices.
On account of this, we recall that $\incir_i$ and $\incir_{i + 2}$ are always vertex-disjoint (Proposition \ref{prop:geod_cons}\ref{itm:2donttouch}), so the number of \textit{disjoint} open arms is at least $\ceil{j/2}\cdot 2$.

We identify an additional disjoint open arm from $e$ to distance $M$ as follows.
Proceed along $\gamma$ from $e$ toward $\BB$; since $e\in\CC_k$, this portion of $\gamma$ does not intersect any of $\CC_{1},\dots,\CC_{k-1}$ (Proposition~\ref{prop:geod_cons}\ref{itm:gamma_on_circuit}) and thus avoids the open arms from the previous paragraph.
If $K=k$, then this path extends all the way to $\BB$ without encountering any closed edges.
If instead $K>k$, then this path reaches $\CC_{k+1}$ without encountering any closed edges, after which we can follow the open circuit $\CC_{k+1}$ around $\AA$ or $\BB$.  

\medskip \noindent Part~\ref{2claim_co_e}:
Consider any $j\in\llbrack1,J-1\rrbrack$.
Since $\dualp_e$ ends at distance $\ge M > 2^{I_{j+1}}$ away from $e$, it includes a portion between $\partial B_{2^{1+I_j}}(e)$ and $\partial B_{2^{I_{j+1}}}(e)$.
This subpath uses only edges in $\dualp_e\cap(\EE^{(e)}_{1+I_j}\cup\cdots\cup\EE^{(e)}_{I_{j+1}-1})$, so it is closed since every relevant index $i$ satisfies $I_j < i < I_{j+1}$. 

\medskip \noindent Part~\ref{2claim_co_f}:
Since $\dualp_e$ ends at distance $\ge M \ge 2^{1+I_J}$ away from $e$, it includes a portion between $\partial B_{2^{1+I_J}}(e)$ and $\partial B_{2^{\floor{\log_2 M}}}(e)$.
This subpath uses only edges in $\dualp_e\cap(\EE^{(e)}_{1+I_J}\cup\cdots\cup\EE^{(e)}_{\floor{\log_2 M}})$, so it is closed by maximality of $I_J$.
\end{proofclaim}

Claims \ref{claim_co} and \ref{2claim_co} together imply three essential statements listed below, from which the desired estimate will follow.
Let $k_\star$ be the large positive integer from Lemma \ref{mono_poly}.

\begin{enumerate}[label=\textup{(\roman*)}]
    \item \label{essential_1} 
    \textit{There is a 3-arm event from $e$ to $\partial B_{2^{\floor{\log_2 M}}}(e)$ if $J=0$, or to $\partial B_{2^{I_1}}(e)$ if $J\ge1$.}
    This is provided by Claim~\ref{claim_co}.
    
    \item \label{essential_2}
    \textit{Each annulus admits a polychromatic 4-arm event.}
        For $j\in\llbrack 1,J-1\rrbrack$, between $\partial B_{2^{1+I_j}}$ and $\partial B_{2^{I_{j+1}}}$, there are 3 open arms provided by Claim~\ref{2claim_co}\ref{2claim_co_d}, and 1 closed arm provided by Claim~\ref{2claim_co}\ref{2claim_co_e}.   
        Between $\partial B_{2^{1+I_J}}$ and $\partial B_{2^{\floor{\log M}}}(e)$, there are 3 open arms provided by Claim~\ref{2claim_co}\ref{2claim_co_d}, and 1 closed arm provided by Claim~\ref{2claim_co}\ref{2claim_co_f}.
    
    \item \label{essential_3}
    \textit{The final annulus admits a monochromatic $k_\star$-arm event if $J>k_\star$.}
    On the event $\{J>k_\star\}$, Claim~\ref{3claim_co}\ref{2claim_co_d} provides at least $k_\star$ open arms between $\partial B_{2^{1+I_J}}$ and $\partial B_M$.
\end{enumerate}
We stress that in \ref{essential_2}, it does not matter that there are specifically 3 open arms and 1 closed arm; any polychromatic arrangement will suffice thanks to asymptotic equivalence of polychromatic arm events \cite[Prop.~1.1]{reeves_sosoe22_arxiv}.
Similarly, in \ref{essential_3} it does not matter that the arms are open; a closed monochromatic $k_\star$-arm event will suffice thanks to Lemma~\ref{closed_like_open}.
The rest of the argument uses only \ref{essential_1}, \ref{essential_2}, and \ref{essential_3}.
So in future cases, we will just establish these three facts, and not repeat the following.

Let $\Asf$ be the event under consideration, i.e.
\eq{
\Asf = \{e\in\gamma\cap\CC_K\}\cup\{\text{$e \in \primalp\cap\CC_k$ for some $k\in\llbrack 2,K-1\rrbrack$, $\CC_{k+1}$ is open}\}.
}
First suppose $J=0$.
Then statement \ref{essential_1} implies
\eeq{\label{J=0}
\mathbb{P}(\Asf\cap\{J=0\}) \le \pi_3(2^{\floor{ \log M}}).
}
Next suppose $J\in\llbrack1,k_\star\rrbrack$.
Using \ref{essential_1} and then $J$ many applications of \ref{essential_2}, we obtain the second inequality below:
\begin{align}
&\mathbb{P}(\Asf\cap\{J\in\llbrack1,k_\star\rrbrack\})
\le \sum_{\ell=1}^{k_\star}\P(\Asf\cap\{J=\ell\})\nonumber \\
&\stackrefp{arm_calc_ineq}{\le} \sum_{\ell=2}^{k_\star}\sum_{{i}_1=0}^{ \floor{\log_2 M }-1}\sum_{{i}_2=1 + {i}_1}^{ \floor{\log_2 M}-1} \cdots \sum_{{i}_{\ell}=1+ {i}_{{\ell-1}}}^{ \floor{\log_2 M}-1}\Big[\pi_3( 2^{{i}_1})\pi_{4}(2^{1 + {i}_1 }, 2^{{i}_{2}})\cdots \label{L_small} \\
&\hphantom{\stackref{arm_calc_ineq}{\leq} \sum_{{i}_1=0}^{ \floor{\log_2 M }-1}\sum_{{i}_2=1 + {i}_1}^{ \floor{\log_2 M}-1} \cdots \sum_{{i}_{\ell}=1+ {i}_{{\ell-1}}}^{ \floor{\log_2 M}-1}\Big[}\cdots\pi_{4}(2^{1+ {i}_{\ell-1} }, 2^{i_\ell})\pi_{4}(2^{1 + {i}_{{\ell}}}, 2^{\floor{\log_2 M}})\Big]\nonumber\\
&\stackref{arm_calc_ineq}{\le} C\pi_3(1,2^{\floor{\log_2 M}}). \nonumber
\end{align}
Finally, suppose $J>k_\star$.
We proceed as in the case $J\in\llbrack1,k_\star\rrbrack$, except that beyond distance $2^{1+I_{k_\star}}$, we only keep track of open arms.
Namely, we use \ref{essential_1} followed by $k_\star-1$ many applications of \ref{essential_2}, and finally \ref{essential_3}:
\begin{align}
&\mathbb{P}(\Asf\cap\{J>k_\star\}) \nonumber \\
&\stackrefp{mono_poly_ineq}{\leq} \sum_{{i}_1=0}^{ \floor{\log_2 M}-1}\sum_{i_2 = 1 + i_1}^{\floor{\log_2 M}-1} \cdots \sum_{{i}_{k_\star}=1+ {i}_{k_\star-1} }^{ \floor{\log_2 M}-1}\Big[\pi_3(2^{i_1})\pi_4(2^{1 + i_1 } ,2^{i_2})\cdots \nonumber\\
&\hphantom{\stackrefp{mono_poly_ineq}{\leq}\sum_{{i}_1=0}^{ \floor{\log_2 M}-1}\sum_{i_2 = 1 + i_1}^{\floor{\log_2 M}-1} \cdots \sum_{i_{k_\star}=1+ i_{k_\star-1} }^{ \floor{\log_2 M}-1}\Big[}\cdots \pi_{4}(2^{1+ {i}_{k_\star-1} }, 2^{{i}_{k_\star}}) \pi'_{k_\star} (2^{1 + {i}_{k_\star}  }, M) \nonumber \\
&\stackref{mono_poly_ineq}{\leq} \sum_{{i}_1=0}^{ \floor{\log_2 M}-1}\sum_{i_2 = 1 + i_1 }^{\floor{\log_2 M}-1}  \cdots \sum_{{i}_{{J}}=1+ {i}_{k_\star-1} }^{ \floor{\log_2 M}-1}\Big[\pi_3(2^{{i}_1})\pi_4(2^{1 + i_1} ,2^{i_2})\cdots \label{L_large} \\
&\hphantom{\stackref{mono_poly_ineq}{\leq} \sum_{{i}_1=0}^{ \floor{\log_2 M}}\sum_{i_2 = 1 + i_1 }^{\floor{\log_2 M}}  \cdots \sum_{{i}_{{J}}=1+ {i}_{{J}-1} }^{ \floor{\log_2 M}}\Big[}\cdots \pi_{4}(2^{1+ {i}_{k_\star-1} }, 2^{{i}_{k_\star}}) \pi_{3} (2^{1 + {i}_{k_\star}}, M) \Big(\frac{2^{1 + {i}_{k_\star}}} {M}\Big)\Big] \nonumber\\
&\stackrefpp{arm_calc_ineq2}{mono_poly_ineq}{\le}  C\pi_3(1,M). \nonumber
\end{align}
Combining \eqref{J=0}--\eqref{L_large} proves \eqref{lemmm2}.

\medskip \noindent \textbf{Proof of \eqref{lemmm8}.}
Assume $e\in\gamma\cap\CC_k$ for some $k\in\llbrack2,K-1\rrbrack$ (which implies $\CC_k$ is open), and $\CC_{k+1}$ is closed.
This case is very similar to \eqref{lemmm5}, but now the geodesic $\gamma$ encounters a closed edge (specifically on $\CC_{k+1}$) before reaching either $\BB$ or an open circuit.
This means we lose the additional open arm in Claim~\ref{2claim_co}\ref{2claim_co_d}.
We account for this by finding an additional closed arm, as will be seen in the following argument.

Once again, let $\dualp_e$ be the dual path from Proposition \ref{prop:geod_cons}\ref{itm:alledual}, which starts at $e$, ends at $\AA$, and is closed except where it passes through the open circuits among $\CC_{k-1},\CC_{k-2},\dots,\CC_1$.
Let $f$ be the closed edge of $\gamma$ such that $f^\star\in\CC_{k+1}$, and let $I^\star$ be the unique integer such that $f\in\EE^{(e)}_{I^\star}$. 
In particular, the portion of $\gamma$ from $e$ to $\BB$ reaches $\partial B_{2^{I^\star}}(e)$ without using any closed edges.
Let $I_1<\cdots<I_J$ be the random (possibly empty) sequence of $i\in\{0,\dots,\floor{\log_2 M}-1\}$ such that \textit{either} $\EE^{(e)}_i$ contains an open edge belonging to $\dualp_e$, or $i = I^\star$.

Claim~\ref{claim_co} and its proof hold verbatim.
We just need to replace Claim~\ref{2claim_co}.
If $I^\star\ge \floor{\log_2 M}$, then the only modification to Claim~\ref{2claim_co} is that the last open arm in part~\ref{2claim_co_d} extends to $\partial B_{2^{\floor{\log_2 M}}}(e)$ rather than $\partial B_{M}(e)$.
This modified version is still sufficient thanks to \eqref{relax_to_2}.
Otherwise Claim~\ref{2claim_co} is replaced with the following.

\begin{claim}[Arms across annuli] \label{3claim_co}
Assume $I^\star \le \floor{\log_2 M}-1$, and let $J_f$ be the unique element of $\llbrack1,J\rrbrack$ such that $I_{J_f} = I^\star$. 
The following statements hold:
\begin{enumerate}[label=\textup{(\alph*)}] \setcounter{enumi}{3}
\item \label{3claim_co_d}
For each $j\in\llbrack 1,J_f-1\rrbrack$, there are $\ceil{j/2}\cdot 2+1$ disjoint open arms from $\partial B_{2^{1 + I_j}}(e)$ to $\partial B_{2^{I^\star}}(e)$.

\item[\textup{($\mathrm{d}'$)}] \label{3claim_co_dd}
For each $j\in\llbrack J_f,J\rrbrack$, there are $\ceil{(j-1)/2}\cdot 2$ disjoint open arms from $\partial B_{2^{1 + I_j}}(e)$ to $\partial B_{M}(e)$.

\item\label{3claim_co_e}
For each $j\in\llbrack 1,J_f-1\rrbrack$, there is a closed arm from $\partial B_{2^{1 + I_j}}(e)$ to $\partial B_{2^{I_{j+1}}}(e)$.

\item \label{3claim_co_f}
There are two closed arms from $\partial B_{2^{1 + I^\star}}(e)$ to $\partial B_M(e)$.

\end{enumerate}
\end{claim}

\begin{proofclaim}
Part~\ref{3claim_co_d}: 
The argument is the same as for Claim~\ref{2claim_co}\ref{2claim_co_d}, except now the additional open arm provided by $\gamma$ only extends to $\partial B_{2^{I^\star}}(e)$.

\medskip \noindent Part~\hyperref[3claim_co_dd]{\textup{($\mathrm{d}'$)}}: 
The argument is the same as for Claim~\ref{2claim_co}\ref{2claim_co_d}, except for the following detail.
Since $j\ge J_f$, the union $\EE^{(e)}_{I_1}\cup\EE^{(e)}_{I_2}\cup\cdots\cup\EE^{(e)}_{I_j}$ is only guaranteed to contain $j-1$ open edges of $\zeta_e$ (rather than $j$).
Therefore, the number of disjoint open arms is at least $\ceil{(j-1)/2}\cdot 2$.

\medskip \noindent Part~\ref{3claim_co_e}:
The argument is the same as for Claim~\ref{2claim_co}\ref{2claim_co_e}.

\medskip \noindent Part~\ref{3claim_co_f}:
By definition of $I^\star$, the closed edge $f$ belongs to $\partial B_{2^{I^\star}}(e)$.
Since $f^\star$ belongs to the closed circuit $\CC_{k+1}$, we can follow this circuit in both directions to obtain two disjoint closed arms from $\partial B_{2^{1+\star}}(e)$ to $\partial B_M(e)$.
\end{proofclaim}

To complete the proof, it suffices to check the three essential statements from before:
\begin{enumerate}[label=\textup{(\roman*)}]
    \item \textit{There is a 3-arm event from $e$ to $\partial B_{2^{\floor{\log_2 M}}}(e)$ if $J=0$, or to $\partial B_{2^{I_1}}(e)$ if $J\ge1$.}
    This is again provided by Claim~\ref{claim_co}.
    \item \textit{Each annulus admits a polychromatic 4-arm event.}
    \begin{itemize}
    \item  Suppose $I^\star = 1$.
        Between $\partial B_{2^{1+I_1}}$ and $\partial B_{2^{I_2}}$, there are two open arms provided by Claim~\ref{claim_co}\ref{claim_co_a}, and two closed arms provided by Claim~\ref{3claim_co}\ref{3claim_co_f}.
        For $j\in\llbrack 2,J-1\rrbrack$, between $\partial B_{2^{1+I_j}}$ and $\partial B_{2^{I_{j+1}}}$, there are (at least) two open arms provided by Claim~\ref{3claim_co}\hyperref[3claim_co_dd]{\textup{($\mathrm{d}'$)}}, and two closed arms provided by Claim~\ref{3claim_co}\ref{3claim_co_f}.
        Finally, between $\partial B_{2^{1+I_J}}$ and $\partial B_{2^{\floor{\log M}}}(e)$, there are two open arms provided by Claim~\ref{claim_co}\ref{claim_co_a}, and two closed arms provided by Claim~\ref{3claim_co}\ref{3claim_co_f}.
    \item Suppose $I^\star \ge 2$.
        For $j\in\llbrack 1,J_f-1\rrbrack$, between $\partial B_{2^{1+I_j}}$ and $\partial B_{2^{I_{j+1}}}$, there are (at least) three open arms provided by Claim~\ref{3claim_co}\ref{3claim_co_d}, and a closed arm provided by Claim~\ref{3claim_co}\ref{3claim_co_e}.
        For $j\in\llbrack J_f,J-1\rrbrack$, between $\partial B_{2^{1+I_j}}$ and $\partial B_{2^{I_{j+1}}}$, there are (at least) two open arms provided by Claim~\ref{3claim_co}\hyperref[3claim_co_dd]{\textup{($\mathrm{d}'$)}}, and two closed arms provided by Claim~\ref{3claim_co}\ref{3claim_co_f}.    
        Finally, between $\partial B_{2^{1+I_J}}$ and $\partial B_{2^{\floor{\log M}}}(e)$, there are two open arms provided by Claim~\ref{claim_co}\ref{claim_co_a}, and two closed arms provided by Claim~\ref{3claim_co}\ref{3claim_co_f}.
    \end{itemize}
    \item \textit{The final annulus admits a monochromatic $k_\star$-arm event if $J>k_\star$.}
    On the event $\{J>k_\star\}$, Claim~\ref{3claim_co}\hyperref[3claim_co_dd]{\textup{($\mathrm{d}'$)}} provides at least $k_\star$ open arms between $\partial B_{2^{1+I_J}}$ and $\partial B_M$.
\end{enumerate}

\begin{figure}[t]
\begin{center}
\tikzset{every picture/.style={line width=0.75pt}} 

\begin{tikzpicture}[x=0.75pt,y=0.75pt,yscale=-1,xscale=1]

\draw  [draw opacity=0][fill={rgb, 255:red, 155; green, 155; blue, 155 }  ,fill opacity=0.5 ] (112,196.24) .. controls (112,185.19) and (127.67,176.24) .. (147,176.24) .. controls (166.33,176.24) and (182,185.19) .. (182,196.24) .. controls (182,207.29) and (166.33,216.24) .. (147,216.24) .. controls (127.67,216.24) and (112,207.29) .. (112,196.24) -- cycle ;
\draw  [fill={rgb, 255:red, 155; green, 155; blue, 155 }  ,fill opacity=0.5 ] (330.83,93.44) -- (409.23,93.44) -- (409.23,171.04) -- (330.83,171.04) -- cycle(386.23,116.44) -- (353.84,116.44) -- (353.84,148.03) -- (386.23,148.03) -- cycle ;
\draw  [fill={rgb, 255:red, 155; green, 155; blue, 155 }  ,fill opacity=0.44 ] (267.03,29.67) -- (473.03,29.67) -- (473.03,234.8) -- (267.03,234.8) -- cycle(435.83,66.88) -- (304.24,66.88) -- (304.24,197.6) -- (435.83,197.6) -- cycle ;
\draw  [line width=1.5] [line join = round][line cap = round] (357.94,118.04) .. controls (364.99,119.47) and (372.37,119.74) .. (379.08,122.33) .. controls (387.03,125.38) and (391.35,135.52) .. (401.65,132.04) .. controls (412.87,128.25) and (422.57,120.79) .. (433.65,116.61) .. controls (446.59,111.74) and (446.62,119.12) .. (460.51,116.9) .. controls (465.44,116.11) and (469.65,112.9) .. (474.22,110.9) ;
\draw  [line width=1.5] [line join = round][line cap = round] (131.27,175.1) .. controls (119.31,157.15) and (105.17,133.95) .. (133.27,119.9) .. controls (138.06,117.51) and (143.16,115.39) .. (148.47,114.7) .. controls (157.59,113.51) and (166.91,113.49) .. (176.07,114.3) .. controls (196.44,116.1) and (213.88,128.59) .. (231.27,137.5) .. controls (249.96,147.08) and (271.71,158.11) .. (292.07,163.1) .. controls (303.3,165.85) and (327.82,173.36) .. (338.07,163.1) .. controls (364.79,136.38) and (314.2,123.46) .. (296.47,115.9) .. controls (292.41,114.17) and (288.38,112.35) .. (284.47,110.3) .. controls (278.8,107.33) and (273.51,104.1) .. (268.07,100.7) .. controls (259.32,95.23) and (247.35,78.39) .. (256.07,68.7) .. controls (264.8,59) and (288.2,60.59) .. (298.07,67.1) .. controls (313.01,76.95) and (318.86,97.03) .. (334.07,105.9) .. controls (342.25,110.67) and (351.14,112.36) .. (358.07,117.9) .. controls (361.82,120.9) and (366.68,133.1) .. (371.27,133.1) ;
\draw  [draw opacity=0][fill={rgb, 255:red, 155; green, 155; blue, 155 }  ,fill opacity=0.5 ] (547.4,67.27) .. controls (547.4,56.22) and (563.07,47.27) .. (582.4,47.27) .. controls (601.73,47.27) and (617.4,56.22) .. (617.4,67.27) .. controls (617.4,78.31) and (601.73,87.27) .. (582.4,87.27) .. controls (563.07,87.27) and (547.4,78.31) .. (547.4,67.27) -- cycle ;
\draw  [line width=1.5] [line join = round][line cap = round] (470.8,112.73) .. controls (473.47,112.33) and (476.28,109.28) .. (478.87,108.29) .. controls (486.8,105.25) and (495.12,103.35) .. (503.29,101.04) .. controls (523.41,95.34) and (544.19,97.22) .. (562.4,84.31) ;
\draw [color={rgb, 255:red, 155; green, 155; blue, 155 }  ,draw opacity=1 ][line width=3]    (466.94,8.78) .. controls (437.61,36.78) and (442.94,118.11) .. (545.61,208.11) ;
\draw [color={rgb, 255:red, 155; green, 155; blue, 155 }  ,draw opacity=1 ][line width=3]    (258.67,13.17) .. controls (295.33,160.33) and (391.67,79.17) .. (432.67,283.17) ;
\draw  [color={rgb, 255:red, 0; green, 0; blue, 0 }  ,draw opacity=1 ][fill={rgb, 255:red, 155; green, 155; blue, 155 }  ,fill opacity=1 ] (273.36,61.71) .. controls (273.36,60.17) and (274.61,58.91) .. (276.16,58.91) .. controls (277.7,58.91) and (278.96,60.17) .. (278.96,61.71) .. controls (278.96,63.26) and (277.7,64.51) .. (276.16,64.51) .. controls (274.61,64.51) and (273.36,63.26) .. (273.36,61.71) -- cycle ;
\draw  [color={rgb, 255:red, 0; green, 0; blue, 0 }  ,draw opacity=1 ][fill={rgb, 255:red, 155; green, 155; blue, 155 }  ,fill opacity=1 ] (465.94,113.84) .. controls (465.94,112.29) and (467.19,111.04) .. (468.74,111.04) .. controls (470.29,111.04) and (471.54,112.29) .. (471.54,113.84) .. controls (471.54,115.38) and (470.29,116.64) .. (468.74,116.64) .. controls (467.19,116.64) and (465.94,115.38) .. (465.94,113.84) -- cycle ;
\draw [color={rgb, 255:red, 155; green, 155; blue, 155 }  ,draw opacity=1 ][line width=1.5]    (348.25,129.06) .. controls (366.65,125.46) and (442.28,167.44) .. (482.28,137.44) ;
\draw  [fill={rgb, 255:red, 0; green, 0; blue, 0 }  ,fill opacity=1 ] (367.23,132.24) .. controls (367.23,130.69) and (368.49,129.44) .. (370.03,129.44) .. controls (371.58,129.44) and (372.83,130.69) .. (372.83,132.24) .. controls (372.83,133.79) and (371.58,135.04) .. (370.03,135.04) .. controls (368.49,135.04) and (367.23,133.79) .. (367.23,132.24) -- cycle ;
\draw  [color={rgb, 255:red, 0; green, 0; blue, 0 }  ,draw opacity=1 ][fill={rgb, 255:red, 155; green, 155; blue, 155 }  ,fill opacity=1 ] (342.16,152.51) .. controls (342.16,150.97) and (343.41,149.71) .. (344.96,149.71) .. controls (346.5,149.71) and (347.76,150.97) .. (347.76,152.51) .. controls (347.76,154.06) and (346.5,155.31) .. (344.96,155.31) .. controls (343.41,155.31) and (342.16,154.06) .. (342.16,152.51) -- cycle ;

\draw (138,186.74) node [anchor=north west][inner sep=0.75pt]    {$\mathcal{A}$};
\draw (370.83,135.64) node [anchor=north west][inner sep=0.75pt]    {$e$};
\draw (175.35,93.03) node [anchor=north west][inner sep=0.75pt]    {$\zeta $};
\draw (447.83,114.53) node [anchor=north west][inner sep=0.75pt]    {$$};
\draw (577.2,60.57) node [anchor=north west][inner sep=0.75pt]    {$\mathcal{B}$};
\draw (436,278.07) node [anchor=north west][inner sep=0.75pt]    {$\mathcal{C}_{k}$};
\draw (543.33,210.07) node [anchor=north west][inner sep=0.75pt]    {$\mathcal{C}_{k+1}$};

\end{tikzpicture}
\caption{\small An illustration of the case when $e \in \primalp$ is between two open circuits $\mathcal{C}_k$ and $\mathcal{C}_{k+1}$. 
There exists a dual path $\dualp$ between $\mathcal{A}$ and $\mathcal{B}$. 
The open dual edges in $\dualp$ are contained in the collection $\{\EE_{I_j}^{(e)}\}_{j=1}^J$, which correspond to the shaded annuli.
The edge $e$ is connected to $\dualp$ by a closed dual path $\dualp_e$.} \label{fig:OO}
\end{center}
\end{figure}

\medskip


\noindent \textbf{Proof of \eqref{lemmm2}.} 
Assume $e$ is between two open circuits $\CC_k$ and $\CC_{k+1}$.
We mimic the proof strategy of the two previous cases, but our method for obtaining arms needs to be modified.
For an illustration of the argument, see Figure \ref{fig:OO}.
Let $I_1<\cdots<I_J$ be the random (possibly empty) sequence of $i\in\{0,\dots,\floor{\log_2 M}-1\}$ such that $\EE^{(e)}_i$ contains an open edge belonging to the dual path $\dualp$ from $\AA$ to $\BB$.

\begin{claim}[Three-arm event from $e$] \label{claim_2open}
The following statements hold:
\begin{enumerate}[label=\textup{(\alph*)}]

\item \label{claim_2open_a}
There are $2$ disjoint open arms from $e$ to $\partial B_M(e)$.

\item \label{claim_2open_b}
If $J=0$, then there is $1$ closed arm from $e$ to $\partial B_{2^{\floor{\log_2 M}}}(e)$.

\item \label{claim_2open_c}
If $J\ge1$, then there is $1$ closed arm from $e$ to $\partial B_{2^{I_1}}(e)$.
\end{enumerate}
\end{claim}

\begin{claim}[Arms across annuli] \label{2claim_2open}
Assume $J\ge1$. The following statements hold:
\begin{enumerate}[label=\textup{(\alph*)}] \setcounter{enumi}{3}
\item \label{2claim_2open_d}
For each $j\in\llbrack1,J\rrbrack$, there are $\ceil{j/2}\cdot 2$ disjoint open arms from $\partial B_{2^{1 + I_j}}(e)$ to $\partial B_{M}(e)$.

\item \label{2claim_2open_e}
For each $j\in\llbrack1,J-1\rrbrack$,  there are $2$ disjoint closed arms $\partial B_{2^{1 + I_j}}(e)$ to $\partial B_{2^{I_{j+1}}}(e)$.

\item \label{2claim_2open_f}
There are $2$ disjoint closed arms from $\partial B_{2^{1 + I_{J}}}(e)$ to $\partial B_{2^{\floor{\log_2 M}}}(e)$.

\end{enumerate}
\end{claim}

\begin{proofclaim}

Part~\ref{claim_2open_a}:
By Proposition \ref{prop:geod_cons}\ref{itm:gamma_on_circuit}, the assumption that $e$ is between $\CC_k$ and $\CC_{k + 1}$ implies that these two circuits are vertex-disjoint. 
Therefore, the following two arms to distance $M$ are disjoint: from $e$, proceed along $\gamma$ in both directions until reaching $\CC_k$ and $\CC_{k + 1}$, respectively, then follow these circuits around either $\mathcal{A}$ or $\mathcal{B}$. 
The portions along $\gamma$ are open by Proposition \ref{prop:geod_cons}\ref{itm:dualcir}, and the portions along $\CC_k$ and $\CC_{k+1}$ are open by assumption.

\medskip \noindent Parts~\ref{claim_2open_b} and \ref{claim_2open_c}: 
By assumption, $e$ does not belong to any of the circuits $\CC_1,\dots,\CC_K$.
In particular, $e$ is open (Proposition~\ref{prop:geod_cons}\ref{itm:dualcir}), so there exists a closed dual path $\dualp_e$ from $e$ to $\dualp$ (Proposition~\ref{prop:geod_cons}\ref{itm:dualconn}).
Consider the following dual path: follow $\dualp_e$ from $e$ to $\dualp$, then follow $\dualp$ to either $\AA$ or $\BB$ (the choice does not matter).
If $J=0$, then this path reaches $\partial B_{2^{\floor{\log_2 M}}}(e)$ without using any open edges.
If instead $J\ge1$, then this path reaches $\partial B_{2^{I_1}}(e)$  without using any open edges.

\medskip \noindent Part~\ref{2claim_2open_d}: 
Consider any $j\in\llbrack1,J\rrbrack$.
Recall that every open edge of $\dualp$ intersects a different open circuit from the edge-disjoint collection $\incir_1,\dots,\incir_P$ (Proposition~\ref{prop:geod_cons}\ref{itm:zetaopen}).
There are at least $j$ such edges in $\EE^{(e)}_{I_1}\cup\EE^{(e)}_{I_2}\cup\cdots\cup\EE^{(e)}_{I_j}$, and each one yields two disjoint open arms from $\partial B_{2^{1 + I_j}}(e)$ to $\partial B_{M}(e)$, obtained by following the associated open circuit $\incir_i$ in both directions.
Since $\incir_i$ and $\incir_{i + 2}$ are always vertex-disjoint (Proposition \ref{prop:geod_cons}\ref{itm:2donttouch}), in total we have at least $\ceil{j/2}\cdot 2\ge \max\{j,2\}$ disjoint open arms from $\partial B_{2^{1 + I_j}}(e)$ to $\partial B_{M}(e)$.

\medskip \noindent Part~\ref{2claim_2open_e}:
Consider any $j\in\llbrack1,J-1\rrbrack$.
The edge set $\dualp\cap\EE^{(e)}_{I_j}$ is nonempty by definition of $I_j$.
Therefore, $\dualp$ contains a vertex that is (a dual neighbor of) some element of $B_{2^{1+I_j}}(e)$.
Since $\dualp$ starts and ends at distance at least $M > 2^{1+I_j}$ away from $e$, it produces two disjoint paths between $\partial B_{2^{1+I_j}}(e)$ and $\partial B_{2^{I_{j+1}}}(e)$.
These paths use only edges in $\dualp\cap(\EE^{(e)}_{1+I_j}\cup\cdots\cup\EE^{(e)}_{I_{j+1}-1})$, so  they are closed since every relevant index $i$ satisfies $I_j < i < I_{j+1}$. 

\medskip \noindent Part~\ref{2claim_2open_f}:
The edge set $\dualp\cap\EE^{(e)}_{I_J}$ is nonempty by definition of $I_J$.
Therefore, $\dualp$ contains a vertex that is (a dual neighbor of) some element of $B_{2^{1+I_J}}(e)$.
Since $\dualp$ starts and ends at distance $M > 2^{1+I_J}$ away from $e$, it produces two disjoint paths between $\partial B_{2^{1+I_J}}(e)$ and $\partial B_{2^{\floor{\log_2 M}}}(e)$.
These paths use only edges in $\dualp\cap(\EE^{(e)}_{1+I_J}\cup\cdots\cup\EE^{(e)}_{\floor{\log_2 M}})$, so  they are closed by maximality of $I_J$.
\end{proofclaim}

To complete the proof, it suffices to check the three essential statements from before:
\begin{enumerate}[label=\textup{(\roman*)}]
    \item \textit{There is a 3-arm event from $e$ to $\partial B_{2^{\floor{\log_2 M}}}(e)$ if $J=0$, or to $\partial B_{2^{I_1}}(e)$ if $J\ge1$.}
    This is provided by Claim~\ref{claim_2open}.
    \item \textit{Each annulus admits a polychromatic 4-arm event.}
        For $j\in\llbrack 1,J-1\rrbrack$, between $\partial B_{2^{1+I_j}}$ and $\partial B_{2^{I_{j+1}}}$, there are $2$ open arms provided by Claim~\ref{2claim_2open}\ref{2claim_2open_d}, and $2$ closed arms provided by Claim~\ref{2claim_2open}\ref{2claim_2open_e}.   
        Between $\partial B_{2^{1+I_J}}$ and $\partial B_{2^{\floor{\log M}}}(e)$, there are $2$ open arms provided by Claim~\ref{2claim_2open}\ref{2claim_2open_d}, and $2$ closed arms provided by Claim~\ref{2claim_2open}\ref{2claim_2open_f}.
    \item \textit{The final annulus admits a monochromatic $k_\star$-arm event if $J>k_\star$.}
    On the event $\{J>k_\star\}$, Claim~\ref{2claim_2open}\ref{2claim_2open_d} provides at least $k_\star$ open arms between $\partial B_{2^{1+I_J}}$ and $\partial B_M$.
\end{enumerate}

\begin{remark} \label{rem_no_bernoulli}
The proof thus far has not used that the edge-weights are $\{0,1\}$-valued.
Indeed, the only special consequence of this assumption is that the \textit{closed} circuits among $\CC_1,\dots,\CC_K$ are edge-disjoint (Proposition~\ref{prop:geod_cons}\ref{itm:Csequence}), but until now we have only needed the edge-disjointness of the \textit{open} circuits among this sequence, which are $\incir_1,\dots,\incir_P$.
Therefore, $\P(\{e\in\gamma\}\cap\Gsf)\le C\pi_3(M)$ holds assuming only \eqref{critical_assumption} whenever $\Gsf$ is one of the following events: 
\begin{itemize}
    \item $e\in\incir_i$ for some $i$, as in \eqref{lemmm6}, \eqref{lemmm55}, \eqref{lemmm5}, \eqref{lemmm8}.
    \item $e$ lies on an entirely open portion of $\gamma$ that either
    \begin{itemize}
        \item starts at $\AA$ and ends at $\BB$ as \eqref{lemmm1};
        \item starts at $\AA$ and ends at $\incir_1$ as in \eqref{lemmm4};
        \item starts at $\incir_P$ and ends at $\BB$ as in \eqref{lemmm44}; or
        \item starts at $\incir_i$ and ends at $\incir_{i+1}$ as in \eqref{lemmm2}.
    \end{itemize}
\end{itemize}
By contrast, the remainder of the proof requires Proposition~\ref{prop:geod_cons}\ref{itm:Csequence}, which only works for $\{0,1\}$-valued edge weights.
\end{remark}

\medskip \noindent \textbf{Proof of \eqref{lemmm3}, \eqref{lemmm33}, \eqref{lemmm333}, \eqref{lemmm3333}.}
This argument is similar to the case of \eqref{lemmm2}, but now the arm events across annuli are created using the geodesic $\gamma$ instead of the dual path $\dualp$.
Namely, let $I_1<\cdots<I_J$ be the random (possibly empty) sequence of $i\in\{0,\dots,\floor{\log_2 M}-1\}$ such that $\EE^{(e)}_i$ contains a closed edge belonging to $\primalp$.
Then Claim~\ref{2claim_2open} holds once we exchange ``closed'' and ``open,'' and the proof is verbatim except for replacing $\dualp$ with $\primalp$, and applying parts \ref{itm:dualcir} and \ref{itm:Csequence} of Proposition~\ref{prop:geod_cons} instead of parts \ref{itm:zetaopen} and \ref{itm:2donttouch_new}.
We just need to replace Claim~\ref{claim_2open} with the following.

\begin{claim}[Three-arm event from $e$] \label{claim_1closed}
The following statements hold:
\begin{enumerate}[label=\textup{(\alph*)}]

\item \label{claim_1closed_a}
If $J=0$, then there are $2$ disjoint open arms from $e$ to $\partial B_{2^{\floor{\log_2 M}}}(e)$.

\item[\textup{$(\mathrm{a}')$}] \label{claim_1closed_aa}
If $J\ge 1$, then there are $2$ disjoint open arms from $e$ to $\partial B_{2^{I_1}}(e)$.

\item \label{claim_1closed_b}
There is $1$ closed arm from $e$ to $\partial B_M(e)$.

\end{enumerate}
\end{claim}

\begin{proofclaim}

Parts~\ref{claim_1closed_a} and \hyperref[claim_1closed_aa]{\textup{$(\mathrm{a}')$}}:
Follow $\gamma$ from $e$ in both directions.
If $J=0$, then $\gamma$ reaches $\partial B_{2^{\floor{\log_2 M}}}(e)$ without using any closed edges.
If instead $J\ge1$, then $\gamma$ reaches $\partial B_{2^{I_1}}(e)$ without using any closed edges.

\medskip \noindent Part~\ref{claim_2open_b}: 
We consider two possibilities: 
\begin{itemize}
    \item In \eqref{lemmm3333}, $e^\star$ belongs to a closed dual circuit that encloses either $\AA$ or $\BB$.
    Following this circuit, we obtain a closed arm from $e$ to distance $M$ (in fact, we obtain two such arms).
    \item In \eqref{lemmm3}, \eqref{lemmm33}, \eqref{lemmm333}, $e$ must be open (Proposition~\ref{prop:geod_cons}\ref{itm:dualcir}), so there exists a closed dual path $\dualp_e$ from $e$ to $\dualp$ (Proposition~\ref{prop:geod_cons}\ref{itm:dualconn}).
    Furthermore, we assume $e$ is between $\CC_k$ and $\CC_{k+1}$ (replace $\CC_0$ with $\AA$, and replace $\CC_{K+1}$ with $\BB$), and at least one of these circuits is closed.
    If $\dualp_e$ intersects either $\CC_k$ or $\CC_{k+1}$, then a closed arm to distance $M$ is obtained by simply following $\dualp_e$ from $e$ to that circuit (which must be closed since $\dualp_e$ is closed), and then following that circuit around $\AA$ or $\BB$.
    Otherwise, $\dualp_e$ intersects $\zeta$ along the portion of $\zeta$ between $\CC_k$ and $\CC_{k+1}$.
     Since $\zeta$ is entirely closed along this portion (Proposition~\ref{prop:geod_cons}\ref{itm:zetaopen}), a closed arm to distance $M$ is obtained by following $\dualp_e$ from $e$ to $\dualp$, then following $\dualp$ to whichever of $\CC_k$ and $\CC_{k+1}$ is closed, and then following that circuit around $\AA$ or $\BB$. \qedhere
\end{itemize} 
\end{proofclaim}
Now repeat what comes after the proof of Claim~\ref{2claim_2open}, but replace Claim~\ref{claim_2open} with Claim~\ref{claim_1closed}, and exchange ``closed'' and ``open.''
\end{proof}

\subsection{Upper bounds on length of geodesic} \label{sec_ber_proof_final}

We are now ready to prove our first two main results.
As usual, the value of the constant $C$ may change with each inequality.


\begin{proof}[Proof of Theorem \ref{thm:Ber_ptb}]
Let $\primalp$ be the geodesic constructed in Proposition \ref{prop:geod_cons}, where $\mathcal{A} = \{\vc 0\}$ and $\mathcal{B} = \partial B_R$. 
Let $|\primalp|$ denote the number of edges in $\primalp$. 
Note that every edge of $\gamma$ has both endpoints in $B_R$ and is thus separated from $\{\vc 0\}\cup\partial B_R$ by some distance $r\in\{0,\dots,R\}$.
We now have
\eq{
\mathbb E[\lng_R] 
\le \mathbb E|\primalp| 
= \sum_{e\in E(\Z^2)} \mathbb P(e \in \primalp)
&\stackrefp{key3}{=} \sum_{r = 0}^{R} \sum_{\substack{e\in E(\Z^2) \\ \dist(e,\{\vc 0\}) \wedge \dist(e, \partial B_R) = r }} \mathbb P(e \in \primalp) \\
&\stackref{key3}{\le} C \sum_{r = 0}^R  R \pi_3(r) 
\stackref{3r8bbc}{\le} C R^2 \pi_3(R). \qedhere
}
\end{proof}

\begin{proof}[Proof of Theorem \ref{thm:Ber_ptp}]
The inequality \eqref{cm33} is trivial if $\theta<1$, so we assume $\theta\ge1$.
Recall the notation $R = \dist(\mathcal{A},\mathcal{B})$.
By translation invariance, we may assume without loss of generality that
\eeq{ \label{jb210b}
\text{$\mathcal{A}$ contains the origin}.
}
For $i \ge 1$, let $\Hsf_i$ be the event that both of the following statements are true:
\begin{enumerate} [label=\textup{(\roman*)}]
\item there exists an open circuit $\CC$ that encloses $B_{4^{i-1}}$ and whose vertex set is contained in $B_{4^i/2}$; and
\item there exists a closed dual circuit $\DD$ that encloses $B_{4^i/2}$ and whose vertex set is contained in $B_{4^i}$.
\end{enumerate}
Note that every vertex of $\CC$ lies in $\intr(\DD)$, so every vertex of $\DD$ lies in $\ext(\CC)$ by Lemma~\ref{surround_lemma}.
The events $\{\Hsf_i\}_{i\ge1}$ are independent, and by the RSW theorem \eqref{RSW} together with the FKG inequality, there exists a constant $\delta > 0$ such that
$\mathbb P(\Hsf_i) \ge 1 - \e^{-\delta}$ for all $i \ge 1$.
In particular, for all $i'\ge1$ we have
\eeq{ \label{ub7b3c}
\P\Bigl(\bigcap_{i = i'}^{n} \Hsf_i^\complement\Bigr) = \prod_{i = i'}^{n}  (1-\mathbb P(\Hsf_i)) \le \e^{-\delta(n-i'+1)}.
}
Henceforth we take $i'$ large enough that $B_{4^{i'-1}}$ contains both $\mathcal{A}$ and $\mathcal{B}$.
By \eqref{jb210b}, the following choice suffices:
\eeq{ \label{jv6xh}
i' = \ceil{\log_4(|\AA|+|\BB|+R)}+1.
}
We will now argue that for any $i\ge i'$, on the event $\Hsf_i$, all geodesics from $\mathcal{A}$ to $\mathcal{B}$ are contained in $B_{4^i}$. 
That is, we claim
\eeq{ \label{kwefx}
\Hsf_i \subseteq \bigcap_{\gamma\in\Geo(\mathcal{A},\mathcal{B})}\{\text{all vertices of $\gamma$ belong to $B_{4^i}$}\} \quad \text{for all $i\ge i'$},
}
where $\Geo(\mathcal{A},\mathcal{B})$ denotes the set of geodesics between $\mathcal{A}$ and $\mathcal{B}$.
To justify \eqref{kwefx}, let $\CC$ and $\DD$ be any choice of the open and closed circuits guaranteed by the event $\Hsf_i$, and let $\gamma$ be any geodesic between $\mathcal{A}$ and $\mathcal{B}$.
Notice that both $\mathcal{A}$ and $\mathcal{B}$ are contained in $B_{4^{i'}-1}\subseteq B_{4^{i-1}}\subseteq\intr(\CC)$.
Therefore, if the geodesic $\gamma$ has any segment in $\ext(\CC)$, this segment begins and ends at vertices on $\CC$.
The passage time between any two vertices on $\CC$ is $0$, so this segment cannot use any closed edges.
In particular, $\gamma$ never crosses the closed circuit $\DD$, since $\DD$ lies in the exterior of $\CC$.
Therefore, $\gamma$ remains entirely in $\intr(\DD)\subseteq B_{4^i}$.


Now let $\gamma$ be the geodesic between $\mathcal{A}$ and $\mathcal{B}$ from Proposition \ref{prop:geod_cons}.
For any $n \ge i'$, we have
\eeq{ \label{tb2p}
&\mathbb P(\lng_{\mathcal{A}, \mathcal{B}} 
\ge \theta R^2 \pi_3(R)) 
\stackref{kwefx}{\le} \mathbb P\Big(\{\lng_{\mathcal{A}, \mathcal{B}} \ge \theta R^2 \pi_3(R)\}\cup\bigcup_{i=i'}^n \Hsf_i\Big) + \e^{-\delta(n-i'+1)} \\
&\stackref{kwefx}{\le} \mathbb P(|\gamma| \ge \theta R^2 \pi_3(R),\,\text{all vertices of $\gamma$ belong to $B_{4^n}$}) + \e^{-\delta(n-i'+1)} \\
&\stackrefp{kwefx}{\le} \frac{\E[|\gamma| \one(\text{all vertices of $\gamma$ belong to $B_{4^n}$})]}{\theta R^2 \pi_3(R)} + \e^{-\delta(n-i'+1)}.
}
We now wish to control the expectation appearing in the final line of \eqref{tb2p}.
Let $E(B_{4^n})$ denote the set of edges with both endpoints in $B_{4^n}$.
We have
\eq{
\mathbb E[|\gamma| \one( \text{all vertices of $\gamma$ belong to $B_{4^n}$})] 
&\stackrefp{key3}{\le}  \sum_{r = 0}^{4^{n}} \sum_{\substack{e \in E(B_{4^n}) \\ \dist(e,\mathcal{A})\wedge \dist(e,\mathcal{B}) = r}} \mathbb P(e \in \gamma) \\
&\stackref{key3}{\le} \sum_{r = 0}^{4^{n}} \sum_{\substack{e \in E(B_{4^n}) \\ \dist(e,\mathcal{A})\wedge \dist(e,\mathcal{B}) = r}} C \pi_3(r) \\
&\stackrefp{key3}{\le} \sum_{r = 0}^{4^{n}} C(|\mathcal A| + |\mathcal B|)r \pi_3(r) \\
&\stackrefpp{3r8bbc}{key3}{\le} C(|\mathcal A| + |\mathcal B|) (4^{n})^2 \pi_3(4^{n}). 
}
Now \eqref{tb2p} leads to
\[
\mathbb P(\lng_{\mathcal{A}, \mathcal{B}} \ge \theta R^2 \pi_3(R)) 
\le \f{C(|\mathcal A| + |\mathcal B|) 16^{n} \pi_3(4^{n})}{\theta R^2 \pi_3(R)} + \e^{-\delta(n-i'+1)}.
\]
Now set $n = i' + \lfloor \log_{16}\sqrt{\theta} \rfloor$ with $i'$ as in \eqref{jv6xh}, so that $16^n \le 16^{2}(|\AA|+|\BB|+R)^2\sqrt{\theta}$.
In addition, we have $\pi_3(4^n)\le\pi_3(R)$ since $4^{n} \ge 4^{i'} \ge |\AA|+|\BB|+R > R$.
Using these observations in the previous display, we obtain
\eq{
\mathbb P(\lng_{\mathcal{A}, \mathcal{B}} \ge \theta R^2 \pi_3(R))
\leq \frac{C(|\mathcal{A}|+|\mathcal{B}|)(|\mathcal{A}|+|\mathcal{B}|+R)^2}{\sqrt{\theta}R^2} + \e^{-\delta\log_{16}\sqrt{\theta}}.
}
Upon using the inequality $|\AA|+|\BB|+R \le 2R(|\AA|+|\BB|)$, taking $c = \tfrac12\min\{1, \delta/\log 16\}$, and further adjusting the constant $C$, we simplify this final inequality to \eqref{cm33}.
\end{proof}

\section{Proofs for general edge-weights} \label{sec:general_proof}
The goal of this section is to prove Theorem~\ref{thm:general_thm} for any edge-weight distribution function $F$ satisfying \eqref{critical_assumption} and additionally one of following conditions:
\eeqs{
\limsup_{n\to\infty} F^{-1}(p_{\b^{n+1}})/F^{-1}(p_{\b^{n}}) &< 1  
\label{eq : limit_bigger_than_one},\\
\text{or} \quad \liminf_{n\to\infty} F^{-1}(p_{\b^{2n}})/F^{-1}(p_{\b^{n}}) &> 0, \label{eq : limit_equal_zero}
}
where $p_{R}$ is defined in \eqref{def_pn}, and $b$ is an integer (at least $2$) that is fixed throughout this section. 
The conditions \eqref{eq : limit_bigger_than_one} and \eqref{eq : limit_equal_zero} will be used separately in different ways, thereby creating parallel lines of argument at various spots in the exposition.
On the other hand, several intermediate results require only the criticality condition \eqref{critical_assumption}. 
For each result, we will make clear which assumptions are being made.
As before, the constants $C,c>0$ can change with each inequality. 
Their values may depend on $\b$ and the distribution function $F$.

\subsection{Correlation length and verification of examples} \label{sec:corr_length}
The lemma below provides various properties of the correlation length $p\mapsto L(p)$ and its approximate inverse $R\mapsto p_R$, defined in Section \ref{rsl_general}.
Display \eqref{eq: scaling_relation} is commonly referred to as \textit{Kesten's scaling relation}.

\begin{lemma} \label{lem:cor_length}
There exist constants $C,c>0$ such that the following inequalities hold:
\eeq{
\label{eq: L_p_n}
    c R \leq L(p_R) \leq R \quad \text{for all $R\geq1$,}
}
\eeq{
\label{eq: scaling_relation}
    c \leq L(p)^2 \pi_4(1,L(p)) (p-p_\cc) \leq C \quad \text{for all $p>p_\cc$,}
}
\eeq{
\label{eq: comparable_p_n_equation}
    c\Bigl(\f{R}{r}\Bigr)^{-(1-c)}	\le \f{p_R - p_\cc}{p_{r} - p_\cc} \le C\Bigl(\f{R}{r}\Bigr)^{-2/3} \quad \text{for all $R\geq r\geq 1$,}
}
\eeq{
\label{pn-pcbd}
    cR^{-(1-c)} \le p_R - p_\cc \le CR^{-2/3} \quad \text{for all $R\geq 1$.}
}
\end{lemma}

\begin{proof}

For the first claim \eqref{eq: L_p_n}, we start with the upper bound. 
Recall from \eqref{corr_L} that $L(p) = L(p,\eps_\star)$ where $\eps_\star>0$ is chosen sufficiently small, and
\eq{
L(p,\eps_\star) = \min\{ R\geq 1:\, \sigma(R,p) \ge 1 - \eps_\star \}.
}
Since $p\mapsto\sigma(R,p)$ is clearly increasing (and continuous) by definition \eqref{cross_prob}, the map $p\mapsto L(p)$ is weakly decreasing.
It follows from this monotonicity that $L(p)\le R$ for all $p > p_R$, since $p_R = \inf\{p>p_\cc:\, L(p)\le R\}$.
If it were the case that $L(p_R) > R$, then $\sigma(r,p_R) < 1-\eps_\star$ for all $r\le R$, which (by continuity) implies $\sigma(r,p) < 1-\eps_\star$ for all $r\le R$ and $p$ sufficiently close to $p_R$.
But this would mean $L(p)>R$ for all $p$ sufficiently close to $p_R$, which contradicts our earlier observation.
So $L(p_R)\le R$.


Regarding the lower bound in \eqref{eq: L_p_n}, \cite[disp.~(2.10)]{jarai03} states that there exists a constant $D > 0$ such that
\begin{equation} \label{eq:L_Rat}
\lim_{\delta \searrow 0} \f{L(p-\delta)}{L(p)} \le D\quad\text{for all $p > p_\cc$.}
\end{equation}
We now show that this implies the desired lower bound in \eqref{eq: L_p_n}  with $c = \f{1}{2D}$. Assume by way of contradiction that $ \f{R}{2D} > L(p_R)$ for some $R \ge 1$. By definition of $p_R$ as an infimum, every $p\in(p_\cc,p_R)$ has $L(p) > R$.
Hence
\[
\f{L(p_R - \delta)}{L(p_R)} > \f{R}{\frac{R}{2D}} = 2D \quad \text{ for all sufficiently small $\delta > 0$},
\]
a contradiction to \eqref{eq:L_Rat} since $p_R > p_\cc$ by \eqref{k3r78b}.

The second claim \eqref{eq: scaling_relation} appears as \cite[Prop.~34]{nolin08}.
For the third claim \eqref{eq: comparable_p_n_equation}, we begin by making use of the previous parts:
\eeq{ \label{bc63}
\frac{p_R-p_\cc}{p_r-p_\cc}
\stackref{eq: scaling_relation}{\leq}
C\frac{L(p_r)^2\pi_4(1,L(p_r))}{L(p_R)^2\pi_4(1,L(p_R))}
\stackref{eq: L_p_n}{\leq}
C\frac{r^2\pi_4(1,cr)}{R^2\pi_4(1,R)}
\stackref{quasi_c}{\leq}
C\frac{r^2\pi_4(1,r)}{R^2\pi_4(1,R)}.
}
By analogous reasoning, we also have
\eeq{ \label{bc64}
\frac{p_R - p_\cc}{p_r-p_\cc} \geq c\frac{r^2\pi_4(1,r)}{R^2\pi_4(1,R)}.
}
%
Now we use quasi-multiplicativity \eqref{quasi_j} to bound the ratio of $\pi_4$ quantities:
\eq{
\frac{1}{\pi_4(r,R)} 
\le \frac{\pi_4(1,r)}{\pi_4(1,R)}
\le \frac{C}{\pi_4(r,R)}.
}
Insert these upper and lower bounds into \eqref{bc63} and \eqref{bc64} respectively, and then use the known bounds on 4-arm probabilities:
\eq{
&\frac{p_R-p_\cc}{p_r-p_\cc} 
\leq C\frac{r^2}{R^2\pi_4(r,R)}
\stackref{best_4_arm}{\le} C\Big(\frac{R}{r}\Big)^{-2/3}, \\
&\frac{p_R-p_\cc}{p_r-p_\cc} 
\ge c\frac{r^2}{R^2\pi_4(r,R)}
\stackref{best_4_arm}{\ge} c\Big(\frac{R}{r}\Big)^{-(1-c)}.
}
The two previous displays are exactly \eqref{eq: comparable_p_n_equation}.
Finally, \eqref{pn-pcbd} follows from \eqref{eq: comparable_p_n_equation} upon setting $r = 1$. 
%
%
%
 \end{proof}

For completeness, we next check that the distributions in Example \ref{gnwig} have the claimed properties.

\begin{proof}[Proof of Example \ref{gnwig}]
Part \ref{gnwig_a}: Assume $F(t) = p_\cc + \alpha t^\beta$ for $t \in [0,h]$, where $\alpha,\beta,h>0$. 
For $p > p_\cc$ sufficiently close to $p_\cc$, we have $F^{-1}(p) = \bigl((p - p_\cc)/{\alpha}\bigr)^{1/\beta}$. 
So for all $n$ sufficiently large, we have
\[
\frac{F^{-1}(p_{\b^{n+1}})}{F^{-1}(p_{\b^{n}})} 
= \Big(\frac{p_{\b^{n+1}}-p_\cc}{p_{\b^n}-p_\cc}\Big)^{1/\beta}  
\stackref{eq: comparable_p_n_equation}{\le} C\b^{-2/(3\beta)}.
\]
Hence \eqref{eq : limit_bigger_than_one} holds by choosing $\b$ sufficiently large that $C\b^{-2/(3\beta)}<1$.

\medskip \noindent Part \ref{gnwig_b}: Assume $F(0)=F(h) = p_\cc$ for some $h>0$.
Choose $h'>h$ such that $F(h') > p_\cc$.
Then for all $p > p_\cc$ sufficiently close to $p_\cc$, we have $h \leq F^{-1}(p) \leq h'$.
Hence \eqref{eq : limit_equal_zero} holds.

\medskip \noindent Part \ref{gnwig_c}: Assume $F(t) = p_\cc + \alpha\e^{-t^{-\beta}}$ for $t\in(0,h]$, where $\alpha,\beta,h>0$.
Then for $p > p_\cc$ sufficiently close to $p_\cc$, we have
\[
F^{-1}(p) = \Big(\f{1}{\log(\alpha/(p - p_\cc))}\Big)^{1/\beta}.
\]
So for all $n$ sufficiently large, we have
\eq{
\frac{F^{-1}(p_{\b^{2n}})}{F^{-1}(p_{\b^n})}
= \Big(\frac{\log \alpha - \log(p_{\b^n}-p_\cc)}{\log \alpha - \log(p_{\b^{2n}}-p_\cc)}\Big)^{1/\beta}.
}
Sending $n\to\infty$ results in
\eq{
\liminf_{n\to\infty}\frac{F^{-1}(p_{\b^{2n}})}{F^{-1}(p_{\b^n})}
= \Big(\liminf_{n\to\infty}\frac{\log(p_{\b^n}-p_\cc)}{\log(p_{\b^{2n}}-p_\cc)}\Big)^{1/\beta}.
}
So define $\nu_n$ implicitly by 
\eq{
\frac{\log(p_{\b^n}-p_\cc)}{\log(p_{\b^{2n}}-p_\cc)} = \nu_n.
}
In order to establish \eqref{eq : limit_equal_zero}, it suffices to show
\eq{
\liminf_{n\to\infty}\nu_n >0 .
}
To this end, observe that for some $c\in(0,1)$, we have
\eq{
\big(c(\b^{2n})^{-(1-c)})^{\nu_n} 
\stackref{pn-pcbd}{\leq} (p_{\b^{2n}}-p_\cc)^{\nu_n}
= p_{\b^n}-p_\cc
\stackref{pn-pcbd}{\leq} C(\b^n)^{-2/3}.
}
Isolating the first and last expressions, we see that
\eq{
\nu_n(\log_\b  c - 2n(1-c)) &\leq \log_\b (C) - \tfrac{2}{3}n \\
\implies \quad
\liminf_{n\to\infty}\nu_n &\geq \frac{1}{3(1-c)} > 0.
}
Hence \eqref{eq : limit_equal_zero} holds.

\medskip \noindent Part \ref{gnwig_d}: Assume $F$ is such that $F^{-1}(p_R) = \e^{-\sqrt{\log_2 L(p_R)}}$ for all large $R$.
We then have
\eq{
\limsup_{n\to\infty}\frac{F^{-1}(p_{\b^{n+1}})}{F^{-1}(p_{\b^n})}
\stackref{eq: L_p_n}{\ge} \frac{\e^{-\sqrt{(n+1)\log_2 \b}}}{\e^{-\sqrt{n\log_2 \b + \log_2 c}}} \to 1 \quad \text{as $n\to\infty$},
}
as well as
\eq{
\liminf_{n\to\infty}\frac{F^{-1}(p_{\b^{2n}})}{F^{-1}(p_{\b^n})}
\stackref{eq: L_p_n}{\le} \frac{\e^{-\sqrt{2n\log_2 \b + \log_2 c}}}{\e^{-\sqrt{n\log_2 \b}}} \to 0 \quad \text{as $n\to\infty$}.
}
Therefore, neither \eqref{eq : limit_bigger_than_one} nor \eqref{eq : limit_equal_zero} holds.
\end{proof}
\label{sec:corr_length_special}

\subsection{Bounds for passage times of geodesics across annuli} \label{sec_ptgaa}
We will study geodesics from $\vc 0$ to $\partial B_R$ on one scale at a time, examining the subpath between $\partial B_{\b^{k-1}}$ and $\partial B_{\b^k}$.
This task is complicated by the fact that the subpath may not be a geodesic between $\partial B_{\b^{k-1}}$ and $\partial B_{\b^k}$.
Nevertheless, the goal of this subsection is to demonstrate that the passage time restricted to this annulus is ``small'' when measured on a scale depending on $k$ but not on $R$.

We let
$E(B_{\b^k})$ denote the set of edges with both endpoints in $B_{\b^k}$.
To isolate edges in a particular annulus, we define
\eeq{ \label{def_ann}
\EE_1^\ann = E(B_{\b^1}) \qquad \text{and} \qquad \EE_{k}^\ann = E(B_{\b^k}) \setminus E(B_{\b^{k-1}}) \quad \text{for $k\ge2$.}
}
So $\EE_1^\ann,\EE_2^\ann,\dots,\EE_k^\ann$ from a partition of all possible edges used by a path $\gamma$ that does not exit $B_{\b^k}$.
The passage time accumulated in the $k$-th annulus will be denoted by
\eq{
T_k(\gamma) = \sum_{e\in\gamma\cap\EE_k^\ann} t_e.
}
We consider the random variable
\eeq{ \label{def_Tmax}
T_{k,R}  = \sup_{\gamma \in \Geo(\vc 0,\partial B_{R})} T_k(\gamma), \quad k\ge1,
}
where $\Geo(\vc 0,\partial B_{R})$ denotes the set of all geodesics between the origin and $\partial B_{R}$.
Note that $T_{k,R}=0$ if $k > \ceil{\log_\b  R}$ because every edge of $\gamma\in\Geo(\vc 0,\partial B_R)$ has both endpoints in $B_R\subseteq B_{\b^{\ceil{\log_\b R}}}$.

\begin{lemma} \label{lem:big_PT_bound}
Assume \eqref{critical_assumption}.
There exist constants $C,c>0$ such that for all $R\ge1$, $k\ge3$, and $p > p_\cc$ with $L(p)\leq \b^{k-3}$, we have
\eeq{ \label{big_PT_bound_eq}
 \P \Big( T_{k,R}   \geq \lambda F^{-1}(p)\Big( \dfrac{\b^k}{L(p)} \Big)^2 \Big) \leq  C\e^{-c \lambda} + C\exp \Big( -c \dfrac{\b^k}{L(p)}  \Big) \quad \text{for all $\lambda\ge0$.}
}
\end{lemma}


The estimate from Lemma~\ref{lem:big_PT_bound} will be obtained from a similar estimate for passage times across rectangles, quoted below.
Note that by definition, a ``crossing'' stays within the associated rectangle.

\begin{lemma} \textup{\cite[Cor.~1]{damron_tang19}} \label{lem:damron-tang-Tkmax}
Assume \eqref{critical_assumption}.
For any integer $K \ge 2$, there exists a constant $c > 0$ such that for any integer $n\ge1$ and any $p>p_\cc$ with $L(p) \le n$, we have
\eeq{ \label{damron-tang-Tkmax_eq}
\P\Big(\Tsf \ge \lambda F^{-1}(p) \Bigl(\f{n}{L(p)}\Bigr)^2 \Big) \le \e^{-c\lambda} + \exp\Bigl(-c \f{n}{L(p)}\Bigr) \quad \text{for all $\lambda \ge 0$},
}
where $\Tsf$ is the minimal passage time of a left-right crossing $[-Kn,Kn] \times [-n,n]$.
\end{lemma}

\begin{proof}[Proof of Lemma~\ref{lem:big_PT_bound}]
We may assume $k \le \ceil{\log_\b  R}$, since otherwise $T_{k,R} = 0$.
We will bound $T_{k,R}$ from above by the sum of nine first-passage values across different rectangles, each of which can be controlled using Lemma~\ref{lem:damron-tang-Tkmax}.
See Figure \ref{fig:T19} for an illustration.

\begin{figure}[t]
\begin{center}

\tikzset{every picture/.style={line width=0.75pt}} 

\subfloat[$k<\floor{\log_\b  R}$ \label{fig:T19_a}]{
\scalebox{0.85}{
\begin{tikzpicture}[x=0.75pt,y=0.75pt,yscale=-1,xscale=1]

\draw  [draw opacity=0][fill={rgb, 255:red, 184; green, 181; blue, 181 }  ,fill opacity=0.5 ] (302.5,172.5) -- (380,172.5) -- (380,192) -- (302.5,192) -- cycle ;
\draw  [draw opacity=0][fill={rgb, 255:red, 184; green, 181; blue, 181 }  ,fill opacity=0.5 ] (380,172.5) -- (360.5,172.5) -- (360.5,250) -- (380,250) -- cycle ;
\draw  [draw opacity=0][fill={rgb, 255:red, 184; green, 181; blue, 181 }  ,fill opacity=0.5 ] (322,172.5) -- (302.5,172.5) -- (302.5,250.5) -- (322,250.5) -- cycle ;
\draw  [draw opacity=0][fill={rgb, 255:red, 184; green, 181; blue, 181 }  ,fill opacity=0.5 ] (302.5,230.5) -- (380,230.5) -- (380,250) -- (302.5,250) -- cycle ;
\draw  [draw opacity=0][fill={rgb, 255:red, 184; green, 181; blue, 181 }  ,fill opacity=0.5 ] (186.25,56.25) -- (496.5,56.25) -- (496.5,134) -- (186.25,134) -- cycle ;
\draw  [draw opacity=0][fill={rgb, 255:red, 184; green, 181; blue, 181 }  ,fill opacity=0.5 ] (186,288.5) -- (496.25,288.5) -- (496.25,366.25) -- (186,366.25) -- cycle ;
\draw  [draw opacity=0][fill={rgb, 255:red, 184; green, 181; blue, 181 }  ,fill opacity=0.5 ] (263.5,56.25) -- (186,56.25) -- (186,366.25) -- (263.5,366.25) -- cycle ;
\draw  [draw opacity=0][fill={rgb, 255:red, 184; green, 181; blue, 181 }  ,fill opacity=0.5 ] (496.25,56.25) -- (418.75,56.25) -- (418.75,366.25) -- (496.25,366.25) -- cycle ;
\draw    (187,72) .. controls (227,42) and (457.5,103) .. (497.5,73) ;
\draw    (186,333) .. controls (226,303) and (456.5,364) .. (496.5,334) ;
\draw    (302.5,180) .. controls (337,164) and (341.5,194) .. (380.5,178) ;
\draw    (302.5,244) .. controls (337,228) and (341.5,258) .. (380.5,242) ;
\draw    (208.5,57) .. controls (178.5,152) and (260.5,259) .. (233.5,365) ;
\draw    (443.5,56) .. controls (495.5,138) and (444.5,258) .. (479.5,366) ;
\draw  [color={rgb, 255:red, 0; green, 0; blue, 0 }  ,draw opacity=1 ][dash pattern={on 4.5pt off 4.5pt}] (341.5,192) -- (496.5,192) -- (496.5,231) -- (341.5,231) -- cycle ;
\draw    (318,172.5) .. controls (311,224.5) and (321.5,205) .. (316.5,250) ;
\draw    (370.5,172) .. controls (363.5,224) and (379.5,205) .. (374.5,250) ;
\draw  [line width=2.25]  (154.25,24.25) -- (528.25,24.25) -- (528.25,398.25) -- (154.25,398.25) -- cycle ;
\draw    (342,204) .. controls (382,174) and (456.5,242) .. (496.5,212) ;
\draw   [color={rgb, 255:red, 29; green, 92; blue, 216 }  ,draw opacity=1 ] (341.25,211.25) .. controls (338.82,211.52) and (337.46,210.5) .. (337.17,208.17) .. controls (336.94,205.87) and (335.65,204.85) .. (333.3,205.1) .. controls (330.99,205.41) and (329.66,204.4) .. (329.32,202.06) .. controls (328.82,199.69) and (327.41,198.77) .. (325.09,199.31) .. controls (322.95,200.1) and (321.43,199.46) .. (320.52,197.37) .. controls (318.99,195.42) and (317.36,195.29) .. (315.63,196.98) .. controls (314.99,198.95) and (313.62,199.85) .. (311.53,199.66) .. controls (309.2,200.47) and (308.5,202.01) .. (309.44,204.26) .. controls (310.61,206.22) and (310.2,207.83) .. (308.19,209.08) .. controls (306.21,210.39) and (305.88,211.98) .. (307.21,213.86) .. controls (308.5,215.91) and (308.17,217.58) .. (306.2,218.85) .. controls (304.22,220.05) and (303.82,221.7) .. (304.99,223.79) .. controls (306.11,225.84) and (305.59,227.39) .. (303.42,228.45) .. controls (301.27,229.17) and (300.47,230.6) .. (301.04,232.74) .. controls (299.94,235.22) and (298.43,235.52) .. (296.51,233.63) .. controls (295.88,231.39) and (294.42,230.53) .. (292.13,231.05) .. controls (290,231.76) and (288.49,231.08) .. (287.62,229) .. controls (285.83,227.24) and (284.2,227.41) .. (282.73,229.52) .. controls (282.35,231.71) and (281.02,232.72) .. (278.74,232.56) .. controls (276.31,232.88) and (275.45,234.3) .. (276.15,236.82) .. controls (277.81,238.13) and (278.11,239.73) .. (277.04,241.6) .. controls (276.61,243.99) and (277.59,245.28) .. (279.96,245.46) .. controls (282.31,245.37) and (283.59,246.47) .. (283.78,248.74) .. controls (284.27,251.11) and (285.74,252.06) .. (288.17,251.6) .. controls (290.3,250.88) and (291.69,251.62) .. (292.33,253.81) .. controls (293.45,256.16) and (295.11,256.88) .. (297.31,255.99) .. controls (299.45,255.01) and (300.88,255.52) .. (301.61,257.53) .. controls (302.78,259.64) and (304.41,260.11) .. (306.52,258.95) .. controls (308.52,257.7) and (310.17,258.06) .. (311.47,260.01) .. controls (312.9,261.91) and (314.53,262.12) .. (316.38,260.65) .. controls (318.04,259.09) and (319.77,259.15) .. (321.58,260.83) .. controls (323.23,262.42) and (324.86,262.27) .. (326.46,260.39) .. controls (327.69,258.43) and (329.29,258.01) .. (331.26,259.13) .. controls (333.47,259.96) and (334.93,259.17) .. (335.64,256.74) .. controls (335.69,254.45) and (336.8,253.26) .. (338.95,253.19) .. controls (341.34,252.2) and (342.03,250.62) .. (341.03,248.47) .. controls (339.96,246.46) and (340.5,244.94) .. (342.63,243.91) .. controls (344.79,242.8) and (345.36,241.2) .. (344.35,239.11) .. controls (343.36,236.97) and (343.94,235.42) .. (346.09,234.45) .. controls (348.22,233.56) and (348.83,232.05) .. (347.9,229.92) .. controls (347.06,227.68) and (347.75,226.19) .. (349.97,225.44) .. controls (352.2,224.9) and (353.12,223.49) .. (352.72,221.21) .. controls (352.45,218.88) and (353.51,217.58) .. (355.9,217.31) .. controls (358.24,217.2) and (359.35,216) .. (359.24,213.72) .. controls (359.36,211.31) and (360.61,210.19) .. (362.99,210.34) .. controls (365.32,210.65) and (366.66,209.67) .. (367.01,207.42) .. controls (367.74,205.09) and (369.26,204.31) .. (371.57,205.09) .. controls (373.58,206.18) and (375.18,205.78) .. (376.39,203.87) .. controls (378.13,202.16) and (379.77,202.26) .. (381.32,204.16) .. controls (382.51,206.17) and (384.1,206.63) .. (386.1,205.52) .. controls (388.29,204.57) and (389.85,205.2) .. (390.79,207.43) .. controls (391.54,209.63) and (393.03,210.37) .. (395.28,209.64) .. controls (397.5,208.93) and (398.95,209.71) .. (399.62,211.97) .. controls (400.31,214.26) and (401.78,215.08) .. (404.01,214.42) .. controls (406.28,213.77) and (407.76,214.59) .. (408.46,216.88) .. controls (409.09,219.11) and (410.52,219.88) .. (412.77,219.19) .. controls (415.03,218.48) and (416.56,219.26) .. (417.36,221.52) .. controls (418.06,223.73) and (419.54,224.48) .. (421.8,223.78) .. controls (424,223.09) and (425.39,223.87) .. (425.98,226.14) .. controls (426.69,228.53) and (428.1,229.41) .. (430.22,228.8) .. controls (432.58,228.39) and (433.97,229.37) .. (434.39,231.73) .. controls (434.68,234.06) and (435.99,235.11) .. (438.32,234.88) .. controls (440.69,234.76) and (441.87,235.87) .. (441.88,238.21) .. controls (441.81,240.6) and (442.93,241.91) .. (445.26,242.14) .. controls (447.59,242.61) and (448.46,244.04) .. (447.87,246.43) .. controls (446.88,248.46) and (447.35,249.99) .. (449.3,251.03) .. controls (451.03,252.82) and (450.92,254.52) .. (448.97,256.11) .. controls (446.88,256.88) and (446.15,258.32) .. (446.79,260.43) .. controls (446.88,262.9) and (445.73,264.18) .. (443.34,264.27) .. controls (441.1,264.13) and (439.93,265.23) .. (439.83,267.56) .. controls (439.47,269.98) and (438.13,270.96) .. (435.81,270.51) .. controls (433.7,269.72) and (432.17,270.35) .. (431.22,272.38) .. controls (429.59,274.23) and (427.91,274.27) .. (426.18,272.52) .. controls (424.76,270.67) and (423.09,270.47) .. (421.18,271.93) .. controls (419.55,273.51) and (417.89,273.42) .. (416.2,271.67) .. controls (414.78,269.85) and (413.14,269.67) .. (411.28,271.13) .. controls (409.43,272.6) and (407.71,272.41) .. (406.12,270.56) .. controls (404.59,268.75) and (403,268.61) .. (401.35,270.16) .. controls (399.51,271.73) and (397.83,271.67) .. (396.3,269.96) .. controls (394.43,268.31) and (392.7,268.35) .. (391.13,270.08) .. controls (389.68,271.87) and (388.07,272.03) .. (386.3,270.58) .. controls (384.38,269.23) and (382.71,269.59) .. (381.28,271.65) .. controls (380.34,273.7) and (378.76,274.28) .. (376.54,273.39) .. controls (374.33,272.66) and (372.89,273.51) .. (372.24,275.92) .. controls (372.15,278.14) and (370.98,279.2) .. (368.75,279.09) .. controls (366.22,279.62) and (365.26,280.97) .. (365.87,283.15) .. controls (366.63,285.4) and (365.96,286.95) .. (363.86,287.8) .. controls (361.79,288.97) and (361.33,290.57) .. (362.5,292.62) .. controls (363.69,294.62) and (363.24,296.29) .. (361.15,297.64) .. controls (359.14,298.69) and (358.72,300.27) .. (359.89,302.38) .. controls (361.06,304.44) and (360.63,306.04) .. (358.6,307.18) .. controls (356.58,308.25) and (356.13,309.83) .. (357.26,311.94) .. controls (358.39,314.01) and (357.89,315.63) .. (355.76,316.82) .. controls (353.65,317.82) and (353.09,319.33) .. (354.1,321.36) .. controls (354.89,323.71) and (354.16,325.25) .. (351.91,325.98) .. controls (349.68,326.37) and (348.71,327.68) .. (349.02,329.89) .. controls (348.57,332.38) and (347.14,333.28) .. (344.75,332.57) .. controls (343.04,331.2) and (341.43,331.33) .. (339.9,332.95) .. controls (338.1,334.46) and (336.42,334.31) .. (334.85,332.48) .. controls (333.32,330.65) and (331.66,330.47) .. (329.85,331.96) .. controls (327.95,333.43) and (326.31,333.25) .. (324.92,331.42) .. controls (323.48,329.58) and (321.83,329.4) .. (319.97,330.87) .. controls (318.08,332.33) and (316.36,332.14) .. (314.81,330.29) .. controls (313.49,328.47) and (311.85,328.29) .. (309.9,329.75) .. controls (308.2,331.24) and (306.56,331.07) .. (304.97,329.22) .. controls (303.38,327.37) and (301.75,327.2) .. (300.08,328.71) .. controls (298.2,330.2) and (296.5,330.03) .. (294.97,328.21) .. controls (293.48,326.4) and (291.86,326.25) .. (290.11,327.77) .. controls (288.23,329.29) and (286.52,329.16) .. (284.99,327.37) .. controls (283.53,325.6) and (281.94,325.51) .. (280.22,327.09) .. controls (278.43,328.7) and (276.7,328.68) .. (275.05,327.02) .. controls (273.4,325.41) and (271.75,325.5) .. (270.1,327.29) .. controls (268.67,329.12) and (267.02,329.34) .. (265.13,327.93) .. controls (263.25,326.58) and (261.61,326.92) .. (260.2,328.94) .. controls (259.04,330.96) and (257.42,331.41) .. (255.34,330.3) .. controls (253.3,329.23) and (251.72,329.79) .. (250.61,331.99) .. controls (249.75,334.15) and (248.22,334.82) .. (246.02,333.99) .. controls (243.88,333.19) and (242.41,333.95) .. (241.61,336.26) .. controls (241.06,338.5) and (239.66,339.33) .. (237.4,338.76) .. controls (235.05,338.3) and (233.63,339.26) .. (233.14,341.63) .. controls (232.89,343.88) and (231.55,344.9) .. (229.12,344.68) .. controls (226.8,344.43) and (225.58,345.49) .. (225.45,347.86) .. controls (225.61,350.13) and (224.5,351.42) .. (222.12,351.73) .. controls (219.82,352.14) and (218.97,353.46) .. (219.57,355.71) .. controls (220.28,357.98) and (219.53,359.57) .. (217.32,360.47) .. controls (215.21,361.38) and (214.68,362.94) .. (215.73,365.13) .. controls (216.92,367.18) and (216.55,368.75) .. (214.6,369.85) .. controls (212.67,371.26) and (212.41,372.91) .. (213.84,374.8) .. controls (215.33,376.71) and (215.19,378.39) .. (213.43,379.83) .. controls (211.71,381.48) and (211.68,383.12) .. (213.35,384.75) -- (213.39,386.35) -- (214.05,394.32) ;
\draw [shift={(214.5,397)}, rotate = 257.91] [fill={rgb, 255:red, 29; green, 92; blue, 216 }  ,fill opacity=1 ][line width=0.08]  [draw opacity=0] (8.93,-4.29) -- (0,0) -- (8.93,4.29) -- cycle    ;
\draw  [draw opacity=0][fill={rgb, 255:red, 0; green, 0; blue, 0 }  ,fill opacity=1 ] (310,197) .. controls (310,193.96) and (312.46,191.5) .. (315.5,191.5) .. controls (318.54,191.5) and (321,193.96) .. (321,197) .. controls (321,200.04) and (318.54,202.5) .. (315.5,202.5) .. controls (312.46,202.5) and (310,200.04) .. (310,197) -- cycle ;
\draw  [draw opacity=0][fill={rgb, 255:red, 0; green, 0; blue, 0 }  ,fill opacity=1 ] (297,230.5) .. controls (297,227.46) and (299.46,225) .. (302.5,225) .. controls (305.54,225) and (308,227.46) .. (308,230.5) .. controls (308,233.54) and (305.54,236) .. (302.5,236) .. controls (299.46,236) and (297,233.54) .. (297,230.5) -- cycle ;
\draw  [draw opacity=0][fill={rgb, 255:red, 0; green, 0; blue, 0 }  ,fill opacity=1 ] (358,289) .. controls (358,285.96) and (360.46,283.5) .. (363.5,283.5) .. controls (366.54,283.5) and (369,285.96) .. (369,289) .. controls (369,292.04) and (366.54,294.5) .. (363.5,294.5) .. controls (360.46,294.5) and (358,292.04) .. (358,289) -- cycle ;
\draw  [draw opacity=0][fill={rgb, 255:red, 0; green, 0; blue, 0 }  ,fill opacity=1 ] (271,327) .. controls (271,323.96) and (273.46,321.5) .. (276.5,321.5) .. controls (279.54,321.5) and (282,323.96) .. (282,327) .. controls (282,330.04) and (279.54,332.5) .. (276.5,332.5) .. controls (273.46,332.5) and (271,330.04) .. (271,327) -- cycle ;
\draw  [color={rgb, 255:red, 0; green, 0; blue, 0 }  ,draw opacity=1 ] (339.13,211.25) .. controls (339.13,210.08) and (340.08,209.13) .. (341.25,209.13) .. controls (342.42,209.13) and (343.38,210.08) .. (343.38,211.25) .. controls (343.38,212.42) and (342.42,213.38) .. (341.25,213.38) .. controls (340.08,213.38) and (339.13,212.42) .. (339.13,211.25) -- cycle ;
\draw [color={rgb, 255:red, 248; green, 231; blue, 28 }  ,draw opacity=0.5 ][line width=4.5] [line join = round][line cap = round]   (299.5,233) .. controls (291.23,233) and (290.29,227.78) .. (280.5,229) .. controls (274.45,229.76) and (279.1,245.07) .. (283.5,248) .. controls (298.18,257.79) and (306.34,260.31) .. (332.5,259) .. controls (336.59,258.8) and (337.58,252.92) .. (340.5,250) ;
\draw [color={rgb, 255:red, 248; green, 231; blue, 28 }  ,draw opacity=0.5 ][line width=3.75] [line join = round][line cap = round]   (381.5,204) .. controls (389.38,204) and (414.86,222) .. (417.5,222) ;
\draw [color={rgb, 255:red, 248; green, 231; blue, 28 }  ,draw opacity=0.5 ][line width=3.75] [line join = round][line cap = round]   (366.5,284) .. controls (366.5,282.33) and (365.88,280.55) .. (366.5,279) .. controls (366.91,277.97) and (378.5,270) .. (379.5,273) .. controls (383.5,269) and (396.5,269) .. (406.5,270) .. controls (408.36,270.13) and (415.45,273) .. (417.5,273) ;

\draw (336,161.4) node [anchor=north west][inner sep=0.75pt]    {$\mathsf{T}_{1}$};
\draw (374,209.4) node [anchor=north west][inner sep=0.75pt]    {$\mathsf{T}_{2}$};
\draw (349,248.4) node [anchor=north west][inner sep=0.75pt]    {$\mathsf{T}_{3}$};
\draw (318.5,211.9) node [anchor=north west][inner sep=0.75pt]    {$\mathsf{T}_{4}$};
\draw (430,195.4) node [anchor=north west][inner sep=0.75pt]    {$\mathsf{T}_{9}$};
\draw (348,78.4) node [anchor=north west][inner sep=0.75pt]    {$\mathsf{T}_{5}$};
\draw (471,141.4) node [anchor=north west][inner sep=0.75pt]    {$\mathsf{T}_{6}$};
\draw (392,341.4) node [anchor=north west][inner sep=0.75pt]    {$\mathsf{T}_{7}$};
\draw (215,167.4) node [anchor=north west][inner sep=0.75pt]    {$\mathsf{T}_{8}$};
\draw (294,185) node [anchor=north west][inner sep=0.75pt]    {$x_{1}$};
\draw (286,213) node [anchor=north west][inner sep=0.75pt]    {$x_{2}$};
\draw (371,288) node [anchor=north west][inner sep=0.75pt]    {$x_{3}$};
\draw (273,307) node [anchor=north west][inner sep=0.75pt]    {$x_{4}$};
\draw (197,371.4) node [anchor=north west][inner sep=0.75pt]    {$\gamma $};

\end{tikzpicture}
}
}\\[0.03in]

\subfloat[$k=\floor{\log_\b  R}$ \label{fig:T19_b}]{
\scalebox{0.85}{
\begin{tikzpicture}[x=0.75pt,y=0.75pt,yscale=-1,xscale=1]

\draw  [draw opacity=0][fill={rgb, 255:red, 184; green, 181; blue, 181 }  ,fill opacity=0.5 ] (302.5,172.5) -- (380,172.5) -- (380,192) -- (302.5,192) -- cycle ;
\draw  [draw opacity=0][fill={rgb, 255:red, 184; green, 181; blue, 181 }  ,fill opacity=0.5 ] (380,172.5) -- (360.5,172.5) -- (360.5,250) -- (380,250) -- cycle ;
\draw  [draw opacity=0][fill={rgb, 255:red, 184; green, 181; blue, 181 }  ,fill opacity=0.5 ] (322,172.5) -- (302.5,172.5) -- (302.5,250.5) -- (322,250.5) -- cycle ;
\draw  [draw opacity=0][fill={rgb, 255:red, 184; green, 181; blue, 181 }  ,fill opacity=0.5 ] (302.5,230.5) -- (380,230.5) -- (380,250) -- (302.5,250) -- cycle ;
\draw  [draw opacity=0][fill={rgb, 255:red, 184; green, 181; blue, 181 }  ,fill opacity=0.5 ] (186.25,56.25) -- (496.5,56.25) -- (496.5,134) -- (186.25,134) -- cycle ;
\draw  [draw opacity=0][fill={rgb, 255:red, 184; green, 181; blue, 181 }  ,fill opacity=0.5 ] (186,288.5) -- (496.25,288.5) -- (496.25,366.25) -- (186,366.25) -- cycle ;
\draw  [draw opacity=0][fill={rgb, 255:red, 184; green, 181; blue, 181 }  ,fill opacity=0.5 ] (263.5,56.25) -- (186,56.25) -- (186,366.25) -- (263.5,366.25) -- cycle ;
\draw  [draw opacity=0][fill={rgb, 255:red, 184; green, 181; blue, 181 }  ,fill opacity=0.5 ] (496.25,56.25) -- (418.75,56.25) -- (418.75,366.25) -- (496.25,366.25) -- cycle ;
\draw    (302.5,180) .. controls (337,164) and (341.5,194) .. (380.5,178) ;
\draw    (302.5,244) .. controls (337,228) and (341.5,258) .. (380.5,242) ;
\draw  [color={rgb, 255:red, 0; green, 0; blue, 0 }  ,draw opacity=1 ][dash pattern={on 4.5pt off 4.5pt}] (341.5,192) -- (496.5,192) -- (496.5,231) -- (341.5,231) -- cycle ;
\draw    (318,172.5) .. controls (311,224.5) and (321.5,205) .. (316.5,250) ;
\draw    (370.5,172) .. controls (363.5,224) and (379.5,205) .. (374.5,250) ;
\draw  [line width=2.25]  (225.06,95.06) -- (457.44,95.06) -- (457.44,327.44) -- (225.06,327.44) -- cycle ;
\draw    (342,204) .. controls (382,174) and (456.5,242) .. (496.5,212) ;
\draw    [color={rgb, 255:red, 29; green, 92; blue, 216 }  ,draw opacity=1 ] (341.25,211.25) .. controls (338.82,211.52) and (337.46,210.5) .. (337.17,208.17) .. controls (336.94,205.87) and (335.65,204.85) .. (333.3,205.1) .. controls (330.99,205.41) and (329.66,204.4) .. (329.32,202.06) .. controls (328.82,199.69) and (327.41,198.77) .. (325.09,199.31) .. controls (322.95,200.1) and (321.43,199.46) .. (320.52,197.37) .. controls (318.99,195.42) and (317.36,195.29) .. (315.63,196.98) .. controls (314.99,198.95) and (313.62,199.85) .. (311.53,199.66) .. controls (309.2,200.47) and (308.5,202.01) .. (309.44,204.26) .. controls (310.61,206.22) and (310.2,207.83) .. (308.19,209.08) .. controls (306.21,210.39) and (305.88,211.98) .. (307.21,213.86) .. controls (308.5,215.91) and (308.17,217.58) .. (306.2,218.85) .. controls (304.22,220.05) and (303.82,221.7) .. (304.99,223.79) .. controls (306.11,225.84) and (305.59,227.39) .. (303.42,228.45) .. controls (301.27,229.17) and (300.47,230.6) .. (301.04,232.74) .. controls (299.94,235.22) and (298.43,235.52) .. (296.51,233.63) .. controls (295.88,231.39) and (294.42,230.53) .. (292.13,231.05) .. controls (290,231.76) and (288.49,231.08) .. (287.62,229) .. controls (285.83,227.24) and (284.2,227.41) .. (282.73,229.52) .. controls (282.35,231.71) and (281.02,232.72) .. (278.74,232.56) .. controls (276.31,232.88) and (275.45,234.3) .. (276.15,236.82) .. controls (277.81,238.13) and (278.11,239.73) .. (277.04,241.6) .. controls (276.61,243.99) and (277.59,245.28) .. (279.96,245.46) .. controls (282.31,245.37) and (283.59,246.47) .. (283.78,248.74) .. controls (284.27,251.11) and (285.74,252.06) .. (288.17,251.6) .. controls (290.3,250.88) and (291.69,251.62) .. (292.33,253.81) .. controls (293.45,256.16) and (295.11,256.88) .. (297.31,255.99) .. controls (299.45,255.01) and (300.88,255.52) .. (301.61,257.53) .. controls (302.78,259.64) and (304.41,260.11) .. (306.52,258.95) .. controls (308.52,257.7) and (310.17,258.06) .. (311.47,260.01) .. controls (312.9,261.91) and (314.53,262.12) .. (316.38,260.65) .. controls (318.04,259.09) and (319.77,259.15) .. (321.58,260.83) .. controls (323.23,262.42) and (324.86,262.27) .. (326.46,260.39) .. controls (327.69,258.43) and (329.29,258.01) .. (331.26,259.13) .. controls (333.47,259.96) and (334.93,259.17) .. (335.64,256.74) .. controls (335.69,254.45) and (336.8,253.26) .. (338.95,253.19) .. controls (341.34,252.2) and (342.03,250.62) .. (341.03,248.47) .. controls (339.96,246.46) and (340.5,244.94) .. (342.63,243.91) .. controls (344.79,242.8) and (345.36,241.2) .. (344.35,239.11) .. controls (343.36,236.97) and (343.94,235.42) .. (346.09,234.45) .. controls (348.22,233.56) and (348.83,232.05) .. (347.9,229.92) .. controls (347.06,227.68) and (347.75,226.19) .. (349.97,225.44) .. controls (352.2,224.9) and (353.12,223.49) .. (352.72,221.21) .. controls (352.45,218.88) and (353.51,217.58) .. (355.9,217.31) .. controls (358.24,217.2) and (359.35,216) .. (359.24,213.72) .. controls (359.36,211.31) and (360.61,210.19) .. (362.99,210.34) .. controls (365.32,210.65) and (366.66,209.67) .. (367.01,207.42) .. controls (367.74,205.09) and (369.26,204.31) .. (371.57,205.09) .. controls (373.58,206.18) and (375.18,205.78) .. (376.39,203.87) .. controls (378.13,202.16) and (379.77,202.26) .. (381.32,204.16) .. controls (382.51,206.17) and (384.1,206.63) .. (386.1,205.52) .. controls (388.29,204.57) and (389.85,205.2) .. (390.79,207.43) .. controls (391.54,209.63) and (393.03,210.37) .. (395.28,209.64) .. controls (397.5,208.93) and (398.95,209.71) .. (399.62,211.97) .. controls (400.31,214.26) and (401.78,215.08) .. (404.01,214.42) .. controls (406.28,213.77) and (407.76,214.59) .. (408.46,216.88) .. controls (409.09,219.11) and (410.52,219.88) .. (412.77,219.19) .. controls (415.03,218.48) and (416.56,219.26) .. (417.36,221.52) .. controls (418.06,223.73) and (419.54,224.48) .. (421.8,223.78) .. controls (424,223.09) and (425.39,223.87) .. (425.98,226.14) .. controls (426.69,228.53) and (428.1,229.41) .. (430.22,228.8) .. controls (432.58,228.39) and (433.97,229.37) .. (434.39,231.73) .. controls (434.68,234.06) and (435.99,235.11) .. (438.32,234.88) .. controls (440.69,234.76) and (441.87,235.87) .. (441.88,238.21) .. controls (441.81,240.6) and (442.93,241.91) .. (445.26,242.14) .. controls (447.59,242.61) and (448.46,244.04) .. (447.87,246.43) .. controls (446.88,248.46) and (447.35,249.99) .. (449.3,251.03) .. controls (451.03,252.82) and (450.92,254.52) .. (448.97,256.11) .. controls (446.88,256.88) and (446.15,258.32) .. (446.79,260.43) .. controls (446.88,262.9) and (445.73,264.18) .. (443.34,264.27) .. controls (441.1,264.13) and (439.93,265.23) .. (439.83,267.56) .. controls (439.47,269.98) and (438.13,270.96) .. (435.81,270.51) .. controls (433.7,269.72) and (432.17,270.35) .. (431.22,272.38) .. controls (429.59,274.23) and (427.91,274.27) .. (426.18,272.52) .. controls (424.76,270.67) and (423.09,270.47) .. (421.18,271.93) .. controls (419.55,273.51) and (417.89,273.42) .. (416.2,271.67) .. controls (414.78,269.85) and (413.14,269.67) .. (411.28,271.13) .. controls (409.43,272.6) and (407.71,272.41) .. (406.12,270.56) .. controls (404.59,268.75) and (403,268.61) .. (401.35,270.16) .. controls (399.51,271.73) and (397.83,271.67) .. (396.3,269.96) .. controls (394.43,268.31) and (392.7,268.35) .. (391.13,270.08) .. controls (389.68,271.87) and (388.07,272.03) .. (386.3,270.58) .. controls (384.38,269.23) and (382.71,269.59) .. (381.28,271.65) .. controls (380.34,273.7) and (378.76,274.28) .. (376.54,273.39) .. controls (374.33,272.66) and (372.89,273.51) .. (372.24,275.92) .. controls (372.15,278.14) and (370.98,279.2) .. (368.75,279.09) .. controls (366.22,279.62) and (365.26,280.97) .. (365.87,283.15) .. controls (366.63,285.4) and (365.96,286.95) .. (363.86,287.8) .. controls (361.74,289.13) and (361.29,290.72) .. (362.52,292.55) .. controls (363.67,294.68) and (363.23,296.29) .. (361.22,297.39) .. controls (359.17,298.7) and (358.75,300.37) .. (359.97,302.4) .. controls (361.27,304.26) and (360.95,305.85) .. (359.01,307.17) -- (358.66,312.14) -- (359.44,320.02) ;
\draw [shift={(359.5,323)}, rotate = 270] [fill={rgb, 255:red, 29; green, 92; blue, 216 }  ][line width=0.08]  [draw opacity=0] (8.93,-4.29) -- (0,0) -- (8.93,4.29) -- cycle    ;
\draw  [draw opacity=0][fill={rgb, 255:red, 0; green, 0; blue, 0 }  ,fill opacity=1 ] (310,197) .. controls (310,193.96) and (312.46,191.5) .. (315.5,191.5) .. controls (318.54,191.5) and (321,193.96) .. (321,197) .. controls (321,200.04) and (318.54,202.5) .. (315.5,202.5) .. controls (312.46,202.5) and (310,200.04) .. (310,197) -- cycle ;
\draw  [draw opacity=0][fill={rgb, 255:red, 0; green, 0; blue, 0 }  ,fill opacity=1 ] (297,230.5) .. controls (297,227.46) and (299.46,225) .. (302.5,225) .. controls (305.54,225) and (308,227.46) .. (308,230.5) .. controls (308,233.54) and (305.54,236) .. (302.5,236) .. controls (299.46,236) and (297,233.54) .. (297,230.5) -- cycle ;
\draw  [draw opacity=0][fill={rgb, 255:red, 0; green, 0; blue, 0 }  ,fill opacity=1 ] (358,289) .. controls (358,285.96) and (360.46,283.5) .. (363.5,283.5) .. controls (366.54,283.5) and (369,285.96) .. (369,289) .. controls (369,292.04) and (366.54,294.5) .. (363.5,294.5) .. controls (360.46,294.5) and (358,292.04) .. (358,289) -- cycle ;
\draw  [draw opacity=0][fill={rgb, 255:red, 0; green, 0; blue, 0 }  ,fill opacity=1 ] (354,328.5) .. controls (354,325.46) and (356.46,323) .. (359.5,323) .. controls (362.54,323) and (365,325.46) .. (365,328.5) .. controls (365,331.54) and (362.54,334) .. (359.5,334) .. controls (356.46,334) and (354,331.54) .. (354,328.5) -- cycle ;
\draw  [color={rgb, 255:red, 0; green, 0; blue, 0 }  ,draw opacity=1 ] (339.13,211.25) .. controls (339.13,210.08) and (340.08,209.13) .. (341.25,209.13) .. controls (342.42,209.13) and (343.38,210.08) .. (343.38,211.25) .. controls (343.38,212.42) and (342.42,213.38) .. (341.25,213.38) .. controls (340.08,213.38) and (339.13,212.42) .. (339.13,211.25) -- cycle ;
\draw [color={rgb, 255:red, 248; green, 231; blue, 28 }  ,draw opacity=0.5 ][line width=4.5] [line join = round][line cap = round]   (299.5,233) .. controls (291.23,233) and (290.29,227.78) .. (280.5,229) .. controls (274.45,229.76) and (279.1,245.07) .. (283.5,248) .. controls (298.18,257.79) and (306.34,260.31) .. (332.5,259) .. controls (336.59,258.8) and (337.58,252.92) .. (340.5,250) ;
\draw [color={rgb, 255:red, 248; green, 231; blue, 28 }  ,draw opacity=0.5 ][line width=3.75] [line join = round][line cap = round]   (381.5,204) .. controls (389.38,204) and (414.86,222) .. (417.5,222) ;
\draw [color={rgb, 255:red, 248; green, 231; blue, 28 }  ,draw opacity=0.5 ][line width=3.75] [line join = round][line cap = round]   (366.5,284) .. controls (366.5,282.33) and (365.88,280.55) .. (366.5,279) .. controls (366.91,277.97) and (378.5,270) .. (379.5,273) .. controls (383.5,269) and (396.5,269) .. (406.5,270) .. controls (408.36,270.13) and (415.45,273) .. (417.5,273) ;

\draw (336,161.4) node [anchor=north west][inner sep=0.75pt]    {$\mathsf{T}_{1}$};
\draw (374,209.4) node [anchor=north west][inner sep=0.75pt]    {$\mathsf{T}_{2}$};
\draw (349,248.4) node [anchor=north west][inner sep=0.75pt]    {$\mathsf{T}_{3}$};
\draw (318.5,211.9) node [anchor=north west][inner sep=0.75pt]    {$\mathsf{T}_{4}$};
\draw (468,200) node [anchor=north west][inner sep=0.75pt]    {$\mathsf{T}_{9}$};
\draw (293,185) node [anchor=north west][inner sep=0.75pt]    {$x_{1}$};
\draw (285,213) node [anchor=north west][inner sep=0.75pt]    {$x_{2}$};
\draw (372,288) node [anchor=north west][inner sep=0.75pt]    {$x_{3}$};
\draw (352,338) node [anchor=north west][inner sep=0.75pt]    {$x_{4}$};
\draw (343,299) node [anchor=north west][inner sep=0.75pt]    {$\gamma $};

\end{tikzpicture}
}
}
\hfill
\subfloat[$k=\ceil{\log_\b  R}$ \label{fig:T19_c}]{
\scalebox{0.85}{
\begin{tikzpicture}[x=0.75pt,y=0.75pt,yscale=-1,xscale=1]

\draw  [draw opacity=0][fill={rgb, 255:red, 184; green, 181; blue, 181 }  ,fill opacity=0.5 ] (302.5,172.5) -- (380,172.5) -- (380,192) -- (302.5,192) -- cycle ;
\draw  [draw opacity=0][fill={rgb, 255:red, 184; green, 181; blue, 181 }  ,fill opacity=0.5 ] (380,172.5) -- (360.5,172.5) -- (360.5,250) -- (380,250) -- cycle ;
\draw  [draw opacity=0][fill={rgb, 255:red, 184; green, 181; blue, 181 }  ,fill opacity=0.5 ] (322,172.5) -- (302.5,172.5) -- (302.5,250.5) -- (322,250.5) -- cycle ;
\draw  [draw opacity=0][fill={rgb, 255:red, 184; green, 181; blue, 181 }  ,fill opacity=0.5 ] (302.5,230.5) -- (380,230.5) -- (380,250) -- (302.5,250) -- cycle ;
\draw  [draw opacity=0][fill={rgb, 255:red, 184; green, 181; blue, 181 }  ,fill opacity=0.5 ] (186.25,56.25) -- (496.5,56.25) -- (496.5,134) -- (186.25,134) -- cycle ;
\draw  [draw opacity=0][fill={rgb, 255:red, 184; green, 181; blue, 181 }  ,fill opacity=0.5 ] (186,288.5) -- (496.25,288.5) -- (496.25,366.25) -- (186,366.25) -- cycle ;
\draw  [draw opacity=0][fill={rgb, 255:red, 184; green, 181; blue, 181 }  ,fill opacity=0.5 ] (263.5,56.25) -- (186,56.25) -- (186,366.25) -- (263.5,366.25) -- cycle ;
\draw  [draw opacity=0][fill={rgb, 255:red, 184; green, 181; blue, 181 }  ,fill opacity=0.5 ] (496.25,56.25) -- (418.75,56.25) -- (418.75,366.25) -- (496.25,366.25) -- cycle ;
\draw    (302.5,180) .. controls (337,164) and (341.5,194) .. (380.5,178) ;
\draw    (302.5,244) .. controls (337,228) and (341.5,258) .. (380.5,242) ;
\draw  [color={rgb, 255:red, 0; green, 0; blue, 0 }  ,draw opacity=1 ][dash pattern={on 4.5pt off 4.5pt}] (341.5,192) -- (496.5,192) -- (496.5,231) -- (341.5,231) -- cycle ;
\draw    (318,172.5) .. controls (311,224.5) and (321.5,205) .. (316.5,250) ;
\draw    (370.5,172) .. controls (363.5,224) and (379.5,205) .. (374.5,250) ;
\draw  [line width=2.25]  (271.53,143.66) -- (410.97,143.66) -- (410.97,283.09) -- (271.53,283.09) -- cycle ;
\draw    (342,204) .. controls (382,174) and (456.5,242) .. (496.5,212) ;
\draw    [color={rgb, 255:red, 29; green, 92; blue, 216 }  ,draw opacity=1 ] (341.25,211.25) .. controls (338.82,211.52) and (337.46,210.5) .. (337.17,208.17) .. controls (336.94,205.87) and (335.65,204.85) .. (333.3,205.1) .. controls (330.99,205.41) and (329.66,204.4) .. (329.32,202.06) .. controls (328.82,199.69) and (327.41,198.77) .. (325.09,199.31) .. controls (322.95,200.1) and (321.43,199.46) .. (320.52,197.37) .. controls (318.99,195.42) and (317.36,195.29) .. (315.63,196.98) .. controls (314.99,198.95) and (313.62,199.85) .. (311.53,199.66) .. controls (309.2,200.47) and (308.5,202.01) .. (309.44,204.26) .. controls (310.61,206.22) and (310.2,207.83) .. (308.19,209.08) .. controls (306.21,210.39) and (305.88,211.98) .. (307.21,213.86) .. controls (308.5,215.91) and (308.17,217.58) .. (306.2,218.85) .. controls (304.22,220.05) and (303.82,221.7) .. (304.99,223.79) .. controls (306.11,225.84) and (305.59,227.39) .. (303.42,228.45) .. controls (301.27,229.17) and (300.47,230.6) .. (301.04,232.74) .. controls (299.94,235.22) and (298.43,235.52) .. (296.51,233.63) .. controls (295.88,231.39) and (294.42,230.53) .. (292.13,231.05) .. controls (290,231.76) and (288.49,231.08) .. (287.62,229) .. controls (285.83,227.24) and (284.2,227.41) .. (282.73,229.52) .. controls (282.35,231.71) and (281.02,232.72) .. (278.74,232.56) .. controls (276.31,232.88) and (275.45,234.3) .. (276.15,236.82) .. controls (277.81,238.13) and (278.11,239.73) .. (277.04,241.6) .. controls (276.61,243.99) and (277.59,245.28) .. (279.96,245.46) .. controls (282.31,245.37) and (283.59,246.47) .. (283.78,248.74) .. controls (284.27,251.11) and (285.74,252.06) .. (288.17,251.6) .. controls (290.3,250.88) and (291.69,251.62) .. (292.33,253.81) .. controls (293.45,256.16) and (295.11,256.88) .. (297.31,255.99) .. controls (299.45,255.01) and (300.88,255.52) .. (301.61,257.53) .. controls (302.78,259.64) and (304.41,260.11) .. (306.52,258.95) .. controls (308.52,257.7) and (310.17,258.06) .. (311.47,260.01) .. controls (312.9,261.91) and (314.53,262.12) .. (316.38,260.65) .. controls (318.04,259.09) and (319.77,259.15) .. (321.58,260.83) .. controls (323.23,262.42) and (324.86,262.27) .. (326.46,260.39) .. controls (327.69,258.43) and (329.29,258.01) .. (331.26,259.13) .. controls (333.47,259.96) and (334.93,259.17) .. (335.64,256.74) .. controls (335.69,254.45) and (336.8,253.26) .. (338.95,253.19) .. controls (341.34,252.2) and (342.03,250.62) .. (341.03,248.47) .. controls (339.96,246.46) and (340.5,244.94) .. (342.63,243.91) .. controls (344.79,242.8) and (345.36,241.2) .. (344.35,239.11) .. controls (343.36,236.97) and (343.94,235.42) .. (346.09,234.45) .. controls (348.22,233.56) and (348.83,232.05) .. (347.9,229.92) .. controls (347.06,227.68) and (347.75,226.19) .. (349.97,225.44) .. controls (352.2,224.9) and (353.12,223.49) .. (352.72,221.21) .. controls (352.45,218.88) and (353.51,217.58) .. (355.9,217.31) .. controls (358.24,217.2) and (359.35,216) .. (359.24,213.72) .. controls (359.36,211.31) and (360.61,210.19) .. (362.99,210.34) .. controls (365.32,210.65) and (366.66,209.67) .. (367.01,207.42) .. controls (367.74,205.09) and (369.26,204.31) .. (371.57,205.09) .. controls (373.58,206.18) and (375.18,205.78) .. (376.39,203.87) .. controls (378.13,202.16) and (379.75,202.25) .. (381.26,204.14) .. controls (382.59,206.07) and (384.25,206.38) .. (386.23,205.07) .. controls (388.26,203.82) and (389.85,204.22) .. (391,206.29) .. controls (391.95,208.4) and (393.51,209.02) .. (395.7,208.13) -- (397.31,208.86) -- (404.51,212.29) ;
\draw [shift={(407,213.5)}, rotate = 205.93] [fill={rgb, 255:red, 29; green, 92; blue, 216 }  ][line width=0.08]  [draw opacity=0] (8.93,-4.29) -- (0,0) -- (8.93,4.29) -- cycle    ;
\draw  [draw opacity=0][fill={rgb, 255:red, 0; green, 0; blue, 0 }  ,fill opacity=1 ] (310,197) .. controls (310,193.96) and (312.46,191.5) .. (315.5,191.5) .. controls (318.54,191.5) and (321,193.96) .. (321,197) .. controls (321,200.04) and (318.54,202.5) .. (315.5,202.5) .. controls (312.46,202.5) and (310,200.04) .. (310,197) -- cycle ;
\draw  [draw opacity=0][fill={rgb, 255:red, 0; green, 0; blue, 0 }  ,fill opacity=1 ] (297,230.5) .. controls (297,227.46) and (299.46,225) .. (302.5,225) .. controls (305.54,225) and (308,227.46) .. (308,230.5) .. controls (308,233.54) and (305.54,236) .. (302.5,236) .. controls (299.46,236) and (297,233.54) .. (297,230.5) -- cycle ;
\draw  [draw opacity=0][fill={rgb, 255:red, 0; green, 0; blue, 0 }  ,fill opacity=1 ] (407,213.5) .. controls (407,210.46) and (409.46,208) .. (412.5,208) .. controls (415.54,208) and (418,210.46) .. (418,213.5) .. controls (418,216.54) and (415.54,219) .. (412.5,219) .. controls (409.46,219) and (407,216.54) .. (407,213.5) -- cycle ;
\draw  [color={rgb, 255:red, 0; green, 0; blue, 0 }  ,draw opacity=1 ] (339.13,211.25) .. controls (339.13,210.08) and (340.08,209.13) .. (341.25,209.13) .. controls (342.42,209.13) and (343.38,210.08) .. (343.38,211.25) .. controls (343.38,212.42) and (342.42,213.38) .. (341.25,213.38) .. controls (340.08,213.38) and (339.13,212.42) .. (339.13,211.25) -- cycle ;
\draw [color={rgb, 255:red, 248; green, 231; blue, 28 }  ,draw opacity=0.5 ][line width=4.5] [line join = round][line cap = round]   (299.5,233) .. controls (291.23,233) and (290.29,227.78) .. (280.5,229) .. controls (274.45,229.76) and (279.1,245.07) .. (283.5,248) .. controls (298.18,257.79) and (306.34,260.31) .. (332.5,259) .. controls (336.59,258.8) and (337.58,252.92) .. (340.5,250) ;
\draw [color={rgb, 255:red, 248; green, 231; blue, 28 }  ,draw opacity=0.5 ][line width=3.75] [line join = round][line cap = round]   (381.5,204) .. controls (389.38,204) and (413.5,211) .. (407,213.5) ;

\draw (336,161.4) node [anchor=north west][inner sep=0.75pt]    {$\mathsf{T}_{1}$};
\draw (374,209.4) node [anchor=north west][inner sep=0.75pt]    {$\mathsf{T}_{2}$};
\draw (349,248.4) node [anchor=north west][inner sep=0.75pt]    {$\mathsf{T}_{3}$};
\draw (318.5,211.9) node [anchor=north west][inner sep=0.75pt]    {$\mathsf{T}_{4}$};
\draw (459,198.4) node [anchor=north west][inner sep=0.75pt]    {$\mathsf{T}_{9}$};
\draw (293,185) node [anchor=north west][inner sep=0.75pt]    {$x_{1}$};
\draw (285,213) node [anchor=north west][inner sep=0.75pt]    {$x_{2}$};
\draw (417,214) node [anchor=north west][inner sep=0.75pt]    {$x_{3}$};
\draw (286.5,259) node [anchor=north west][inner sep=0.75pt]    {$\gamma $};

\end{tikzpicture}
}
}

\caption{\small Illustration for the proof of Lemma~\ref{lem:big_PT_bound}.
In all three cases, the thick black box identifies $\partial B_R$, and $\gamma\in\Geo(\vc 0,\partial B_R)$ is displayed as a wavy blue path.
The inner shaded annulus is $(B_{\b^{k-1}}\setminus B_{\b^{k-2}})\cup\partial B_{\b^{k-2}}$, the outer shaded annulus is $(B_{\b^{k+1}} \setminus B_{\b^{k}}) \cup \partial B_{\b^{k}}$, and the two annuli are connected by the rectangle $[0,\b^{k+1}]\times[-\b^{k-2},\b^{k-2}]$, whose boundary is shown as dashed lines.
The edges belonging to $\gamma\cap\EE_k^\ann$ are highlighted in yellow.}
\label{fig:T19}

\end{center}
\end{figure}

To begin, we cover the annulus $(B_{\b^{k-1}} \setminus B_{\b^{k - 2}}) \cup \partial B_{\b^{k - 2}}$ by four rectangles corresponding to the top, right, bottom, and left portions of the annulus, in that order:
\begin{enumerate}[label=\textup{\arabic*.}]
\item $[-\b^{k-1},\b^{k-1}] \times [\b^{k - 2},\b^{k-1}]$
\item $[\b^{k - 2},\b^{k-1}] \times [-\b^{k-1},\b^{k-1}]$
\item $[-\b^{k-1},\b^{k-1}] \times [-\b^{k-1},-\b^{k - 2}]$
\item $[-\b^{k-1},-\b^{k - 2}]\times [-\b^{k-1},\b^{k-1}]$.
\end{enumerate}
Let $\Tsf_{1}$ be the minimal passage time among all left-right crossings of the first rectangle. 
Let $\Tsf_{2}$ be the minimal passage time among all top-bottom crossings of the second rectangle. 
Define $\Tsf_{3}$ and $\Tsf_{4}$ analogously to $\Tsf_{1}$ and $\Tsf_{2}$, respectively.
Any four crossings $\gamma_1,\gamma_2,\gamma_3,\gamma_4$ that achieve these passage times $\Tsf_{1},\Tsf_{2},\Tsf_{3},\Tsf_{4}$ must intersect in a cyclical pattern, i.e.\ $\gamma_1$ intersects $\gamma_2$, $\gamma_2$ intersects $\gamma_3$, $\gamma_3$ intersects $\gamma_4$, and $\gamma_4$ intersects $\gamma_1$. 
So by concatenating subpaths at intersection points, we construct a circuit $\CC_1$ that is contained in $B_{\b^{k-1}}$, surrounds the circuit formed by $\partial B_{\b^{k-2}}$ (thanks to Lemma~\ref{surround_lemma}), and has passage time at most $\sum_{i = 1}^4 \Tsf_{i}$. 

In the same way, we cover $(B_{\b^{k+1}} \setminus B_{\b^{k}}) \cup \partial B_{\b^{k}}$ by four rectangles and define $\Tsf_{5},\Tsf_{6},\Tsf_{7},\Tsf_{8}$ analogously to $\Tsf_{1},\Tsf_{2},\Tsf_{3},\Tsf_{4}$.
Once again we identify a circuit, this time called $\CC_2$, that is contained in $B_{\b^{k+1}}$, surrounds the circuit formed by $\partial B_{\b^{k}}$, and has passage time at most $\sum_{i = 5}^8 \Tsf_{i}$.

Finally, let $\Tsf_9$ be the minimal passage time of a left-right crossing of the rectangle $[0,\b^{k + 1}] \times [-\b^{k - 2},\b^{k - 2}]$, and choose a crossing $\gamma'$ achieving this passage time.
Since $\gamma'$ starts in $B_{\b^{k-2}}$ and ends at $\partial B_{\b^{k+1}}$, it must intersect both $\CC_1$ and $\CC_2$.
In the special cases $k=\floor{\log_\b  R}$ or $k=\ceil{\log_\b  R}$, we will instead use the fact that $\gamma'$ intersects both $\CC_1$ and $\partial B_R$.

We now claim 
    \begin{equation} \label{eqn:Tk123_new}
    T_{k,R}  \leq \sum_{i = 1}^9 \mathsf{T}_{i}.
    \end{equation}
To justify this claim, we consider any $\gamma\in\Geo(\vc0,\partial B_{R})$.
We identify four particular vertices on $\gamma$ which---by the geometry of our construction---are reached in the order $x_1,x_2,x_3,x_4$ as $\gamma$ proceeds from the origin to $\partial B_R$:
\begin{itemize}
\item If $k<\floor{\log_\b  R}$, let $x_1\in B_{\b^{k-1}}$ be the first intersection of $\gamma$ with $\CC_1$, let $x_2$ be the first intersection with $\partial B_{\b^{k-1}}$, let $x_3$ be last intersection with $\partial B_{\b^{k}}$, and let $x_4$ be the last intersection with $\CC_2$.
It is possible that $x_1=x_2$ and/or $x_3=x_4$. See Figure~\ref{fig:T19_a}.
\item If $k=\floor{\log_\b  R}$, let $x_1$, $x_2$, and $x_3$ be as above, and let $x_4\in\partial B_{R}\subseteq B_{\b^{k+1}}$ be the final vertex of $\gamma$.  See Figure~\ref{fig:T19_b}.
\item If $k=\ceil{\log_\b  R}$, let $x_1$ and $x_2$ be as above, and let $x_3=x_4\in\partial B_{R}\subseteq B_{\b^{k}}$ be the final vertex of $\gamma$.  See Figure~\ref{fig:T19_c}.
\end{itemize}
Let $\gamma_{x_i,x_j}$ be the subpath of $\gamma$ that starts at $x_i$ and ends at $x_j$.
By extremality of $x_2$ and $x_3$, any edge $e\in\gamma\cap\EE_k^\ann$ must belong to $\gamma_{x_2,x_3}$.
Hence
\eeq{ \label{ju8g4}
T_k(\gamma) \le T(\gamma_{x_2,x_3}).
}
On the other hand, there is an alternative route from $x_1$ to $x_4$ (or from $x_1$ to $\partial B_{R}$, if $k=\floor{\log_\b  R}$ or $k=\ceil{\log_\b  R}$): follow $\CC_1$ until reaching $\gamma'$, then follow $\gamma'$ until reaching $\CC_2$ (or until reaching $\partial B_{R}$), and finally follow $\CC_2$ to $x_4$.
The passage time of this alternative route is at most $\sum_{i=1}^9 \Tsf_i$. 
Since $\gamma$ is a geodesic, we conclude 
\eeq{ \label{ju8g5}
T(\gamma_{x_1,x_4})\le \sum_{i=1}^9 \Tsf_i.
}
Connecting \eqref{ju8g4} and \eqref{ju8g5} with the trivial inequality $T(\gamma_{x_2,x_3})\le T(\gamma_{x_1,x_4})$, we obtain \eqref{eqn:Tk123_new}.

From \eqref{eqn:Tk123_new} a simple union bound gives
     \eq{
     \P \Big( T_{k,R}   \geq \lambda F^{-1}(p)\Big( \dfrac{\b^k}{L(p)} \Big)^2  \Big)\le \sum_{i = 1}^9 \Big( \Tsf_{i}  \geq \f{\lambda}{9} F^{-1}(p)\Big(\dfrac{\b^k}{L(p)} \Big)^2\Big). 
     }
     Finally, we apply \eqref{damron-tang-Tkmax_eq} to each term on the right-hand side. Note that after translation and rotation, the nine rectangles match the setup of Lemma \ref{lem:damron-tang-Tkmax}: with $(n,K) = (\frac{\b^{k-1}-\b^{k-2}}{2},\frac{2b}{b-1})$ for $\Tsf_1,\dots,\Tsf_4$, $(n,K) = (\frac{\b^{k+1}-\b^k}{2},\frac{2b}{b-1})$ for $\Tsf_5,\dots,\Tsf_8$, and $(n,K) = (\b^{k-2},\frac{\b^3}{2})$ for $\Tsf_9$.
     Also note that in all nine cases, we have $n\ge\b^{k-3}\ge L(p)$.
\end{proof}

Since the second term on the right-hand side of \eqref{big_PT_bound_eq} does not depend on $\lambda$, it makes sense to lower $\lambda$ all the way to $\b^k/L(p)$.
This is essentially what is done in the following result, for various $p$ satisfying the constraint $L(p)\le \b^{k-3}$ from Lemma \ref{lem:big_PT_bound}.
We will use (here and in numerous future locations) the following simple inequalities:
\eeq{ \label{floor_bound}
x/2 \le \floor{x} \le \ceil{x} \le 2x \quad \text{for all $x\ge1$}.
}

\begin{lemma} \label{cor:Tkmaxbd} 
Assume \eqref{critical_assumption}.
There exist constants $C,c > 0 $ such that
\eeq{ \label{cor:Tkmaxbd_eq}
\mathbb{P}\Big(T_{k,R} \geq  \theta^3 F^{-1}( p_{\floor{\b^{k-3}/\theta}}) \Big) \leq C  \e^{-c\theta}
\quad \text{for all $R\ge1$, $k\ge3$, $\theta\in[1,\b^{k-3}]$.}
}
\end{lemma}
	
\begin{proof}
By the upper bound in \eqref{eq: L_p_n}, we have $L(p_{\floor{\b^{k-3}/\theta}}) \le \floor{\b^{k-3}/\theta} \le \b^{k-3}$.
Therefore, we may use Lemma \ref{lem:big_PT_bound} with $p = p_{\floor{\b^{k-3}/\theta}}$ to obtain the following inequality for all $\lambda\ge0$:
\eq{
 \P \bigg( T_{k,R}   \geq \lambda F^{-1}(p_{\floor{\b^{k-3}/\theta}})\Big( \dfrac{\b^k}{L(p_{\floor{\b^{k-3}/\theta}})} \Big)^2 \bigg) 
 &\leq  C\e^{-c\lambda} + C\exp \Big( -c \dfrac{\b^k}{L(p_{\floor{\b^{k-3}/\theta}})}  \Big) \\
  &\leq  C\e^{-c\lambda} + C\exp \Big( -c \dfrac{\b^k}{\b^{k-3}/\theta}\Big)  \Big) \\
  &= C\e^{-c\lambda} + C\e^{-c\b^3\theta}.
}
By the lower bound in \eqref{eq: L_p_n}, there is a constant $\delta\in(0,\b^{-3}/2)$ (not depending on $R$, $k$, or $\theta$) such that
\eq{
L(p_{\floor{\b^{k-3}/\theta}}) \ge \delta\floor{\b^{k-3}/\theta} \stackref{floor_bound}{\ge} \delta\cdot \b^{k-3}/(2\theta) \ge \delta^2\cdot \b^k/\theta,
}
so the previous display implies
\eq{
 \P \Big( T_{k,R}   \geq \lambda F^{-1}(p_{\floor{\b^{k-3}/\theta}})\Big(\frac{\theta}{\delta^2}\Big)^2 \Big) \leq  C\e^{-c\lambda} + C\e^{-c\b^3\theta}.
}
Finally set $\lambda = \delta^4\theta$ to obtain \eqref{cor:Tkmaxbd_eq} after suitably adjusting $c$ and $C$. 
	\end{proof}

The drawback of \eqref{cor:Tkmaxbd_eq} is that $\theta^3F^{-1}( p_{\floor{\b^{k-3}/\theta}})$ can diverge as $\theta$ and $k$ jointly tend to $\infty$.
But under the additional hypothesis of \eqref{eq : limit_bigger_than_one}, we can keep prevent this divergence provided $\theta$ is not too close to $\b^{k}$:

\begin{proposition} \label{lem:param}
Assume \eqref{critical_assumption} and \eqref{eq : limit_bigger_than_one}.
There exist constants $C,c>0$ and $q\ge 6$ such that
\eeq{ \label{param_eq}
\mathbb{P}\Big(T_{k,R} \geq F^{-1}(p_{\floor{\b^{k}/\theta^q}}) \Big) \leq C \e^{- c\theta}
\quad \text{for all $R\ge1$, $k\ge3$, $\theta\in[\tfrac{3}{2},c\b^{k/q}]$.}
}
	\end{proposition}

	\begin{proof}
    Comparing to Lemma~\ref{cor:Tkmaxbd}, we just need to find sufficiently large $q\geq 6$ and sufficiently small $c_0>0$ such that
\eeq{ \label{e7gxg}
F^{-1}(p_{\floor{\b^k/\theta^q}  }) \geq   \theta^{3}F^{-1}(p_{\floor{\b^{k-3}/\theta} } )
\quad \text{for all $k\ge3$, $\theta\in[\tfrac{3}{2},c_0\b^{k/q}]$}.
}
	By \eqref{eq : limit_bigger_than_one}, there exist $\eps>0$ and a nonnegative integer $n_0$ such that $F^{-1}(p_{\b^{n}})/F^{-1}(p_{\b^{n+1}}) \geq 1+\eps$ for all $n\ge n_0$. 
    Iterating this equality yields
	\begin{equation} \label{eq:Lell}
     F^{-1}(p_{\b^{n}})/F^{-1}(p_{\b^{n+\ell}}) \geq (1+\eps)^{\ell} \quad \text{whenever $n\ge n_0$, $\ell\ge0$}.
     \end{equation}
     Now choose $q\ge 6$ large enough that
     \eeq{ \label{4fxvb}
     (q-1)\log_\b \theta \ge 5 \quad \text{and} \quad
     \theta^{(q-1)\log_\b (1+\eps)-3} \ge (1+\eps)^6 \qquad 
     \text{for all $\theta\ge\tfrac{3}{2}$},
     }
     and set $c_0 = \b^{-n_0/q} \wedge \b^{-3/q}$.
     We will prove \eqref{e7gxg} with these choices, so assume henceforth that $k\ge3$ and $\theta\in[\tfrac{3}{2},c_0\b^{k/q}]$.

The inequality $c_0\le \b^{-n_0/q}$ gives 
$\theta \le \b^{-n_0/q}\cdot b^{k/q}$.
Solving this inequality for $n_0$ yields
\eeq{ \label{wj76c}
n_0 \le k - q\log_\b  \theta \le k - \floor{\log_\b  \theta} - \floor{(q-1)\log_\b  \theta}.
}
Using the second inequality of \eqref{wj76c}, we note that
\eq{
\floor{\b^k/\theta^q} \le \b^k/\theta^q \le \b^{k-\floor{\log_\b \theta}-\floor{(q-1)\log_\b  \theta}}. 
}
On the other hand, the inequality $c_0\le \b^{-3/q}$ gives $\theta \le b^{-3/q}\cdot\b^{k/q}\le \b^{k-3}$,
which enables the first inequality below:
\eq{
\floor{\b^{k-3}/\theta} \stackref{floor_bound}{\ge} 
\b^{k-3}/(2\theta) \ge
\b^{k-4}/\theta = \b^{k-4-\log_\b \theta} \ge \b^{k-5-\floor{\log_\b \theta}}. 
}
Since $r\mapsto p_r$ is weakly decreasing and $F^{-1}$ is weakly increasing, the two previous displays together imply
\eeq{ \label{32gbe}
\frac{F^{-1}(p_{\floor{\b^k/\theta^q}  })}{F^{-1}(p_{\floor{\b^{k-3}/\theta} } )}
\ge \frac{F^{-1}(\b^{k-\floor{\log_\b \theta}-\floor{(q-1)\log_\b  \theta}})}{F^{-1}(\b^{k-5-\floor{\log_\b \theta}})}.
}
The inequality \eqref{wj76c} enables us to apply \eqref{eq:Lell} with $n = k-\floor{\log_\b \theta}-\floor{(q-1)\log_\b \theta}$ and $\ell = \floor{(q-1)\log_\b \theta}-5$, where $\ell\ge0$ because of the first inequality in \eqref{4fxvb}.
This application of \eqref{eq:Lell} produces a lower bound on the right-hand side of \eqref{32gbe}, resulting in
\begin{align*}
\frac{F^{-1}(p_{\floor{\b^k/\theta^q}  })}{F^{-1}(p_{\floor{\b^{k-3}/\theta} } )}
\ge (1+\eps)^{\floor{(q-1)\log_\b \theta}-5}
&\ge (1+\eps)^{(q-1)\log_\b (\theta)-6} \\
&= \theta^{(q-1)\log_\b(1+\eps)}(1+\eps)^{-6}
\stackref{4fxvb}{\ge} \theta^3. \qedhere
\end{align*}
	\end{proof}

 We next prove a result similar to Proposition~\ref{lem:param}, but now assuming 
 \eeq{ \label{weak_assump_2}
    \liminf_{n\to\infty} F^{-1}(p_{\b^{n+1}})/F^{-1}(p_{\b^n}) > 0.
 }
 Note that \eqref{eq : limit_equal_zero} implies \eqref{weak_assump_2}, since $p_{\b^{n+1}} \ge p_{\b^{2n}}$ for any $n\ge1$.

 \begin{proposition} \label{lem:Tkn0assumption} 
 Assume \eqref{critical_assumption} and \eqref{weak_assump_2}. 
 There exist constants $C,c>0$ and $q\ge3$ such that
\eeq{ \label{Tkn0assumption_eq}
\mathbb{P}\Big(T_{k,R} \geq \lambda F^{-1}(p_{\b^k} ) \Big) \leq C \e^{-c\lambda^{1/q}} \quad \text{for all $R\ge1$, $k\ge3$, $\lambda\in[0,c\b^{qk}]$}.
}
 	\end{proposition}

\begin{proof}
By the assumption \eqref{weak_assump_2}, there exist $\eps\in(0,1]$ and a nonnegative integer $n_0$ such that 
 $F^{-1}(p_{\b^{n+1}})/F^{-1}(p_{\b^n}) \geq  \eps$ for all $n\ge n_0$.
Iterating this inequality yields
\eeq{ \label{elf12}
F^{-1}(p_{\b^{n+\ell}})/F^{-1}(p_{\b^n}) \ge \eps^\ell \quad \text{whenever $n\ge n_0$, $\ell\ge0$}.
}
For any $\theta\in[1,\b^{k-3}]$, we have the following inequalities:
\eq{
\floor{\b^{k-3}/\theta} 
\stackref{floor_bound}{\ge} \b^{k-3}/(2\theta)
\ge \b^{k-4}/\theta = \b^{k-4-\log_\b \theta} \ge \b^{k-5-\floor{\log_\b \theta}}.
}
Since $r\mapsto p_r$ is weakly decreasing and $F^{-1}$ is weakly increasing, it follows that
\eeq{ \label{mv29bx}
F^{-1}(p_{\floor{\b^{k-3}/\theta}}) \le F^{-1}(p_{\b^{k-5-\floor{\log_\b \theta}}}) \quad \text{for all $\theta\in[1,\b^{k-3}]$}.
}
Under the more restrictive condition $\theta \in[1,\b^{k-5-n_0}]$, we have
\eq{
n_0 \le  k-5-\log_\b \theta \le k-5-\floor{\log_\b \theta},
}
so we can apply \eqref{elf12} with $n = k-5-\floor{\log_\b \theta}$ and $\ell = 5+\floor{\log_\b \theta}$.
This results in
\eeq{ \label{mv29by}
F^{-1}(p_{\b^k})/F^{-1}(p_{\b^{k-5-\floor{\log_\b \theta}}}) 
\ge \eps^{5+\floor{\log_\b \theta}} 
&\ge \eps^5\eps^{\log_\b \theta} \\
&= \eps^5\theta^{\log_\b \eps}\quad \text{for all $\theta\in[1,c_1\b^k]$},
}
where $c_1 = \b^{-5-n_0}$.
Combining \eqref{mv29bx} and \eqref{mv29by} yields
\eeq{ \label{mv29bz}
F^{-1}(p_{\b^k}) \ge \eps^{5}\theta^{\log_\b \eps}F^{-1}(p_{\floor{\b^{k-3}/\theta}}) \quad \text{for all $\theta\in[1,c_1\b^k]$}.
}
Finally, set $q = 3-\log_\b \eps$.
Given any $\lambda\in[\eps^{-5},\eps^{-5}(c_1\b^{k})^q]$, we choose $\theta = (\lambda\eps^5)^{1/q} \in [1,c_1\b^k]$ so that
\eq{
\P\Big(T_{k,R} \ge \lambda F^{-1}(p_{\b^k})\Big) 
&\stackref{mv29bz}{\le} \P\Big(T_{k,R} \ge \lambda\eps^{5}\theta^{\log_\b \eps}F^{-1}(p_{\floor{\b^{k-3}/\theta}})\Big) \\
&\stackrefp{mv29bz}{=} \P\Big(T_{k,R} \ge \theta^3F^{-1}(p_{\floor{\b^{k-3}/\theta}})\Big) \\
&\stackrefpp{cor:Tkmaxbd_eq}{mv29bz}{\le} C\e^{-c\theta} 
= C\e^{-c\eps^{5/q}\lambda^{1/q}}.
}
We have thus proved \eqref{Tkn0assumption_eq} after suitably adjusting $c$, for $\lambda\in[\eps^{-5},c\b^{qk}]$.
We may also include $\lambda\in[0,\eps^{-5}]$ after adjusting $C$.
\end{proof}

\subsection{Bounds for number of closed edges on geodesic within annuli} \label{sec_ceba}

The goal of this subsection is to upper bound the number of closed edges used by a geodesic on its subpath between $\partial B_{\b^{k-1}}$ and $\partial B_{\b^k}$.
The eventual conclusion will be that this number is tight (with respect to $k$) with a stretched exponential tail (Proposition~\ref{prop:Dknbd}).
Recall that we can realize the edge-weights as $t_e = F^{-1}(U_e)$, where $(U_e)_{e\in E(\Z^2)}$ are i.i.d.\ uniform random variables on $[0,1]$.
We say a path $\gamma$ is \textit{$p$-open} if $U_e\le p$ for each edge $e\in\gamma$, or \textit{$p$-closed} if $U_e>p$ for all $e\in\gamma$.

The first result below (Lemma~\ref{lem : kiss}) is a general statement about near-critical percolation and does not depend on the distribution function $F$.
For $p>p_\cc$ and an edge $e\in\EE_k^\ann$, let $\Asf_{k,R}(p,e)$ be the event that all of the following occur:
\begin{enumerate}[label=\textup{(\roman*)}]

\item \label{Aevent_1} $U_e\in(p_\cc,p]$;

\item \label{Aevent_2} there are two $p$-open paths starting at the two vertices of $e$: one ending at $\partial B_{\b^{k-2}}(e)$ and the other at $\partial B_{r}(e)$, where
\eq{
r = \min\{\b^{k-2},\dist(e,\partial B_{R})\}.
}

\item \label{Aevent_3} there are two $p_\cc$-closed dual paths starting at the two vertices of $e^\star$: both ending at $\partial B_{\b^{k-2}}(e)$;

\item \label{Aevent_4} all four of these paths are disjoint and remain in $B_R$.

\end{enumerate}
For an illustration of this event, see the yellow box in Figure~\ref{fig_kiss_and_beyond}.
Let $N_{k,R}(p)$ denote the number of $e\in\EE_k^\ann$ for which $\Asf_{k,R}(p,e)$ occurs:
\eeq{ \label{wefu7b}
N_{k,R}(p) = \sum_{e\in\EE_k^\ann} \one_{\Asf_{k,R}(p,e)},
}
where $\EE_k^\ann$ is defined in \eqref{def_ann}.
The following estimate is an adaptation of \cite[Lem.~2]{damron_tang19}, which considers an analogous quantity where the annulus is replaced by a rectangle.

\begin{lemma} \label{lem : kiss}
There exist constants $C,c>0$ such that for all $R\ge1$, $k\in\{2,\dots,\ceil{\log_\b  R}\}$, and $p>p_\cc$ with $L(p)\le \b^{k-2}$, we have
\eeq{ \label{ibk3c0}
 \P \Big( N_{k,R}(p) \geq \lambda \Big( \dfrac{\b^k}{L(p)} \Big)^2 \Big) \leq C\e^{-c \lambda} \quad \text{for all $\lambda\ge0$}.
}
\end{lemma}

Since the proof is essentially the same as \cite[Lem.~2]{damron_tang19}, which in turn borrows parts of \cite{kiss14,kiss_manolescu_sidoravicius15}, we omit the details.
 Instead we explain here why $(\b^k/L(p))^2$ is the correct scale in \eqref{ibk3c0}, and then point out the differences from \cite{damron_tang19}.
 First observe that the event in \ref{Aevent_1} has probability $p-p_\cc$. 
 Next observe that the event in \ref{Aevent_2}--\ref{Aevent_4} is independent of $U_e$ and effectively implies a polychromatic 4-arm event to distance at least $r$.
 If $r = \b^{k-2}$, then all four arms extend to distance at least $L(p)$, and the fact that the arms in \ref{Aevent_2} are $p$-open rather than $p_\cc$-open is inconsequential at scale $L(p)$ (see \cite[Lem.~6.3]{damron_sapozhnikov_vagvolgyi09}), so the probability of this event is at most $C\pi_4(1,L(p))$.
 If $k\in\{\floor{\log_\b  R},\ceil{\log_\b  R}\}$, then it is possible that $r<L(p)$, in which case the second arm in \ref{Aevent_2} is too short.  But in this scenario the remaining three arms continue beyond this distance while confined to a half plane created by the nearest side of $\partial B_R$.
 Since a 3-arm event in a half plane is far less likely than a 4-arm event in the full plane (see \cite[Thm.~24(2)]{nolin08}), the upper bound of $C\pi_4(1,L(p))$ still applies after we invoke quasi-multiplicativity.
 Finally, there are at most $C(\b^k)^2$ edges in $\EE_k^\ann$.
 All of these observations together suggest that the first moment of $N_{k,R}(p)$ is at most
 \eq{
 C(\b^k)^2 \cdot \pi_4(1,L(p)) \cdot (p-p_\cc) \stackref{eq: scaling_relation}{\asymp} \Big(\frac{\b^k}{L(p)}\Big)^2.
 }

 To obtain \eqref{ibk3c0}, one needs also to control the higher moments of $N_{k,R}(p)$.
 This is done in \cite[p.~112--114]{damron_tang19} by using the probability estimate described above (see \cite[disp.~(15)]{damron_tang19}), together with a quasi-mulitiplicativity argument that controls the probability that a particular collection of edges is a subset of $\{e\in\EE_k^\ann:\, \Asf_{k,R}(p,e)\text{ occurs}\}$ (see \cite[disp.~(14)]{damron_tang19}).
 Compared to our setting, there are three differences:
 \begin{itemize}
 \item In \cite{damron_tang19}, the event $A_n(p,e)$ (their notation on p.~109) uses $p_\cc$-closed arms that extend to a particular domain boundary rather than to a certain distance from $e$.
 This means that one of their closed arms is shortened from encountering the boundary.
 By contrast, both of our closed arms always extend to a certain distance, namely $\b^{k-2}$.
 Since this distance is what matters in the analysis (i.e.~greater distance creates greater rarity), our setting suffers no disadvantage in this regard.

\item On the other hand, unlike in \cite{damron_tang19}, the presence of a boundary (namely $\partial B_R$) can shorten one of our $p$-open arms.
 This means that our half-space 3-arm event (mentioned in the previous paragraph) has two $p_\cc$-closed paths and one $p$-open path, rather than two $p$-open paths and one $p_\cc$-closed path as in \cite{damron_tang19}.
 Our event is less likely since $p_\cc<p$ (although the two events have comparable probabilities at scale $L(p)$), so this difference causes no issue.
 
 \item Instead of considering all edges in a rectangle as in \cite{damron_tang19}, our sum \eqref{wefu7b} is restricted to edges in an annulus.  Since this restriction can only lower the value of $N_{k,R}$ (when compared to a rectangle that contains the annulus, which has a comparable number of edges), we can safely ignore this difference.
 \end{itemize}

\begin{figure}[ht]
\centering

\tikzset{every picture/.style={line width=0.75pt}} 

\begin{tikzpicture}[x=0.75pt,y=0.75pt,yscale=-1,xscale=1]

\draw  [line width=2.25]  (154.25,24.25) -- (528.25,24.25) -- (528.25,398.25) -- (154.25,398.25) -- cycle ;
\draw  [color={rgb, 255:red, 155; green, 155; blue, 155 }  ,draw opacity=1 ] (496.25,56.25) -- (186.25,56.25) -- (186.25,366.25) -- (496.25,366.25) -- cycle ;
\draw  [color={rgb, 255:red, 155; green, 155; blue, 155 }  ,draw opacity=1 ] (419,134) -- (263,134) -- (263,288) -- (419,288) -- cycle ;
\draw  [color={rgb, 255:red, 155; green, 155; blue, 155 }  ,draw opacity=1 ] (380,172.5) -- (302.5,172.5) -- (302.5,250) -- (380,250) -- cycle ;
\draw  [color={rgb, 255:red, 155; green, 155; blue, 155 }  ,draw opacity=1 ] (360.5,192.5) -- (322.25,192.5) -- (322.25,229.5) -- (360.5,229.5) -- cycle ;
\draw  [draw opacity=0][fill={rgb, 255:red, 184; green, 181; blue, 181 }  ,fill opacity=0.5 ] (418.5,134.5) -- (418.5,288.5) -- (262.5,288.5) -- (262.5,134.5) -- (418.5,134.5) -- cycle (380,172.5) -- (302,172.5) -- (302,250.5) -- (380,250.5) -- (380,172.5) -- cycle ;
\draw  [color={rgb, 255:red, 155; green, 155; blue, 155 }  ,draw opacity=1 ][fill={rgb, 255:red, 248; green, 231; blue, 28 }  ,fill opacity=0.5 ] (293.63,199.5) -- (255.38,199.5) -- (255.38,236.5) -- (293.63,236.5) -- cycle ;
\draw [color={rgb, 255:red, 208; green, 2; blue, 27 }  ,draw opacity=1 ][line width=2.25]    (271.88,218) -- (274.5,218) -- (276.88,218) ;
\draw [color={rgb, 255:red, 208; green, 2; blue, 27 }  ,draw opacity=1, dashed]   (274.5,218) .. controls (269.25,175.5) and (277.25,160) .. (283.75,149) .. controls (290.25,138) and (307.16,135.25) .. (327.38,142.97) .. controls (347.6,150.69) and (375.69,113.77) .. (390.88,120.5) .. controls (406.06,127.23) and (401.25,147) .. (372.25,158.5) .. controls (343.25,170) and (333.17,178.96) .. (348.75,197) .. controls (364.33,215.04) and (482.25,122.5) .. (503.75,163) .. controls (525.25,203.5) and (525.25,254.5) .. (457.75,300.5) .. controls (390.25,346.5) and (306.22,344.76) .. (281.25,340) .. controls (256.28,335.24) and (265.4,295.97) .. (268.58,280.73) .. controls (271.75,265.5) and (272.75,260) .. (273.25,246.5) .. controls (273.75,233) and (275.38,223.75) .. (274.5,218) -- cycle ;
\draw [color={rgb, 255:red, 74; green, 144; blue, 226 }  ,draw opacity=1 ]   (187.25,88.5) .. controls (188.44,86.23) and (190.08,85.73) .. (192.19,87.01) .. controls (193.91,88.48) and (195.58,88.31) .. (197.19,86.51) .. controls (198.78,84.76) and (200.49,84.7) .. (202.32,86.33) .. controls (203.83,87.98) and (205.57,87.99) .. (207.56,86.35) .. controls (209.28,84.73) and (210.75,84.78) .. (211.98,86.49) .. controls (213.65,88.24) and (215.43,88.35) .. (217.33,86.8) .. controls (219.32,85.27) and (220.95,85.4) .. (222.22,87.19) .. controls (223.96,89.03) and (225.64,89.2) .. (227.26,87.69) .. controls (228.92,86.2) and (230.63,86.41) .. (232.4,88.31) .. controls (233.65,90.16) and (235.13,90.37) .. (236.85,88.94) .. controls (239.1,87.61) and (240.84,87.9) .. (242.05,89.79) .. controls (243.22,91.7) and (244.94,92.02) .. (247.21,90.77) .. controls (249.02,89.44) and (250.47,89.75) .. (251.57,91.7) .. controls (253.08,93.77) and (254.74,94.17) .. (256.55,92.91) .. controls (258.84,91.8) and (260.44,92.25) .. (261.35,94.25) .. controls (262.6,96.38) and (264.34,96.95) .. (266.57,95.95) .. controls (268.4,94.84) and (269.83,95.38) .. (270.86,97.57) .. controls (271.71,99.75) and (273.21,100.43) .. (275.36,99.6) .. controls (277.89,99.05) and (279.39,99.89) .. (279.85,102.1) .. controls (280.32,104.43) and (281.7,105.45) .. (283.99,105.15) .. controls (286.34,105.1) and (287.46,106.33) .. (287.36,108.84) .. controls (286.59,110.79) and (287.28,112.28) .. (289.43,113.29) .. controls (291.31,114.83) and (291.37,116.45) .. (289.61,118.14) .. controls (287.68,119.3) and (287.26,120.87) .. (288.35,122.84) .. controls (289.09,125.14) and (288.28,126.61) .. (285.93,127.24) .. controls (283.59,127.57) and (282.56,128.83) .. (282.83,131.02) .. controls (282.62,133.56) and (281.34,134.74) .. (279,134.56) .. controls (276.7,134.25) and (275.36,135.26) .. (274.99,137.58) .. controls (274.45,139.95) and (273.13,140.8) .. (271.03,140.15) .. controls (268.66,139.6) and (267.08,140.5) .. (266.3,142.83) .. controls (265.75,145) and (264.27,145.75) .. (261.84,145.1) .. controls (259.78,144.23) and (258.24,144.95) .. (257.23,147.24) .. controls (256.51,149.38) and (255.13,149.98) .. (253.08,149.04) .. controls (250.71,148.21) and (249.15,148.84) .. (248.39,150.94) .. controls (247.26,153.17) and (245.72,153.76) .. (243.75,152.73) .. controls (241.48,151.8) and (239.81,152.42) .. (238.73,154.59) .. controls (238,156.63) and (236.56,157.15) .. (234.41,156.15) .. controls (232.01,155.24) and (230.36,155.83) .. (229.46,157.92) .. controls (228.63,159.99) and (227.15,160.54) .. (225,159.55) .. controls (222.71,158.64) and (221.11,159.27) .. (220.22,161.43) .. controls (219.47,163.59) and (218.03,164.3) .. (215.89,163.57) .. controls (213.36,163.52) and (212.2,164.69) .. (212.39,167.08) .. controls (212.94,169.32) and (212.07,170.84) .. (209.79,171.65) .. controls (207.64,172.48) and (207.06,174.07) .. (208.05,176.4) .. controls (209.3,178.35) and (208.98,179.88) .. (207.11,181) .. controls (205.3,182.67) and (205.23,184.43) .. (206.9,186.28) .. controls (208.71,187.59) and (208.93,189.14) .. (207.57,190.93) .. controls (206.54,193.31) and (207.14,194.99) .. (209.35,195.96) .. controls (211.46,196.38) and (212.29,197.76) .. (211.84,200.09) .. controls (211.57,202.4) and (212.75,203.71) .. (215.37,204.02) .. controls (217.46,203.63) and (218.71,204.63) .. (219.1,207.01) .. controls (219.51,209.26) and (221.04,210.18) .. (223.67,209.75) .. controls (225.52,208.8) and (226.9,209.44) .. (227.83,211.67) .. controls (229.02,213.91) and (230.6,214.48) .. (232.55,213.39) .. controls (234.48,212.23) and (236.26,212.72) .. (237.88,214.87) .. controls (239.19,216.88) and (240.58,217.18) .. (242.07,215.78) .. controls (244.12,214.45) and (245.94,214.75) .. (247.53,216.68) .. controls (248.61,218.51) and (250.25,218.7) .. (252.46,217.27) .. controls (253.98,215.74) and (255.74,215.88) .. (257.74,217.7) .. controls (259.13,219.45) and (260.63,219.53) .. (262.22,217.93) .. controls (263.82,216.31) and (265.4,216.35) .. (266.95,218.05) .. controls (268.59,219.72) and (270.25,219.72) .. (271.92,218.04) .. controls (273.6,216.35) and (275.31,216.32) .. (277.06,217.95) .. controls (278.71,219.6) and (280.69,219.59) .. (283.01,217.92) .. controls (284.51,216.27) and (285.95,216.3) .. (287.32,218) .. controls (289.23,219.73) and (290.85,219.8) .. (292.18,218.2) .. controls (294.07,216.66) and (295.77,216.8) .. (297.26,218.61) .. controls (298.53,220.44) and (300.18,220.67) .. (302.2,219.3) .. controls (304.17,218) and (305.81,218.4) .. (307.12,220.49) .. controls (307.81,222.54) and (309.24,223.24) .. (311.39,222.58) .. controls (313.85,222.83) and (314.6,224.18) .. (313.63,226.63) .. controls (311.72,227.79) and (311.15,229.44) .. (311.92,231.57) .. controls (312.43,233.77) and (311.42,235.1) .. (308.9,235.56) .. controls (306.52,235.7) and (305.41,236.89) .. (305.57,239.13) .. controls (305.62,241.43) and (304.48,242.55) .. (302.15,242.48) .. controls (299.58,242.63) and (298.27,243.89) .. (298.22,246.24) .. controls (298.21,248.57) and (297.04,249.72) .. (294.72,249.67) .. controls (292.4,249.66) and (291.32,250.79) .. (291.49,253.07) .. controls (291.64,255.46) and (290.57,256.8) .. (288.28,257.07) .. controls (285.87,257.77) and (285.07,259.25) .. (285.87,261.5) .. controls (287.19,263.21) and (287.13,264.85) .. (285.68,266.42) .. controls (285.1,268.77) and (285.95,270.13) .. (288.23,270.52) .. controls (290.6,271.09) and (291.5,272.46) .. (290.92,274.65) .. controls (290.46,277.07) and (291.35,278.52) .. (293.6,279) .. controls (295.89,279.58) and (296.75,280.99) .. (296.16,283.24) .. controls (295.64,285.57) and (296.53,286.97) .. (298.82,287.44) .. controls (301.12,287.83) and (302.12,289.2) .. (301.83,291.54) .. controls (301.73,293.93) and (302.9,295.11) .. (305.33,295.08) .. controls (307.54,294.61) and (308.94,295.46) .. (309.54,297.63) .. controls (310.75,299.79) and (312.4,300.21) .. (314.47,298.9) .. controls (316.26,297.38) and (317.91,297.53) .. (319.43,299.35) .. controls (320.87,301.2) and (322.52,301.41) .. (324.39,299.98) .. controls (326.2,298.55) and (327.81,298.78) .. (329.23,300.65) .. controls (330.73,302.5) and (332.38,302.66) .. (334.19,301.15) .. controls (335.98,299.56) and (337.67,299.61) .. (339.26,301.3) .. controls (340.97,302.93) and (342.61,302.87) .. (344.19,301.12) .. controls (345.82,299.32) and (347.49,299.17) .. (349.22,300.67) .. controls (351.13,302.12) and (352.78,301.91) .. (354.19,300.04) .. controls (355.46,298.08) and (357.13,297.61) .. (359.21,298.64) .. controls (361.42,299.46) and (362.9,298.71) .. (363.66,296.4) .. controls (364.05,294.21) and (365.49,293.27) .. (367.98,293.59) .. controls (370.34,293.92) and (371.56,293) .. (371.65,290.82) .. controls (371.96,288.41) and (373.35,287.24) .. (375.82,287.31) .. controls (378.07,287.53) and (379.19,286.5) .. (379.19,284.23) .. controls (379.42,281.68) and (380.69,280.44) .. (382.98,280.5) .. controls (385.3,280.5) and (386.41,279.34) .. (386.32,277.01) .. controls (386.19,274.69) and (387.28,273.48) .. (389.61,273.39) .. controls (391.95,273.24) and (393.02,272) .. (392.82,269.66) .. controls (392.83,267.03) and (394,265.61) .. (396.31,265.4) .. controls (398.62,265.15) and (399.61,263.88) .. (399.26,261.59) .. controls (398.87,259.34) and (399.8,258.07) .. (402.05,257.8) .. controls (404.52,257.18) and (405.5,255.78) .. (404.97,253.59) .. controls (404.57,251.16) and (405.45,249.8) .. (407.62,249.5) .. controls (409.94,248.88) and (410.8,247.43) .. (410.19,245.16) .. controls (409.6,242.74) and (410.37,241.25) .. (412.51,240.7) .. controls (414.72,239.85) and (415.38,238.28) .. (414.49,235.99) .. controls (413.33,234.09) and (413.7,232.51) .. (415.61,231.25) .. controls (417.11,229.18) and (416.68,227.66) .. (414.33,226.67) .. controls (412.04,226.85) and (410.69,225.87) .. (410.26,223.72) .. controls (409.22,221.48) and (407.61,220.88) .. (405.42,221.93) .. controls (403.49,223.14) and (401.91,222.76) .. (400.67,220.78) .. controls (399.25,218.82) and (397.64,218.54) .. (395.84,219.93) .. controls (393.81,221.32) and (392.13,221.08) .. (390.78,219.23) .. controls (389.39,217.38) and (387.67,217.18) .. (385.64,218.63) .. controls (383.9,220.12) and (382.35,219.96) .. (380.98,218.14) .. controls (379.4,216.29) and (377.68,216.1) .. (375.81,217.57) .. controls (373.99,219.03) and (372.33,218.82) .. (370.83,216.93) .. controls (369.58,215.03) and (367.95,214.72) .. (365.95,215.99) .. controls (363.84,217.06) and (362.42,216.33) .. (361.69,213.81) -- (361.5,213) ;

\draw (258.5,77) node [anchor=north west][inner sep=0.75pt]    {$\gamma _{R}$};
\draw (275,207) node [anchor=north west][inner sep=0.75pt]    {$e$};
\draw (321,213) node [anchor=north west][inner sep=0.75pt]    {$\partial B_{b^{k-2}}$};
\draw (340,250.4) node [anchor=north west][inner sep=0.75pt]    {$\partial B_{b^{k-1}}$};
\draw (456,367.4) node [anchor=north west][inner sep=0.75pt]    {$\partial B_{b^{k+1}}$};
\draw (391.5,289.9) node [anchor=north west][inner sep=0.75pt]    {$\partial B_{b^{k}}$};
\draw (158.5,380) node [anchor=north west][inner sep=0.75pt]    {$\partial B_{R}$};

\end{tikzpicture}

\caption{ \small Illustration for Lemmas~\ref{lem : kiss} and \ref{cor:DknMknbd}.
Both lemmas are concerned with edges $e$ in the shaded annulus, i.e.\ $e\in\EE_k^\ann$.
Shown here is the case $k<\floor{\log_\b R}$.
The event $\Asf_{k,R}(p,e)$ depends on the edges in $B_{\b^{k-2}}(e)$, shown as the yellow box centered at $e$.
In the proof of Lemma~\ref{cor:DknMknbd}, this event occurs because the geodesic $\gamma_R$ (wavy blue curve) provides two $p$-open arms, and the closed dual circuit (dashed red curve) provides two $p_\cc$-closed arms.}
\label{fig_kiss_and_beyond}
\end{figure}

Henceforth we let $\gamma_R\in\Geo(\vc 0,\partial B_R)$ be the geodesic constructed in Proposition \ref{prop:geod_cons}, with $\mathcal A = \{\vc 0\}$ and $\mathcal B = \partial B_R$.
From Lemma~\ref{lem : kiss} we can limit the number of edges $e\in\gamma_R\cap\EE_k^\ann$ that are closed, provided that any such edge has $U_e\le p$ for some $p$ satisfying $L(p)\le \b^{k-2}$.
This is captured by Lemma~\ref{cor:DknMknbd} below, which uses the following definitions.
The number of closed edges in the $k$-th annulus will be denoted by
\begin{equation} \label{Dkn}
Y_{k,R}   = |\{ e \in \gamma_R \cap \EE_k^\ann:\,  t_e > 0 \}|. 
\end{equation}
The maximum value of a closed edge in either the $k$-th annulus or one of the two neighboring annuli is encoded by the random variable
 \begin{equation}
 \label{Mkn}
 M_{k,R} =  \max \{ U_e :\,  e \in \gamma_R \cap (\EE_{k-1}^\ann \cup \EE_{k}^\ann \cup \EE_{k+1}^\ann) \}.
 \end{equation}

\begin{lemma} \label{cor:DknMknbd}
Assume \eqref{critical_assumption}.
There exist constants $C,c > 0$ (not depending on the edge-weight distribution) such that
\eeq{ \label{DknMknbd_eq}
\mathbb P(Y_{k,R} \ge \lambda,\ M_{k,R} \le p_{\floor{\b^{k-2}/\theta}}) \le C\e^{-c \lambda/\theta^2}
\quad \text{for all $R\ge1$, $k\ge2$, $\lambda\ge0$, $\theta\in[1,\b^{k-2}]$}.
}
\end{lemma}

\begin{proof}
Since $Y_{k,R}=0$ for all $k>\ceil{\log_\b R}$, we may assume $k \le\ceil{\log_b R}$.
We will show
\begin{equation} \label{DknMknNkpconn}
\{Y_{k,R} \ge \lambda,\ M_{k,R} \le p_{\floor{\b^{k-2}/\theta}}\} \subseteq  
    \mathbb \{N_{k}(p_{\floor{\b^{k-2}/\theta}}) \ge \lambda\}.
\end{equation}
Before establishing \eqref{DknMknNkpconn}, we explain why it is sufficient to prove the proposition. 

By the upper bound in \eqref{eq: L_p_n}, we have $L(p_{\floor{\b^{k-2}/\theta}}) \le \floor{\b^{k-2}/\theta} \le \b^{k-2}$, so we may use Lemma~\ref{lem : kiss} with $p=p_{\floor{\b^{k-2}/\theta}}$.
Meanwhile, the lower bound in \eqref{eq: L_p_n} tells us there exists a constant  $\delta\in(0,\b^{-2}/2)$ (not depending on $R$, $k$, or $\theta$) such that
\eeq{ \label{kb71x}
L(p_{\floor{\b^{k-2}/\theta}}) 
\ge \delta\floor{\b^{k-2}/\theta} 
\stackref{floor_bound}{\ge} \delta\b^{k-2}/(2\theta) 
\ge \delta^2\b^k/\theta.
}
We now have
\eq{
\mathbb P\Big(N_{k}(p_{\floor{\b^{k-2}/\theta}}) \ge \lambda\Big) 
&\stackrefp{kb71x}{=} \mathbb P\Big(N_{k}(p_{\floor{\b^{k-2}/\theta}}) \ge \lambda\cdot\Big(\f{L(p_{\floor{\b^{k-2}/\theta}})}{\b^k}\Big)^2 \cdot \Big(\f{\b^k}{L(p_{\floor{\b^{k-2}/\theta}} )}\Big)^2 \Big) \\
&\stackref{kb71x}{\le} \mathbb P\Big(N_{k}(p_{\floor{\b^{k-2}/\theta}}) \ge \lambda\cdot\f{\delta^4}{\theta^2} \cdot \Big(\f{\b^k}{L(p_{\floor{\b^{k-2}/\theta}} )}\Big)^2 \Big) \\
&\stackrefpp{ibk3c0}{kb71x}{\le} C\e^{-c\lambda\delta^4/\theta^2}.
}
The proposition follows from this inequality together with \eqref{DknMknNkpconn}, after absorbing $\delta^4$ into $c$. 
The remainder of the proof is to establish \eqref{DknMknNkpconn}.

Suppose $Y_{k,R} \ge \lambda$ and $M_{k,R} \le p_{\floor{\b^{k-2}/\theta}}$. 
For each edge $e \in \gamma_R \cap \EE_k^\ann$ with $t_e > 0$ (i.e.~$U_e>p_\cc$), we verify that the event $\Asf_{k,R}(p,e)$ occurs with $p = p_{\floor{\b^{k-2}/\theta}}$; from this it follows that $N_k(p_{\floor{\b^{k-2}/\theta}}) \ge Y_{k,R} \ge \lambda$.
That is, we verify conditions \ref{Aevent_1}--\ref{Aevent_4} above \eqref{wefu7b}.
See Figure~\ref{fig_kiss_and_beyond} for an illustration.
Since $M_{k,R} \le p$, every $e'\in\gamma_R\cap(\EE_{k-1}^\ann\cup\EE_k^\ann\cup\EE_{k+1}^\ann)$ satisfies $U_{e'} \le p$.
This implies \ref{Aevent_1} and \ref{Aevent_2}, where in the latter the $p$-open arms arise as follows:
\begin{itemize}
    \item Consider the subpath of $\gamma_R$ extending backward from $e$ (i.e.~toward the origin) until first reaching either $\partial B_{\b^{k-2}}$ or $\partial B_{\b^{k+1}}$.
    All edges on this path belong to $\EE_{k-1}^\ann\cup\EE_k^\ann\cup\EE_{k+1}^\ann$ (so the path is $p$-open), and the $\ell^\infty$-distance traveled from $e$ is at least
    \eeq{ \label{gekuc}
    \min\big\{\dist(e,\partial_{B_{\b^{k-2}}}), \dist(e,\partial B_{\b^{k+1}})\big\}
    &\ge (\b^{k-1}-\b^{k-2})\wedge(\b^{k+1}-\b^k) \\
    &= (\b^{k-1}-\b^{k-2}) \ge \b^{k-2}.
    }
    \item Consider the subpath of $\gamma_R$ extending forward from $e$ until first reaching either $\partial B_{2^{k-2}}$, $\partial B_{2^{k+1}}$, or $\partial B_R$.
    All edges on this path belong to $\EE_{k-1}^\ann\cup\EE_k^\ann\cup\EE_{k+1}^\ann$ (so the path is $p$-open), and the $\ell^\infty$-distance traveled from $e$ is at least
    \eq{
    \min\big\{\dist(e,\partial_{B_{\b^{k-2}}}), \dist(e,\partial B_{\b^{k+1}}), \dist(e,\partial B_R)\big\}
    &\stackref{gekuc}{\ge} \min\{\b^{k-2}, \dist(e,\partial B_R)\}.
    }
\end{itemize}
Regarding \ref{Aevent_3}, since $t_e > 0$, Proposition \ref{prop:geod_cons}\ref{itm:dualcir} implies that $e^\star$ belongs to a closed dual circuit that encloses the origin, and this circuit does not intersect $\gamma_R$ at any location other than $e^\star$. 
The two arcs of this circuit emanating from $e^\star$ constitute disjoint $p_\cc$-closed dual arms traveling $\ell^\infty$-distance at least $\dist(e,\vc0) \ge \b^{k-1}>\b^{k-2}$.
Moreover, these closed arms are disjoint from the $p$-open arms discussed above, since the latter are subpaths of $\gamma_R$.
Finally, the requirement in \ref{Aevent_4} that all these arms remain in $B_R$ is obvious for the $p$-open arms (since $\gamma_R$ remains in $B_R$), and follows from Proposition \ref{prop:geod_cons}\ref{itm:dualcir} for the $p_\cc$-closed dual arms.
\end{proof}

Ultimately we are interested in controlling just $Y_{k,R}$ without requiring $M_{k,R}$ to be small as in \eqref{DknMknNkpconn}.
The following corollary accomplishes this when either \eqref{eq : limit_bigger_than_one} or \eqref{eq : limit_equal_zero} holds.

 	  \begin{proposition} \label{prop:Dknbd}
 	  Assume \eqref{critical_assumption} and either \eqref{eq : limit_bigger_than_one} or \eqref{eq : limit_equal_zero}. 
      There exist constants $C,c , s  >0$ such that
      \eeq{ \label{Dknbd_eq}
      \mathbb{P}(Y_{k,R} \geq \lambda ) \leq C \e^{-c \lambda^{s}} \quad \text{for all $R\ge1$, $k\ge1$, $\lambda\ge0$.}  
    }
 	  \end{proposition}

    

\begin{proof}
Note that $Y_{k,R}$ is trivially upper bounded by the cardinality of $\EE_k^\ann$, which is at most $4(2\b^k+1)^2 \le 4(3\b^k)^2 = 36\cdot \b^{2k}$.
Therefore, we may assume without loss of generality that
\eeq{ \label{g7m23b}
\lambda \le 36 \cdot \b^{2k}, \quad \text{or equivalently} \quad
\tfrac{1}{6}\sqrt{\lambda} \le \b^k.
}
We may also assume $\lambda\ge1$ since $Y_{k,R}$ is integer-valued.
In both cases below, we will prove the proposition assuming $k\ge k_0$, for some sufficiently large $k_0$ determined below, not depending on $R$ or $\lambda$.
The inequality \eqref{Dknbd_eq} is then trivially extended to $k<k_0$ by choosing $C \ge \e^{c(36\cdot \b^{2k_0})^{2s}}$.

\medskip \noindent \textbf{Case 1: Assuming \eqref{eq : limit_bigger_than_one}.}
	Let $c_\star>0$ and $q\ge 6$ be the constants $c$ and $q$ from Proposition~\ref{lem:param}.
    By possibly adjusting $c_\star$, we may assume $c_\star\le\b^{-2/q}$.
    Fix any $\eta  \in (0,1/q)$, and then take $k_0$ large enough that
    \eeq{ \label{k0_condition}
k_0 \ge q \qquad \text{and} \qquad b < \ceil{\b^{\eta k}} \le c_\star\b^{k/q} \le \b^{(k-2)/q} \quad \text{for all $k\ge k_0$}.
    }
    These conditions will allow us to apply Proposition~\ref{lem:param} and Lemma~\ref{cor:DknMknbd} in the appropriate parameter ranges, as we assume henceforth that $k\ge k_0$.
    By decomposing the event of interest according to the value of $M_{k,R}$, we have
 \begin{equation} \label{Dknbd}
	\begin{aligned}
	        \mathbb{P} (Y_{k,R} \geq \lambda)  
            &=  
	      \mathbb{P} \Big(Y_{k,R} \geq \lambda ,\  M_{k,R} \leq p_{\floor{\b^{k}/\b^q}} \Big)  \\
	     &\phantom{=} + \sum_{\ell=b}^{\ceil{\b^{\eta k}}-1} \mathbb{P}\Big(Y_{k,R} \geq \lambda ,\ p_{\floor{\b^k/\ell^q}} < M_{k,R} \leq p_{\floor{\b^k/(\ell+1)^q}}  \Big)   \\
	     &\phantom{=} + \mathbb{P}\Big(Y_{k,R} \geq \lambda ,\   M_{k,R} > p_{\floor{\b^k/\ceil{ \b^{\eta k}}^q}}  \Big).
	\end{aligned}
 \end{equation}
	We treat the three terms on the right-hand side separately, with the goal of an upper bound of the form $C\e^{-c\lambda^s}$ for each one.
    The values of $C$ and $c$ may change with each inequality.
    
    For the first term, we simply apply Lemma~\ref{cor:DknMknbd} with $\theta = \b^{q-2}$ (which is allowed since the first inequality in \eqref{k0_condition} guarantees $q-2\le k-2$ and thus $\b^{q-2}\in[1,\b^{k-2}]$):
    \eq{
    \P(Y_{k,R} \ge \lambda,\ M_{k,R} \le p_{\floor{\b^{k}/\b^{q}}})
    = \P(Y_{k,R} \ge \lambda,\ M_{k,R} \le p_{\floor{\b^{k-2}/\b^{q-2}}}) 
    \stackref{DknMknbd_eq}{\le} C\e^{-c \lambda/\b^{2(q-2)}}.
    }
    The right-hand side has the desired form $C\e^{-c\lambda^s}$ once we absorb $1/\b^{2(q-2)}$ into $c$.

    For the last term on the right-hand side of \eqref{Dknbd}, we simply ignore the event $\{Y_{k,R}\ge\lambda\}$ and make the following observation: if $M_{k,R} > p$, then there exists $e\in\gamma_R\cap(\EE_{k-1}^\ann \cup \EE_{k}^\ann \cup \EE_{k+1}^\ann)$ with $U_e > p$ and thus $t_e = F^{-1}(U_e)\ge F^{-1}(p)$.
    Therefore, a union bound justifies the first inequality below.
    The second inequality uses the fact that $r\mapsto p_r$ is weakly decreasing, the third inequality uses Proposition~\ref{lem:param}, and the final inequality is just adjusting constants:
    \eeq{ \label{kb7inn}
    \P\Big(M_{k,R}>p_{\floor{\b^k/\theta^q}}\Big)
    &\stackrefp{param_eq}{\le}
    \P\Big(T_{k-1,R} \ge F^{-1}(p_{\floor{\b^k/\theta^q})}\Big) \\
    &\hphantom{\stackrefp{param_eq}{\le}}+ \P\Big(T_{k,R} \ge F^{-1}(p_{\floor{\b^k/\theta^q}})\Big) \\
    &\hphantom{\stackrefp{param_eq}{\le}}+ \P\Big(T_{k+1,R} \ge F^{-1}(p_{\floor{\b^k/\theta^q}})\Big) \\
    &\stackrefp{param_eq}{\le} 
    \P\Big(T_{k-1,R} \ge F^{-1}(p_{\floor{\b^{k-1}/(\theta\b^{-1/q})^q})}\Big) \\
    &\hphantom{\stackrefp{param_eq}{\le}}+ \P\Big(T_{k,R} \ge F^{-1}(p_{\floor{\b^k/\theta^q}})\Big) \\
    &\hphantom{\stackrefp{param_eq}{\le}}+ \P\Big(T_{k+1,R} \ge F^{-1}(p_{\floor{\b^{k+1}/\theta^q}})\Big) \\
    &\stackref{param_eq}{\le} C\big(\e^{-c\theta b^{-1/q}} + \e^{-c_\star\theta} + \e^{-c_\star\theta}\big) \quad
    \text{for all $\theta\in[\tfrac{3}{2}\b^{1/q},c_\star\b^{k/q}]$}\\
    &\stackrefp{param_eq}{\le} C\e^{-c\theta}.
    }
Note that $\frac{3}{2}b^{1/q}< b$ since $b\ge 2$ and $q\ge6$, so the interval $[\tfrac{3}{2}\b^{1/q},c_\star\b^{k/q}]$ is nonempty by \eqref{k0_condition} and contains $[b,b^{\eta k}]$. 
In the special case $\theta = \b^{\eta k}$, we obtain
\eq{
\P\Big(M_{k,R}>p_{\floor{\b^k/\ceil{ \b^{\eta k}}^q}}\Big)
\le \P\Big(M_{k,R}>p_{\floor{\b^k/(\b^{\eta k})^q}}\Big)
\stackref{kb7inn}{\le} C\e^{-c\b^{\eta k}}
\stackref{g7m23b}{\le} C\e^{-c(\sqrt{\lambda}/6)^{\eta}}.
}
This again has the desired form $C\e^{-c\lambda^s}$ once we absorb $1/6^\eta$ into $c$.

Our final task is to control the sum on the right-hand side of \eqref{Dknbd}. 
By H\"older's inequality, 
\eeq{\label{Dkbd0}
&\mathbb{P}\Big(Y_{k,R} \geq \lambda ,\ p_{\floor{\b^k/\ell^q}} < M_{k,R}  \leq p_{\floor{\b^k/(\ell+1)^q}}  \Big)  \\
&\leq  \mathbb{P}\Big(Y_{k,R} \geq \lambda ,\ M_{k,R} \leq p_{\floor{\b^k/(\ell+1)^q}}  \Big) ^{1/2}  \mathbb{P}\Big(M_{k,R}>p_{\floor{\b^k/\ell^q}}  \Big)^{1/2}.
}
We wish to upper bound the right-hand side for an arbitrary $\ell\in\{\b,\dots,\ceil{\b^{\eta k}}-1\}$.
For the first factor, we trivially adjust the power of $\b$ and then apply Lemma~\ref{cor:DknMknbd} with $\theta = (\ell+1)^q$ (which is allowed since $(\ell+1)^q \in [(\b+1)^q, \ceil{\b^{\eta k}}^q]\subseteq[1,\b^{k-2}]$ by \eqref{k0_condition}):
\begin{subequations} \label{Dkbd_both}
\eeq{ \label{Dkbd1}
\mathbb{P}\Big(Y_{k,R} \geq \lambda ,\ M_{k,R} \leq p_{\floor{\b^k/(\ell+1)^q}}  \Big)
&\stackrefp{DknMknbd_eq}{\leq} \mathbb{P}\Big(Y_{k,R} \geq \lambda ,\ M_{k,R} \leq p_{\floor{\b^{k-2}/(\ell+1)^q}}  \Big) \\
&\stackref{DknMknbd_eq}{\leq} C\e^{-c \lambda/(\ell+1)^{2q}}.
}
For the second factor on the right-hand side of \eqref{Dkbd0}, we apply \eqref{kb7inn} with $\theta = \ell$ (which is allowed since $\ell\in[\b,\ceil{\b^{\eta k}}-1]\subseteq[\tfrac{3}{2}\b^{1/q},c_\star\b^{k/q}]$ as discussed below \eqref{kb7inn}):
    \begin{equation} \label{Dkbd2}
    \mathbb{P}\Big(  M_{k,R} >   p_{\floor{\b^k/\ell^q}}    \Big) \le \mathbb P\Big( T_{k,R} \ge F^{-1}(p_{\floor{\b^k/\ell^q}} ) \Big) \leq C \e^{-c \ell }.
    \end{equation}
\end{subequations}
Now insert \eqref{Dkbd_both} into \eqref{Dkbd0}, absorb $1/2$ into $c$ to account for the square roots in \eqref{Dkbd0}, and sum over $\ell$:
\eeq{ \label{jb2ll}
\sum_{\ell=\b}^{\ceil{\b^{\eta k}}-1}\mathbb{P}\Big(Y_{k,R} \geq \lambda ,\ p_{\floor{\b^k/\ell^q}} < M_{k,R}  \leq p_{\floor{\b^k/(\ell+1)^q}}  \Big)
\le C\sum_{\ell=2}^{\infty}\e^{-c\lambda/(\ell+1)^{2q}}\e^{-c\ell}.
}
To obtain the desired form $C\e^{-c\lambda^s}$, we simply observe that for any $A,a>0$,
\eeq{ \label{dkb7}
\sum_{\ell=1}^\infty \e^{-c\lambda^{a}/\ell^{A}}\e^{-c\ell}
&= \sum_{\ell=1}^{\floor{\lambda^{a/(A+1)}}}\e^{-c\lambda^{a}/\ell^A}\e^{-c\ell}
+ \sum_{\ell=\floor{\lambda^{a/(A+1)}}+1}^\infty\e^{-c\lambda^{a}/\ell^A}\e^{-c\ell} \\
&\le \sum_{\ell=1}^{\floor{\lambda^{a/(A+1)}}}\e^{-c\lambda^{a/(A+1)}}\e^{-c\ell}
+ \sum_{\ell=\floor{\lambda^{a/(A+1)}}+1}^\infty1\cdot \e^{-c\ell} \\
&\le C\e^{-c\lambda^{a/(A+1)}} + C\e^{-c\lambda^{a/(A+1)}},
}
where the middle inequality uses $\ell\le\lambda^{a/(A+1)}$ for the first sum.

\medskip \noindent \textbf{Case 2: Assuming \eqref{eq : limit_equal_zero}.}
Let $c_\star>0$ and $q\ge3$ be the constants $c$ and $q$ from Proposition~\ref{lem:Tkn0assumption}, and let $C_\star$ be the constant from the upper bound in \eqref{pn-pcbd}.
Choose $k_0$ large enough so that
\eeq{ \label{jd4vx}
36\b^{2k} \le \min\{c_\star\b^{qk},\tfrac{1}{C_\star\e}b^{8k/3}\} \quad \text{for all $k\ge k_0$}.
}
Write $Y_{k,R} = Y_{k,R}^{(1)}+Y_{k,R}^{(2)}$, where
\eq{
 Y_{k,R}^{(1)} &=  |\{ e \in \gamma_R \cap \EE_k^\ann  :\, U_e \in (p_\cc,p_{\b^{4k}})\}|, \\
  Y_{k,R}^{(2)} &= |\{ e \in \gamma_R \cap \EE_k^\ann  :\, U_e \ge p_{\b^{4k}}\}|.
}
A simple union bound gives
\eeq{ \label{eq:Dkn12bd}
 \P(Y_{k,R} \geq \lambda ) \leq  \P(Y_{k,R}^{(1)} \geq \lambda/2 ) + \P(Y_{k,R}^{(2)} \geq \lambda/2 ). 
}
We treat the two terms on the right-hand side separately, with the goal of an upper bound of the form $C\e^{-c\lambda^s}$ for each one.
The values of $C$ and $c$ may change with each inequality.

For the first term, we trivially bound $Y_{k,R}^{(1)}$ by the total number of edges $e \in \EE_k^\ann$ with $U_e \in (p_\cc,p_{\b^{4k}})$, which has the Binomial($N,\rho$) distribution with 
\eq{
N = |\EE_k^\ann| \le 36\b^{2k} 
\quad \text{and} \quad
\rho = p_{\b^{4k}} - p_\cc  \stackref{pn-pcbd}{\le} C_\star \b^{-8k/3} \stackref{jd4vx}{\le} (\e N)^{-1}.
}
This comparison implies
\eq{
\P(Y_{k,R}^{(1)} \geq \lambda) 
&\le \sum_{\ell = \ceil{\lambda} }^{N} { N \choose \ell } \rho^\ell (1-\rho)^{N-\ell}  \\
&\leq  \sum_{\ell= \ceil{\lambda} }^{N} \frac{N^\ell}{\ell!} \rho^\ell
\le \frac{(N\rho)^{\ceil{\lambda}}}{\ceil{\lambda}!}\sum_{\ell=0}^{N-\ceil{\lambda}} \frac{(N\rho)^\ell}{\ell!}
\le \frac{\e^{-\lambda}}{\ceil{\lambda}!}\cdot \e^{\e^{-1}}.
}
This has the desired form $C\e^{-c\lambda^s}$ after we drop the factorial.
 
 For the second term on the right-hand side of \eqref{eq:Dkn12bd}, we begin by noting 
\[
\liminf_{n \to \infty} \f{F^{-1}(p_{\b^{4n}})}{F^{-1}(p_{\b^{n}})} 
= \liminf_{n \to \infty} \f{F^{-1}(p_{\b^{4n}})}{F^{-1}(p_{\b^{2n}})}\cdot \f{F^{-1}(p_{\b^{2n}})}{F^{-1}(p_{\b^{n}})}
\ge \Big(\liminf_{n \to \infty} \f{F^{-1}(p_{\b^{2n}})}{F^{-1}(p_{\b^{n}})}\Big)^2
\stackref{eq : limit_equal_zero}{>} 0.
\]
Therefore, there exists $\delta\in(0,1]$ such that
\begin{equation} \label{Cbetadef}
F^{-1}(p_{\b^{4k}}) \ge \delta F^{-1}(p_{\b^{k}}) \quad \text{for all $k\ge0$.}
\end{equation}
Next observe that every edge $e$ with $U_e\ge p_{\b^{4k}}$ has $t_e = F^{-1}(U_e) \ge F^{-1}(p_{\b^{4k}})$, and thus $T_{k,R} \ge Y_{k,R}^{(2)}F^{-1}(p_{\b^{4k}})$.
This observation implies
\eq{
 \P(Y_{k,R}^{(2)} \geq \lambda) 
 \le \P(T_{k,R} \geq \lambda F^{-1}(p_{\b^{4k} } ) )
 \stackref{Cbetadef}{\leq}  \P(T_{k,R} \geq \delta \lambda F^{-1}(p_{\b^{ k} } )   ).
 }
Now apply Proposition~\ref{lem:Tkn0assumption} to the final probability.
This application is allowed since
\eq{
\delta \lambda
\le \lambda 
\stackref{g7m23b}{\le} 36\b^{2k} 
\stackref{jd4vx}{\le} c_\star\b^{qk},
}
and it yields
\eq{
 \P(Y_{k,R}^{(2)} \geq \lambda)
 \stackrefpp{Tkn0assumption_eq}{Cbetadef}{\leq} C \e^{-c_\star(\delta\lambda)^{1/q}}.
}
This has the desired form $C\e^{-c\lambda^s}$ with $c = c_\star\delta^{1/q}$.
\end{proof}

\subsection{Spacings within and between circuits} \label{sec:spacings}
Section~\ref{sec_ceba} achieved control on the number of closed edges found on the geodesic $\gamma_R$ between $\partial B_{\b^{k-1}}$ and $\partial B_{\b^k}$.
Ultimately we wish to do the same when $(\partial B_{\b^k})_{k\ge1}$ are replaced with the circuits $(\incir_j)_{j\ge1}$ from Proposition~\ref{prop:geod_cons}, since these circuits enable the long arm events needed to yield Theorem~\ref{thm:general_thm}.
This task will be completed in Section~\ref{sec_cebc} but is complicated by the fact that the circuits are random.
The present subsection gives the necessary control over this randomness.
The results here are general statements about critical bond percolation on $\Z^2$.
Our estimates are similar to those in the proof of \cite[Thm.~3.3]{chayes_chayes_durrett86}; nonetheless we provide a complete argument since the precise definition of the circuits we use is quite delicate.

For any primal circuit $\CC$, define its \textit{inner reach} and \textit{outer reach} as follows:
\begin{subequations}
\eeqs{
\label{def_inner_reach}
\mathfrak{i}(\mathcal{C}) &= \min \{ k \ge 1 :\, \EE_k^\ann \cap \mathcal{C} \not= \varnothing \}, \\
\label{def_outer_reach}
\mathfrak{o}(\mathcal{C}) &= \max \{ k \ge 1:\, \EE_k^\ann \cap \mathcal{C} \not= \varnothing \}.
}
\end{subequations}
We will need the following deterministic facts:


\begin{lemma} \label{lemma_reacon}
For any edge $e\in E(\Z^2)$, let $k_e$ be the unique integer such that $e\in \EE_{k_e}^\ann$.
Let $\CC$ be a primal circuit.
The following statements hold:
\begin{enumerate}[label=\textup{(\alph*)}]

\item \label{lemma_reacon_a}
If the midpoint of $e$ is not in $\ext(\CC)$, then $k_e\le\mathfrak{o}(\CC)$.

\item \label{lemma_reacon_b}
If the midpoint of $e$ is not in $\intr(\CC)$, and $\vc0\in\intr(\CC)$, then $k_e\ge\mathfrak{i}(\CC)$.

\end{enumerate}
Let $\CC_1$ and $\CC_2$ be primal circuits such that $\CC_2$ surrounds $\CC_1$.
The following statements hold:
\begin{enumerate}[label=\textup{(\alph*)}] \setcounter{enumi}{2}

\item \label{lemma_reacon_c}
$\mathfrak{o}(\CC_1)\le\mathfrak{o}(\CC_2)$.

\item \label{lemma_reacon_d}
If $\vc 0\in\intr(\CC_1)$, then $\mathfrak{i}(\CC_1)\le\mathfrak{i}(\CC_2)$.

\end{enumerate}
\end{lemma}

\begin{proof}
We call $k_e$ the ``annuli index'' of $e$.
Note that for any $x=(a,b)\in\Z^2$ with $a,b\geq0$, the annuli index of the west and south edges away from $x$ is at most that of the east and north edges:
\eeq{ \label{aw53f}
k_{\{(a-1,b),(a,b)\}} = k_{\{(a,b-1),(a,b)\}}
\le \min\big\{k_{\{(a,b),(a+1,b)\}},k_{\{(a,b),(a,b+1)\}}\big\}.
}
For parts \ref{lemma_reacon_a} and \ref{lemma_reacon_b}, we assume without loss of generality that both coordinates of both endpoints of $e$ are nonnegative (otherwise rotate everything by 90, 180, or 270 degrees).
Denote the midpoint of $e$ by $y$.

\medskip Part \ref{lemma_reacon_a}:
If $y$ lies on $\CC$, then $k_e\le\mathfrak{o}(\CC)$ by definition of outer reach in \eqref{def_outer_reach}.
So assume $y\in\intr(\CC)$.
Take any infinite up-right path $\gamma=(e_1,e_2,\dots)$ whose first edge is $e_1=e$.
Since $y\in\intr(\CC)$, the path $\gamma$ must intersect $\CC$ at some vertex $z$ after passing through $y$.
Let $i$ be the unique index such that $z\in e_i\cap e_{i+1}$.
We assume $z$ is the \textit{first} intersection of $\gamma$ with $\CC$ after passing through $y$, so that $e_i$ does not belong to $\CC$.

If we write $z = (a,b)$, then $e_i$ is either $\{(a-1,b),(a,b)\}$ or $\{(a,b-1),(a,b)\}$, since $\gamma$ is an up-right path.
On the other hand, there are two edges $e',e''\in \CC$ that contain $z$; since neither one is $e_i$, at least one of these two is equal to either $\{(a,b),(a+1,b)\}$ or $\{(a,b),(a,b+1)\}$.
This observation justifies the second inequality below:
\eq{
\mathfrak{o}(\CC)
\stackref{def_outer_reach}{\ge}
\max\{k_{e'},k_{e''}\}
\ge \min\big\{k_{\{(a,b),(a+1,b)\}},k_{\{(a,b),(a,b+1)\}}\big\}
\stackref{aw53f}{\ge} k_{e_i}.
}
Since $k_{e_i}\ge k_{e_1}=k_e$ (again because of \eqref{aw53f} and the fact that $\gamma$ is an up-right path confined to the northeast quadrant), we are done.

\medskip Part \ref{lemma_reacon_b}:
This argument is ``dual'' to that of part \ref{lemma_reacon_a}.
If $y$ lies on $\CC$, then $k_e\ge\mathfrak{i}(\CC)$ by definition of inner reach in \eqref{def_inner_reach}.
So assume $y\in\ext(\CC)$.
Take an up-right path $\gamma=(e_1,\dots,e_n)$ that starts at the origin and whose last edge is $e_n = e$.
Since $\vc 0\in\intr(\CC)$ while $y\in\ext(\CC)$, the path $\gamma$ must intersect $\CC$ at some vertex $x$ before reaching $y$.
Let $i\in\{2,\dots,n\}$ be the unique index such that $x = e_{i-1}\cap e_{i}$.
We assume $x$ is the \textit{last} intersection of $\gamma$ with $\CC$ before reaching $y$, so that $e_i$ does not belong to $\CC$.

If we write $x = (a,b)$, then $e_i$ is either $\{(a,b),(a+1,b)\}$ or $\{(a,b),(a,b+1)\}$ since $\gamma$ is an up-right path.
On the other hand, there are two edges $e',e''\in \CC$ that contain $x$; since neither one is $e_i$, at least one of these two is equal to either $\{(a-1,b),(a,b)\}$ or $\{(a,b-1),(a,b)\}$.
This observation justifies the second inequality below:
\eq{
\mathfrak{i}(\CC)
\stackref{def_inner_reach}{\le}
\min\{k_{e'},k_{e''}\}
\le \max\big\{k_{\{(a-1,b),(a,b)\}},k_{\{(a,b-1),(a,b)\}}\big\}
\stackref{aw53f}{\le} k_{e_i}.
}
Since $k_{e_i}\le k_{e_n} = k_e$ (again because of \eqref{aw53f} and the fact that $\gamma$ is an up-right path confined to the northeast quadrant),  we are done.

\medskip Part \ref{lemma_reacon_c}:
Fix $e\in\CC_1$ such that $k_e = \mathfrak{o}(\CC_1)$.
By Lemma~\ref{surround_lemma}\ref{surround_lemma_c}, the midpoint of $e$ does not belong to $\ext(\CC_2)$.
Now apply part~\ref{lemma_reacon_a}.

\medskip Part \ref{lemma_reacon_d}:
Fix $e\in\CC_2$ such that $k_e = \mathfrak{i}(\CC_2)$.
By Lemma~\ref{surround_lemma}\ref{surround_lemma_d}, the midpoint of $e$ does not belong to $\intr(\CC_1)$.
Now apply part~\ref{lemma_reacon_b}. 
\end{proof}

Let $\incir_{1},\incir_2,\ldots$ be the sequence of successively innermost edge-disjoint open circuits enclosing the origin. 
On the full-probability event $\Omega_\infty$ from Definition~\ref{def:omegainf}, this sequence is infinite.
The rigorous construction of this sequence proceeds just as in Section~\ref{sec:circons}, except that the circuits only need to enclose $\AA = \{\vc 0\}$, i.e.~one takes $\BB = \varnothing$ in \eqref{Si}.
We also take the convention that $\incir_0 = \{\vc 0\}$ is the ``empty circuit at $\vc 0$'' with $\intr(\incir_0) = \{\vc0\}$, $\ext(\incir_0) = \R^2\setminus\{\vc0\}$, and $\mathfrak{i}(\incir_0)=\mathfrak{o}(\incir_0)=0$.

Let $\EE_j'$ denote the set of edges whose midpoint does not belong to $\ext(\incir_j)$.
To isolate edges between consecutive circuits, we define
\eeq{ \label{def_edges_btw_cir}
\EE_1^\crc = \EE_1' \qquad \text{and} \qquad
\EE_j^\crc = \EE_j' \setminus \EE_{j-1}' \quad \text{for $j\ge2$.}
}
So $\EE_1^\crc,\EE_2^\crc,\dots,\EE_j^\crc$ from a partition of all possible edges used by a path $\gamma$ that starts at $\vc 0$ and does not enter the exterior of $\incir_j$.
These sets are related to the sequence $(\EE_k^\ann)_{k\ge1}$ as follows.

\begin{proposition} \label{prop_crc_to_ann}
For any $j\ge1$, we have $\EE_j^\crc \subseteq \bigcup_{k=\mathfrak{i}(\incir_{j-1})\vee1}^{\mathfrak{o}(\incir_j)}\EE_k^\ann$.
\end{proposition}

\begin{proof}
Take any $e\in\EE_j^\crc=\EE_j'\setminus\EE'_{j-1}$. 
In the notation of Lemma~\ref{lemma_reacon}, we wish to show $\mathfrak{i}(\incir_{j-1}) \le k_e \le \mathfrak{o}(\incir_{j})$.
Since $e\in\EE_j'$, the midpoint of $e$ does not belong to $\ext(\incir_j)$, so
Lemma~\ref{lemma_reacon}\ref{lemma_reacon_a} implies $k_e \le \mathfrak{o}(\incir_{j})$.
Since $e\notin\EE_{j-1}'$, the midpoint of $e$ belongs to $\ext(\incir_{j-1})$, so 
Lemma~\ref{lemma_reacon}\ref{lemma_reacon_b} implies $k_e\ge \mathfrak{i}(\incir_{j-1})$.
If $j=1$, then this last conclusion can be replaced by the trivial inequality $k_e \ge 1$.
\end{proof}

Proposition~\ref{prop_crc_to_ann} is most useful when $\mathfrak{o}(\incir_j) - \mathfrak{i}(\incir_{j-1})$ is small.
This motivates the next result.

\begin{proposition} \label{lem : circuitBound}
 There exist constants \(C,c > 0\) such that
\eeq{ \label{reb8c}
  \P(\mathfrak{o}(\incir_{j}) - \mathfrak{i}(\incir_{j-1}) \geq \ell) \leq C j \e^{-c\ell} \quad \text{for all $j\ge1$, $\ell\ge0$}.
}
\end{proposition}

The proof requires that we control two quantities: the radial distance traveled by a given circuit, and the radial distance between successive circuits.
These two quantities appear in the two inequalities of the following lemma:

\begin{lemma} \label{lem_reaches}
There exist constants \(C,c > 0\) such that
\begin{subequations} \label{gx_new}
\eeq{ \label{gx6v5_new}
  \P(\mathfrak{o}(\incir_{j}) - \mathfrak{i}(\incir_{j}) \geq \ell) \leq C j \e^{-c\ell}
  \quad \text{for all $j\ge1$, $\ell\ge0$,}
}
and
\eeq{ \label{gx6v6_new}
\P(\mathfrak{i}(\incir_{j}) - \mathfrak{o}(\incir_{j-1}) \geq \ell) \leq C j \e^{-c\ell} \quad \text{for all $j
\ge1$, $\ell\ge0$.}
}
\end{subequations}
\end{lemma}

Before proving Lemma~\ref{lem_reaches}, we use it to establish Proposition~\ref{lem : circuitBound}.

\begin{proof}[Proof of Proposition~\ref{lem : circuitBound}]
The result is trivial if $\ell=0$, so assume $\ell\ge1$.
By a union bound, $\P(\mathfrak{o}(\incir_{j}) - \mathfrak{i}(\incir_{j-1}) \geq \ell)$ is at most
\eq{
        \P\big(\mathfrak{o}(\incir_{j}) - \mathfrak{i}(\incir_{j}) \geq \ceil{\ell/3} \big) +   \P\big(\mathfrak{i}(\incir_{j}) - \mathfrak{o}(\incir_{j-1}) \geq \ceil{\ell/3} \big) +  \P\big(\mathfrak{o}(\incir_{j-1}) - \mathfrak{i}(\incir_{j-1}) \geq \ceil{\ell/3} \big),
}
where the third probability is $0$ if $j=1$.
Now apply \eqref{gx6v5_new} to the first and last terms, and \eqref{gx6v6_new} to the middle term.
\end{proof}

The strategy for Lemma~\ref{lem_reaches} is to leverage the fact that each of the events in \eqref{gx_new} induces a 1-arm event across an annulus.
There are two complications.
First, the location of the annulus is random because the locations of the circuits $(\incir_j)_{j\ge1}$ is random.
Second, the arm relevant for \eqref{gx6v6_new} is closed dual rather than open primal.
These two complications are dealt with in the following two lemmas.

\begin{lemma} \label{lem:stochast_bd}
There exists $\rho\in(0,1)$ such that the following statement holds.
For every $j \ge 1$, the random variable $\mathfrak o(\incir_j)$ (and therefore also $\mathfrak i(\incir_j)$)  is stochastically dominated by the sum of $j$ independent Geometric($\rho$) variables (supported on $\{1,2,\ldots\}$). 
In particular, for any $m\ge1$, there exists a constant $C = C(\rho,m)$ such that for every $j\ge1$, 
\eeq{ \label{jd7bx}
\sum_{k=1}^\infty\P(\mathfrak i(\incir_j)=k)^{1/m} \le Cj
\quad \text{and} \quad
\sum_{k=1}^\infty\P(\mathfrak o(\incir_j)=k)^{1/m} \le Cj.
}
\end{lemma}

\begin{proof}
 Consider the following event:
 \[
 \Omega_k = \{\exists \text{ an open circuit $\CC$ enclosing the origin such that $\CC\subseteq\EE_k^\ann$}\}.
 \]
 Since $(\EE_k^\ann)_{k\ge1}$ are disjoint, the events $(\Omega_k)_{k\ge1}$ are independent.
 In addition, by the RSW theorem \eqref{RSW} together with the FKG inequality, there exists $\rho\in(0,1)$ such that $\P(\Omega_k)\ge \rho$ for all $k\ge1$.
 By the second Borel--Cantelli lemma, it follows that with probability one, the event $\Omega_k$ occurs for infinitely many $k$.
 Let $\mathfrak{K}_1<\mathfrak{K}_2<\cdots$ be the (random) list of these values of $k$, and set $\mathfrak{K}_0=0$.
 Since the events $(\Omega_k)_{k\ge1}$ are independent, the increments $(\mathfrak{K}_{j}-\mathfrak{K}_{j-1})_{j\ge1}$ are independent and satisfy $\P(\mathfrak{K}_{j}-\mathfrak{K}_{j-1} \ge n) \le (1-\rho)^{n-1}$ for all $n\ge1$.
 Therefore,
 \eeq{ \label{jx7bd}
 \text{$\mathfrak{K}_j$ is stochastically dominated by the sum of $j$ independent Geometric($\rho$) variables.}
 }
 
 Now consider a sequence of open circuits $\CC_1,\CC_2,\dots$, where $\CC_j$ is any open circuit fitting the description in $\Omega_{\mathfrak{K}_j}$ (chosen in some measurable way from the finitely many possible circuits, given the value of $\mathfrak{K}_j$).
By construction, $\CC_{j+1}$ surrounds $\CC_j$ and is edge-disjoint from $\CC_j$ (in fact, they are also vertex-disjoint). 
Since $(\incir_j)_{j\ge1}$ was defined as the sequence of successively \textit{innermost} edge-disjoint open circuits enclosing the origin, it follows by induction that each $\incir_j$ is surrounded by $\CC_j$. 
So Lemma~\ref{lemma_reacon}\ref{lemma_reacon_c} implies $\mathfrak o(\incir_j)\le \mathfrak o(\CC_j) = \mathfrak{K}_j$.
The first statement of the lemma now follows from \eqref{jx7bd}.

For \eqref{jd7bx}, let $\mu_m$ be the $m$-th moment of a Geometric($\rho$) random variable.
We claim
\eeq{ \label{eub83}
\E[\mathfrak i(\incir_j)^m] \le \mathbb \E[\mathfrak o(\incir_j)^m] \le j^m\mu_m.
}
Indeed, the first inequality is trivial, and the second follows from the stochastic domination and the following deterministic inequality for nonnegative numbers $x_1,\ldots,x_j$:
\[
(x_1+\cdots+x_j)^m = j^m\Big(\frac{x_1+\cdots+x_j}{j}\Big)^m \le j^{m}\Big(\frac{x_1^m+\cdots+x_j^m}{j}\Big).
\]
The desired statement \eqref{jd7bx} is trivial for $m=1$.
For $m>1$, we use H\"older's inequality followed by the moment bound:
\eq{
\sum_{k=1}^\infty\P(\mathfrak i(\incir_j)=k)^{1/m}
&\le \bigg(\sum_{k=1}^\infty k^m\cdot\P(\mathfrak i(\incir_j)=k)\bigg)^{1/m}\
\bigg(\sum_{k=1}^\infty k^{-m/(m-1)}\bigg)^{(m-1)/m} \\
&= \E[\mathfrak{i}(\incir_j)^m]^{1/m}\cdot \bigg(\sum_{k=1}^\infty k^{-m/(m-1)}\bigg)^{(m-1)/m}
\stackref{eub83}{\le} Cj.
}
The second inequality in \eqref{eub83} is proved in exactly the same way.
\end{proof}

We are now ready for the proof of  Lemma~\ref{lem_reaches} that had been postponed.
Let $\{\mathcal A \leftrightarrow \mathcal B\}$ denote the event that there exists an open (primal) path starting at $\mathcal{A}$ and ending at $\mathcal{B}$.
Let $\{\mathcal A \overset{\mathsf{cd}} {\leftrightarrow} \mathcal B\}$ denote the event that there exists a closed (dual) path starting at $\mathcal A$ and ending at $\mathcal B$, in the sense of Definition~\ref{open_closed_paths_def}.

\begin{proof}[Proof of Lemma~\ref{lem_reaches}]
If $C$ is large enough, then the result is trivial for $\ell=0$ and $\ell=1$, so assume $\ell\ge2$.
First we prove \eqref{gx6v5_new}.
If $\mathfrak i(\incir_{j}) \le k$ and $\mathfrak o(\incir_{j}) \ge k+\ell$, then there exists an open path from $\partial B_{\b^{k}}$ to $\partial B_{\b^{k + \ell-1}}$ formed by following a portion of $\incir_{j}$.
That is,
\eeq{ \label{2hfv6x}
\{\mathfrak i(\incir_{j}) \le k,\ \mathfrak o(\incir_{j}) \ge k+\ell\} \subseteq \{\partial B_{\b^{k}} \leftrightarrow \partial B_{\b^{k+\ell-1}}  \}
\quad \text{for any $j,k,\ell\ge1$.}
}
Moreover, Proposition \ref{prop:1-arm} gives an upper bound on 1-arm probabilities:
\eeq{ \label{ewb6x}
\P(\partial B_{\b^{k}} \leftrightarrow \partial B_{\b^{k+\ell-1}})
= \pi_1(\b^{k},\b^{k+\ell-1}) \stackref{ncb3}{\leq} C(\b^{\ell-1})^{-c}.
}
Now decompose the event $\{\mathfrak{o}(\incir_{j}) - \mathfrak{i}(\incir_{j}) \geq \ell\}$ based on the value of $\mathfrak{i}(\incir_{j})$:
\eq{
\P(\mathfrak{o}(\incir_{j}) - \mathfrak{i}(\incir_{j}) \geq \ell) 
&\stackrefp{jd7bx,ewb6x}{=}  \sum_{k=1}^{\infty } \P(\mathfrak{i}(\incir_{j}) = k,\ \mathfrak{o}(\incir_{j}) - \mathfrak{i}(\incir_{j}) \geq \ell ) \\
&\stackrefpp{2hfv6x}{jd7bx,ewb6x}{\leq} \sum_{k=1}^{\infty } \P(\mathfrak{i}(\incir_{j})=  k,\   \partial B_{\b^{k}} \leftrightarrow \partial B_{\b^{k+\ell-1}}) \\
&\stackrel{\parbox{\widthof{\footnotesize\eqref{jd7bx},\eqref{ewb6x}}}{\centering\footnotesize (H\"older)}}{\le} \sum_{k=1}^{\infty } \P(\mathfrak{i}(\incir_{j}) =  k)^{1/2}\P(\partial B_{\b^{k}} \leftrightarrow \partial B_{\b^{k+\ell-1}})^{1/2}  \\
&\stackref{jd7bx,ewb6x}{\le} Cj(\b^{\ell-1})^{-c}.
}
This proves \eqref{gx6v5_new} after we adjust $C$ and $c$ to obtain the desired form $Cj\e^{-c\ell}$. 

Next we prove \eqref{gx6v6_new}.
By Proposition \ref{prop:geod_cons}\ref{itm:zetaopen}, there exists a closed dual path $\dualp'$ starting at $\incir_{j-1}$ and ending at $\incir_{j}$.
Therefore, if $\mathfrak o(\incir_{j-1}) \le k$ and $\mathfrak i(\incir_{j}) \ge k+\ell$, then there exists a closed dual path from $\partial B_{\b^{k}}$ to $\partial B_{\b^{k + \ell-1}}$ formed by a portion of $\dualp'$.
That is,
\eeq{ \label{3hfv6x}
\{\mathfrak o(\incir_{j-1}) \le k,\ \mathfrak i(\incir_{j}) \ge k+\ell\} \subseteq \{\partial B_{\b^{k}} \overset{\mathsf{cd}}{\leftrightarrow} \partial B_{\b^{k+\ell-1}}  \}
\quad \text{for any $j,k,\ell\ge1$.}
}
Moreover, by putting together Lemma~\ref{lem_closed_like_open} and  Proposition \ref{prop:1-arm}, we have
\eeq{  \label{q9bp3}
\P(\partial B_{\b^{k}} \overset{\mathsf{cd}}{\leftrightarrow} \partial B_{\b^{k+\ell-1}})
= \pi_1''(\b^k,\b^{k+\ell-1})
\stackref{closed_like_open,ncb3}{\le} C(\b^{\ell-1})^{-c}.
}
Now decompose the event $\{\mathfrak{i}(\incir_{j}) - \mathfrak{o}(\incir_{j-1}) \geq \ell\}$ based on the value of $\mathfrak{i}(\incir_{j})$:
\eq{
\P(\mathfrak{i}(\incir_{j}) - \mathfrak{o}(\incir_{j-1}) \geq \ell) 
&\stackrefp{jd7bx,q9bp3}{=}  \sum_{k=1}^{\infty } \P(\mathfrak{i}(\incir_{j}) = k,\ \mathfrak{i}(\incir_{j}) - \mathfrak{o}(\incir_{j-1}) \geq \ell ) \\
&\stackrefpp{3hfv6x}{jd7bx,ewb6x}{\leq} \sum_{k=1}^{\infty } \P(\mathfrak{i}(\incir_{j})=  k,\   \partial B_{\b^{k}} \overset{\mathsf{cd}}{\leftrightarrow} \partial B_{\b^{k+\ell-1}}) \\
&\stackrel{\parbox{\widthof{\footnotesize\eqref{jd7bx},\eqref{q9bp3}}}{\centering\footnotesize (H\"older)}}{\le} \sum_{k=1}^{\infty } \P(\mathfrak{i}(\incir_{j}) =  k)^{1/2}\P(\partial B_{\b^{k}} \overset{\mathsf{cd}}{\leftrightarrow} \partial B_{\b^{k+\ell-1}})^{1/2}  \\
&\stackref{jd7bx,q9bp3}{\le} Cj(\b^{\ell-1})^{-c}.
}
This proves \eqref{gx6v6_new} after we adjust $C$ and $c$ to obtain the desired form $Cj\e^{-c\ell}$. 
\end{proof}

We conclude this subsection with one more proposition that is needed in Section~\ref{sec_cebc}.
Namely, we wish to know that there are not too many circuits among $(\incir_j)_{j\ge1}$ that remain in the interior of $B_R$.
Let $L_R$ denote the number of such circuits:
\begin{equation} \label{YR}
L_R = \sup\{j \ge 0:\, \text{$e\subseteq B_{R-1}$ for every $e\in\incir_j$}\}.
\end{equation}
In other words, $L_R$ is the value of $L$ in Proposition \ref{prop:geod_cons} applied with $\mathcal A = \{\vc 0\}$ and $\mathcal B = \partial B_R$.
Note that $L_R$ takes the value $0$ if there are no open circuits that both enclose the origin and remain in $B_{R-1}$.

\begin{proposition} \label{prop_LR_tail}
There exists a constant $c>0$ such that
\eeq{ \label{LR_tail}
\P(L_R \ge \theta \log_\b R) \le \ceil{\log_\b R}\e^{-c\theta} \quad \text{for all $\theta\ge0$, $R\ge2$.}
}
\end{proposition}

\begin{proof}
By definition \eqref{YR}, all edges of $\incir_{L_R}$ have both endpoints in $B_{R-1}\subseteq B_{\b^{\ceil{\log_\b R}}-1}$ and thus belong to $\EE_{\ceil{\log_\b R}}$. 
Therefore, we always have $\mathfrak o(\incir_{L_R}) \le \ceil{\log_\b R}$.
Consequently, we have $L_R = \sum_{k=1}^{\ceil{\log_\b R}}V_k$, where $V_k$ is the number of circuits in the sequence $(\incir_j)_{j\ge1}$ with $\mathfrak o(\incir_j) = k$. 
For $k\ge3$, let $\Asf_k$ denote the event that both of the following occur:
\begin{itemize}
    \item there exists a closed circuit $\DD$ that encloses the origin and whose vertices lie in the annulus $[-\b^{k-1},\b^{k-1}]^2\setminus[-\b^{k-2},\b^{k-2}]^2$;
    \item $\partial B_{\b^{k-3}} \overset{\mathsf{cd}}{\leftrightarrow} \partial B_{\b^{k+1}}$.
\end{itemize}
By the RSW theorem \eqref{RSW} together with the FKG inequality, there exists $\eps>0$ such that $\P(\Asf_k) \ge \ve$ for every $k$.
We claim that on the event $\Asf_k$, there exists no open circuit $\CC$ enclosing the origin with $\mathfrak o(\CC) = k$.

Indeed, suppose toward a contradiction that such a circuit $\CC$ exists.
By Lemma \ref{lemma:openclosed}, either $\DD$ surrounds $\CC$, or $\CC$ surrounds $\DD$.
In fact, it must be the latter.
To see this, note that the assumption $\mathfrak{o}(\CC) = k$ implies $\CC$ contains a vertex $v$ with $\|v\|_\infty \ge \b^{k-1}$ and thus $v\in\ext(\DD)$.
It now follows from Lemma~\ref{surround_lemma}\ref{surround_lemma_c} that $\CC$ is not surrounded by $\DD$.
Now we know $\CC$ surrounds $\DD$.
Let $\dualp$ a closed dual path from $\partial B_{\b^{k-3}}$ to $\partial B_{\b^{k+1}}$.
Since the first vertex of $\dualp$ belongs to 
$\intr(\DD)\subseteq\intr(\CC)$ while the last vertex of $\dualp$ belongs to $\ext(\CC)$,
there must be some intersection of $\dualp$ with $\CC$.
But this is impossible since $\dualp$ is closed dual while $\CC$ is open primal.

We have thus shown
\[
\mathbb P(\exists \text{ an open circuit $\CC$ with $\mathfrak o(\CC) = k$}) \le \P(\Asf_k^\complement) \le 1 -\ve.
\]
By the BKR inequality (applied second below), it follows that
\eeq{ \label{Vitail}
\mathbb P(V_k \ge n)  \le \mathbb P(\text{$\exists\ n$ edge-disjoint open circuits $\CC$ with $\mathfrak o(\CC) = k$}) \le (1-\ve)^n.
}
Using our earlier inequality for $L_R$ and then a union bound, we obtain
\eq{
\P(L_R\ge\theta \log_\b R) 
\le \P\bigg(\sum_{k=1}^{\ceil{\log_\b R}}V_k \ge \theta \log_\b R\bigg)
&\stackrefp{Vitail}{\le} \sum_{k=1}^{\ceil{\log_\b R}}\P\Big(V_k \ge \Big\lceil\frac{\theta \log_\b R}{\ceil{\log_\b R}}\Big\rceil\Big) \\
&\stackref{Vitail}{\le} \ceil{\log_\b R}(1-\eps)^{\theta{\log_b R}/\ceil{\log_\b R}} \\
&\stackrefp{Vitail}{\le} \ceil{\log_b R}e^{-c\theta},
}
where $c > 0$ is an appropriately chosen constant.
\end{proof}

\subsection{Bounds for number of closed edges on geodesic between circuits} \label{sec_cebc}
Recall that $\gamma_R\in\Geo(\vc 0,\partial B_R)$ is the geodesic from Proposition \ref{prop:geod_cons} with $\mathcal A = \{\vc 0\}$ and $\mathcal B = \partial B_{R}$.
This subsection will prove a tail bound for the number the closed edges in $\gamma_R\cap\EE_j^\crc$.
This is analogous to Proposition~\ref{prop:Dknbd}, but now using circuits instead of annuli.
The random variable analogous to $Y_{k,R}$ from \eqref{Dkn} is
\eeq{\label{eq: xi}
X_{j,R} = |\{e\in\gamma_R\cap\EE_j^\crc:\, t_e>0\}|,
}
where $\EE_j^\crc$ is defined in \eqref{def_edges_btw_cir}.
In other words, $X_{j,R}$ is the number of closed edges in $\gamma_R$ between its last intersection with $\incir_{j-1}$ and its last intersection with $\incir_{j}$ (this equivalent description is using Proposition~\ref{prop:geod_cons}\ref{itm:gamma_on_circuit} to guarantee that all edges between the first and last intersections with $\incir_j$ are in fact on $\incir_j$).
The previous sentence is true even if $j=1$, since we take the convention $\incir_0 = \{\vc 0\}$.

\begin{proposition} \label{prop:Xjbd}
 	Assume \eqref{critical_assumption} and either \eqref{eq : limit_bigger_than_one} or \eqref{eq : limit_equal_zero}. 
    There exist constants \(C,c ,s > 0\) such that
\eeq{ \label{eq_Xjbd}
  \P(X_{j,R} \geq \lambda)\leq C  j^{4/3} \e^{-c \lambda^s} \quad \text{for all $j \ge 1$, $R\ge1$, $\lambda \ge 0$}.
}
\end{proposition}

\begin{proof}
As usual, the values of $C$ and $c$ may change with each inequality.
By Proposition~\ref{prop_crc_to_ann}, we have $X_{j,R} \le \sum_{k=\mathfrak{i}(\incir_{j-1})\vee1}^{\mathfrak{o}(\incir_{j})}Y_{k,R}$.
This justifies the first inequality below:
\eq{
      &\P(X_{j,R}\geq \lambda) 
\le \P \bigg( \sum_{k=\mathfrak{i}(\incir_{j-1})\vee1}^{\mathfrak{o}(\incir_{j})} Y_{k,R}  \geq \lambda \bigg)  \\
&\stackrefp{Dknbd_eq,reb8c}{=} \sum_{\ell=0}^{\infty} \sum_{n=1}^{\infty}  \P \bigg( \sum_{i=k-\ell}^{k} Y_{k,R}   \geq \lambda ,\ \mathfrak{o}(\incir_j) = n,\ \mathfrak{o}(\incir_{j})  - \mathfrak{i}(\incir_{j-1})  = \ell  \bigg) \\
&\stackrel{\parbox{\widthof{\footnotesize\eqref{Dknbd_eq},\eqref{reb8c}}}{\centering\footnotesize (H\"older)}}{\le}  \sum_{\ell=0}^{\infty }\sum_{n=1}^{\infty}  \P \bigg( \sum_{k=n-\ell}^{k} Y_{k,R}  \geq \lambda \bigg)^{1/3} \P(\mathfrak{o}(\incir_j) = n)^{1/3} \P( \mathfrak{o}(\incir_{j})  - \mathfrak{i}(\incir_{j-1}) = \ell )^{1/3} \\
&\stackref{Dknbd_eq,reb8c}{\le} C\sum_{\ell=0}^{\infty }\sum_{n=1}^{\infty} (\ell+1)^{1/3}\e^{-c(\lambda/(\ell+1))^s}\P \Big( \mathfrak{o}(\incir_j) = n \Big)^{1/3} \cdot j^{1/3}\e^{-c\ell} \\
&\stackrefpp{jd7bx}{Dknbd_eq,reb8c}{\le} Cj^{4/3}\sum_{\ell=1}^\infty \ell^{1/3}\e^{-c\lambda^s/\ell^s}\e^{-c\ell}.
}
By increasing $C$ and decreasing $c$, we may absorb the factor of $\ell^{1/3}$ into $\e^{-c\ell}$.
We are then left with
\[
\P(X_{j,R}\ge\lambda)
\le Cj^{4/3}\sum_{\ell=1}^\infty\e^{-c\lambda^s/\ell^s}\e^{-c\ell}
\stackref{dkb7}{\le} Cj^{4/3}\e^{-c\lambda^{s/(s+1)}}. \qedhere
\]
\end{proof}

\subsection{Upper bound on length of geodesic} \label{sec_gen_final_proof}
In this subsection we prove our last main result, Theorem~\ref{thm:general_thm}.
We will need the following notation: let $\pi_3^{(\lambda)}(M)$ denote the probability of the following polychromatic 3-arm event to distance $M$ with at most $\lambda$ ``defects'':
\begin{itemize}
    \item There exist two primal paths, starting at $(0,0)$ and $(1,0)$ respectively, both ending at $\partial B_M$, and each containing at most $\lambda$ closed edges.
    \item There exists a dual path, starting at either $(\f{1}{2},\frac{1}{2})$ or $(\f{1}{2},-\frac{1}{2})$, ending at a dual neighbor of some vertex belonging to $\partial B_M$, and containing at most $\lambda$ open edges.
    \item All three of these paths are disjoint.
\end{itemize}
A key estimate is \cite[Prop.~18]{nolin08}, which gives
\begin{equation}\label{defect_bd}
\pi_3^{(\lambda)}(M) \leq \big(C(1+\log M)\big)^\lambda \pi_3(M) \quad \text{for all $\lambda\ge0$, $M\ge1$}.
\end{equation}
While \cite[Prop.~18]{nolin08} is stated as $\pi_3^{(\lambda)}(M) \le C_\lambda (1+\log M)^\lambda \pi_3(M)$ for some constant $C_\lambda$ that depends on $\lambda$, \eqref{defect_bd} follows from their proof since it is shown that $C_\lambda = C_{\lambda-1} C'$ for some constant $C'$.

\begin{proof}[Proof of Theorem \ref{thm:general_thm}] 
Let $\primalp_R$ and $\dualp_R$ be the geodesic and the disjoint dual path constructed in Proposition~\ref{prop:geod_cons} with $\AA = \{\vc 0\}$ and $\BB = \partial B_R$.
The statement to be shown is that
\begin{equation} \label{eq:1.4reform}
\P\big(|\gamma_R| \ge \theta R^{2+\ve} \pi_3(R)\big) \le C\e^{-c(\log(\theta R))^s}
\quad \text{for all $\theta\ge1$, $R\ge1$.}
\end{equation}
In the case of $R=1$, we have $|\gamma_R|=1$, so the result trivially holds provided $C\ge\pi_3(1)^{-1}$.
So henceforth we assume $R\ge2$.

As in previous subsections, let $(\incir_j)_{j\ge1}$ denote the collection of successively innermost edge-disjoint open circuits enclosing $\incir_0 = \{\vc 0\}$.
Recall from \eqref{YR} that $L_R$ is the number of these circuits that remain in the interior of $\partial B_R$.
Recall from \eqref{def_edges_btw_cir} that $\EE_j^\crc$ contains all edges that could \textit{possibly} be used by $\gamma_R$ between its last intersection with $\incir_{j-1}$ and its last intersection with $\incir_j$.
Recall from \eqref{eq: xi} that $X_{j,R}$ is the number of those edges that are \textit{actually} used by $\gamma_R$ and are closed.

\begin{claim} \label{claim_last_circ}
$X_{j,R}=0$ for all $j\ge L_R+2$.
\end{claim}

\begin{proofclaim}
If $\gamma_R$ does not intersect $\incir_{L_R+1}$, then $\gamma_R$ does not intersect $\incir_{L_R+2},\incir_{L_R+3},\dots$, which makes the claim trivial.
So assume $\gamma_R$ intersects $\incir_{L_R+1}$, and
let $x$ be the first vertex of $\gamma_R$ that lies on $\incir_{L_R+1}$.
Since any intersection of $\gamma_R$ with $\incir_{L_R+2},\incir_{L_R+3},\dots$ occurs after $x$, it suffices to show $T(x,\partial B_R) = 0$.
To this end, consider the Jordan curve $\jor = \{z\in\R^2:\, \|z\|_\infty = R\}$.
By maximality of $L_R$, the circuit $\incir_{L_R+1}$ contains a vertex in $\Z^2\setminus B_{R-1} \subseteq \jor\cup\ext(\jor)$.
On the other hand, $x$ belongs to $B_R\subseteq\jor\cup\intr(\jor)$.
Therefore, $\incir_{L_R+1}$ intersects both $\jor\cup\ext(\jor)$ and $\jor\cup\intr(\jor)$.
We conclude that $\incir_{L_R+1}$ must intersect $\jor$ at some vertex $y\in\partial B_R$.
Since $x$ and $y$ both lie on the open circuit $\incir_{L_R+1}$, we have $T(x,y)=0$.
\end{proofclaim}

Now define
\eeq{ \label{frakXR_def}
\mathfrak{X}_R
=\max_{j\ge1} X_{j,R}
\stackrel{\text{\footnotesize (Claim~\ref{claim_last_circ}})}{=} \max_{1\leq j\le L_R+1} X_{j,R}.
}
We prove \eqref{eq:1.4reform} via the following union bound:
\eeq{ \label{kb7s3}
\P\big(|\gamma_R| \ge \theta R^{2+\ve} \pi_3(R)\big) 
&\le \P\big( |\primalp_R | \geq \theta  R^{2 + \ve} \pi_3(R),\  \mathfrak{X}_R <   \lambda \big) \\
&\phantom{\le} +  \P(\mathfrak{X}_R  \ge \lambda,\ L_R <  \ell) 
+  \P(L_R \ge \ell).
}
We seek an upper bound of the form $C\e^{-c(\log(\theta R))^s}$ for each of the three terms on the right-hand side, which will be achieved with the choices
\eeq{ \label{choices}
\lambda = \ceil{\log(\theta R)}^{1/2}, \qquad
\ell = \ceil{\log(\theta R)\cdot\log_\b R}.
}
As usual, we allow the constants $C,c,s>0$ to change with each inequality, but they will never depend on $\theta$ or $R$.

The third term on the right-hand side of \eqref{kb7s3} is controlled using Proposition~\ref{prop_LR_tail} (in the second ineqaulity below):
\eq{
\P(L_R\ge\ell) 
\stackref{choices}{\le} \P(L_R \ge \log(\theta R)\cdot\log_\b R)
\stackref{LR_tail}{\le} \ceil{\log_\b R}\e^{-c\log(\theta R)}.
}
By decreasing $c$, we may absorb the factor of $\ceil{\log_\b R}$ into the exponential and obtain an upper bound of the form $C\e^{-c\log(\theta R)}$.

The second term on the right-hand side of \eqref{kb7s3} is controlled using Proposition \ref{prop:Xjbd} (in the third inequality below):
\eq{
&\P\big(\mathfrak{X}_R  \ge \lambda,\ L_R <  \ell \big)
   \stackref{frakXR_def}{\le} \P\Big(\max_{1 \leq j \leq \ell } X_{j,R} \ge \lambda \Big)
   \le \sum_{j=1}^{\ell} \P( X_{j,R} \ge \lambda) \\
   &\stackrefpp{eq_Xjbd}{choices}{\leq} C  \e^{- c\lambda^s } \sum_{j=1}^{\ell} j^{4/3} 
   \le C  \e^{- c\lambda^s }\ell^{7/3} \stackref{choices}{\le} C\e^{-c(\log(\theta R))^{s/2}}\ceil{\log(\theta R)\cdot \log_\b R}^{7/3}.
}
By increasing $C$ and decreasing $c$, we may absorb the factor of $\ceil{\log(\theta R)\cdot \log_\b R}^{7/3}$ into the exponential and obtain an upper bound of the form $C\e^{-c(\log(\theta R))^{s/2}}$.

Finally, the first term on the right-hand side of \eqref{kb7s3} will be controlled with the help of the following claim.
Let $E(B_R)$ denote the set of edges with both endpoints in $B_{R}$.

\begin{claim} \label{claim_defect}
Given an edge $e\in E(B_R)$, let \(M = \min \{ \dist(e,\{\vc 0\}) , \dist(e , \partial B_{R} ) \} \). Then
\eeq{\label{gen_3arm}
\mathbb{P}(e\in \primalp_R,\ \mathfrak{X}_R \leq \lambda) \leq C \pi_3^{(\lambda)}(M),
}
where $C$ does not depend on $e$ or $R$.
\end{claim}

\begin{proofclaim}
Suppose $e\in\gamma_R$ and $\mathfrak{X}_R\le\lambda$.
Let $J$ be the unique integer such that $e\in\EE_J^\crc$.
That is, $e$ lies on the portion of $\gamma_R$ between its last intersection with $\incir_{J-1}$ and its last intersection with $\incir_J$.
Either the midpoint of $e$ belongs to $\ext(\incir_{J-1})\cap\intr(\incir_J)$, or $e\in\incir_J$.
We split the relevant event into three cases and prove \eqref{gen_3arm_1}--\eqref{gen_3arm_3} below, which together prove \eqref{gen_3arm}.

\medskip\noindent\textbf{Case 1: $e\in\incir_{J}$.} 
Then the first bullet point in Remark~\ref{rem_no_bernoulli} gives 
\eeq{ \label{gen_3arm_1}
\P(e\in\gamma_R,\ e\in\incir_J) \le C\pi_3(M).
}

\medskip\noindent\textbf{Case 2: $e\notin\incir_{J}$ and $X_{j,R}=0$.} 
Then the second bullet point in Remark~\ref{rem_no_bernoulli} gives 
\eeq{ \label{gen_3arm_2}
\P(e\in\gamma_R,\ e\notin\incir_J,\ X_{j,R}=0) \le C\pi_3(M).
}

\medskip\noindent\textbf{Case 3: $e\notin\incir_{J}$ and $X_{j,R} \in \{1,\dots,\lambda\}$.} 
Proposition~\ref{prop:geod_cons}\ref{itm:gamma_on_circuit} implies that $\incir_{J-1}$ and $\incir_J$ must be vertex-disjoint in this case.
Therefore, the following two primal paths to distance $M$ are disjoint: follow $\gamma_R$ from $e$ in both directions until reaching $\incir_J$ (or possibly $\partial B_R$ if $J>L_R$) and $\incir_{J-1}$ respectively, then follow $\incir_J$ and $\incir_{J-1}$ around the origin.
These paths use at most $X_{j,R}\le\lambda$ closed edges.
Next we construct a closed arm from $e$ to distance $M$ that is disjoint from the two paths just discussed, which will show
\eeq{ \label{gen_3arm_3}
\P(e\in\gamma_R,\ e\notin\incir_{J},\ X_{j,R} \in \{1,\dots,\lambda\}) \le C\pi_3^{(\lambda)}(M).
}
In fact, this closed arm from $e$ to $\partial B_M(e)$ will not intersect $\gamma_R$, $\incir_J$, or $\incir_{J-1}$.

Since $X_{j,R}>0$, there exists a closed edge $f\in\gamma_R\cap\EE_J^\crc$.
If $e$ is closed, we take $f=e$; otherwise choose $f$ arbitrarily.
By Proposition \ref{prop:geod_cons}\ref{itm:dualcir}, the dual edge \( f^\star \) lies on a closed circuit $\dincir$ with the following properties: $\vc 0\in\intr(\dincir)$, $\partial B_R\subseteq\ext(\dincir)$, and $\UU$ does not intersect the geodesic \( \primalp_R \) anywhere other than the midpoint of $f$.
Moreover, the closed dual circuit $\dincir$ obviously cannot intersect the open primal circuits $\incir_{J-1}$ and $\incir_J$.
Therefore, if $e$ is closed (so $f=e$), then $\dincir$ immediately provides the desired closed arm (in fact, $\dincir$ provides two such arms).

If instead $e$ is open, then consider the closed dual path $\dualp_e$ from $e$ to $\dualp_R$, guaranteed by Proposition \ref{prop:geod_cons}\ref{itm:dualconn}.
Let $x$ be the vertex (on the dual lattice) at which $\dualp_e$ meets $\dualp_R$.
Since $\dualp_e$ is closed dual, it cannot intersect the open primal circuits $\incir_{J-1}$ and $\incir_J$.
Hence $\dualp_e$ remains entirely in $\ext(\incir_{J-1})\cap\intr(\incir_J)$. 
The same logic applies to $\dincir$ since the midpoint of $f$ lies in $\ext(\incir_{J-1})\cap\intr(\incir_J)$.
Since $\dualp_R$ starts at $\vc 0\in\intr(\UU)$ and ends at $\partial B_R\subseteq\ext(\UU)$, $\dualp_R$ must intersect $\dincir$ at some vertex $y$.
We now know that $x$ and $y$ both belong to $\ext(\incir_{J-1})\cap\intr(\incir_J)$.
Since $\dualp_R$ intersects $\incir_{J-1}$ and $\incir_J$ only once each (Proposition~\ref{prop:geod_cons}\ref{itm:zetaopen}), and any such intersection is actually a crossing from interior to exterior, the portion of $\dualp_R$ between $x$ and $y$ remains entirely in $\ext(\incir_{J-1})\cap\intr(\incir_J)$.
Therefore, the following closed path does not intersect $\incir_{J-1}$ or $\incir_J$:
\begin{itemize}
    \item Follow $\dualp_e$ from $e$ to $x$. This path is disjoint from $\gamma_R$ by Proposition~\ref{prop:geod_cons}\ref{itm:dualconn}.
    \item Next follow $\dualp_R$ from $x$ to $y$.  This path is disjoint from $\gamma_R$ by Proposition~\ref{prop:geod_cons}.
    If we have reached $\partial B_M(e)$, then we can stop.
    \item If we have not reached $\partial B_M(e)$, then consider the two arcs of $\dincir$ emanating from $y$, both of which intersect $\partial B_M(e)$ since $\dincir$ encloses the origin.
    Follow the arc that avoids $f^\star$ so that it is disjoint from $\gamma_R$.
\end{itemize}
It is possible that the path just constructed is not self-avoiding ($\zeta_e$ and $\zeta_R$ may use edges on $\dincir$), but removing any loops yields the desired closed arm from $e$ to distance $M$.
\end{proofclaim}

We can now carry out the first-moment method:
\eeq{ \label{kb82x}
&\P\big( |\primalp_R | \geq  \theta R^{2 + \ve} \pi_3(R),\  \mathfrak{X}_R \le\lambda \big)  \\
&\stackrefp{defect_bd}{=} \P \bigg ( \sum_{e \in E(B_R)}  \one(  e \in \primalp_R,\mathfrak{X}_R \le\lambda  ) \geq \theta R^{2 + \ve} \pi_3(R) \bigg) \\
&\stackrefp{defect_bd}{\leq} \frac{1}{\theta R^{2 + \ve} \pi_3(R)} \sum_{e \in E(B_{R})}   \P (  e \in \primalp_R,\ \mathfrak{X}_R\leq \lambda) \\
 &\stackrefp{defect_bd}{=} \frac{1}{\theta R^{2 + \ve} \pi_3(R)}\sum_{r=0}^{R} \sum_{\substack{e \in E(B_{R}) \\ \dist(e,\{\vc 0\}) \wedge \dist(e, \partial B_{R}) = r }}  \P (  e \in \primalp_R, \mathfrak{X}_R \le \lambda) \\
 &\stackrefpp{gen_3arm}{defect_bd}{\le} \frac{C}{\theta R^{2 + \ve} \pi_3(R)}\sum_{r = 0}^{R} R \pi_3^{(\lambda)}(r) \\
& \stackref{defect_bd}{\leq} \frac{C}{\theta R^{2 + \ve} \pi_3(R)} \big(C(1+\log R)\big)^{\lambda} \sum_{r = 0}^{R} R  \pi_3(r)
 \\
 &\stackrefpp{3r8bbc}{defect_bd}{\leq} \dfrac{C\big(C(1+\log R)\big)^{\lambda}}{\theta R^{\ve}} 
 \stackrefpp{choices}{defect_bd}{=} \frac{C\big(C(1+\log R)\big)^{\ceil{\log(\theta R)}^{1/2}}}{\theta R^\eps}.
}
Since $\theta\ge1$, we have
\eq{
\big(C(1+\log R)\big)^{\ceil{\log(\theta R)}^{1/2}}
\le \big(C\log(\theta R)\big)^{\ceil{\log(\theta R)}^{1/2}}
\le C\e^{\log(\theta R)^{2/3}}.
}
On the other hand,
\eq{
\theta R^\eps = \e^{\log(\theta R^\eps)}
\ge c\e^{2(\log(\theta R))^{2/3}}.
}
Using the two previous displays in the final line of \eqref{kb82x}, we obtain an upper bound of the form $C\e^{-(\log(\theta R))^{2/3}}$.
\end{proof}

\section{Kesten's separation results and consequences}\label{sec:separation}
In the remaining three sections of the paper, we develop the topological theory needed to rigorously prove Proposition \ref{prop:geod_cons}. 

Kesten \cite{kesten82} developed a general theory of percolation on lattices embedded in $\R^d$ and satisfying appropriate conditions. Here we review how general separation theorems proved in \cite{kesten82} apply to the square lattice. 

A \textit{mosaic} (Definition 2 in Section 2.2 of \cite{kesten82}) is a graph $\MM$ embedded in $\R^2$ satisfying the following three conditions. Here, we consider each edge as a set of points in $\R^2$.
\begin{enumerate} [label=\rm(\roman{*}), ref=\rm(\roman{*})]  \itemsep=3pt 
    \item $\MM$ has no loops.
    \item All edges of $\MM$ are bounded, and every compact set of $\R^2$ intersects only finitely many edges of $\MM$. 
    \item  Any two distinct edges of $\MM$ are either disjoint or their intersection consists of a single vertex of the graph.
\end{enumerate}
An important fact, as noted by Kesten, is that any mosaic is a planar graph. 
A mosaic $\MM$ splits $\R^2$ into connected components called faces. For a face $F$ of $\MM$, to \textit{close-pack} $F$ means to add an edge between any pair of vertices on the boundary of $F$ that are not yet connected (\cite{kesten82}, Definition 3 in Section 2.2).
Given a subset $\FF$ of faces, let $\GG$ and $\GG^\star$ be the graphs obtained from $\MM$ by close-packing all faces in $\FF$ and not in $\FF$, respectively.
Then $(\GG,\GG^\star)$ is called a \textit{matching pair}; we consider a specific example in Lemma \ref{lem:cov_graph} which is illustrated in Figures \ref{fig:mosaic} and \ref{fig:matching_pair}.

\begin{remark}[Triangular vs.\ square lattice] \label{rmk:triang}
As an example, let $\MM$ be the triangular lattice. Every face in $\MM$ is a triangle, so no matter the choice of $\FF$, the induced matching pair is $(\MM,\MM)$. As we will see, this self-similarity does not hold for the primal lattice. This is one reason why working with the primal lattice is technically more challenging than the triangular lattice. 
\end{remark}

Recall Definition \ref{def:enclose} that a circuit $\CC$ \textit{encloses} a set $\AA$ if $\AA \subseteq \intr(\CC)$, and that a circuit $\CC$ \textit{surrounds} another circuit $\CC'$ if $\intr(\CC') \subseteq \intr(\CC)$. When we say a circuit encloses a vertex $v$, we mean that $\CC$ encloses the set $\{v\}$. 

The following theorem relates site percolation on the graph $\GG$ to site percolation on $\GG^\star$. In site percolation, the open cluster of an open vertex $v$ is the set of all vertices connected to $v$ by a path  whose every vertex is open. 
\begin{theorem} \label{Kesten-cor2.2}
{\textup{\cite[Cor.~2.2]{kesten82}}}
Let $(\GG,\GG^\star)$ be the matching pair constructed from a mosaic $\MM$ and a set of faces $\FF$. Consider site percolation on the graph $\GG$. Also, assume the following: 

\begin{enumerate} [label=\rm(\roman{*}), ref=\rm(\roman{*})]  \itemsep=3pt 
    \item $\GG$ has bounded degree.
    \item All edges of $\GG$ have finite diameter, and every compact set of $\R^d$ intersects only finitely many edges of $\GG$. 
    \item $\GG$ is connected. 
\end{enumerate}
Under these assumptions, if $W(v)$, the open cluster of the vertex $v$, is nonempty and bounded, then there exists a closed circuit $J$ on $\GG^\star$ enclosing $v$.
\end{theorem}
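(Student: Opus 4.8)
The final statement to prove is Theorem~\ref{Kesten-cor2.2}, which is Corollary 2.2 of \cite{kesten82}. Since this is quoted verbatim from the literature, the ``proof'' in the paper will almost certainly be a citation rather than a full argument; my plan is to explain how one would reconstruct it from Kesten's general planarity/separation machinery, in case a self-contained argument is wanted. The core principle is the standard planar duality between connectivity in $\GG$ and blocking circuits in the matching graph $\GG^\star$: a bounded open cluster in $\GG$ cannot ``escape to infinity,'' and the topological obstruction to escape is precisely a closed circuit of $\GG^\star$ that surrounds it. The three hypotheses (bounded degree, locally finite embedding, connectedness of $\GG$) are exactly what is needed to make $\GG$ a well-behaved planar graph to which the combinatorial topology applies.

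\textbf{Key steps.} First, I would set $W = W(v)$ and assume it is nonempty and bounded; let $\partial_{\mathrm{out}} W$ denote the outer vertex boundary of $W$ in $\GG$ — the set of vertices of $\GG$ not in $W$ but adjacent (in $\GG$) to some vertex of $W$. By boundedness of $W$ and local finiteness of the embedding (hypothesis (ii)), $\partial_{\mathrm{out}} W$ is finite, and by definition of the open cluster, every vertex of $\partial_{\mathrm{out}} W$ is closed. Second, I would invoke the matching-pair structure: the whole point of constructing $(\GG,\GG^\star)$ by close-packing complementary sets of faces of the mosaic $\MM$ is that two vertices are ``$\GG$-non-adjacent but bounding a common face'' exactly when they are ``$\GG^\star$-adjacent.'' Kesten's topological lemma (the analogue of the fact that on $\Z^2$ the external boundary of a finite set contains a $*$-connected surrounding circuit) then gives that the closed set $\partial_{\mathrm{out}} W$ contains a circuit $J$ in $\GG^\star$. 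Third, I would verify that $J$ surrounds $v$: since $J$ lies in the outer boundary of $W$, the Jordan curve it traces separates $v$ (which lies inside, in $W$) from the unbounded face of $\R^2 \setminus \MM$; this uses that $W$ is connected and $v \in W$. Finally, each vertex of $J$ is in $\partial_{\mathrm{out}} W$, hence closed, so $J$ is a closed circuit of $\GG^\star$ surrounding $v$, as claimed.

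\textbf{Main obstacle.} The delicate point is the second step — extracting a genuine circuit (a cyclic sequence of $\GG^\star$-adjacent vertices, self-avoiding except at the endpoint) from the outer boundary $\partial_{\mathrm{out}} W$, and doing so at the level of generality of an arbitrary mosaic rather than $\Z^2$. One must show that consecutive vertices encountered while ``walking around'' the boundary of the bounded region $\bigcup_{u \in W} \{\text{faces of } \MM \text{ incident to } u\}$ are always $\GG^\star$-adjacent, which is exactly where the close-packing construction of the matching pair is used: faces not close-packed in $\GG$ are close-packed in $\GG^\star$, so any two boundary vertices cobounding such a face become adjacent in $\GG^\star$. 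Making the boundary walk well-defined and self-avoiding requires the planarity of $\MM$ (hypothesis (i) of the mosaic definition and (iii) for $\GG$) together with bounded degree (hypothesis (i) of the theorem), which prevents pathological accumulation. In a fully rigorous treatment this is precisely the content of the cited results in \cite[Sec.~2]{kesten82}, so in practice I would cite that section and only sketch the boundary-walk argument; the subsequent Lemma~\ref{lem:cov_graph} and the other results of Section~\ref{sec:separation} then specialize this to the concrete matching pair relevant for bond percolation on $\Z^2$.
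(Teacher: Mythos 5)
You are right that the paper does not prove Theorem~\ref{Kesten-cor2.2}; it simply cites \cite[Cor.~2.2]{kesten82} and then specializes it (via the covering-graph identification in Lemma~\ref{lem:cov_graph}) to deduce the separation results actually used, so there is no in-paper proof to compare against. Your sketch is a faithful high-level summary of Kesten's own argument (finite outer boundary of closed sites, conversion of $\GG$-non-adjacent face-coboundary pairs into $\GG^\star$-adjacency via the matching construction, boundary-walk extraction of a self-avoiding surrounding circuit), and you correctly flag the genuinely delicate point — making the boundary walk a well-defined self-avoiding $\GG^\star$-circuit for a general mosaic — as the content that lives in Kesten's Section~2 rather than something one would reprove here. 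One small imprecision worth noting if you were to flesh this out: the circuit $J$ is not quite contained in the $\GG$-outer-boundary $\partial_{\mathrm{out}}W$ as you define it; Kesten's boundary is taken with respect to $\GG^\star$-adjacency (equivalently, adjacency in the close-packed faces), and it is only after that adjustment that every vertex of $J$ is automatically closed and that consecutive vertices of the walk are guaranteed $\GG^\star$-neighbors.
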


In the setting of the present paper, instead of site percolation, we study edge percolation on $\Z^2$. However, we may still apply Theorem \ref{Kesten-cor2.2} in the appropriate setting by noting that edge percolation on  a graph $\GG$ is equivalent to site percolation on its \textit{covering graph} $\wt \GG$ (The vertex set of $\wt\GG$ is the set of midpoints of the edges of $\GG$, and two vertices in $\wt\GG$ are connected by an edge if the corresponding edges in $\GG$ share an endpoint).
This observation was first made by Fisher and Essam \cite{fisher61,fisher_essam61}, and is detailed in \cite[Sec.~2.5]{kesten82}. 

In the setting of the primal lattice $\Z^2$, let $\MM$ be the mosaic defined as follows and depicted in Figure \ref{fig:mosaic}. We place a vertex of $\MM$ at the midpoint of each edge of the primal lattice; that is, points of the form $(m,n + \f{1}{2})$ and $(m + \f{1}{2},n)$ for integers $m,n$. Each vertex $v$ of $\MM$ has edges connecting to four other vertices, namely $v\pm (\f{1}{2},\f{1}{2})$ and $v\pm(\f{1}{2},-\f{1}{2})$.
\begin{figure}[t]
    \centering
    \includegraphics[height =2in]{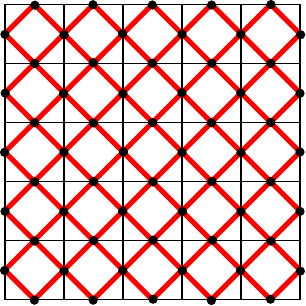}
    \caption{\small The vertices of the mosaic $\MM$ are denoted with black dots, and the edges are denoted with red/thick edges. The mosaic is overlaid onto the primal lattice (black/thin edges). }
    \label{fig:mosaic}
\end{figure}

Let $\FF$ be the set of faces of $\MM$ that contain a point of $\Z^2$, and let $(\wt \GG,\wt \GG^\star)$ be the associated matching pair, shown in Figure \ref{fig:matching_pair}.
Then, we have the following lemma.
\begin{figure}[t]
    \centering
    \includegraphics[height = 2in]{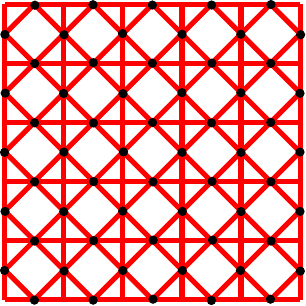} \hspace{0.5in}
    \includegraphics[height = 2in]{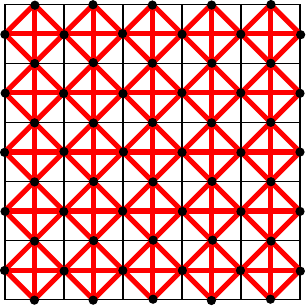}
    \caption{\small On the left is the graph $\wt \GG$, created by adding an edge between any two vertices of $\MM$ in faces that contain a point of $\Z^2$. No additional vertices are added. For faces that contain points of $(\Z^2)^\star$, no edges are added. On the right is the graph $\wt \GG^\star$, which is created by adding edges between vertices in $\MM$ in faces that contain a point of $(\Z^2)^\star$. Again, no additional vertices are added.  In the right figure, we also see the edges from the primal lattice in black/thin for reference, but these edges are not part of $\GG^\star$. }
    \label{fig:matching_pair}
\end{figure}

\begin{lemma} \label{lem:cov_graph}
$\wt \GG$ is the covering graph for the primal lattice, and $\wt \GG^\star$ is the covering graph for the dual lattice.
\end{lemma}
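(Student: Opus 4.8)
\textbf{Proof plan for Lemma \ref{lem:cov_graph}.}

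The plan is to verify directly that $\wt\GG$ and $\wt\GG^\star$ have the vertex sets and adjacency relations prescribed by the definition of the covering graph applied to the primal lattice $\Z^2$ and the dual lattice $\wh\Z^2$, respectively. First I would record the vertex sets. By construction, the vertices of $\MM$ are the midpoints of primal edges, i.e.\ the points $(m,n+\tfrac12)$ and $(m+\tfrac12,n)$ with $m,n\in\Z$; and close-packing adds no new vertices, so both $\wt\GG$ and $\wt\GG^\star$ have exactly this vertex set. On the other hand, the covering graph of $\Z^2$ by definition has one vertex per primal edge, placed at that edge's midpoint, which is precisely the same set; and the midpoints of the dual edges of $\wh\Z^2=\Z^2+(\tfrac12,\tfrac12)$ are also exactly the points $(m,n+\tfrac12)$ and $(m+\tfrac12,n)$ (each dual edge bisects a unique primal edge and vice versa, as noted in the introduction). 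So the vertex sets already match on both sides; the content of the lemma is the edge sets.

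Next I would analyze which faces of $\MM$ get close-packed. Each face of $\MM$ is a small square (a ``diamond'') with vertices at the four midpoints surrounding a single lattice point — either a point of $\Z^2$ or a point of $(\Z^2)^\star=\wh\Z^2$. A face around a point $x\in\Z^2$ has as its four boundary vertices exactly the midpoints of the four primal edges incident to $x$; a face around a point $\hat x\in\wh\Z^2$ has as its four boundary vertices the midpoints of the four dual edges incident to $\hat x$. Since $\FF$ was chosen to be the faces containing a point of $\Z^2$, the graph $\wt\GG$ is obtained by adding all edges among the four midpoints-of-primal-edges-incident-to-$x$, for every $x\in\Z^2$ (and keeping the original mosaic edges, which I'll check are redundant below); similarly $\wt\GG^\star$ adds all edges among the four midpoints-of-dual-edges-incident-to-$\hat x$, for every $\hat x\in\wh\Z^2$. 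But ``two primal edges incident to a common vertex $x$'' is exactly the adjacency relation defining the covering graph of $\Z^2$, so the close-packed edges of $\wt\GG$ are precisely the covering-graph edges of $\Z^2$; likewise for $\wt\GG^\star$ and $\wh\Z^2$. I would also note that each original mosaic edge of $\MM$ joins two midpoints of primal edges that share an endpoint (the shared endpoint being the lattice point at the center of the face containing that mosaic edge on the appropriate side), so these edges are already present among the close-packed edges and add nothing — and symmetrically they lie in faces around dual points, so they are also accounted for on the $\wt\GG^\star$ side. Assembling these observations gives that $\wt\GG$ and the covering graph of $\Z^2$ have identical vertex and edge sets, and similarly for $\wt\GG^\star$ and the covering graph of $\wh\Z^2$.

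The only genuinely fiddly point — and the step I'd expect to require the most care — is the bookkeeping of \emph{which} mosaic edges belong to faces containing a primal point versus a dual point, and confirming that every pair of distance-$1/\sqrt2$ mosaic vertices (i.e.\ every potential covering-graph edge) is correctly assigned to exactly the right one of $\wt\GG$ or $\wt\GG^\star$ and not both; this is really a matter of drawing the picture (Figures \ref{fig:mosaic} and \ref{fig:matching_pair}) carefully and checking a $2\times 2$ period of the coloring of faces as ``primal'' or ``dual.'' Concretely: faces of $\MM$ come in two classes in a checkerboard pattern, those centered at $\Z^2$ and those centered at $\wh\Z^2$, and the close-packing edge between two adjacent mosaic vertices $v,v'$ (which form one diagonal of a mosaic face) goes into $\wt\GG$ iff that face is centered at a primal point. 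Verifying that this assignment coincides with ``$v,v'$ are midpoints of two primal edges sharing a vertex'' is the crux; once it is done, everything else is immediate from the definitions, and the lemma follows.
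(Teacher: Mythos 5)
Your proof is correct and takes essentially the same approach as the paper's: a direct verification that the vertex sets agree and that the edge sets (mosaic edges plus close-packed diagonals) reproduce exactly the covering-graph adjacency relation; the paper phrases this edge-centrically (for a fixed primal edge $e$, the four mosaic neighbors of its midpoint correspond to the perpendicular edges sharing a vertex with $e$, and close-packing supplies the two parallel ones), whereas you phrase it face-centrically, but the content is the same. One small wording slip: the original mosaic edges are not ``already present among the close-packed edges'' --- close-packing by definition only adds edges between boundary vertices not yet connected, so the mosaic edges and the close-packed diagonals are disjoint sets; what is true, and what your argument actually uses, is that both kinds of edges join midpoints of primal edges sharing a common endpoint, so together they give exactly the $\binom{4}{2}=6$ covering-graph edges around each primal vertex.
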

\begin{proof}
We prove that  $\wt \GG$ is the covering graph for the primal lattice, and the other statement follows analogously. It is helpful to refer to Figure \ref{fig:matching_pair}. Both graphs have the same vertex set by definition. In $\MM$, each vertex $v$ is connected by an edge to four other vertices. The corresponding edge $e$ in the primal lattice shares an endpoint with these edges; they are the edges connected to $e$ that are perpendicular to $e$. The two additional edges that share an endpoint with $e$ in the primal lattice correspond to the vertices in $\MM$ whose connections to $v$ are added when constructing $\wt \GG$. 
\end{proof}


\begin{definition} \label{def:dualBD}
For a set $W$ of primal edges, we define the dual boundary of $W$, denoted $\dualBD(W)$, as the collection of dual edges $e^\star$ such that the associated primal edge $e$ is not in $W$, but shares at least one endpoint with an edge of $W$. Similarly, for a set $W^\star$ of dual edges, the primal boundary of the open cluster is defined as the set of primal edges whose dual edge shares at least one endpoint with an edge of $\primBD(e^\star)$.
\end{definition}


We say that a connected set of open edges $W$ is a complete open cluster if no other open edges in $\Z^2$ are connected to an endpoint of an edge in $W$ by an open path.


\begin{lemma} \label{lem:bd_edges_closed}
 If $W$ is a complete open (closed) cluster of primal edges, then $\dualBD(W)$ consists entirely of closed (open) dual edges. Similarly, if $W^\star$ is a complete closed (open) cluster of dual edges, then $\primBD(W^\star)$ consists entirely of open (closed) primal edges. 
\end{lemma}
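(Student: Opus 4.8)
\textbf{Proof proposal for Lemma \ref{lem:bd_edges_closed}.}

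The plan is to reduce the statement to the definition of a complete cluster together with the fact that primal and dual edges carry the same open/closed status (so that $e$ closed iff $\hat e$ closed). By the duality/symmetry of the setup, it suffices to prove the first assertion --- that if $W$ is a complete open cluster of primal edges then every edge of $\dualBD(W)$ is dual-closed --- since the ``closed'' case is obtained by swapping the roles of open and closed (and primal/dual statuses are synchronized), and the statements about a dual cluster $W^\star$ and $\primBD(W^\star)$ follow by the same argument with the roles of the primal and dual lattices interchanged (using that the dual of the dual edge is the primal edge). So I will only carry out the first case and remark that the others are identical.

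For the first case, let $\hat e \in \dualBD(W)$, so by Definition \ref{def:dualBD} the associated primal edge $e$ is \emph{not} in $W$ but shares at least one endpoint, say $v$, with some edge $f \in W$. Suppose for contradiction that $\hat e$ is open; then $e$ is open as well, since an edge and its dual edge have the same status. Now $f \in W$ is open (as $W$ is an open cluster) and $f$ has $v$ as an endpoint, so there is an open path realizing the cluster $W$ that reaches the vertex $v$; appending the open edge $e$ to this path shows $e$ is connected by an open path to an endpoint of an edge of $W$. By the definition of a complete open cluster, this forces $e \in W$, contradicting $e \notin W$. Hence $\hat e$ must be closed, which is what we wanted.

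I do not anticipate any real obstacle here; the only point requiring a moment's care is making the ``complete cluster'' definition do its job, namely that completeness is precisely the maximality statement that rules out an open edge sharing an endpoint with $W$ yet lying outside $W$. One should also note that $W$ being a connected set of open edges means every edge of $W$ is reachable from every other by an open path within $W$, so in particular the endpoint $v$ of $f$ is an endpoint of an open edge of the cluster; this is exactly what lets us extend by $e$. The remaining three cases (closed $W$; closed $W^\star$; open $W^\star$) are verbatim the same argument after interchanging ``open'' $\leftrightarrow$ ``closed'' and/or ``primal'' $\leftrightarrow$ ``dual'', using in the dual cases that $\primBD$ is defined symmetrically via dual neighbors and that the dual of a dual edge is the original primal edge.
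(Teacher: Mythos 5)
Your proof is correct and takes essentially the same approach as the paper: contradiction via the definition of $\dualBD$ together with the maximality built into ``complete open cluster,'' then duality/symmetry for the remaining three cases. You unpack the intermediate steps (naming the shared vertex $v$ and the extending edge) a bit more explicitly than the paper, but the argument is the same.
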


\begin{proof}
Let $W$ be an open cluster of primal edges, and assume to the contrary that $\dualBD(W)$ has an open dual edge $e^\star$. Then, the associated primal edge $e$ is open, and by definition $e$ is connected to an edge of $W$. Hence, $e \in W$, a contradiction to the definition of $\dualBD(W)$.
\end{proof}

Let $\GG$ be a graph and $\wt \GG$ its associated covering graph. Let $\wt r = (\wt v_1,\wt e_2,\ldots,\wt e_k,\wt v_k)$ be a path on $\wt \GG$. For each $i$, let $e_i$ be the associated edge on $\GG$ corresponding to the vertex $\wt v_i$ on $\wt \GG$. As noted in Comment (iii) in Section 2.5 of \cite{kesten82}, for suitable choices $v_0$ and $v_k$ of endpoints of $e_1$ and $e_k$, $r = (v_0,e_1,\ldots,e_k,v_k)$ is a path on $\GG$. However, $r$ is not necessarily a self-avoiding path, even if $\wt r$ is. Nevertheless, if $\wt r$ is a circuit, then the associated path $r$ on $\GG$ contains a circuit. Finally, with $\phi:\GG \to \wt \GG$ denoting the map sending edges of $\GG$ to vertices of $\wt \GG$, and $W(e)$ the open cluster of the edge $e$ in $\GG$, \cite[Prop.~3.1]{kesten82} gives $\phi(W(e)) = W(\phi(e))$.  We therefore obtain the following lemma as a corollary of Theorem \ref{Kesten-cor2.2} and Lemma \ref{lem:cov_graph}. See Figure \ref{fig:contain_closed} for clarity.

\begin{figure}[t]
    \centering
    \includegraphics[height =2in]{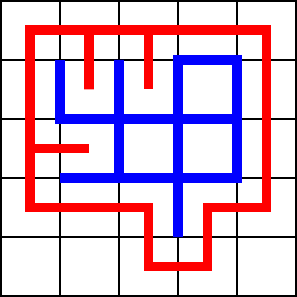} \hspace{0.5in}
\includegraphics[height = 2in]{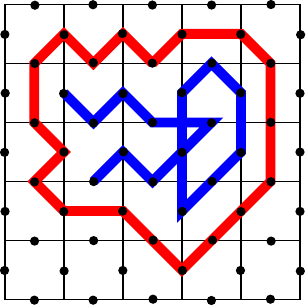}
    \caption{\small On the left is a picture of an open cluster of open edges in $\GG$ (blue/dark). On the right is the associated cluster of open vertices in $\wt \GG$, connected by a blue/dark path. By Theorem \ref{Kesten-cor2.2}, there exists a closed circuit in $\wt \GG^\star$ enclosing this open cluster. This circuit of closed vertices is shown on the right (red/light).  The set of edges on the left associated with the closed vertices on the right is not a circuit, but contains a circuit enclosing the original open cluster of edges. }
    \label{fig:contain_closed}
\end{figure}

\begin{lemma} \label{lemma:separation}
Let $W\subseteq\Z^2$ be a collection of vertices that are each connected to each other by an open path and such that for $v \notin W$, $v$ is not connected to any vertex in $W$ by an open path. Then, if the set $W$ is bounded, there exists a closed circuit $\DD$ on the dual lattice that encloses $W$. If $W$ consists of more than a single point, then we can consider $W$ as a complete open cluster of edges, and the circuit can be chosen to consist only of edges of $\dualBD(W)$. By duality and symmetry, the same holds with the roles of the primal and dual lattices reversed and/or with the roles of ``open'' and ``closed'' reversed.
\end{lemma}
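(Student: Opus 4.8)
## Proof plan for Lemma \ref{lemma:separation}

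The plan is to reduce the statement to Kesten's separation result, Theorem \ref{Kesten-cor2.2}, via the covering-graph correspondence established in Lemma \ref{lem:cov_graph}. I would organize the argument into two cases depending on whether $W$ is a single vertex or larger.

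First, consider the case where $W$ consists of more than one vertex. Then $W$ carries at least one open edge, so the set of open edges incident to vertices of $W$ forms a nonempty connected open cluster of primal edges; call it $\widehat W$. Since $W$ is bounded, $\widehat W$ is bounded as well. Apply the covering-graph functor $\phi:\GG \to \wt\GG$ from the discussion preceding the lemma, where here $\GG$ is the primal lattice $\Z^2$ and $\wt\GG$ is its covering graph (which by Lemma \ref{lem:cov_graph} coincides with the graph built from the mosaic $\MM$ and the face set $\FF$). Declare a vertex of $\wt\GG$ open if the corresponding primal edge is open. By \cite[Prop.~3.1]{kesten82}, $\phi(\widehat W)$ equals the open cluster $W(\phi(e))$ in $\wt\GG$ of any edge $e\in\widehat W$; since $\widehat W$ is nonempty and bounded, so is $W(\phi(e))$. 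One checks the hypotheses of Theorem \ref{Kesten-cor2.2}: $\wt\GG$ has bounded vertex degree, finite-diameter edges with local finiteness, and is connected. Theorem \ref{Kesten-cor2.2} then provides a closed circuit $\wt J$ on $\wt\GG^\star$ surrounding $\phi(e)$. By Lemma \ref{lem:cov_graph}, $\wt\GG^\star$ is the covering graph of the dual lattice, so $\wt J$ pulls back (via the comment recalled before the lemma) to a path $r$ on the dual lattice that contains a dual circuit $\DD$; since $\wt J$ is closed, every vertex of $\wt J$ corresponds to a closed dual edge, so $\DD$ consists of closed dual edges. Moreover the vertices of $\wt J$ correspond precisely to dual edges whose primal edge is not in $\widehat W$ but touches it — i.e.\ to edges of $\dualBD(\widehat W)$ — which gives the refinement that $\DD$ can be chosen inside $\dualBD(W)$; here I would invoke Lemma \ref{lem:bd_edges_closed} to reconcile that $\dualBD$ of a complete open cluster is automatically closed. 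Finally, the topological fact that a circuit surrounding $\phi(e)$ in the covering graph pulls back to a dual circuit enclosing the whole cluster $W$ (not merely one vertex) needs Lemma \ref{enclose lemma} / the Jordan-curve bookkeeping of Section \ref{sec:outer_circuits}: since $\DD$ separates an edge of $\widehat W$ from infinity and $W$ is connected and disjoint from $\DD$, all of $W$ lies in $\intr(\DD)$.

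Second, the case $|W|=1$, say $W=\{v\}$, is handled by a reduction: either $v$ is an endpoint of some open edge, in which case the singleton open cluster of $v$ is contained in a larger open edge-cluster and the previous paragraph applies (the enclosing circuit then a fortiori encloses $v$); or $v$ is incident only to closed edges, in which case the four dual edges bisecting those closed edges are all closed and, together with short dual connectors, already form a closed dual circuit of length $4$ immediately around $v$ — so one exhibits it directly. The "by duality and symmetry" clause at the end requires no new work: swapping the roles of $\FF$ and its complement swaps $\wt\GG$ with $\wt\GG^\star$ and hence primal with dual, and relabeling open/closed is just relabeling the Bernoulli marks, so the same argument runs verbatim.

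The main obstacle I anticipate is not the invocation of Theorem \ref{Kesten-cor2.2} itself but the topological translation between the covering graph and the original lattice: Theorem \ref{Kesten-cor2.2} only yields a circuit \emph{surrounding a single vertex} $\phi(e)$ of $\wt\GG^\star$, whereas the lemma asserts a circuit \emph{enclosing the entire set} $W$, and it asserts this for the dual $\Z^2$ lattice rather than for the abstract covering graph. Bridging this gap cleanly — i.e.\ showing that the pulled-back dual path contains a genuine self-avoiding dual circuit, that this circuit is closed, that it can be taken within $\dualBD(W)$, and that it encloses all of $W$ — is where the care is needed, and it is exactly the content that the preliminary results of Section \ref{sec:outer_circuits} (Jordan curves) and Lemma \ref{lem:bd_edges_closed} are designed to supply.
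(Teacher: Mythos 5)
Your approach is on the right track — reduce to Theorem \ref{Kesten-cor2.2} via the covering-graph correspondence of Lemma \ref{lem:cov_graph} — and this is indeed what the paper does. But there is a genuine gap in the step where you claim the refinement that $\DD$ lies in $\dualBD(W)$: you assert that ``the vertices of $\wt J$ correspond precisely to dual edges whose primal edge is not in $\widehat W$ but touches it,'' i.e.\ that the circuit produced by Theorem \ref{Kesten-cor2.2} automatically consists only of boundary edges. The statement of Theorem \ref{Kesten-cor2.2} gives you no such information: it only asserts the \emph{existence} of a closed circuit on $\GG^\star$ surrounding $v$, and a priori that circuit could lie far from $W$ and involve closed edges that have nothing to do with $\dualBD(W)$. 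The paper closes exactly this gap with an environment-modification trick that you do not mention: before invoking the separation theorem, set every primal edge whose dual is \emph{not} in $\dualBD(W)$ to open (leaving $W$ a complete open cluster, by Lemma \ref{lem:bd_edges_closed}). In the modified environment the only closed edges are those on $\dualBD(W)$, so \emph{any} closed circuit surrounding $W$ is forced to consist of boundary edges; and since those edges were already closed in the original environment, the conclusion transfers back. Without this (or some equivalent inspection of the internals of Kesten's proof), the ``$\DD \subseteq \dualBD(W)$'' refinement is unproven in your proposal.

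A minor secondary remark: in your single-vertex case, the first sub-case (``$v$ is an endpoint of some open edge'') is vacuous — if $v$ were incident to an open edge, its other endpoint would be connected to $v$ by an open path and would therefore also lie in $W$, contradicting $|W|=1$ — so only the ``$v$ incident only to closed edges'' branch actually occurs, which is the one the paper treats.
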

\begin{proof}
If $W$ consists of a single vertex $v$, then there are no open edges incident to $v$. Then, the dual edges to each of the four closed edges incident to $v$ forms a dual closed circuit enclosing $v$. 

Now, assume that $W$ has more than one point so that we can consider it as a complete open cluster of edges. We perform a modification of the environment as follows. Set all edges whose dual does not lie on $\dualBD(W)$ to open. We claim that in the modified environment, $W$ is still a complete open cluster of edges. By definition, all primal edges $e$ sharing an endpoint with an edge in $W$ are either already in $W$, or their dual lies in $\dualBD(W)$. If the dual edge $e^\star$ lies in $\dualBD(W)$, then by Lemma \ref{lem:bd_edges_closed}, $e$ is closed. Thus, a vertex outside $W$ cannot connect to a vertex in $W$ by an open path without taking a closed edge in $\dualBD(W)$. 

Since $W$ is still a complete open cluster, the remarks above the lemma imply that, in the modified environment, there exists a closed dual circuit enclosing $W$. But these edges are also closed in the original environment. 
\end{proof}

\begin{lemma} \label{lem:separate_sets}
    Let $\DD$ be a closed circuit. Assume that there exist two disjoint connected sets $\mathcal{A}$ and $\mathcal{B}$ of dual vertices lying in $\intr(\DD)$ such that all edges connecting vertices of $\mathcal{A}$ are closed and the same with edges connecting vertices in $\mathcal{B}$. Let $W_{\mathcal A}$ be the cluster of dual vertices connected by a closed dual path to $\mathcal{A}$, and let $W_{\mathcal B}$ be the cluster of dual vertices connected by a closed dual path to $\mathcal{B}$.  Then, if $W_{\mathcal A} \neq W_{\mathcal B}$, there exists an open circuit $\CC$ contained in $\intr(\DD)$ that contains $W_{\mathcal A}$ in its interior and $W_{\mathcal B}$ in its exterior, or vice versa.  

By duality, this lemma also holds by interchanging the roles of open/closed and simultaneously interchanging the roles of primal/dual.
\end{lemma}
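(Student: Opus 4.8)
The plan is to apply Lemma~\ref{lemma:separation} (the separation result following from Kesten) to the closed cluster $W_{\mathcal B}$, but carried out relative to the ``inside'' of $\DD$ rather than the full lattice. The idea is that $W_{\mathcal B}$, viewed as a set of dual vertices, is bounded (it lies inside $\intr(\DD)$), so there should be an open primal circuit enclosing it; we then need to check that this circuit stays inside $\intr(\DD)$ and keeps $W_{\mathcal A}$ outside. Since $W_{\mathcal A}\neq W_{\mathcal B}$ and both are closed clusters, they are in fact disjoint, and moreover no closed dual edge joins a vertex of one to a vertex of the other (otherwise they would be the same cluster); this separation of the two closed clusters is what will force the open circuit to land between them.

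First I would set up a modification of the environment, just as in the proof of Lemma~\ref{lemma:separation}. Declare open every primal edge whose dual does \emph{not} lie on $\dualBD(W_{\mathcal B})$, except that we also keep closed every dual edge belonging to $\DD$ itself (the edges of $\DD$ are already closed, so this is automatic) and every dual edge incident to $W_{\mathcal A}$ that lies in $\intr(\DD)$. One checks, using Lemma~\ref{lem:bd_edges_closed}, that after this modification $W_{\mathcal B}$ is still a complete open-$\star$... more precisely a complete closed cluster of dual edges, because all dual edges sharing an endpoint with $W_{\mathcal B}$ are either in $W_{\mathcal B}$ or in $\dualBD(W_{\mathcal B})$, and the latter remain open in the modified environment. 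Then the covering-graph version of Theorem~\ref{Kesten-cor2.2} (exactly as invoked in Lemma~\ref{lemma:separation}, with the roles of open/closed and primal/dual interchanged) produces an open primal circuit $\CC$ surrounding $W_{\mathcal B}$, consisting of edges of $\primBD(W_{\mathcal B})$, hence edges that are open in the original environment too.

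Next I would verify the two containment properties. To see $\CC\subseteq\intr(\DD)$: every edge of $\CC$ lies in $\primBD(W_{\mathcal B})$, so it is adjacent (through its dual) to an edge of $W_{\mathcal B}\subseteq\intr(\DD)$; since $\DD$ is a closed dual circuit and its edges are closed, while the edges of $\CC$ are open, $\CC$ cannot cross $\DD$, and being adjacent to $\intr(\DD)$ it must lie entirely in $\intr(\DD)$ (here I use that $W_{\mathcal B}$ together with its boundary edges is separated from $\ext(\DD)$ by $\DD$, which is a Jordan-curve argument like those in Section~\ref{sec:outer_circuits}). To see $W_{\mathcal A}\subseteq\ext(\CC)$: if some vertex of $W_{\mathcal A}$ lay in $\intr(\CC)$, then since $W_{\mathcal A}$ is a connected set of dual vertices joined by closed edges and $W_{\mathcal A}\neq W_{\mathcal B}$ forces $W_{\mathcal A}$ to be disjoint from $\intr(\CC)\cap(W_{\mathcal B}\cup\dualBD(W_{\mathcal B})\text{'s dual neighbors})$, the closed path realizing the connectivity of $W_{\mathcal A}$ to $\mathcal A$ (or, if $\mathcal A$ itself is inside, a closed path from $W_{\mathcal A}$ to $W_{\mathcal B}$ — impossible — or to $\ext(\CC)$) would have to cross the open circuit $\CC$, contradicting that $\CC$ is open and $W_{\mathcal A}$ is closed. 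If instead it is $W_{\mathcal A}$ that gets enclosed and $W_{\mathcal B}$ that is outside — which happens when we run the construction around $W_{\mathcal A}$ instead — we obtain the ``vice versa'' alternative in the statement. The duality remark at the end is immediate by symmetry of the construction.

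\textbf{Main obstacle.} The technical heart is the topological bookkeeping of the third paragraph: making rigorous that an open primal circuit adjacent to $W_{\mathcal B}$ cannot escape $\intr(\DD)$ and cannot swallow $W_{\mathcal A}$, given that primal and dual circuits live on different lattices and can only interact by crossing dual pairs of edges. This requires the Jordan-curve lemmas of Section~\ref{sec:outer_circuits} (enclosure is preserved under the primal/dual correspondence, a closed circuit and an open circuit cannot cross, etc.) together with careful use of the hypothesis $W_{\mathcal A}\neq W_{\mathcal B}$ to guarantee that the relevant closed connectivity witnesses genuinely separate the two clusters. The probabilistic input (Theorem~\ref{Kesten-cor2.2}) is essentially black-boxed; everything else is deterministic planar topology plus the environment-modification trick already used for Lemma~\ref{lemma:separation}.
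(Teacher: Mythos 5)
Your proposal takes a different route from the paper's, and it has a genuine gap that the paper's strategy is specifically designed to avoid.

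The core problem is that you apply the separation lemma directly to $W_{\mathcal B}$ and then try to verify \emph{a posteriori} that the resulting circuit lies inside $\DD$ and keeps $W_{\mathcal A}$ outside. Both verifications fail in important configurations. First, nothing in the hypotheses prevents $W_{\mathcal B}$ from touching $\DD$: the circuit $\DD$ is itself closed, so $W_{\mathcal B}$ (defined as the full closed cluster of $\mathcal B$) can merge with $\DD$ and extend into $\ext(\DD)$, and may even be unbounded. Your assertion ``$W_{\mathcal B}\subseteq\intr(\DD)$'' (used in the step proving $\CC\subseteq\intr(\DD)$) is therefore unjustified, and when $W_{\mathcal B}\supseteq\DD$ the separation lemma either does not apply (unbounded cluster) or produces a circuit enclosing $\DD$, which then also encloses $W_{\mathcal A}$ and fails the conclusion. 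Second, even when $W_{\mathcal B}$ is bounded and stays inside $\DD$, the circuit $\CC$ enclosing $W_{\mathcal B}$ need not exclude $W_{\mathcal A}$: a closed cluster can have ``holes,'' and $W_{\mathcal A}$ may sit in such a hole of $W_{\mathcal B}$, in which case the open circuit of $\primBD(W_{\mathcal B})$ encloses both clusters. Your argument that the connectivity of $W_{\mathcal A}$ ``would have to cross $\CC$'' only rules out $W_{\mathcal A}$ straddling $\CC$; it does not rule out $W_{\mathcal A}\subseteq\intr(\CC)$. The ``run the construction around $W_{\mathcal A}$ instead'' escape hatch is not a proof: you give no criterion for when to encircle which cluster.

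The paper resolves both issues at once with a different modification and a case split that you do not have. It first sets every dual edge with an endpoint in $\ext(\DD)$ to closed, so that in the modified environment the cluster $W_{\DD}$ of $\DD$ is \emph{infinite}. It then distinguishes: (Case 1) one of $W_{\mathcal A},W_{\mathcal B}$ equals $W_{\DD}$; (Case 2) neither does, in which case an extra closed infinite ray is adjoined to one of them to reduce to Case 1. In Case 1 the separation lemma is applied to the \emph{bounded} cluster (the one not equal to $W_{\DD}$); the other cluster is then automatically outside the resulting circuit $\CC$ because it is unbounded and a connected closed set cannot cross the open circuit $\CC$ — this is what rules out the ``hole'' scenario. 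The containment $\CC\subseteq\intr(\DD)$ is also automatic, because the modification closed every primal edge touching $\ext(\DD)$, so $\CC$ can only use edges with both endpoints in $\intr(\DD)$. Your environment modification (keeping only $\dualBD(W_{\mathcal B})$, $\DD$, and edges incident to $W_{\mathcal A}$) does not force the ``other'' cluster to be unbounded, which is precisely what is needed to guarantee it lands in $\ext(\CC)$. To repair your argument you would essentially have to import the paper's $W_{\DD}$ device and the accompanying case analysis.
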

Figure \ref{fig:two_clusters} gives an illustration of Lemma \ref{lem:separate_sets}.
\begin{figure}
    \centering
    \includegraphics[height = 2in]{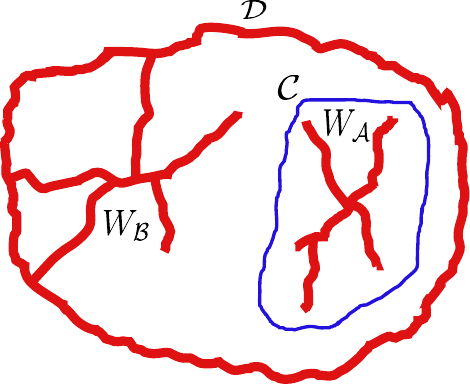}
    \caption{\small Two disjoint clusters of closed edges, $W_\AA$ and $W_\BB$, which lie inside a circuit $\DD$ and are separated by an open circuit $\CC$ (blue/thin)}
    \label{fig:two_clusters}
\end{figure}

\begin{proof}
We modify the environment so that all edges whose midpoint is in $\ext(\DD)$ are set to closed. 
Let $W_{\DD}$ be the infinite cluster of points connected to $\DD$ by a closed dual path in this modified environment, and adjust $W_{\mathcal A}$ and $W_{\mathcal B}$ to include these dual edges in the case that one of the clusters connects to $\DD$. We note that both clusters cannot connect to $\DD$ because then, there is a closed dual path from $\mathcal{A}$ to $\DD$, then from $\DD$ to $\mathcal{B}$, a contradiction. 
We consider two cases:

\medskip \noindent \textbf{Case 1:} $W_{\mathcal A} = W_{\DD}$ or $W_{\mathcal B} = W_\DD$.
Without loss of generality, say $W_{\mathcal B} = W_\DD$. Then, $W_{\mathcal A}$ must be bounded because it cannot cross $\DD$. Lemma \ref{lemma:separation} implies that there exists an open circuit $\CC$ containing $W_{\mathcal A}$ in its interior. Vertices in $W_{\mathcal B}$ cannot lie on the circuit $\CC$ because $\CC$ is open (and therefore primal). $W_{\mathcal B}$ contains points in the exterior of $\CC$ because $W_{\mathcal B} = W_\DD$ is unbounded by assumption. Furthermore, $W_{\mathcal B}$ cannot contain points in the interior of $\CC$; otherwise, the closed dual path from the points of $W_{\mathcal B}$ in $\ext(\CC)$ to the points of $W_{\mathcal B}$ in $\intr(\CC)$  crosses $\CC$, a contradiction because $\CC$ is open. Hence, $\CC$ contains $W_{\AA}$ in its interior and $W_{\BB}$ in its exterior.

\medskip \noindent \textbf{Case 2:} $W_{\mathcal B} \neq W_\DD$ and $W_{\mathcal A} \neq W_\DD$. We modify the environment further to obtain the previous case. We first argue that there exists an infinite closed dual path from either $W_{\mathcal A}$ or $W_{\mathcal B}$ that avoids the other. To see this, take any dual path from $W_{\mathcal A}$. There is a last vertex in $W_{\mathcal A} \cup W_{\mathcal B}$: the portion of the path starting from that vertex is the desired path. Set all the edges on this path to closed. Thus, we have reduced to the previous case. Since edges are only being changed to closed in this modified environment, the open circuit we obtain is still open in the original environment. 
\end{proof}

For the benefit of Section \ref{sec:outer_circuits}, we note that Theorem \ref{Kesten-cor2.2} and Lemmas \ref{lemma:separation} and \ref{lem:separate_sets} are deterministic statements that hold for any given configuration on the graphs. 

\section{Topological details concerning circuits and paths} \label{sec:outer_circuits}
Here we provide rigorous justification of certain topological constructions used throughout the paper.
The first four lemmas will be applications of the Jordan--Schönflies theorem, which says that for every Jordan curve $\jor\subseteq\R^2$, there is a homeomorphism $f\colon\R^2\to\R^2$ such that $f(\jor)$ is the unit circle.
In particular, because homeomorphisms preserve path-connectedness and compactness, the sets $\intr(\jor)$ and $\ext(\jor)$ are separately path-connected, and $\intr(\jor)$ is bounded while $\ext(\jor)$ is unbounded.
Furthermore, for every $z\in\jor$, the set $\{z\}\cup\intr(\jor)\cup\ext(\jor)$ is path-connected.

Recall our terminology that $\jor_1$ is \textit{surrounded} by $\jor_2$ if $\intr(\jor_1)\subseteq\intr(\jor_2)$.

\begin{lemma} \label{surround_lemma}
Let $\jor_1,\jor_2$ be Jordan curves.
The following are equivalent:
\begin{enumerate}[label=\textup{(\alph*)}]

\item \label{surround_lemma_a}
$\jor_1$ is surrounded by $\jor_2$.


\item \label{surround_lemma_c}
$\jor_1 \subseteq \jor_2 \cup \intr(\jor_2)$.

\item \label{surround_lemma_d}
$\jor_2 \subseteq \jor_1 \cup \ext(\jor_1)$ and $\intr(\jor_1)\cap\intr(\jor_2)\neq\varnothing$.

\end{enumerate} 
\end{lemma}
\begin{proof}
\ref{surround_lemma_a}$\implies$\ref{surround_lemma_c}: 
For any Jordan curve $\jor$, the set $\jor\cup\intr(\jor)$ is the closure of $\intr(\jor)$.
Therefore, the assumption $\intr(\jor_1)\subseteq\intr(\jor_2)$ implies $\jor_1\cup\intr(\jor_1)\subseteq\jor_2\cup\intr(\jor_2)$.
Dropping $\intr(\jor_1)$ on the left-hand side yields \ref{surround_lemma_c}.

\medskip \noindent \ref{surround_lemma_a}$\implies$\ref{surround_lemma_d}: 
From the assumption $\intr(\jor_1)\subseteq\intr(\jor_2)$ it trivially follows that $\intr(\jor_1)\cap\intr(\jor_2)$ is nonempty, and also that $\intr(\jor_1)\subseteq\intr(\jor_2)\cup\ext(\jor_2)$.
From the latter fact, taking complements yields $\jor_1\cup\ext(\jor_1)\supseteq\jor_2$.


\medskip \noindent \ref{surround_lemma_c}$\implies$\ref{surround_lemma_a} and \ref{surround_lemma_d}$\implies$\ref{surround_lemma_a}, by contrapositive: 
Assume \ref{surround_lemma_a} is false, meaning there exists some point
$x\in\intr(\jor_1) \cap \intr(\jor_2)^\complement$.
Since $x\notin\intr(\jor_2)$, there exists a continuous path starting at $x$, extending to infinity, and remaining entirely in $\ext(\jor_2)\cup\{x\}$. Since $x \in \intr(\jor_1)$, this path must intersect $\jor_1$ at some point $y_1\ne x$, so $y_1\in\jor_1\cap\ext(\jor_2)$.
Since $\ext(\jor_2)=(\jor_2\cup\intr(\jor_2))^\complement$, the existence of $y_1$ shows that \ref{surround_lemma_c} is false.

To show that \ref{surround_lemma_d} is also false, we suppose the second statement in \ref{surround_lemma_d} is true, and then prove that the first statement is false.
That is, we assume there exists $z\in\intr(\jor_1)\cap\intr(\jor_2)$, and then prove $\jor_2\cap(\jor_1\cup\ext(\jor_1))^\complement$ is nonemtpy.
Since both $x$ and $z$ belong to $\intr(\jor_1)$, there exists a continuous path from $x$ to $z$ that remains in $\intr(\jor_1)$.
Since $x\notin\intr(\jor_2)$ while $z\in\intr(\jor_2)$, this path must intersect $\jor_2$ at some point $y_2$.
Hence $y_2\in\jor_2\cap\intr(\jor_1) = \jor_2\cap(\jor_1\cup\ext(\jor_1))^\complement$,
as desired.
\end{proof}


\begin{lemma} \label{crossing_lemma}
Let $\jor_1$ and $\jor_2$ be Jordan curves, neither of which surrounds the other.
If $\intr(\jor_1)\cap\intr(\jor_2)$ is nonempty, then the following statements hold:
\begin{enumerate}[label=\textup{(\alph*)}]

\item \label{crossing_lemma_a} Both $\intr(\jor_1)\cap\jor_2$ and $\ext(\jor_1)\cap\jor_2$ are nonempty.

\item \label{crossing_lemma_b} $\jor_1\cap\jor_2$ contains at least two distinct points.

\end{enumerate}

\end{lemma}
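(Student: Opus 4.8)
\textbf{Proof proposal for Lemma \ref{crossing_lemma}.}

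The plan is to use the Jordan--Schönflies theorem to reduce to a clean topological picture, and then argue by connectedness. First I would record the hypothesis in the convenient contrapositive-friendly form: since neither $\jor_1$ nor $\jor_2$ encloses the other, there exists a point $a \in \intr(\jor_1) \cap \ext(\jor_2)$ (otherwise $\intr(\jor_1) \subseteq \jor_2 \cup \intr(\jor_2)$, which by Lemma \ref{enclose lemma} would force $\jor_2$ to enclose $\jor_1$), and symmetrically a point $b \in \intr(\jor_2) \cap \ext(\jor_1)$. We are also given a point $p \in \intr(\jor_1) \cap \intr(\jor_2)$.

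For part \ref{crossing_lemma_a}: to see $\intr(\jor_1) \cap \jor_2 \neq \varnothing$, apply the Jordan--Schönflies theorem to $\jor_1$, so that $\{z\}\cup\intr(\jor_1)$ is path-connected for any $z \in \jor_1$; more directly, $\intr(\jor_1)$ itself is path-connected, so there is a path $\eta$ inside $\intr(\jor_1)$ from $p$ to $a$. Since $p \in \intr(\jor_2)$ and $a \in \ext(\jor_2)$, the path $\eta$ must cross $\jor_2$ (the complement $\intr(\jor_2) \sqcup \ext(\jor_2)$ being a separation of $\R^2 \setminus \jor_2$ into open sets), so $\eta$ meets $\jor_2$ at some point, which lies in $\intr(\jor_1)$. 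This gives $\intr(\jor_1)\cap\jor_2 \neq \varnothing$. The statement $\ext(\jor_1)\cap\jor_2\neq\varnothing$ follows by the same argument applied to a path inside $\intr(\jor_2)$ from $p$ to $b$: such a path crosses $\jor_1$ at a point lying in $\intr(\jor_2)$, but I actually want a point of $\jor_2$ in $\ext(\jor_1)$ — so instead I take a path $\eta'$ inside $\ext(\jor_1)$ (which is path-connected by Jordan--Schönflies) from $b$ to some point far away, say a point $q$ that is in $\ext(\jor_2)$ as well (points near infinity lie in the exterior of both curves since both curves are bounded); since $b \in \intr(\jor_2)$ and $q \in \ext(\jor_2)$, the path $\eta'$ crosses $\jor_2$ at a point of $\ext(\jor_1)$. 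This completes part \ref{crossing_lemma_a}.

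For part \ref{crossing_lemma_b}: part \ref{crossing_lemma_a} already produces one point $x \in \jor_1 \cap \jor_2$ that is ``entered'' from $\intr(\jor_1)$ and one point $y \in \jor_1 \cap \jor_2$ that lies in $\ext(\jor_1)$ — but these are points of $\jor_2$, and I need two points of the \emph{intersection}. The cleanest route: parametrize $\jor_2$ as a loop and observe that the set $\jor_2 \cap \intr(\jor_1)$ is a nonempty relatively open subset of $\jor_2$ (open because $\intr(\jor_1)$ is open), and likewise $\jor_2 \cap \ext(\jor_1)$ is nonempty and relatively open in $\jor_2$. Since $\jor_2$ is connected, these two disjoint nonempty relatively open sets cannot cover it, so their complement in $\jor_2$ — which is exactly $\jor_2 \cap \jor_1$ — is nonempty; moreover, removing $\jor_1 \cap \jor_2$ disconnects $\jor_2$ into the two nonempty pieces just named, and a single point cannot disconnect the circle $\jor_2$ (via the homeomorphism $\jor_2 \cong S^1$, and $S^1$ minus a point is connected). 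Hence $\jor_1 \cap \jor_2$ must contain at least two points.

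The main obstacle I expect is part \ref{crossing_lemma_b}: getting from ``$\jor_2$ meets both the interior and exterior of $\jor_1$'' to ``$\jor_1 \cap \jor_2$ has two points'' requires the observation that a single point cannot separate a circle, and one must be careful that the two relatively open arcs of $\jor_2$ are genuinely nonempty and that their union misses at least two points of $\jor_2$ (a connected space minus a connected complement of a proper closed set). Everything else is a routine application of path-connectedness of the complementary regions, which is precisely what Jordan--Schönflies supplies. I would also double-check the edge case where $\jor_1 \cap \jor_2$ might a priori be a nontrivial arc rather than finitely many points — the argument above is insensitive to this, since it only asserts a lower bound on cardinality.
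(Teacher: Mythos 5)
Your proof is correct. For part~\ref{crossing_lemma_a} the route coincides with the paper's: use a path inside $\intr(\jor_1)$ from the common interior point to a point of $\intr(\jor_1)$ outside $\intr(\jor_2)$ (which must cross $\jor_2$), and a path inside $\ext(\jor_1)$ from a point of $\intr(\jor_2)\cap\ext(\jor_1)$ out to infinity (which must also cross $\jor_2$). One small imprecision: you invoke Lemma~\ref{enclose lemma} with the hypothesis $\intr(\jor_1)\subseteq\jor_2\cup\intr(\jor_2)$, whereas the lemma as stated asks for $\jor_1\subseteq\jor_2\cup\intr(\jor_2)$; the gap is easily closed by taking closures, since $\jor_2\cup\intr(\jor_2)$ is closed and $\jor_1\subseteq\overline{\intr(\jor_1)}$. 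In fact the paper sidesteps this by taking $x_1\in\intr(\jor_1)\setminus\intr(\jor_2)$, which the hypothesis yields directly, and the crossing argument works whether or not $x_1$ lands on $\jor_2$.

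For part~\ref{crossing_lemma_b} your argument is genuinely different from the paper's. You observe that $\jor_2\setminus\jor_1=(\jor_2\cap\intr(\jor_1))\cup(\jor_2\cap\ext(\jor_1))$ is a disjoint union of two nonempty relatively open subsets, hence disconnected, and appeal to the fact that $S^1$ minus a single point is connected to force $|\jor_1\cap\jor_2|\geq 2$. The paper instead takes $z_{\mathrm{int}}\in\jor_2\cap\intr(\jor_1)$ and $z_{\mathrm{ext}}\in\jor_2\cap\ext(\jor_1)$ and notes that each of the two arcs of $\jor_2$ between them must separately cross $\jor_1$, yielding two distinct intersection points. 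Your version is shorter and manifestly insensitive to the intersection being an arc; the paper's is more elementary and locates the two points in specific complementary arcs. Both arguments are sound.
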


\begin{proof}
By assumption, we can find $x_1 \in \intr(\jor_1)\setminus\intr(\jor_2)$, $x_2\in\intr(\jor_2)\setminus\intr(\jor_1)$, and $y\in\intr(\jor_1)\cap\intr(\jor_2)$.
Since $x_1$ and $y$ both belong to $\intr(\jor_1)$, there is a continuous path starting at $y$, ending at $x_1$, and remaining entirely in $\intr(\jor_1)$.
But since $y$ belongs to $\intr(\jor_2)$ while $x_1$ does not, this path must contain a point lying on $\jor_2$.
On the other hand, because $x_2\notin\intr(\jor_1)$, there is a continuous path starting at $x_2$, extending to infinity, and remaining entirely in $\ext(\jor_1)\cup\{x_2\}$.
Given that $x_2\in\intr(\jor_2)$, this path must intersect $\jor_2$, hence $\jor_2\cap\ext(\jor_1)$ is nonempty.
We have now seen that $\jor_2$ intersects both $\intr(\jor_1)$ and $\ext(\jor_1)$, thereby proving part \ref{crossing_lemma_a}.
Now take some $z_\mathrm{int}\in\jor_2\cap\intr(\jor_1)$ and $z_\mathrm{ext}\in\jor_2\cap\ext(\jor_1)$.
Any continuous path between $z_\mathrm{int}$ and $z_\mathrm{ext}$ must intersect $\jor_1$, and by definition $\jor_2$ offers two such paths which are disjoint (except at $z_\mathrm{int}$ and $z_\mathrm{ext}$, of course).
This observation implies part \ref{crossing_lemma_b}.
\end{proof}

\begin{lemma} \label{lem:dual_inext}
Let $\jor$ be a Jordan curve. Consider two adjacent boxes on the square lattice, and let $e$ be the edge shared by their boundaries. Assume that all of $e$ lies on $\jor$, and further assume that $\jor$ has empty intersection with the interior of each box. Then, exactly one of the dual neighbors of $e$ lies in $\intr(\jor)$, and the other dual neighbor lies in $\ext(\jor)$. 
\end{lemma}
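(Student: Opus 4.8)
The statement is a local, deterministic topological fact about a Jordan curve $\JJ$ that contains an entire lattice edge $e = \{u,v\}$ while keeping the interiors of the two incident unit squares disjoint from $\intr(\JJ)$. Write $e^\star = \{w_+, w_-\}$ for the two dual neighbors of $e$, i.e. the two points $w_\pm$ obtained by shifting the midpoint $m$ of $e$ perpendicularly by $\tfrac12$ in each direction; these are the centers of the two squares adjacent to $e$. I want to show exactly one of $w_+, w_-$ lies in $\intr(\JJ)$ and the other in $\ext(\JJ)$.

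First I would observe that neither $w_+$ nor $w_-$ lies on $\JJ$: by hypothesis each $w_\pm$ is in the (open) interior of one of the two incident squares, and those open square-interiors are assumed disjoint from $\intr(\JJ)$; moreover $w_\pm$ cannot lie on $\JJ$ itself because $\JJ\cap(\text{square interior})$ would then be a point of $\JJ$ strictly inside the square, which one can rule out using that $e\subseteq \JJ$ and $\JJ$ is a simple closed curve — actually the cleanest route is: the whole open square interior is disjoint from $\JJ\cup\intr(\JJ)$, which I'll justify below, hence $w_\pm\in\ext(\JJ)$ is a priori possible and $w_\pm\in\JJ$ is not. So each $w_\pm$ lies in $\intr(\JJ)\cup\ext(\JJ)$; I must show they are separated. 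Take a short straight segment $S$ from $w_+$ to $w_-$ passing through the midpoint $m$ of $e$: for $\eps$ small, $S\cap \ball_\eps(m)$ crosses $e$ transversally at $m$, and since $e\subseteq\JJ$, this segment meets $\JJ$. The key point is that it meets $\JJ$ \emph{only} near $m$ (for a suitably short segment, $S$ stays within $\ball_\eps(m)$ where $\JJ\cap\ball_\eps(m)$ is, by the Jordan--Sch\"onflies theorem applied to put $\JJ$ into standard position, an arc through $m$ that locally separates $\ball_\eps(m)$ into its $\intr(\JJ)$-part and its $\ext(\JJ)$-part). Then $w_+$ and $w_-$, being the two endpoints of $S$ on opposite sides of that separating arc, lie on opposite sides: one in $\intr(\JJ)$, one in $\ext(\JJ)$.

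To make the ``meets $\JJ$ only near $m$'' step rigorous I would argue as follows. The union of the two closed incident unit squares, call it $Q$, is a closed topological disk whose boundary $\partial Q$ is a Jordan curve; the edge $e$ is a chord of $Q$ dividing it into the two open squares. By hypothesis $\intr(\JJ)$ is disjoint from both open squares, and since $e\subseteq\JJ$ and $\JJ$ is connected, I claim $\JJ\cap \mathrm{int}(Q)\subseteq e$: indeed any point of $\JJ$ in the interior of one of the open squares would contradict disjointness of that open square from $\JJ$ once we note $\JJ\subseteq \JJ$... more carefully, a point of $\JJ$ lying in an open square lies in neither $\intr(\JJ)$ nor $\ext(\JJ)$, which is fine, so I instead need: the open squares are disjoint from $\JJ$. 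This follows because if $p$ is in an open square and on $\JJ$, then every neighborhood of $p$ meets $\intr(\JJ)$ (a generic point of a Jordan curve has interior points arbitrarily close — true for all points of $\JJ$ by Jordan--Sch\"onflies), so the open square would meet $\intr(\JJ)$, contradiction. Hence $\JJ\cap \mathrm{int}(Q)\subseteq e$, so along the perpendicular segment $S$ through $m$, the only intersection with $\JJ$ inside $\mathrm{int}(Q)$ is at $m$. Combining with the local separating-arc picture at $m$ gives the claim.

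\textbf{Main obstacle.} The routine parts (no $w_\pm$ on $\JJ$; short segment crosses $\JJ$ at $m$) are quick. The delicate step is the \emph{local} structure of $\JJ$ near the interior point $m$ of the edge $e$: I must know that in a small ball $\ball_\eps(m)$ the curve $\JJ$ looks like a single arc separating $\intr(\JJ)$ from $\ext(\JJ)$ locally, so that the two endpoints of a transversal segment land in opposite regions. This is not automatic from ``$e\subseteq\JJ$'' alone (one has to use that $m$ is in the relative interior of the arc $e$ of $\JJ$, so $\JJ$ near $m$ is an embedded arc), but it follows from the Jordan--Sch\"onflies theorem exactly as in the preamble to this section: choose a homeomorphism $f\colon\R^2\to\R^2$ sending $\JJ$ to the unit circle; then $f(m)$ is a point on the circle, a small disk around it is split by the circle into an inside piece and an outside piece, and pulling back by $f^{-1}$ (which preserves $\intr(\JJ)$, $\ext(\JJ)$, and connectedness) transports this picture to $\ball_\eps(m)$ for $\eps$ small enough that $\ball_\eps(m)\subseteq f^{-1}(\text{that small disk})$. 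Formalizing this pullback and checking $\eps$ can be taken small enough that $\ball_\eps(m)\subseteq\mathrm{int}(Q)$ as well is the only place requiring care.
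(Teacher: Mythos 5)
Your approach---examine a small neighborhood of the midpoint $m$ of $e$, use that $\JJ$ locally separates that neighborhood into an interior half and an exterior half, and observe that the perpendicular segment through $m$ crosses $\JJ$ exactly once---is the same as the paper's (the paper works directly with $\ball_{1/4}(m)$ and notes that $\JJ\cap\ball_{1/4}(m)$ is just the diameter along $e$).

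However, there is an inconsistency in your write-up that you do not notice, and it is worth flagging because it traces back to the lemma statement itself. Taking the hypothesis at face value, you correctly derive that both open box interiors are disjoint from $\JJ\cup\intr(\JJ)$ (using the hypothesis together with the fact that $\JJ = \partial\intr(\JJ)$, so every neighborhood of a point of $\JJ$ meets $\intr(\JJ)$). But this immediately places \emph{both} $w_+$ and $w_-$ in $\ext(\JJ)$, contradicting the conclusion you then set out to prove; it also contradicts your later assertion that one of the two half-disks near $m$ lies in $\intr(\JJ)$, since both half-disks are contained in the open box interiors. What this exposes is that the hypothesis as literally written---that $\intr(\JJ)$ avoids the two open box interiors---is in fact unsatisfiable: since $e\subseteq\JJ$ and $\JJ=\partial\intr(\JJ)$, there are points of $\intr(\JJ)$ arbitrarily close to the relative interior of $e$, and any such point lies in one of the two open boxes. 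The hypothesis that makes the lemma nonvacuous, is consistent with the conclusion, and matches how the lemma is invoked in Section~\ref{sec:geod_cons_proof} (where $\JJ$ is a primal circuit and therefore trivially avoids the open unit squares) is that $\JJ$ \emph{itself}---not $\intr(\JJ)$---has empty intersection with the two open box interiors. Under that reading, your local-separation argument is correct and coincides with the paper's; the detour through $\intr(\JJ)$ avoiding the boxes, and the sentence claiming disjointness from $\JJ\cup\intr(\JJ)$, should simply be dropped.
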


\begin{proof}
Let $z_0$ and $z_1$ be the dual neighbors of $e$.
Denote the line segment between these two points by $L = \{(1-t)z_0^\star + tz_1^\star:\,t\in[0,1]\}$, and consider the open set $U = \{x\in\R^2:\, \dist(x,L)<\tfrac14\}$.
 By the assumptions on $\jor$, the set $U\setminus \jor$ has two connected components:  $U_0 = \{x\in U:\, \dist(x,z_0)<\dist(x,z_1)\}$ and $U_1 = \{x\in U:\, \dist(x,z_1)<\dist(x,z_0)\}$.
 Since $\jor$ is the boundary of its interior and exterior, any open set intersecting $\jor$ (such as $U$) has nonempty intersection with both $\intr(\jor)$ and $\ext(\jor)$. Hence one of $U_0$ and $U_1$ must be contained in $\intr(\jor)$, and the other is contained in $\ext(\jor)$. 
 Since $z_0\in U_0$ and $z_1\in U_1$, we are done.
\end{proof}

The following is similar to Lemma \ref{lem:dual_inext} but applies in a more general setting. 
The notation $\ball_\eps(z)$ denotes the open ball centered at $z\in\R^2$ with radius $\eps>0$.

\begin{lemma} \label{z_ball_2_lemma}
For any Jordan curve $\jor$, any $z\in\jor$, and any $\eps>0$, there exists an open set $U\subseteq \ball_\eps(z)$ containing $z$ such that
$U\setminus\jor$ has exactly two connected components.
\end{lemma}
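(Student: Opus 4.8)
\textbf{Proof plan for Lemma \ref{z_ball_2_lemma}.}

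The plan is to reduce to the standard case of the unit circle via the Jordan--Schönflies theorem, exactly as the preamble to this section suggests. Let $f\colon\R^2\to\R^2$ be a homeomorphism with $f(\jor)=S^1$, the unit circle. Write $w=f(z)\in S^1$. Since $f$ is a homeomorphism, $f(\ball_\eps(z))$ is an open neighborhood of $w$, so there is some $\delta>0$ with $\ball_\delta(w)\subseteq f(\ball_\eps(z))$. The key point in the model case is that for the unit circle, every sufficiently small ball $\ball_\delta(w)$ centered at a point $w\in S^1$ satisfies $\ball_\delta(w)\setminus S^1$ has exactly two connected components: a piece inside the disk and a piece outside. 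This is elementary --- one can see it directly, for instance by choosing $\delta<1$ and observing that $\ball_\delta(w)\cap\{|x|<1\}$ and $\ball_\delta(w)\cap\{|x|>1\}$ are each nonempty, open, and connected (they are intersections of two convex sets in the first case, and in the second case a short argument using that $\ball_\delta(w)$ is convex and $\{|x|>1\}$ is connected with connected intersection for $\delta$ small). So fix such a $\delta$, shrinking it further if necessary so that $\ball_\delta(w)\setminus S^1$ has exactly two components.

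Now define $U=f^{-1}(\ball_\delta(w))$. Then $U$ is open (preimage of open under a continuous map), $z=f^{-1}(w)\in U$, and $U\subseteq f^{-1}(f(\ball_\eps(z)))=\ball_\eps(z)$. Finally, $f$ restricts to a homeomorphism from $U\setminus\jor$ onto $\ball_\delta(w)\setminus S^1$, because $f(\jor)=S^1$ and $f$ is a bijection; since homeomorphisms preserve the number of connected components, $U\setminus\jor$ has exactly two connected components. This completes the argument.

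The only genuinely nontrivial ingredient is the claim about the unit circle --- that a small ball centered on $S^1$ meets the circle in an arc whose complement in the ball has exactly two components. I would handle this by taking $\delta\le 1$ and arguing: (a) $\ball_\delta(w)\cap\{|x|<1\}$ is convex hence connected and nonempty (it contains points on the segment from $w$ toward the origin); (b) $\ball_\delta(w)\cap\{|x|>1\}$ is nonempty and connected --- nonempty is clear, and connectedness follows because for $\delta$ small this set is starlike with respect to, say, the point $(1+\delta/2)w$, or alternatively one observes $\ball_\delta(w)\setminus\overline{\ball_1(0)}$ is the intersection of the convex set $\ball_\delta(w)$ with the complement of a convex set, and for $\delta<1$ a direct check shows it is connected; (c) these two sets are disjoint, open, and their union is $\ball_\delta(w)\setminus S^1$. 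Hence exactly two components. Everything else is a routine transport of this fact through the homeomorphism $f$, and there is no real obstacle.
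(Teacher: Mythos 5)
Your proof is correct and follows essentially the same route as the paper: apply the Jordan--Schönflies theorem to reduce to the unit circle, choose a small ball $\ball_\delta(f(z))$ inside $f(\ball_\eps(z))$, observe the two-component decomposition for the circle, and pull back through the homeomorphism. The only difference is that you spell out the ``clearly'' step for the unit circle, which the paper leaves to the reader.
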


\begin{proof}
Let $f\colon\R^2\to\R^2$ be a homeomorphism such that $f(\jor)$ is the unit circle.
Then $f(\ball_\eps(z))$ is an open set containing $f(z)$.
Therefore, we can find $\delta\in(0,2)$ small enough that $\ball_\delta(f(z))\subseteq f(\ball_\eps(z))$.
Since $\delta<2$, the set $\ball_\delta(f(z))\setminus f(\jor)$ has exactly two connected components. 
So the claim is satisfied by taking $U = f^{-1}\big(\ball_\delta(f(z))\big)$.
\end{proof}

Lemma \ref{splitting_lemma} concerns the ``splitting'' of a Jordan curve with a path through its interior;
Figure \ref{fig:split_Jordan} gives a depiction.
\begin{figure}
    \centering
    \includegraphics[height = 1.5in]{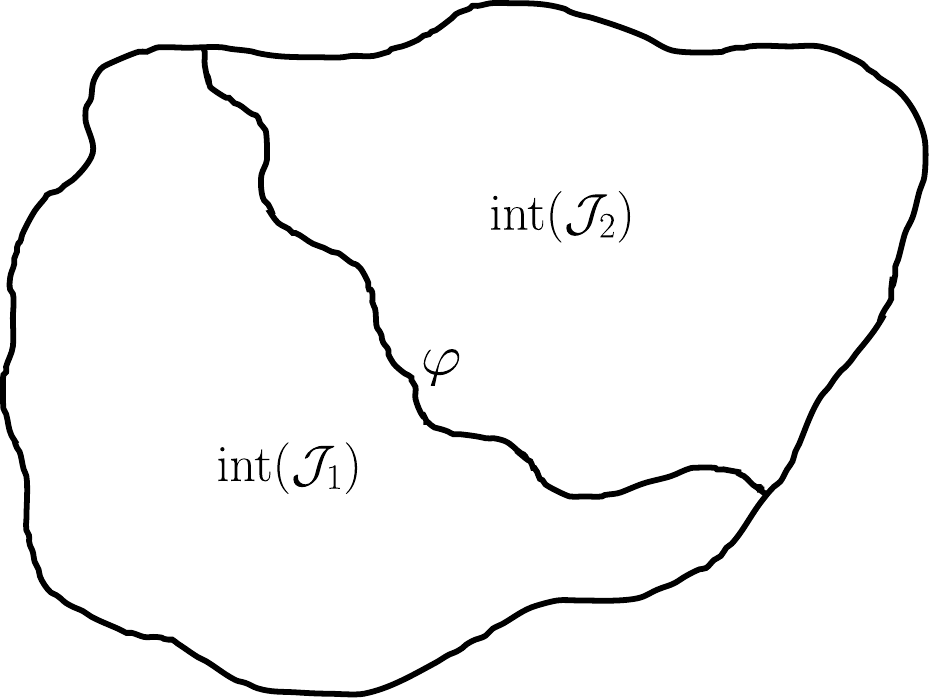}
    \caption{\small Illustration of Lemma \ref{splitting_lemma}: the path $\varphi$ splits $\intr(\jor)$ into three disjoint sets.}
    \label{fig:split_Jordan}
\end{figure}

\begin{lemma} \label{splitting_lemma}
Let $\jor$ be a Jordan curve, and let $\vphi\colon[0,1]\to\R^2$ be a continuous path such that $\vphi(0)$ and $\vphi(1)$ are distinct points on $\jor$, and $\vphi(t)\in\intr(\jor)$ for all $t\in(0,1)$.
Denote by $\jor_1$ the Jordan curve starting at $\vphi(0)$, proceeding in some direction along $\jor$ until reaching $\vphi(1)$, and then following $\vphi$ back to $\vphi(0)$.
Let $\jor_2$ be the same but proceeding along $\jor$ in the other direction.
We can then decompose the interior of $\jor$ as the disjoint union
\eeq{ \label{interior_decomp}
\intr(\jor) = \intr(\jor_1) \uplus \intr(\jor_2) \uplus \{\vphi(t):\, t\in(0,1)\}.
}
\end{lemma}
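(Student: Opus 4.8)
The plan is to apply the Jordan--Schönflies theorem to reduce to the model case where $\jor$ is the unit circle and $\vphi$ is a chord-like arc in the closed disk. Concretely, let $f\colon\R^2\to\R^2$ be a homeomorphism with $f(\jor)=S^1$, the unit circle. Then $f\circ\vphi$ is a continuous curve with endpoints $f(\vphi(0)),f(\vphi(1))\in S^1$ distinct, and $f(\vphi(t))\in\intr(S^1)$ (the open disk) for $t\in(0,1)$, since $f$ maps $\intr(\jor)$ onto $\intr(S^1)$ and $\jor$ onto $S^1$ (it preserves the Jordan decomposition because it is a homeomorphism of the plane). The curves $\jor_1,\jor_2$ map under $f$ to Jordan curves $f(\jor_1),f(\jor_2)$ of exactly the same combinatorial description relative to $S^1$ and $f\circ\vphi$. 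Since $f$ is a bijection of the plane preserving interiors and exteriors of Jordan curves, the decomposition \eqref{interior_decomp} for $\jor$ is equivalent to the analogous decomposition for $S^1$ with the arc $f\circ\vphi$. So it suffices to prove the statement when $\jor=S^1$.

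First I would establish that $\intr(S^1)\setminus \Gamma$ has exactly two connected components, where $\Gamma=\{(f\circ\vphi)(t):t\in(0,1)\}$, or more precisely where we use the full image $\Gamma'=\{(f\circ\vphi)(t):t\in[0,1]\}$ together with $S^1$. The set $\Gamma'\cup S^1$ is a compact connected subset of the closed disk; one can view $S^1\cup\Gamma'$ as the image of a "theta-like" graph. The cleanest route is to invoke the Jordan curve theorem for each of $\jor_1,\jor_2$ (which are genuine Jordan curves by hypothesis — the concatenation of an arc of $S^1$ with $\Gamma'$, meeting only at the two endpoints) to get that $\R^2\setminus\jor_i$ has exactly two components, $\intr(\jor_i)$ and $\ext(\jor_i)$, each with boundary $\jor_i$. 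Then I would show $\intr(\jor_1)$ and $\intr(\jor_2)$ are disjoint: a point in both would have to be joined to a point of $\ext(S^1)$ by a path crossing $S^1$, but I instead argue directly — $\intr(\jor_1)$ lies on one side of $\Gamma'$ and $\intr(\jor_2)$ on the other. More carefully: $\intr(\jor_1)\subseteq\intr(S^1)$ by Lemma~\ref{enclose lemma} (since $\jor_1\subseteq S^1\cup\intr(S^1)$), and likewise $\intr(\jor_2)\subseteq\intr(S^1)$. Also $\jor_1\subseteq \jor_2\cup\intr(\jor_2)$ fails, and neither encloses the other since each contains boundary arcs of $S^1$ the other omits; so by Lemma~\ref{crossing_lemma} if $\intr(\jor_1)\cap\intr(\jor_2)\ne\varnothing$ then $\jor_1\cap\jor_2$ has at least two points — but $\jor_1\cap\jor_2 = \Gamma'$ (they share exactly the arc $\Gamma'$), which is a connected arc with more than one point, so this does not immediately give a contradiction.

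Hence the disjointness needs a more hands-on argument, and this is the step I expect to be the main obstacle: showing $\intr(\jor_1)\cap\intr(\jor_2)=\varnothing$ and that their union together with $\Gamma$ exhausts $\intr(S^1)$. The approach I would take: take $x\in\intr(S^1)\setminus\Gamma$. Then $x\notin\jor_1$ (since $\jor_1\subseteq S^1\cup\Gamma'$ and $x\in\intr(S^1)$, $x\notin\Gamma'$), so $x\in\intr(\jor_1)$ or $x\in\ext(\jor_1)$; similarly for $\jor_2$. I claim $x\in\ext(\jor_1)\iff x\in\intr(\jor_2)$ and vice versa, which gives both disjointness and exhaustion at once. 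To see $\intr(\jor_1)\subseteq\ext(\jor_2)$: a path from a point of $\intr(\jor_1)$ to a point of $S^1$ lying on the open arc $A_1$ used by $\jor_1$ (but not on $\jor_2$, except endpoints) staying in $\intr(\jor_1)\cup\{\text{that endpoint}\}$ — actually it is cleaner to run the path outward through $A_1$: from $x\in\intr(\jor_1)$ there is a path to infinity meeting $\jor_1$ exactly once, and one can arrange it to exit through a point of the open arc $A_1\subset S^1$; this path stays off $\Gamma'$ and off the arc $A_2$ used by $\jor_2$, so it witnesses $x\in\ext(\jor_2)$ (the tail past $A_1$ is in $\ext(S^1)\subseteq\ext(\jor_2)$, the part before is in $\intr(\jor_1)$ which I show inductively avoids $\jor_2$). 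Running this symmetrically gives $\intr(\jor_2)\subseteq\ext(\jor_1)$, hence $\intr(\jor_1)\cap\intr(\jor_2)=\varnothing$; and since every $x\in\intr(S^1)\setminus\Gamma$ is interior to at least one of $\jor_1,\jor_2$ (it cannot be exterior to both: being exterior to both $\jor_i$ while interior to $S^1$ forces a contradiction by considering a path to $\infty$ which must cross $S^1=A_1\cup A_2\cup\{\text{2 points}\}$, hence cross some $\jor_i$), we get $\intr(S^1)\setminus\Gamma = \intr(\jor_1)\uplus\intr(\jor_2)$. Since $\Gamma\subseteq\intr(S^1)$ and $\Gamma$ is disjoint from $\jor_1,\jor_2\setminus\Gamma'$, we conclude $\intr(S^1)=\intr(\jor_1)\uplus\intr(\jor_2)\uplus\Gamma$, which is \eqref{interior_decomp}. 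Transporting back by $f^{-1}$ completes the proof. The delicate point throughout is arranging the auxiliary paths to exit through the correct open sub-arcs of $S^1$ and to avoid $\Gamma'$, which requires one application of Lemma~\ref{z_ball_2_lemma} near the exit points to know the path can be pushed locally to the correct side.
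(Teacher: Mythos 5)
Your overall approach is the same as the paper's (the Jordan--Sch\"onflies reduction to $S^1$ is cosmetic; the paper works abstractly but cites the same consequence of Jordan--Sch\"onflies), the lemmas you reach for (enclose lemma, crossing lemma, $\ball_\eps$ lemma) are the same ones the paper uses, and your disjointness argument --- concatenate a path in $\intr(\jor_1)$ to a point of the open arc $A_1$ with a path in $\ext(S^1)$ to infinity, both avoiding $\jor_2$ --- is exactly the paper's argument.

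However, your exhaustion step as written has a genuine gap. You say ``being exterior to both $\jor_i$ while interior to $S^1$ forces a contradiction by considering a path to $\infty$ which must cross $S^1 = A_1 \cup A_2 \cup \{\text{2 points}\}$, hence cross some $\jor_i$,'' but this is not a contradiction: $x \in \ext(\jor_i)$ means \emph{some} path to infinity avoids $\jor_i$, not that \emph{every} path does, so a path crossing $\jor_1$ (say) says nothing about whether $x \in \ext(\jor_1)$. The correct argument (which the paper gives): take $x \in \intr(S^1)\setminus(\Gamma\cup\intr(\jor_2))$; one checks $x\notin\jor_2$ so $x\in\ext(\jor_2)$; choose the path $\phi$ from $x$ to infinity that avoids $\jor_2$; since $\phi$ avoids $A_2\cup\{\text{endpoints}\}\subset\jor_2$, its first crossing of $S^1$ is at a point $z$ in the open arc $A_1$; near $z$ the curves $\jor_1$ and $S^1$ coincide, so by Lemma~\ref{z_ball_2_lemma} the two sides of $\jor_1$ near $z$ are exactly $\intr(S^1)$ and $\ext(S^1)$ restricted to a neighborhood, forcing $\phi$ to pass through $\intr(\jor_1)$ just before $z$; and since $\phi$ avoids both $\Gamma'$ and $S^1$ on the open segment before $z$, it avoids $\jor_1$ there, so $x$ is connected to a point of $\intr(\jor_1)$ by a curve disjoint from $\jor_1$, i.e.\ $x\in\intr(\jor_1)$. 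You do correctly flag that Lemma~\ref{z_ball_2_lemma} is needed ``near the exit points,'' so you have the right intuition --- the missing piece is that $\ext(\jor_2)$ must be used to \emph{choose} the path, not that an arbitrary path yields a contradiction.
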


\begin{proof}
First we argue that the right-hand side of \eqref{interior_decomp} is contained in the left-hand side, by demonstrating the reverse containment of the complements.
Consider any $x\in\jor\cup\ext(\jor)$; clearly $x$ cannot be equal to $\vphi(t)$ (which belongs to $\intr(\jor)$) for any $t\in(0,1)$.
Furthermore, there exists a continuous path $\phi$ starting at $x$, extending to infinity, and remaining entirely in $\{x\}\cup\ext(\JJ)$.
In particular, this path never intersects $(\jor_1\cup\jor_2)\setminus\{x\}$, from which it follows that $x$ does not belong to $\intr(\jor_1)$ nor to $\intr(\jor_2)$.

We next argue that $\intr(\jor_1)$ and $\intr(\jor_2)$ must be disjoint. 
%
Since $\jor_1$ and $\intr(\jor_1)$ are disjoint, we have
\eq{
(\jor_1\cup\jor)\cap\intr(\jor_1) = \jor\cap\intr(\jor_1)\subseteq \jor\cap\intr(\jor) = \varnothing.
}
Meanwhile, because $\jor$ and $\ext(\jor)$ are disjoint, we have
\eq{
(\jor_1\cup\jor)\cap\ext(\jor) &= (\jor_1 \setminus \jor)\cap\ext(\jor) \\
&= \{\vphi(t):\, t\in(0,1)\}\cap\ext(\jor)
\subseteq \intr(\jor)\cap\ext(\jor) = \varnothing.
}
Considering that $\jor_2\subseteq\jor_1\cup\jor$, it follows from these two observations that
\eeq{ \label{J2_empties}
\jor_2\cap\intr(\jor_1) = \varnothing = \jor_2\cap\ext(\jor).
}
Now take any $x\in\intr(\jor_1)$ and $z\in\jor\setminus\jor_2$.
Since $z$ necessarily belongs to $\jor_1$, there exists a continuous path $\phi$ from $x$ to $z$ remaining entirely in $\intr(\jor_1)\cap\{z\}$.
By the first equality in \eqref{J2_empties}, this path never intersects $\jor_2$.
Next we extend $\phi$ from $z$ to infinity using a continuous path remaining entirely in $\ext(\jor)\cap\{z\}$; by the second equality in \eqref{J2_empties}, this extension also avoids $\jor_2$.
We have thus constructed a continuous path disjoint from $\jor_2$ that starts at $x$ and extends to infinity, implying that $x\in\ext(\jor_2)$.
In particular, $x$ does not belong to $\intr(\jor_2)$.

All that remains to show is that the left-hand side of \eqref{interior_decomp} is contained in the right-hand side.
So suppose $x\in\intr(\jor)$ but belongs to neither $\intr(\jor_2)$ nor $\{\vphi(t):\, t\in(0,1)\}$.
It follows that $x\notin\jor_2$ since $\jor_2\cap\intr(\jor)$ is equal to $\{\vphi(t):\, t\in(0,1)\}$.
This leaves only the possibility that $x\in\ext(\jor_2)$, meaning there is a continuous path $\phi$ starting at $x$ and extending to infinity that never intersects $\jor_2$.
But because $x\in\intr(\jor)$, this path must intersect $\jor$, necessarily at some point belonging to $\jor\setminus\jor_2\subseteq\jor_1$.
Let $z$ be the first intersection of $\phi$ with $\jor$ upon leaving $x$, and let $\phi_{x\to z}$ be portion of $\phi$ from $x$ to $z$.
In particular, $z$ is the unique intersection point of $\phi_{x\to z}$ with $\jor$.
Furthermore, $\phi_{x\to z}$ can only intersect $\jor_1$ at points belonging to $\jor_1\setminus\jor_2\subseteq\jor$, meaning $z$ is also the unique intersection point of $\phi_{x\to z}$ with $\jor_1$.


\begin{claim}
There exists $\eps>0$ such that
\eeq{ \label{z_ball_1}
\jor\cap \ball_\eps(z) = \jor_1\cap \ball_\eps(z).
}
\end{claim}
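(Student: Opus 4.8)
The plan is to deduce \eqref{z_ball_1} directly from the placement of $z$ on $\jor$: since $z$ is the first intersection of $\phi_{x\to z}$ with $\jor$ and $\phi$ avoids $\jor_2$, the point $z$ lies in $\jor\setminus\jor_2$, i.e.\ in the relative interior of the sub-arc of $\jor$ that is reused to build $\jor_1$. Concretely, I would introduce notation $\alpha := \jor\cap\jor_1$ for the sub-arc of $\jor$ running from $\vphi(0)$ to $\vphi(1)$ in the direction used to form $\jor_1$, and $\beta := \jor\cap\jor_2$ for the complementary sub-arc, so that $\jor=\alpha\cup\beta$ with $\alpha\cap\beta=\{\vphi(0),\vphi(1)\}$, while $\jor_1=\alpha\cup\vphi([0,1])$ and $\jor_2=\beta\cup\vphi([0,1])$. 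From $z\in\jor\setminus\jor_2$ one reads off $z\in\alpha$, $z\notin\beta$, and $z\notin\vphi([0,1])$.

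The next step is to separate $z$ from everything on $\jor$ and $\jor_1$ other than $\alpha$. Let $K:=\beta\cup\vphi([0,1])$. This is a compact set (a union of two continuous images of $[0,1]$), and $z\notin K$ by the previous paragraph, so $d:=\dist(z,K)>0$. Fix any $\eps$ with $0<\eps\le d$; then $\ball_\eps(z)\cap K=\varnothing$, and in particular $\beta\cap\ball_\eps(z)=\varnothing$ and $\vphi([0,1])\cap\ball_\eps(z)=\varnothing$. Intersecting the decompositions $\jor=\alpha\cup\beta$ and $\jor_1=\alpha\cup\vphi([0,1])$ with $\ball_\eps(z)$ and discarding the empty pieces gives $\jor\cap\ball_\eps(z)=\alpha\cap\ball_\eps(z)=\jor_1\cap\ball_\eps(z)$, which is exactly \eqref{z_ball_1}.

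There is no substantive obstacle here; the single point that needs care is the assertion $z\notin\vphi([0,1])$ \emph{including} the two endpoints $\vphi(0),\vphi(1)$. These endpoints lie on $\jor$, so they are not ruled out merely by $\vphi((0,1))\subseteq\intr(\jor)$; they are ruled out because $\vphi(0),\vphi(1)\in\jor_2$ while $z\in\jor\setminus\jor_2$. Once this observation is in place, the compactness of $K$ and the positivity of $\dist(z,K)$ finish the argument, and the claimed $\eps$ can be taken to be any positive number at most $\dist(z,\,\beta\cup\vphi([0,1]))$.
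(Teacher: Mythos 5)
Your proof is correct and follows essentially the same route as the paper: your set $K=\beta\cup\vphi([0,1])$ is exactly $\jor_2$, and your observation that $z\notin K$ together with compactness of $K$ giving a ball $\ball_\eps(z)$ disjoint from $K$ is precisely the paper's step of noting $\jor_2$ is closed and $z\notin\jor_2$, hence $\ball_\eps(z)\cap\jor_2=\varnothing$ for $\eps$ small. Your decomposition $\jor=\alpha\cup\beta$, $\jor_1=\alpha\cup\vphi([0,1])$ is just an unpacked version of the paper's one-line identity $\jor\setminus\jor_2=\jor_1\setminus\jor_2$, so there is no substantive difference.
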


\begin{proofclaim}
Being a Jordan curve, $\jor_2$ is closed, so $\R^2\setminus\jor_2$ is open.
Since $z\in\R^2\setminus\jor_2$, we can thus choose $\eps>0$ such that $\ball_{\eps}(z)\cap\jor_2$ is empty.
This justifies the first and last equalities below, while the middle equality uses the fact that $\jor\setminus\jor_2=\jor_1\setminus\jor_2$:
\[
\jor\cap \ball_\eps(z) 
= (\jor\setminus\jor_2)\cap \ball_\eps (z)
= (\jor_1\setminus\jor_2)\cap \ball_\eps (z)
= \jor_1\cap \ball_\eps(z). \qedhere
\]
\end{proofclaim}


%
Now let $\eps>0$ satisfy \eqref{z_ball_1}, and then take $U\subseteq\ball_\eps(z)$ as in Lemma \ref{z_ball_2_lemma}.
Note that
\eeq{ \label{U_still_good}
\jor\cap U
= \jor \cap \ball_\eps(z)\cap U
\stackref{z_ball_1}{=} \jor_1 \cap \ball_\eps(z)\cap U
= \jor_1 \cap U.
}
Because $U$ has nonempty intersection with both $\intr(\jor)$ and $\ext(\jor)$, we conclude that the two connected components of $U\setminus\jor$ are precisely $U\cap\intr(\jor)$ and $U\cap\ext(\jor)$.
But $U\setminus\jor_1 = U\setminus\jor$ by \eqref{U_still_good}, so by the exact same logic, these two connected components are also expressible as $U\cap\intr(\jor_1)$ and $U\cap\ext(\jor_1)$.
In particular, $U\cap\intr(\jor_1)$ is equal to either $U\cap\intr(\jor)$ or $U\cap\ext(\jor)$.
Considering that $\intr(\jor_1)\subseteq\intr(\jor)$, it must be the former:
\eeq{ \label{interiors_same}
U\cap\intr(\jor_1) = U\cap\intr(\jor).
}
To complete the proof, we return to our consideration of $\phi_{x\to z}$.
This path starts in $\intr(\jor)$ and only encounters $\jor$ at the terminal point $z$, so it must otherwise remain in $\intr(\jor)$.
Since $z\in U$ and $U$ is open, we now know that $\phi_{x\to z}$ intersects $U\cap\intr(\jor)$.
It now follows from \eqref{interiors_same} that $\phi_{x\to z}$ intersects $\intr(\jor_1)$, meaning that $x$ is connected to some element of $\intr(\jor_1)$ by a continuous path avoiding $\jor_1$.
Hence $x\in\intr(\jor_1)$, as needed.
\end{proof}

Lemma \ref{expanding_lemma} concerns the ``expansion'' of a Jordan curve with a path on its exterior;
Figure \ref{fig:Jordan_extend} gives a depiction.
\begin{figure}
    \centering
    \includegraphics[height = 2in]{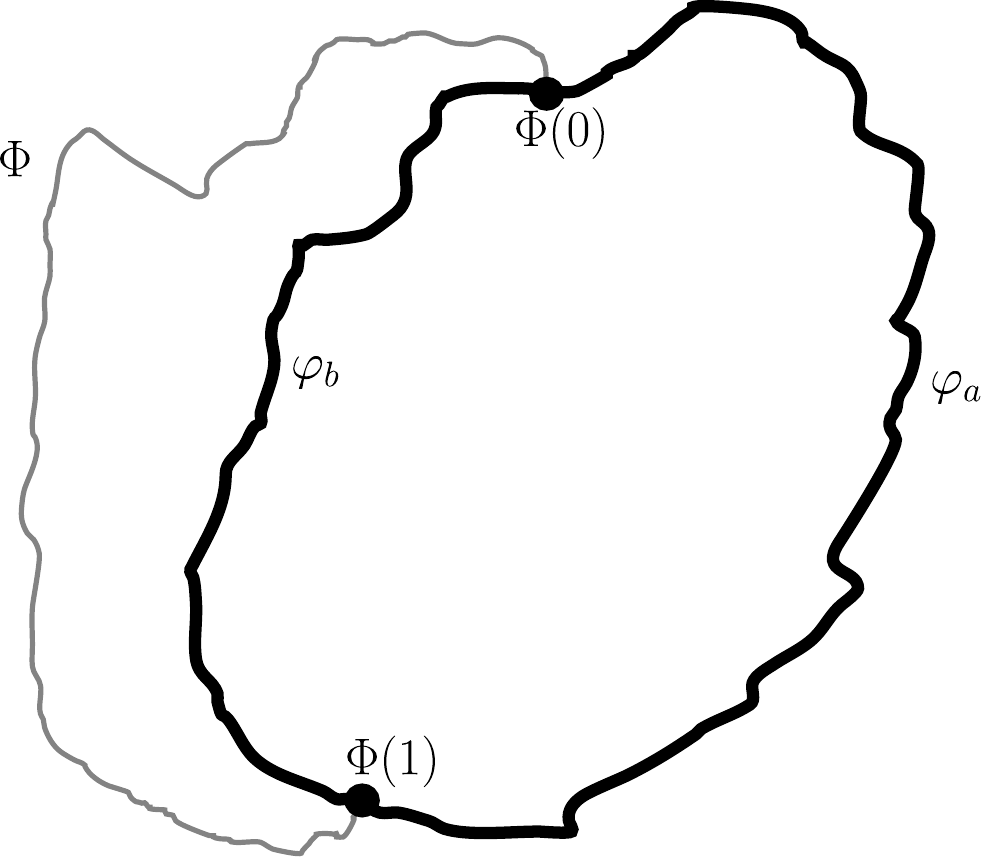}
    \caption{\small One possible conclusion of Lemma \ref{expanding_lemma}: the Jordan curve $\jor_1$ is shown in thick black, and the path $\Phi$ is depicted in thin gray. In this case, \eqref{expanding_lemma_eqs1} holds.}
    \label{fig:Jordan_extend}
\end{figure}

\begin{lemma} \label{expanding_lemma}
Let $\jor_1$ be a Jordan curve, and let $\Phi\colon[0,1]\to\R^2$ be a continuous path such that $\Phi(0)$ and $\Phi(1)$ are distinct points on $\jor_1$, and $\Phi(t)\in\ext(\jor_1)$ for all $t\in(0,1)$.
Let $\vphi_a$ denote the portion of $\jor_1$ proceeding clockwise from $\Phi(0)$ to $\Phi(1)$, and let $\vphi_b$ be the same but proceeding counterclockwise.
We parameterize these functions $\vphi_a,\vphi_b\colon[0,1]\to\R^2$ such that $\vphi_a(0) = \vphi_b(0) = \Phi(0)$ and $\vphi_a(1)=\vphi_b(1)=\Phi(1)$.
Denote by $\jor_a$ the Jordan curve starting at $\Phi(0)$, following $\vphi_a$ until reaching $\Phi(1)$, and then following $\Phi$ back to $\Phi(0)$.
Let $\jor_b$ be the same but using $\vphi_b$.
Then either
\begin{subequations} \label{expanding_lemma_eqs}
\eeq{
\intr(\jor_a)=\intr(\jor_1)\uplus\intr(\jor_b)\uplus \{\vphi_b(t):\, t\in(0,1)\} \label{expanding_lemma_eqs1}
}
or
\eeq{
\intr(\jor_b)=\intr(\jor_1)\uplus\intr(\jor_a)\uplus\{\vphi_a(t):\, t\in(0,1)\}. \label{expanding_lemma_eqs2}
}
\end{subequations}
\end{lemma}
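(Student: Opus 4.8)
The plan is to reduce Lemma \ref{expanding_lemma} to the already-established splitting lemma (Lemma \ref{splitting_lemma}) by a careful reversal of roles. The curve $\Phi$ runs through the exterior of $\jor_1$, while Lemma \ref{splitting_lemma} concerns a curve running through the interior of a Jordan curve, so the main obstacle is that we cannot directly apply Lemma \ref{splitting_lemma} to $\jor_1$ itself. Instead, first I would observe that $\jor_a$ and $\jor_b$ are both Jordan curves (each is a concatenation of an arc of $\jor_1$ and the arc $\Phi$, meeting only at the two endpoints $\Phi(0),\Phi(1)$, so each traces out a simple closed curve). The key structural fact is that one of $\jor_a,\jor_b$ encloses $\jor_1$ and the other is enclosed by it, sharing only the common boundary data along $\Phi$. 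Concretely, I claim $\vphi_b$ lies in $\{\Phi(0),\Phi(1)\}\cup\intr(\jor_a)$: indeed $\vphi_b$ is an arc of $\jor_1$, disjoint from the open arc $\vphi_a$ and from the open arc $\Phi$ (by the hypothesis $\Phi(t)\in\ext(\jor_1)$ for $t\in(0,1)$, so $\Phi$ meets $\jor_1$ only at its endpoints); hence the open arc of $\vphi_b$ does not meet $\jor_a = \vphi_a \cup \Phi$ at all, so it lies entirely in $\intr(\jor_a)$ or entirely in $\ext(\jor_a)$. To decide which, I would use a local argument: near a point $\Phi(t_0)$ with $t_0\in(0,1)$, the curve $\Phi$ passes from $\intr(\jor_1)$'s complement, and an application of Lemma \ref{z_ball_2_lemma} (or Lemma \ref{lem:dual_inext}-style local analysis) shows that on one side of $\jor_1$ near $\Phi(t_0)$ we are in $\ext(\jor_1)$; tracking this local orientation along $\jor_a$ pins down that $\vphi_b$ lies in $\intr(\jor_a)$ in one case, $\vphi_a$ lies in $\intr(\jor_b)$ in the other, and these are the two alternatives \eqref{expanding_lemma_eqs1} and \eqref{expanding_lemma_eqs2}.

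Granting the claim (say we are in the case where $\vphi_b\subseteq\{\Phi(0),\Phi(1)\}\cup\intr(\jor_a)$), I would then apply Lemma \ref{splitting_lemma} with $\jor$ replaced by $\jor_a$ and the interior curve $\vphi$ replaced by $\vphi_b$ (reparametrized on $[0,1]$ with $\vphi_b(0)=\Phi(0)$, $\vphi_b(1)=\Phi(1)$, and $\vphi_b(t)\in\intr(\jor_a)$ for $t\in(0,1)$ by the claim). Lemma \ref{splitting_lemma} then splits $\intr(\jor_a)$ into the interiors of the two Jordan curves obtained by following $\jor_a$ from $\Phi(0)$ to $\Phi(1)$ in each direction and returning along $\vphi_b$. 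One of these two sub-curves is exactly $\jor_1$ (following $\jor_a$ along $\vphi_a$, then back along $\vphi_b$ — this traces out $\vphi_a\cup\vphi_b=\jor_1$), and the other is exactly $\jor_b$ (following $\jor_a$ along $\Phi$, then back along $\vphi_b$ — tracing out $\Phi\cup\vphi_b=\jor_b$). The decomposition \eqref{interior_decomp} from Lemma \ref{splitting_lemma} thus reads $\intr(\jor_a)=\intr(\jor_1)\uplus\intr(\jor_b)\uplus\{\vphi_b(t):t\in(0,1)\}$, which is precisely \eqref{expanding_lemma_eqs1}. In the other case of the claim, the same argument with $a$ and $b$ interchanged yields \eqref{expanding_lemma_eqs2}.

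The step I expect to be the main obstacle is establishing the dichotomy: showing that exactly one of ``$\vphi_b\subseteq\intr(\jor_a)$'' and ``$\vphi_a\subseteq\intr(\jor_b)$'' holds, and ruling out the bad scenarios ``$\vphi_b\subseteq\ext(\jor_a)$ and $\vphi_a\subseteq\ext(\jor_b)$'' or both containments simultaneously. For this I would argue as follows. By the connectedness of the open arc of $\vphi_b$ and its disjointness from $\jor_a$, it lies entirely in $\intr(\jor_a)$ or entirely in $\ext(\jor_a)$; symmetrically for $\vphi_a$ versus $\jor_b$. If $\vphi_b\subseteq\ext(\jor_a)$, then every point of $\jor_1=\vphi_a\cup\vphi_b$ lies in $\jor_a\cup\ext(\jor_a)=\overline{\ext(\jor_a)}$, so by Lemma \ref{enclose lemma} (applied with the roles suited to the exterior, or equivalently noting $\intr(\jor_a)$ is then disjoint from $\jor_1$ and from $\ext(\jor_1)$'s far region) one deduces $\intr(\jor_a)\subseteq\intr(\jor_1)$; but $\jor_a$ contains the arc $\Phi$ whose interior points lie in $\ext(\jor_1)$, forcing (via a local ball argument near an interior point of $\Phi$, using Lemma \ref{z_ball_2_lemma}) that $\intr(\jor_a)$ meets $\ext(\jor_1)$, a contradiction. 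Hence $\vphi_b\subseteq\intr(\jor_a)$; the parallel argument shows one cannot also have $\vphi_a\subseteq\intr(\jor_b)$, since that would (by the same reasoning with $a,b$ swapped) force $\intr(\jor_b)$ to meet $\ext(\jor_1)$ while also being ``inside'' a configuration that, combined with $\intr(\jor_a)\supseteq\intr(\jor_1)\cup\intr(\jor_b)$, is geometrically impossible — a symmetric contradiction. Thus exactly one alternative occurs, and the proof concludes by invoking Lemma \ref{splitting_lemma} as above. Throughout, the nontrivial care is purely point-set-topological: every crossing assertion is justified by the Jordan–Schönflies theorem together with the local neighborhood structure from Lemmas \ref{z_ball_2_lemma} and \ref{splitting_lemma}.
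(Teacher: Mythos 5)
Your overall plan --- establish that one of $\jor_a,\jor_b$ encloses the other and then invoke Lemma~\ref{splitting_lemma} with $\jor=\jor_a$ (or $\jor_b$), $\jor_2=\jor_b$ (or $\jor_a$), $\vphi=\vphi_b$ (or $\vphi_a$) --- is exactly the paper's strategy (Steps 3 and 4 there). But the route you take to establish the enclosure dichotomy has a genuine gap, and it is not a patchable one-liner.

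The problem is in the step ``If $\vphi_b\subseteq\ext(\jor_a)$, then \ldots one deduces $\intr(\jor_a)\subseteq\intr(\jor_1)$.'' All that follows from $\vphi_b\subseteq\ext(\jor_a)$ is that $\jor_1\cap\intr(\jor_a)=\varnothing$, hence (by connectedness) $\intr(\jor_a)\subseteq\intr(\jor_1)$ \emph{or} $\intr(\jor_a)\subseteq\ext(\jor_1)$. You dismiss the second option, but it is a live possibility and not a contradiction: take $\jor_1$ a circle, $\Phi$ a small bump into the exterior near the arc $\vphi_a$, so that $\jor_a$ is the thin lens bounded by $\vphi_a$ and $\Phi$ while $\jor_b$ is the circle-plus-bump. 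Then $\vphi_b\subseteq\ext(\jor_a)$ holds, $\intr(\jor_a)$ lies entirely in $\ext(\jor_1)$, and the correct conclusion is \eqref{expanding_lemma_eqs2} (with $\jor_b$ enclosing both), not \eqref{expanding_lemma_eqs1}. Your deduction would have concluded $\vphi_b\subseteq\intr(\jor_a)$, which is false in this example. In other words, you set out to prove the dichotomy ``exactly one of $\vphi_b\subseteq\intr(\jor_a)$ or $\vphi_a\subseteq\intr(\jor_b)$ holds,'' but your argument tries to prove $\vphi_b\subseteq\intr(\jor_a)$ unconditionally, which is simply not true.

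The missing idea is what the paper does in its Steps 1--2: first exhibit a point $x\in\intr(\jor_a)\cap\ext(\jor_1)$, then push a ray from $x$ to infinity in $\ext(\jor_1)$ and examine its last intersection with $\jor_a$, which must land in the interior of $\Phi$. A local-ball argument (Lemma~\ref{z_ball_2_lemma}) at that point, together with the fact that $\jor_a$ and $\jor_b$ agree with $\Phi$ there, shows $\intr(\jor_a)\cap\intr(\jor_b)\neq\varnothing$. Only after that does Lemma~\ref{crossing_lemma}\ref{crossing_lemma_a} force one of the curves to enclose the other. Your argument never establishes that the interiors overlap, and consequently the alternative ``both $\vphi_a\subseteq\ext(\jor_b)$ and $\vphi_b\subseteq\ext(\jor_a)$'' is never actually ruled out. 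Once you insert a step proving $\intr(\jor_a)\cap\intr(\jor_b)\neq\varnothing$, the rest of your outline (including the reduction to Lemma~\ref{splitting_lemma}) goes through and matches the paper.
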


\begin{proof}
We divide our argument into four steps. \medskip

\noindent\textbf{Step 1}: Show that $\intr(\jor_a)\cap\ext(\jor_1)$ is nonempty. 

For any $y\in\jor_a\setminus\jor_1$, we have $y\in\ext(\jor_1)$ by assumption.
Since $\ext(\jor_1)$ is open, there is $\eps>0$ small enough that $\ball_\eps(y)\subseteq\ext(\jor_1)$.
Because $y\in\jor_a$, $\ball_\eps(y)$ has nonempty intersection with $\intr(\jor_a)$.
This completes the first step. \medskip

\noindent\textbf{Step 2}: Show that $\intr(\jor_a)\cap\intr(\jor_b)$ is nonempty.

By the first step, there exists $x\in\intr(\jor_a)\cap\ext(\jor_1)$.
In particular, there exists a path $\phi$ starting at $x$, extending to infinity, and remaining entirely in $\ext(\jor_1)$.
But because $x\in\intr(\jor_a)$, this path must intersect $\jor_a$.
Denote by $z\in\jor_a$ the last intersection point, and let $\phi_{z\to\infty}$ be the portion of $\phi$ starting at $z$ and extending to infinity; with this choice, $\phi_{z\to\infty}$ never again intersects $\jor_a$.
Now note that
\eeq{ \label{same_outside}
\jor_a\cap\ext(\jor_1)=\jor_b\cap\ext(\jor_1)=\{\Phi(t):\, t\in(0,1)\}.
}
By construction we have $z\in\ext(\jor_1)$.
Choose $\eps>0$ sufficiently small that $\ball_\eps(z)\subseteq\ext(\jor_1)$, so as to guarantee
\eeq{ \label{z_ball_ext}
\jor_a\cap\ball_\eps(z)
=\jor_a\cap\ext(\jor_1)\cap\ball_\eps(z)
=\jor_b\cap\ext(\jor_1)\cap\ball_\eps(z)
= \jor_b\cap\ball_\eps(z).
}
By Lemma \ref{z_ball_2_lemma}, there exists a open set $U\subseteq\ball_\eps(z)$ containing $z$ such that $U\setminus\jor_1$ has exactly two components.
Moreover, \eqref{z_ball_ext} can be specialized this open set:
\eeq{ \label{U_still_good_ext}
\jor_a\cap U
= \jor_a \cap \ball_\eps(z)\cap U
= \jor_b \cap \ball_\eps(z)\cap U
= \jor_b \cap U.
}
Because $U$ has nonempty intersection with both $\intr(\jor_a)$ and $\ext(\jor_a)$, we conclude that the two connected components of $U\setminus\jor_a$ are precisely $U\cap\intr(\jor_a)$ and $U\cap\ext(\jor_a)$.
But $U\setminus\jor_a = U\setminus\jor_b$ by \eqref{U_still_good_ext}, and so by the exact same logic, these two connected components are also expressible as $U\cap\intr(\jor_b)$ and $U\cap\ext(\jor_b)$.
In particular, $U\cap\ext(\jor_b)$ is equal to either $U\cap\intr(\jor_a)$ or $U\cap\ext(\jor_a)$.
We claim it must be the latter.
Indeed, since $\phi_{z\to\infty}$ extends to infinity yet never intersects $\jor_a\cup\jor_1\supseteq\jor_b$ except at its starting point $z$, it must otherwise remain in $\ext(\jor_a)\cap\ext(\jor_b)$.
Hence $U\cap\ext(\jor_a)$ and $U\cap\ext(\jor_b)$ have nonempty intersection, forcing the two sets to be equal.
This leaves us to conclude that $U\cap\intr(\jor_a)=U\cap\intr(\jor_b)$; in particular, $\intr(\jor_a)\cap\intr(\jor_b)$ is nonempty. \medskip

\noindent \textbf{Step 3}: Argue that one of $\jor_a$ and $\jor_b$ must surround the other.

Consider the two portions of $\jor_b$ between $\Phi(0)$ and $\Phi(1)$.
The portion formed by the path $\Phi$ is shared entirely with $\jor_a$.
The other portion of $\jor_b$ intersects $\jor_a$ only at the endpoints $\Phi(0)$ and $\Phi(1)$,  so this portion otherwise remains either entirely in $\intr(\jor_a)$ or entirely in $\ext(\jor_a)$.
Putting these two observations together, we determine that either $\jor_b\subseteq\jor_a\cup\intr(\jor_a)$ or $\jor_b\subseteq\jor_a\cup\ext(\jor_a)$.
In either case, by (the contrapositive of) Lemma \ref{crossing_lemma}\ref{crossing_lemma_a}, one of $\jor_a$ and $\jor_b$ surrounds the other. 
For concreteness, let us assume $\jor_a$ surrounds $\jor_b$, since the reverse scenario would use exactly the same argument. \medskip

\noindent \textbf{Step 4}: Appeal to Lemma \ref{splitting_lemma}.

Recall that $\vphi_b$ is the portion of $\jor_b$ which is not shared with $\jor_a$ except at the endpoints $\vphi_b(0)=\Phi(0)$ and $\vphi_b(1)=\Phi(1)$. 
Consider any $y=\vphi_b(t)$ for $t\in(0,1)$.
Since $y\notin\jor_a$, we must have either $y\in\intr(\jor_a)$ or $y\in\ext(\jor_a)$.
We claim the former is true.
Indeed, because $y\in\jor_b$, every open neighborhood of $y$ must intersect both $\intr(\jor_b)$ and $\ext(\jor_b)$.
But we have assumed $\intr(\jor_b)\subseteq\intr(\jor_a)$, so every open neighborhood of $y$ must intersect $\intr(\jor_a)$, forcing $y\in\intr(\jor_a)$.
We have thus shown that
\eq{
\{\vphi_b(t):\, t\in(0,1)\} \subseteq \intr(\jor_a).
}
Therefore, we are in the setting of Lemma \ref{splitting_lemma} with $\jor = \jor_a$, $\jor_2 = \jor_b$, and $\vphi=\vphi_b$.
Indeed, in this notation, $\jor_1$ is the union of $\vphi_b$ with some portion of $\jor_a$ (namely the clockwise arc of $\jor_1$ between $\Phi(0)$ and $\Phi(1)$), while $\jor_2=\jor_b$ is the union of $\vphi_b$ with the complementary portion of $\jor_a$ (namely $\Phi$ itself).
See Figure \ref{fig:Jordan_extend} for reference.
The desired conclusion now follows from \eqref{interior_decomp}.
\end{proof}

Recall Definition \ref{circuit_def} for \textit{circuits}, which are Jordan curves formed entirely by edges of $\Z^2$ or entirely by edges of $\wh\Z^2$.

\begin{lemma} \label{area_lemma}
If $\jor_1$ and $\jor_2$ are Jordan curves such that $\intr(\jor_1)\subsetneq\intr(\jor_2)$, then the area of $\intr(\jor_1)$ is strictly less than that of $\intr(\jor_2)$.
Furthermore, if $\cir$ is a circuit, then the area of $\intr(\cir)$ is a positive integer.
\end{lemma}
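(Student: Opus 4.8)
\textbf{Proof plan for Lemma \ref{area_lemma}.}

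The plan is to prove the two assertions separately, both via the Jordan--Schönflies theorem and basic measure theory. For the first assertion, since $\intr(\jor_1)\subsetneq\intr(\jor_2)$, we certainly have $\mathrm{area}(\intr(\jor_1))\le\mathrm{area}(\intr(\jor_2))$ by monotonicity of Lebesgue measure; the content is the \emph{strict} inequality. The key point is that the difference $\intr(\jor_2)\setminus\overline{\intr(\jor_1)}$ is a \emph{nonempty open set}, hence has positive Lebesgue measure, and this set is disjoint from $\intr(\jor_1)$. To see it is nonempty: pick any $x\in\intr(\jor_2)\setminus\intr(\jor_1)$. If $x\in\ext(\jor_1)$, then since $\ext(\jor_1)$ is open we are done. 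If instead $x\in\jor_1$, then since $\jor_1\subseteq\overline{\intr(\jor_2)}$ but $\jor_1\cap\jor_2$ could be nonempty, we need a small extra step: either $x\in\intr(\jor_2)$ is already assumed, and a small ball $\ball_\eps(x)$ meets $\ext(\jor_1)$ (because $x\in\jor_1$ and $\jor_1$ is the boundary of $\ext(\jor_1)$); shrinking $\eps$ so that $\ball_\eps(x)\subseteq\intr(\jor_2)$ (possible since $\intr(\jor_2)$ is open and $x\in\intr(\jor_2)$), we obtain a nonempty open subset of $\intr(\jor_2)\cap\ext(\jor_1)$. In all cases $\intr(\jor_2)\cap\ext(\jor_1)$ contains a nonempty open set, so $\mathrm{area}(\intr(\jor_2))\ge\mathrm{area}(\intr(\jor_1))+\mathrm{area}(\intr(\jor_2)\cap\ext(\jor_1))>\mathrm{area}(\intr(\jor_1))$, using that $\intr(\jor_1)$ and $\intr(\jor_2)\cap\ext(\jor_1)$ are disjoint subsets of $\intr(\jor_2)$.

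For the second assertion, I would argue that $\intr(\cir)$ is a bounded nonempty open set, so its area is a well-defined positive real number; positivity follows because a nonempty open set has positive Lebesgue measure. It remains to show the area is an \emph{integer}. The idea is that a lattice circuit $\cir$ on $\Z^2$ (or the dual lattice) traces out the boundary of a region that is a finite union of unit lattice cells, i.e.\ closed unit squares with corners in the appropriate lattice. More precisely, $\intr(\cir)$ is a union of open unit cells together with some of the lattice edges and vertices separating them (which have zero area), so $\mathrm{area}(\intr(\cir))$ equals the number of unit cells it contains, an integer. To make this rigorous one can invoke that $\cir$, as a closed polygonal curve with vertices in $\Z^2$ (respectively in $\wh\Z^2$) and axis-parallel unit edges, has $\intr(\cir)$ expressible via the lattice cells whose interiors lie in $\intr(\cir)$: each such open cell is entirely inside or entirely outside $\intr(\cir)$ since its interior is connected and disjoint from $\cir$, and the union of these cells exhausts $\intr(\cir)$ up to a set of measure zero (the lattice segments). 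Counting cells gives an integer; it is at least $1$ since $\intr(\cir)$ is nonempty and must contain at least one full cell (a circuit has length at least $4$, so it encloses at least one unit square --- alternatively, invoke Pick's theorem, $\mathrm{area}=I+B/2-1$ with $I,B$ nonnegative integers and $B\ge4$).

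The main obstacle I anticipate is the second assertion's bookkeeping: carefully justifying that $\intr(\cir)$ decomposes (up to measure zero) into finitely many unit lattice cells, each lying wholly inside. This is visually obvious but requires noting that $\cir$ consists of axis-parallel unit segments, so $\R^2\setminus\cir$ has its connected components determined by the cell structure, and that the open cells not meeting $\cir$ each lie in exactly one component. Once that is in place, $\mathrm{area}(\intr(\cir))=\#\{\text{cells in }\intr(\cir)\}\in\Z_{>0}$ is immediate. Invoking Pick's theorem is the cleanest shortcut and sidesteps most of this; I would likely present that route, citing it as classical, and only fall back on the cell-counting argument if a self-contained proof is preferred. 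The first assertion, by contrast, is short once one observes the difference set contains a nonempty open ball.
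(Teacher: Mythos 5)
Your proof of the first assertion is essentially identical to the paper's: both locate a nonempty open subset of $\intr(\jor_2)\cap\ext(\jor_1)$ by the same case split on whether $x\in\ext(\jor_1)$ or $x\in\jor_1$, using openness of $\intr(\jor_2)$ and the fact that $\jor_1 = \partial\ext(\jor_1)$. For the integrality claim, your cell-counting argument is also the paper's argument: each open unit cell $x+(0,1)^2$ is connected and disjoint from the circuit (which lives on the lattice edges), hence lies wholly in $\intr(\cir)$ or wholly in $\ext(\cir)$, the cells tile the plane up to measure zero, and $\intr(\cir)$ being open forces at least one cell inside. Since you state you would ``likely present'' the Pick's-theorem route as your actual proof, note one small gap there: $\mathrm{area}=I+B/2-1$ is an integer only if $B$ is even. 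That holds here because a circuit on $\Z^2$ (or on $\wh{\Z}^2$) is a closed walk on a bipartite graph and thus has even length, so $B$, the number of lattice points on the boundary, equals the (even) number of edges. You assert only $B\ge4$ and do not record the parity argument; this is easily repaired, but your fallback cell-counting proof, which coincides with the paper's, sidesteps it entirely.
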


\begin{proof}
For the first claim of the lemma, it suffices to find some open set $U\subseteq\ext(\jor_1)\cap\intr(\jor_2)$.
By assumption there exists $x\in\intr(\jor_2)\setminus\intr(\jor_1)$.
If $x\in\ext(\jor_1)$, then take $U$ to be any open set which contains $x$ and remains in $\ext(\jor_1)\cap\intr(\jor_2)$.
Otherwise we must have $x\in\jor_1$, and then we take some open set $V\subseteq\intr(\jor_2)$ containing $x$.
This set $V$ must contain some $y\in \ext(\jor_1)$, so we take $U$ to be any open set containing $y$ which remains in $V\cap\ext(\jor_1)\subseteq\ext(\jor_1)\cap\intr(\jor_2)$.

For the second claim, let us consider the case when $\CC$ is a primal circuit; the case of a dual circuit is entirely analogous.
For every $x\in\Z^2$, the set $U_x = x + (0,1)^2$ can have no intersection with $\CC$.
(This is because $\CC$ consists entirely of edges between nearest-neighbor vertices.)
And clearly $U_x$ is connected, so it must lie either entirely in $\intr(\CC)$ or entirely in $\ext(\CC)$.
Since each $U_x$ has area $1$ and the area of $\R^2\setminus\biguplus_{x\in\Z^2}U_x$ is equal to $0$, we conclude that the area of $\intr(\CC)$ is precisely the number of $x$ such that $U_x\subseteq\intr(\CC)$.
This number is positive because the interior of any Jordan curve is open.
\end{proof}

\begin{lemma} \label{lemma:openclosed}
Let $\opc$ be an open primal circuit and $\cdc$ a closed dual circuit.
If $\intr(\opc)\cap\intr(\cdc)$ is nonempty, then one must surround the other.
\end{lemma}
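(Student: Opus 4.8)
The statement says that if an open primal circuit $\opc$ and a closed dual circuit $\cdc$ have intersecting interiors, then one encloses the other. I would argue by contradiction: suppose neither encloses the other. Since $\intr(\opc)\cap\intr(\cdc)\neq\varnothing$, Lemma~\ref{crossing_lemma}\ref{crossing_lemma_a} applies and tells us that $\cdc$ meets both $\intr(\opc)$ and $\ext(\opc)$; moreover Lemma~\ref{crossing_lemma}\ref{crossing_lemma_b} gives at least two distinct points of $\opc\cap\cdc$. The whole point is that this situation is \emph{topologically impossible for a primal circuit and a dual circuit}: an open primal edge and a closed dual edge can never cross, because a primal edge $e$ and its dual $e^\star$ are the only primal--dual pair that intersect, and $e$ is open while $e^\star$ would have to be closed (and here $\opc$ is open while $\cdc$ is closed, so $\opc$ and $\cdc$ share no dual pair of edges). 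Hence, as simple curves in the plane, $\opc$ and $\cdc$ are disjoint — this is exactly the disjointness convention recorded in the footnote after the definition of disjoint paths in the introduction (``if on different lattices, it is equivalent to sharing no dual pair of edges, i.e.\ never crossing'').

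So the key steps are: (1) observe that $\opc$, being a cycle of open primal edges, and $\cdc$, being a cycle of closed dual edges, share no dual pair of edges, since any primal edge on $\opc$ is open whereas the primal counterpart of any dual edge on $\cdc$ is closed; (2) conclude from the planar-duality geometry of $\Z^2$ and $\wh\Z^2$ that the simple closed curves traced by $\opc$ and $\cdc$ in $\R^2$ are literally disjoint point sets (a dual edge's curve can only cross a primal edge's curve if they are a dual pair, and distinct-lattice edges that are not a dual pair are disjoint as subsets of $\R^2$); (3) since $\opc\cap\cdc=\varnothing$ as subsets of $\R^2$, the connected set $\cdc$ lies entirely in $\R^2\setminus\opc=\intr(\opc)\uplus\ext(\opc)$, hence entirely in one of $\intr(\opc)$ or $\ext(\opc)$; (4) if $\cdc\subseteq\intr(\opc)$ then $\cdc\subseteq\opc\cup\intr(\opc)$, so by Lemma~\ref{enclose lemma} $\opc$ encloses $\cdc$; if instead $\cdc\subseteq\ext(\opc)$, then $\intr(\cdc)$ cannot meet $\opc$ (as $\opc\cap\cdc=\varnothing$ and $\opc$ is connected and disjoint from $\ext(\cdc)\cup\cdc$... more precisely one shows $\opc\subseteq\intr(\cdc)\cup\ext(\cdc)$ and rules out $\opc\subseteq\ext(\cdc)$ using $\intr(\opc)\cap\intr(\cdc)\neq\varnothing$), giving $\opc\subseteq\cdc\cup\intr(\cdc)$ and hence $\cdc$ encloses $\opc$ by Lemma~\ref{enclose lemma}. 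Either way one of the two encloses the other, contradicting the assumption.

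A cleaner way to package steps (3)--(4) without splitting into cases: once $\opc$ and $\cdc$ are disjoint closed curves, $\opc$ lies in one component of $\R^2\setminus\cdc$ and $\cdc$ lies in one component of $\R^2\setminus\opc$. If $\opc\subseteq\ext(\cdc)$ and $\cdc\subseteq\ext(\opc)$ simultaneously, then $\intr(\opc)$ and $\intr(\cdc)$ would be disjoint (each interior would be contained in the other's exterior, since the interior of a Jordan curve $\jor$ contained in $\ext(\jor')$ must itself lie in $\ext(\jor')$ — an open connected set disjoint from $\jor'$ containing only points of $\ext(\jor')$ on its boundary curve), contradicting $\intr(\opc)\cap\intr(\cdc)\neq\varnothing$. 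So at least one of $\opc\subseteq\intr(\cdc)$ or $\cdc\subseteq\intr(\opc)$ holds, and Lemma~\ref{enclose lemma} finishes it.

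\textbf{Main obstacle.} The only genuinely non-routine point is step~(2): making precise, at the level of point sets in $\R^2$, that an open primal circuit and a closed dual circuit trace disjoint curves. This rests on the geometric fact that for the square lattice and its shift-by-$(\tfrac12,\tfrac12)$ dual, a primal edge $e$ and a dual edge $\hat f$ intersect (as closed segments in the plane) if and only if $\hat f=e^\star$, in which case they cross at the single midpoint; and on each circuit the status (open/closed) is inherited edge-by-edge, so $\opc$ open and $\cdc$ closed precludes $\cdc$ containing any $e^\star$ with $e\in\opc$. Everything after that is a straightforward Jordan-curve argument using Lemma~\ref{enclose lemma} and Lemma~\ref{crossing_lemma}. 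I would state step~(2) as a short standalone observation (it is essentially the content of the disjointness footnote) and then run the four-step contradiction above.
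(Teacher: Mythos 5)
Your proposal is correct and rests on exactly the same key observation as the paper's proof: an open primal circuit and a closed dual circuit cannot intersect as curves, since any such intersection would have to occur at the midpoint of a dual pair $e,e^\star$, forcing that edge to be simultaneously open and closed. The paper then finishes in one line by combining this with Lemma~\ref{crossing_lemma}\ref{crossing_lemma_b} (assuming neither encloses the other yields $\opc\cap\cdc\neq\varnothing$, contradiction), whereas you additionally spell out a direct Jordan-curve route via Lemma~\ref{enclose lemma}; both are fine, but the paper's packaging is more economical.
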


\begin{proof}
Suppose the conclusion were false, in which case Lemma~\ref{crossing_lemma}\ref{crossing_lemma_b} would guarantee that the two circuits intersect.
But since $\opc$ is primal and $\cdc$ is dual, this intersection must be the midpoint of some edge $e$. 
Hence $e$ is both open and closed, a contradiction.
\end{proof}

The following lemma justifies the notion of an ``outermost'' circuit.

\begin{lemma} \label{join_circuits_lemma}
Consider any circuits $\cir_1,\dots,\cir_k$ such that $\bigcap_{i=1}^k\intr(\cir_i)$ is nonempty.
There exists a unique circuit $\cir$ such that
\eeq{ \label{expand_circuits}
\bigcup_{i=1}^k\intr(\cir_i)\subseteq\intr(\cir) \quad \text{and} \quad \cir\subseteq\bigcup_{i=1}^k\cir_i.
}
\end{lemma}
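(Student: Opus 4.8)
The plan is to build $\cir$ by taking the ``outer boundary'' of the union $\bigcup_{i=1}^k \intr(\cir_i)$ and arguing inductively on $k$. The base case $k=1$ is trivial with $\cir=\cir_1$. For the inductive step, suppose we have already produced a circuit $\cir'$ with $\bigcup_{i=1}^{k-1}\intr(\cir_i)\subseteq\intr(\cir')$ and $\cir'\subseteq\bigcup_{i=1}^{k-1}\cir_i$; we want to merge in $\cir_k$. First I would observe that since $\bigcap_{i=1}^k\intr(\cir_i)$ is nonempty, in particular $\intr(\cir')\cap\intr(\cir_k)\neq\varnothing$ (because the common point lies in every $\intr(\cir_i)$, hence in $\intr(\cir')$ by the inductive hypothesis). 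By Lemma~\ref{crossing_lemma}\ref{crossing_lemma_a} (contrapositive), either one of $\cir',\cir_k$ encloses the other, or they cross. If $\cir'$ encloses $\cir_k$, take $\cir=\cir'$; if $\cir_k$ encloses $\cir'$, take $\cir=\cir_k$ (and note $\cir_k\subseteq\bigcup_{i=1}^k\cir_i$ trivially, while $\bigcup_{i=1}^k\intr(\cir_i)=\intr(\cir_k)$ since all the others are enclosed). In either enclosing case we are done.

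\textbf{The crossing case.} The substantive case is when $\cir'$ and $\cir_k$ cross. Here the idea is to produce $\cir$ as the circuit tracing the boundary of $\intr(\cir')\cup\intr(\cir_k)\cup(\cir'\cap\cir_k)$. To make this rigorous I would use Lemma~\ref{expanding_lemma}: start at $\cir'$ and successively ``expand'' it along arcs of $\cir_k$ that lie in $\ext(\cir')$. Concretely, $\cir_k\cap\ext(\cir')$ is a disjoint union of finitely many open arcs, each with both endpoints on $\cir'$ (these are lattice circuits, so everything is a finite union of edges and the intersection structure is combinatorially finite). For each such arc, Lemma~\ref{expanding_lemma} applied with $\jor_1$ the current circuit and $\Phi$ the arc produces a strictly larger Jordan curve whose edges are among the edges of $\cir'$ and $\cir_k$, and whose interior is the old interior together with the interior of the region cut off by the arc. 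After processing all arcs, I get a Jordan curve $\cir$ with $\cir\subseteq\cir'\cup\cir_k\subseteq\bigcup_{i=1}^k\cir_i$ and $\intr(\cir')\subseteq\intr(\cir)$, $\intr(\cir_k)\subseteq\intr(\cir)$; since $\bigcup_{i=1}^{k-1}\intr(\cir_i)\subseteq\intr(\cir')$, this gives $\bigcup_{i=1}^k\intr(\cir_i)\subseteq\intr(\cir)$. I must also check $\cir$ is a genuine circuit in the sense of Definition~\ref{circuit_def}, i.e.\ a closed self-avoiding lattice path of length $\ge 4$ on the appropriate lattice (all $\cir_i$ live on the same lattice here, since the hypothesis that their interiors have common intersection together with the usage elsewhere makes them all primal or all dual): self-avoidance is automatic since a Jordan curve built from lattice edges that visits no vertex twice is self-avoiding, and length $\ge4$ follows since its interior is nonempty and open.

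\textbf{Uniqueness and the main obstacle.} For uniqueness: if $\cir$ and $\wt\cir$ both satisfy \eqref{expand_circuits}, then $\intr(\cir)\subseteq\cir$'s interior contains $\bigcup\intr(\cir_i)$; I would show $\intr(\cir)=\intr(\wt\cir)$ by a double-inclusion/area argument. Any vertex or edge-midpoint of $\cir$ lies on some $\cir_i$, hence in $\cir_i\subseteq\cir_i\cup\intr(\cir_i)\subseteq\wt\cir\cup\intr(\wt\cir)$, so $\cir\subseteq\wt\cir\cup\intr(\wt\cir)\cup\ext(\wt\cir)$ forces (using that $\cir$ is connected and, by \eqref{expand_circuits}, cannot enter $\ext(\wt\cir)$ without separating points of $\bigcup\intr(\cir_i)$ which lies in $\intr(\wt\cir)$) $\cir\subseteq\wt\cir\cup\intr(\wt\cir)$; then Lemma~\ref{enclose lemma} gives $\intr(\cir)\subseteq\intr(\wt\cir)$, and symmetrically the reverse, so by Lemma~\ref{area_lemma} equality of interiors, which for lattice circuits forces $\cir=\wt\cir$ (a circuit is determined by its interior, as its edges are exactly those unit segments with a unit square of $\intr$ on one side and not the other). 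The main obstacle I anticipate is the bookkeeping in the crossing case: verifying that iterating Lemma~\ref{expanding_lemma} over all the arcs of $\cir_k\cap\ext(\cir')$ terminates and yields a single Jordan curve rather than several disjoint ones. This requires knowing that $\intr(\cir')\cup\intr(\cir_k)\cup(\cir'\cap\cir_k)$ is connected and simply connected with connected complement — which it is, since $\intr(\cir')$ and $\intr(\cir_k)$ overlap — but pinning down the combinatorial finiteness and the fact that the expansion at each step keeps the boundary a simple closed curve (no pinch points creating figure-eights) is where the care is needed; Lemma~\ref{expanding_lemma}'s dichotomy, applied repeatedly, is precisely designed to handle this, so the argument should go through cleanly once the arcs are enumerated in an order consistent with nesting.
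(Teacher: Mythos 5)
Your proof broadly tracks the paper's, with the same core tool (iterating Lemma~\ref{expanding_lemma}) but a genuinely different and in fact cleaner uniqueness argument. For uniqueness, the paper finds an edge of one candidate circuit whose midpoint lies in the exterior of the other and derives a contradiction via a small ball; you instead observe that $\intr(\cir_i)\subseteq\intr(\wt\cir)$ forces $\cir_i\subseteq\wt\cir\cup\intr(\wt\cir)$, hence $\cir\subseteq\wt\cir\cup\intr(\wt\cir)$, and apply Lemma~\ref{enclose lemma} in both directions to get $\intr(\cir)=\intr(\wt\cir)$ and hence $\cir=\wt\cir$. That is correct and arguably more direct. (You cite Lemma~\ref{area_lemma} as giving ``equality of interiors,'' but at that point you already have double inclusion; no area argument is needed there.)

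In the existence step your plan is right in spirit but two points are loose, and one intermediate claim is false. First, the paper does not ``enumerate the arcs of $\cir_k\cap\ext(\cir')$ and process them''; it picks \emph{one} arc at a time, applies Lemma~\ref{expanding_lemma} to obtain a strictly larger circuit, and repeats with the \emph{new} circuit. The distinction matters: after one expansion the set of arcs of $\cir_k$ in the exterior of the current circuit changes, so a fixed enumeration made at the start is not the right bookkeeping object, and ordering arcs ``consistently with nesting'' is not obviously well-defined. Second, you identify ``combinatorial finiteness'' as what must be verified but do not supply it; the paper's clean termination argument is via Lemma~\ref{area_lemma}: each application of Lemma~\ref{expanding_lemma} increases the area of the interior by at least one lattice unit, while the area of any circuit built from edges of $\cir'\cup\cir_k$ is bounded, so the process must halt, and when it halts $\cir_k\subseteq\cir\cup\intr(\cir)$, whence Lemma~\ref{enclose lemma} finishes. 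Finally, the claim that $\intr(\cir')\cup\intr(\cir_k)\cup(\cir'\cap\cir_k)$ is ``simply connected with connected complement'' is not true in general: if $\cir'$ and $\cir_k$ cross at many points, $\intr(\cir')\cap\intr(\cir_k)$ may be disconnected, and then the union of the two interiors need not be simply connected (by Mayer--Vietoris, $H_1$ of the union is controlled by $\wt H_0$ of the intersection). This is precisely why the lemma only asserts $\bigcup_i\intr(\cir_i)\subseteq\intr(\cir)$ and not equality, and why one builds $\cir$ iteratively rather than as ``the boundary of the union.'' None of this changes the essential correctness of your strategy, but the termination argument needs to be made explicit and the simple-connectedness heuristic dropped.
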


\begin{proof}
First we prove uniqueness.
Consider two circuits $\cir,\wt\cir$ that both satisfy \eqref{expand_circuits} with the given collection $(\cir_i)_{i=1}^k$.
Suppose towards a contradiction that $\cir\neq\wt\cir$.
Then we claim there is some edge $e$ belonging to one of the circuits whose midpoint is in the exterior of the other circuit.
To see this claim, first consider the case that one circuit surrounds the other, say $\intr(\wt\cir)\subseteq\intr(\cir)$.
Then every edge of $\cir$ either belongs to $\wt\cir$ or has its midpoint in $\ext(\wt\cir)$.
Since we assumed $\cir\neq\wt\cir$, there must be at least one edge with the latter property.
On the other hand, if neither circuit surrounds the other, then we can simply appeal to Lemma~\ref{crossing_lemma}\ref{crossing_lemma_a}.

From the claim, we find some $z\in\ext(\wt\cir)$ which is the midpoint of an edge $e$ belonging to $\cir$.
The edge $e$ must belong to $\cir_i$ for some $i$, so every open neighborhood of $e$ has nonempty intersection with both $\intr(\cir_i)$ and $\ext(\cir_i)$.
In particular,
\eq{
\ball_{1/2}(z)\cap\intr(\cir_i)\neq\varnothing.
}
Next notice that because the midpoint $z$ is in the exterior of $\wt\cir$, the open set $\ball_{1/2}(z)$ has no intersection with $\wt\cir$; it follows that
\eq{
\ball_{1/2}(z) \subseteq \ext(\wt\cir).
}
But when read together, the two previous displays imply that $\intr(\cir_i)$ has nonempty intersection with $\ext(\wt\cir)$, which contradicts the hypothesis that $\intr(\cir_i)\subseteq\intr(\wt\cir)$.
We are left to conclude that $\wt\cir$ must be equal to $\cir$.

Having established uniqueness, we look to prove existence. 
We begin with just two circuits.
That is, we wish to find some circuit $\cir$ such that
\eeq{ \label{existence_for_2_join}
\intr(\cir_1)\cup\intr(\cir_2)\subseteq\intr(\cir) \quad \text{and} \quad \cir \subseteq \cir_1\cup\cir_2.
}
If $\intr(\cir_1)\subseteq\intr(\cir_2)$ or $\intr(\cir_2)\subseteq\intr(\cir_1)$, then we simply take $\cir$ to be $\cir_2$ or $\cir_1$, respectively.
So let us assume neither containment is true, and then Lemma~\ref{crossing_lemma}\ref{crossing_lemma_a} implies the existence of some 
$z_\mathrm{ext}\in\ext(\cir_1)\cap\cir_2$.
%
From $z_\mathrm{ext}$ we follow the circuit $\cir_2$ in both directions; by Lemma~\ref{crossing_lemma}\ref{crossing_lemma_b}, in each direction we will encounter a distinct first intersection with $\cir_1$.
Let $\Phi$ denote the subpath of $\cir_2$ between these intersection points and containing $z_\mathrm{ext}$, so that $\Phi$ is entirely in $\ext(\cir_1)$ except at its endpoints.
We are thus in the setting of Lemma \ref{expanding_lemma}, with the possibility to complete $\Phi$ to a full Jordan curve by following $\cir_1$ either clockwise or counterclockwise.
The circuit resulting from one of these directions will surround the circuit resulting from the other direction;
let $\cir$ be the surrounding circuit and $\cir_b$ the surrounded circuit, so that
\eqref{expanding_lemma_eqs} gives
\eeq{ \label{after_one_join_step}
\intr(\cir) = \intr(\cir_1)\uplus\intr(\cir_b)\uplus\Phi.
}
Notice that the edges in $\cir$ belong to either $\cir_1$ or to $\Phi$, which was a subpath of $\cir_2$.
So if $\cir$ surrounds $\cir_2$, then $\cir$ satisfies \eqref{existence_for_2_join}.
Otherwise, we proceed inductively with $\cir$ replacing $\cir_1$.
To complete the proof, we just need to argue that the procedure just performed can only be repeated finitely many times.
Indeed, in light of \eqref{after_one_join_step}, Lemma \ref{area_lemma} tells us that the area of $\intr(\cir)$ is at least 1 unit greater than that of $\intr(\cir_1)$.
At the same time, the area of a circuit using only edges in $\cir_1\cup\cir_2$ has a finite upper bound, so the argument can only be repeated finitely many times.
Once it terminates, the resulting circuit $\cir$ must satisfy \eqref{existence_for_2_join}.

Our final step is to prove existence for general $n$ from the $n=2$ case just handled.
Given any two circuits $\cir_1$ and $\cir_2$ such that $\intr(\cir_1)\cap\intr(\cir_2)$ is nonempty, let $\cir_1\join\cir_2$ denote the unique circuit $\cir$ satisfying \eqref{existence_for_2_join}.
Then given a sequence $(\cir_i)_{i=1}^k$ such that $\bigcap_{i=1}^k\intr(\cir_i)\neq\varnothing$, we inductively define
\eq{
\cir_1' &= \rlap{$\cir_1$,}\phantom{\cir_{i-1}'\join\cir_i}\quad \text{and} \\
\cir_i' &= \cir_{i-1}'\join\cir_i \quad\text{for $i\in\{2,\dots,n\}$}.
}
By simple induction, the final circuit $\cir_n'$ satisfies \eqref{expand_circuits}.
\end{proof}

The following lemma justifies the notion of an ``innermost'' circuit.

\begin{lemma} \label{meet_circuits_lemma}
Let $S\subseteq\R^2$ be a nonempty connected subset of $\R^2$, and consider any collection of circuits $(\cir_i)_{i\in I}$ such that $S\subseteq\intr(\cir_i)$ for every $i$.
There exists a unique circuit $\cir$
such that
\eeq{ \label{collapse_circuits}
S\subseteq\intr(\cir)\subseteq\bigcap_{i\in I}\intr(\cir_i) \quad \text{and} \quad \cir\subseteq\bigcup_{i\in I}\cir_i.
}
\end{lemma}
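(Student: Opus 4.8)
The plan is to prove uniqueness and existence separately, in both cases exploiting the following identification: any circuit $\cir$ satisfying \eqref{collapse_circuits} must have $\intr(\cir)=\Omega$, where $\Omega$ denotes the connected component of the open set $V:=\bigcap_{i\in I}\intr(\cir_i)$ that contains $S$. (This component is well defined because $S\subseteq V$ by hypothesis and $S$ is connected.) Granting this, uniqueness is immediate, and existence amounts to showing that $\Omega$ is the interior of some circuit contained in $\bigcup_{i\in I}\cir_i$. I would obtain the latter by reducing to the case of finitely many circuits, then by induction to the ``meet'' of two circuits, which I construct using Lemmas~\ref{splitting_lemma} and \ref{area_lemma} in a way dual to the ``join'' of Lemma~\ref{join_circuits_lemma}.

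For uniqueness, suppose $\cir$ satisfies \eqref{collapse_circuits}. Since $\intr(\cir)$ is open, connected, contained in $V$, and meets $S\subseteq\Omega$, it is contained in $\Omega$. For the reverse inclusion, suppose $x\in\Omega\setminus\intr(\cir)$; pick $s\in S\subseteq\intr(\cir)\cap\Omega$ and a path $\mu$ in $\Omega$ from $x$ to $s$ (possible since $\Omega$, being open and connected in $\R^2$, is path-connected). As $\mu$ has one endpoint in $\intr(\cir)$ and one in $\R^2\setminus\intr(\cir)$, it must meet $\cir=\partial\,\intr(\cir)$. But any $z\in\mu\cap\cir$ satisfies $z\in\cir\subseteq\bigcup_i\cir_i$, hence $z\in\cir_{i_0}$ for some $i_0$, while also $z\in\mu\subseteq\Omega\subseteq\intr(\cir_{i_0})$, contradicting that a Jordan curve is disjoint from its interior. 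Hence $\intr(\cir)=\Omega$, and since a Jordan curve is the topological boundary of its interior, any two circuits satisfying \eqref{collapse_circuits} coincide.

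For existence, the heart of the matter is the meet of two circuits $\cir_1,\cir_2$ each enclosing $S$: I claim there is a circuit $\cir$ with $S\subseteq\intr(\cir)\subseteq\intr(\cir_1)\cap\intr(\cir_2)$ and $\cir\subseteq\cir_1\cup\cir_2$. If one of $\cir_1,\cir_2$ encloses the other, take the enclosed circuit. Otherwise, Lemma~\ref{crossing_lemma}\ref{crossing_lemma_a} (applicable as $S\subseteq\intr(\cir_1)\cap\intr(\cir_2)\neq\varnothing$) provides an arc $\vphi$ of $\cir_2$ with relative interior in $\intr(\cir_1)$ and endpoints on $\cir_1$, and $\vphi\cap S=\varnothing$ because $\vphi\subseteq\cir_2$. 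By Lemma~\ref{splitting_lemma}, $\vphi$ splits $\intr(\cir_1)$ into the interiors of two circuits (each built from edges of $\cir_1$ and of $\cir_2$) together with $\vphi$ itself, and the connected set $S$ lies in exactly one of those interiors; retaining that circuit as $\cir^{(1)}$ yields $S\subseteq\intr(\cir^{(1)})\subsetneq\intr(\cir_1)$ and $\cir^{(1)}\subseteq\cir_1\cup\cir_2$. Iterating with $\cir^{(1)}$ in place of $\cir_1$, and noting that $\intr(\cir^{(t)})\subseteq\intr(\cir_1)$ throughout (so $\cir_2$ is never enclosed by $\cir^{(t)}$, whence by Lemma~\ref{crossing_lemma}\ref{crossing_lemma_b} a new arc is available whenever $\cir_2\cap\intr(\cir^{(t)})\neq\varnothing$), the strictly decreasing positive-integer areas (Lemma~\ref{area_lemma}) force the process to halt at a circuit $\cir$ with $\cir_2\cap\intr(\cir)=\varnothing$. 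Then $\intr(\cir)$, being connected and disjoint from $\cir_2$ yet containing $S\subseteq\intr(\cir_2)$, lies in $\intr(\cir_2)$, which is exactly what is needed.

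With the two-circuit meet in hand, a finite collection $\cir_1,\dots,\cir_k$ is handled by induction on $k$: form the meet $\cir'$ of $\cir_1,\dots,\cir_{k-1}$, then take the two-circuit meet of $\cir'$ with $\cir_k$. For an arbitrary collection $(\cir_i)_{i\in I}$, let $\cir_F$ denote the meet of the subfamily indexed by a finite $F\subseteq I$; by the uniqueness argument, $\intr(\cir_F)$ is the $S$-component of $\bigcap_{i\in F}\intr(\cir_i)$, so the areas of $\intr(\cir_F)$ form a directed decreasing family of positive integers, minimized at some $F_0$. For each $i\in I$, $\intr(\cir_{F_0\cup\{i\}})\subseteq\intr(\cir_{F_0})$ with equal areas forces $\cir_{F_0\cup\{i\}}=\cir_{F_0}$ (Lemma~\ref{area_lemma}), hence $\intr(\cir_{F_0})\subseteq\intr(\cir_i)$; so $\cir:=\cir_{F_0}$ satisfies \eqref{collapse_circuits}. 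The step I expect to be the main obstacle is the two-circuit meet, where, exactly as in the proof of Lemma~\ref{join_circuits_lemma}, one must verify that each object produced by Lemma~\ref{splitting_lemma} is a genuine circuit in the sense of Definition~\ref{circuit_def}, handle the cases in which $\cir_1$ and $\cir_2$ share edges or touch without crossing, and argue carefully that the halting configuration is precisely $\cir_2\cap\intr(\cir)=\varnothing$; Lemmas~\ref{splitting_lemma}, \ref{expanding_lemma}, and \ref{area_lemma} provide all the topological input required.
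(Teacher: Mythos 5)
Your proposal is correct, and two of its three components take a genuinely different route from the paper. For uniqueness, you identify $\intr(\cir)$ with the $S$-containing connected component $\Omega$ of $\bigcap_{i\in I}\intr(\cir_i)$ and deduce uniqueness from the fact that a Jordan curve is the topological boundary of its interior; the paper instead argues by contradiction, producing an edge of one candidate circuit whose midpoint lies in the interior of the other and deriving a contradiction with $\intr(\wt\cir)\subseteq\intr(\cir_i)$ by looking at a small ball about that midpoint. Both are valid, but your characterization is more conceptual and pays a dividend later: it immediately yields the inclusion $\intr(\cir_{F'})\subseteq\intr(\cir_F)$ for $F\subseteq F'$, which you need. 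Your two-circuit meet is essentially the paper's construction (iterate Lemma~\ref{splitting_lemma}, using Lemma~\ref{area_lemma} to terminate), modulo your correct rephrasing of the halting condition as $\cir_2\cap\intr(\cir)=\varnothing$. For an arbitrary index set $I$, the paper passes to $I=\N$ via the observation that there are only countably many circuits and then iterates $\cir_i'=\cir_{i-1}'\meet\cir_i$ until stabilization; you instead minimize area over the directed family of finite-subfamily meets and check that the minimizer is fixed under adjoining any $i\in I$, again via Lemma~\ref{area_lemma}. This is arguably cleaner, avoiding the countability reduction entirely. The one point you correctly flag as needing care — that the curves produced by Lemma~\ref{splitting_lemma} are genuine lattice circuits in the sense of Definition~\ref{circuit_def}, so that Lemma~\ref{area_lemma} actually applies — is also glossed over in the paper; it rests on the implicit (and, in all applications, satisfied) assumption that all $\cir_i$ lie on the same lattice.
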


\begin{proof}
First we prove uniqueness.
Consider two circuits $\cir,\wt\cir$ that both satisfy \eqref{collapse_circuits} with the given collection $(\cir_i)_{i\in I}$.
Suppose toward a contradiction that $\cir\neq\wt\cir$.
Then we claim there is some edge $e$ belonging to one of the circuits whose midpoint is in the interior of the other circuit.
To see this claim, first consider the case that one circuit surrounds the other, say $\intr(\cir)\subseteq\intr(\wt\cir)$.
Then every edge of $\cir$ either belongs to $\wt\cir$ or has its midpoint in $\intr(\wt\cir)$.
Since we assumed $\cir\neq\wt\cir$, there must be at least one edge with the latter property.
On the other hand, if neither circuit surrounds the other, then we can simply appeal to Lemma~\ref{crossing_lemma}\ref{crossing_lemma_a}.

From the claim, we find some $z\in\intr(\wt\cir)$ which is the midpoint of an edge $e$ belonging to $\cir$.
The edge $e$ must belong to $\cir_i$ for some $i$, so every open neighborhood of $e$ has nonempty intersection with both $\intr(\cir_i)$ and $\ext(\cir_i)$.
In particular,
\eq{
\ball_{1/2}(z)\cap\ext(\cir_i)\neq\varnothing.
}
Next notice that because the midpoint $z$ is in the interior of $\wt\cir$, the open set $\ball_{1/2}(z)$ has no intersection with $\wt\cir$; it follows that
\eq{
\ball_{1/2}(z) \subseteq \intr(\wt\cir).
}
But when read together, the two previous displays imply that $\intr(\wt\cir)$ has nonempty intersection with $\ext(\cir_i)$, which contradicts the hypothesis that $\intr(\wt\cir)\subseteq\intr(\cir_i)$.
We are left to conclude that $\wt\cir$ must be equal to $\cir$.

Having established uniqueness, we look to prove existence. 
We begin with just two circuits.
That is, we wish to find some circuit $\cir$ such that
\eeq{ \label{existence_for_2}
S\subseteq\intr(\cir)\subseteq\intr(\cir_1)\cap\intr(\cir_2) \quad \text{and} \quad \cir \subseteq \cir_1\cup\cir_2.
}
If $\intr(\cir_1)\subseteq\intr(\cir_2)$ or $\intr(\cir_2)\subseteq\intr(\cir_1)$, then we simply take $\cir$ to be $\cir_1$ or $\cir_2$, respectively.
Let us assume neither containment is true, and then Lemma~\ref{crossing_lemma}\ref{crossing_lemma_a} implies the existence of some 
$z_\mathrm{int}\in\intr(\cir_1)\cap\cir_2$.
%
From $z_\mathrm{int}$ we follow the circuit $\cir_2$ in both directions; by Lemma~\ref{crossing_lemma}\ref{crossing_lemma_b}, in each direction we will encounter a distinct first intersection with $\cir_1$.
Let $\vphi$ denote the subpath of $\cir_2$ between these intersection points and containing $z_\mathrm{int}$, so that $\vphi$ is entirely in $\intr(\cir_1)$ except at its endpoints.
We are thus in the setting of Lemma \ref{splitting_lemma}, with the possibility to complete $\vphi$ to a full Jordan curve by following $\cir_1$ either clockwise or counterclockwise.
We choose the direction that contains $S$ in its interior and call the resulting circuit $\cir$.
Because $S$ is connected, the decomposition \eqref{interior_decomp} shows that such a choice exists and is unique; we thus have $S\subseteq\intr(\cir)\subsetneq\intr(\cir_1)$.
Notice that the edges in $\cir$ belong to either $\cir_1$ or to $\vphi$, which was a subpath of $\cir_2$.
So if $\cir$ is surrounded by $\cir_2$, then we are done.
Otherwise, we proceed inductively with $\cir$ replacing $\cir_1$.
To complete the proof, we just need to argue that the procedure just performed can only be repeated finitely many times.
Indeed, 
Lemma \ref{area_lemma} tells us that the area of $\intr(\cir)$ is at least 1 unit less than that of $\intr(\cir_1)$.
Since $\intr(\cir)$ has finite area, it is now evident that the argument can only be repeated finitely many times.
Once it terminates, we are left with a circuit $\cir$ satisfying \eqref{existence_for_2}.

Our final step is to prove existence for an arbitrary index set $I$.
There are only countably many distinct circuits in the lattice, so it suffices to assume the index set $I$ is the set of positive integers.
Given any two circuits $\cir_1$ and $\cir_2$ such that $S\subseteq\intr(\cir_1)\cap\intr(\cir_2)$, let $\cir_1\meet\cir_2$ denote the unique circuit $\cir$ satisfying \eqref{existence_for_2}.
Then given a sequence $(\cir_i)_{i=1}^\infty$ such that $S\subseteq\intr(\cir_i)$ for every $i$, we inductively define
\eq{
\cir_1' &= \rlap{$\cir_1$,}\phantom{\cir_{i-1}'\meet\cir_i}\quad \text{and} \\
\cir_i' &= \cir_{i-1}'\meet\cir_i \quad \text{for $i\geq2$}.
}
By definition, we have
\eeq{ \label{collapse_circuits_induction}
S\subseteq\intr(\cir_i')
&\subseteq\intr(\cir_{i-1}')\cap\intr(\cir_i) \\
&\subseteq\intr(\cir_{i-2}')\cap\intr(\cir_{i-1})\cap\intr(\cir_i) \\
&\hspace{1.3ex}\vdots \\
&\subseteq\intr(\cir_1)\cap\cdots\cap\intr(\cir_i).
}
Since there are only finitely many circuits surrounded by $\cir_1$, we must eventually have $\cir_i'$ equal to some $\cir$ for all large $i$.
For this circuit $\cir$, \eqref{collapse_circuits} follows from the fact that \eqref{collapse_circuits_induction} holds for every $i$.
\end{proof}

\section{Proof of Proposition \ref{prop:geod_cons} (construction of the geodesic)} \label{sec:geod_cons_proof}

 For the benefit of the reader, we recall the relevant conventions as well as the statements to be proved. 
 We say a circuit $\cir$ 
 \textit{encloses} a set of vertices $\mathcal{A}$ (i.e.~either $\mathcal A\subseteq\Z^2$ or $\mathcal A\subseteq\wh\Z^2$) if $\mathcal A\subseteq\intr(\cir)$. We say a circuit $\cir$ \textit{surrounds} another circuit $\cir'$ if $\intr(\cir') \subseteq \intr(\cir)$. 
 Every edge $e\in E(\Z^2)$ has two \textit{dual neighbors} (the endpoints of the dual edge $e^\star$), while every vertex $v\in \Z^2$ has four dual neighbors. In both cases, dual neighbors are vertices on the dual lattice. 
A primal path $\primalp$ is \textit{open} if all its edges are open, and we use the following language:
\begin{itemize}
    \item If $\mathcal{A}\subseteq\Z^2$, then we say $\primalp$ starts (ends) at $\mathcal{A}$ if its first (last) vertex is an element of $\mathcal{A}$.
    \item If $\EE$ is a collection of primal edges, then we say $\primalp$ starts (ends) at $\EE$ if its first (last) vertex is an endpoint of some element of $\EE$.
    \item If $\wh\EE$ is a collection of dual edges, then we say $\primalp$ starts (ends) at $\wh\EE$ if its first (last) vertex is a dual neighbor of some element of $\wh\EE$.
\end{itemize}
A dual path  $\dualp$ is \textit{closed} if all its edges are closed, and we use the following language:
\begin{itemize}
    \item If $\mathcal A\subseteq\Z^2$, then we say $\dualp$ starts (ends) at $\mathcal{A}$ if its first (last) vertex is a dual neighbor of some element of $\mathcal{A}$.
    \item If $\EE$ is a collection of primal edges, then we say $\dualp$ starts (ends) at $\EE$ if its first (last) vertex is a dual neighbor of some element of $\EE$.
    \item If $\wh\EE$ is a collection of dual edges, then we say $\dualp$ starts (ends) at $\wh\EE$ if its first (last) vertex is an endpoint of some element of $\wh\EE$.
\end{itemize}

Proposition \ref{prop:geod_cons} states the following.
Let $\mathcal{A}$ and $\mathcal{B}$ be disjoint finite connected subsets of $\Z^2$.  On the full-probability event $\Omega_\infty$ from Definition~\ref{def:omegainf}, there exists a (possibly empty) sequence of edge-disjoint open circuits $\incir_1,\ldots,\incir_L,\outcir_{L + 1},\ldots,\outcir_P$ satisfying the following:
\begin{enumerate}[label=\textup{(\roman*)}]
\item \label{itm:Iinc_new} $\mathcal{A} \subseteq \intr(\incir_1) \subseteq \intr(\incir_2) \subseteq \cdots \subseteq \intr(\incir_L) \subseteq \intr(\incir_L) \cup \incir_L \subseteq \mathcal  B^\complement$. 
\item \label{itm:Idisj_new} $\intr(\incir_{L}) \subseteq \ext(\outcir_{L +1})$.
\item \label{itm:Idec_new}  $\mathcal{A}^\complement \supseteq \intr(\outcir_{L + 1}) \cup \outcir_{L+1} \supseteq \intr(\outcir_{L + 1}) \supseteq \intr(\outcir_{L + 2}) \supseteq \cdots \supseteq \intr(\outcir_P) \supseteq \mathcal{B}$.
\item \label{itm:2donttouch_new}  For $j \in \{1,\ldots,P - 2\}$, the circuits $\incir_j$ and $\incir_{j + 2}$ are vertex-disjoint. 
\item \label{itm:alledual_new}   For $j \in \{1,\ldots,P\}$ and every $e \in \incir_j$, there exists a dual path $\dualp_e$ from $e$ to $\mathcal{A}$ that has exactly $j - 1$ open edges, one crossing each of the circuits $\incir_1,\ldots,\incir_{j - 1}$. 
\end{enumerate}
 Furthermore, there exist
a geodesic $\primalp$ from $\mathcal{A}$ to $\mathcal{B}$, and a disjoint dual path $\dualp$ from $\mathcal{A}$ to $\mathcal{B}$, satisfying the following properties (here we note that the paths $\dualp_e$ in Item~\ref{itm:alledual_new} are not necessarily disjoint from $\primalp$): 
\begin{enumerate}[resume,label=\textup{(\roman*)}]
\item \label{itm:zetaopen_new}  $\dualp$ has exactly $P$ open edges, one crossing each of the circuits $\incir_1,\ldots,\incir_L,\outcir_{L +1},\ldots, \outcir_P$.
\item \label{itm:gamma_on_circuit_new}  For each circuit $\incir_j$, let $x_j$ and $y_j$ be the first and last vertices of $\primalp$ on that circuit. Then, the portion of $\primalp$ between $x_j$ and $y_j$ lies entirely on $\incir_j$. If $\incir_j$ and $\incir_{j + 1}$ are not vertex-disjoint, then $y_j = x_{j+1}$.
\item \label{itm:dualconn_new}  For every open edge $e \in \primalp$ with $e \notin \incir_1\cup \cdots \cup \incir_L \cup \outcir_{L + 1}\cup \cdots \cup \outcir_P$, there exists a closed dual path from $e$ to $\dualp$ that is disjoint from $\gamma$.
\item \label{itm:dualcir_new} The dual of each closed edge along $\primalp$ belongs to a closed circuit $\dincir$ that either contains $\mathcal{A}$ in its interior and $\mathcal{B}$ in its exterior, or vice versa. The circuit $\dincir$ does not contain the dual of any other edges along $\primalp$. 
\item \label{itm:Csequence_new}  With $\{0,1\}$-valued edge-weights, the closed circuits $\dincir$ from Item~\ref{itm:dualcir_new} can be chosen to form a edge-disjoint collection $\dincir_1,\ldots,\dincir_V$. 
For $j\in\{1,\dots,V-2\}$, the circuits $\dincir_j$ and $\dincir_{j+2}$ are vertex-disjoint.
The union of the circuits $\incir_1,\ldots,\incir_P$ and the circuits $\dincir_1,\ldots,\dincir_V$ forms a sequence $\CC_1,\ldots,\CC_K$, which is ordered so that, for some index $W \in \{0,\ldots,K\}$, $\intr(\CC_W) \cap \intr(\CC_{W + 1}) = \varnothing$ (with the convention $\intr(\CC_0) = \intr(\CC_{K +1}) = \varnothing)$, and we have the following inclusions
\[
\mathcal A \subseteq \intr(\CC_1) \subseteq \cdots \subseteq \intr(\CC_W),\quad\text{and} \quad \intr(\CC_{W + 1}) \supseteq \cdots \supseteq\intr(\CC_K) \supseteq \mathcal B.
\]
\end{enumerate}

We recall that, on the event $\Omega_\infty$, there exists a closed circuit $\wh \DD$ enclosing both $\mathcal{A}$ and $\mathcal{B}$. Choose such a circuit $\wh \DD$ in a deterministic fashion. One way to do this is to let $\wh \DD$ be the unique innermost such circuit from Lemma \ref{meet_circuits_lemma}, but it is not necessary to make this particular choice.

\subsection{Construction of the circuits (proof of Items \ref{itm:Iinc_new}--\ref{itm:2donttouch_new})} \label{sec:circons}
Assuming that the set
\begin{subequations} \label{Si}
\eeq{
S_0 = \{\text{open circuits } \cir:\, \AA \subseteq \intr(\CC)  \text{ and } \mathcal B \subseteq \ext(\cir)\}
}
is nonempty,
let $\incir_1$ be the unique innermost circuit from Lemma \ref{meet_circuits_lemma} consisting of edges of circuits in $S_0$ and satisfying
\[
\mathcal  A \subseteq \intr(\incir_1) \subseteq \bigcap_{\cir \in S_0} \intr(\cir).
\]
Then, also, 
\[
\mathcal B \subseteq \bigcup_{\cir \in S_0} (\ext(\cir) \cup \cir) \subseteq \ext(\incir_1) \cup \incir_1,
\]
but points of $\mathcal{B}$ cannot lie on $\incir_1$ because they do not lie on any of the $\cir \in S_0$ by assumption. Hence, $\mathcal{B} \subseteq \ext(\incir_1)$.

Inductively, for $j \ge 1$, assume that $\incir_j$ has been constructed, and assume that the set 
\eeq{ 
S_j = \{\text{open circuits }\cir \text{ edge-disjoint from and surrounding }\incir_j:\, \mathcal B \subseteq \ext(\cir)\}
}
\end{subequations}
is nonempty. Then, let $\incir_{j + 1}$ be the unique innermost circuit chosen from the set $S_j$ from Lemma \ref{meet_circuits_lemma}. Again, $\mathcal{A} \subseteq \intr(\incir_{j +1})$ and $\mathcal{B} \subseteq \ext(\incir_{j + 1})$. 

Let $L$ be the smallest index $j$ such that $S_j = \varnothing$. We see from induction that 
\[
\mathcal A \subseteq \intr(\incir_1) \subseteq \intr(\incir_2) \subseteq \cdots \subseteq \intr(\incir_L) \subseteq \intr(\incir_L) \cup \incir_L \subseteq \mathcal  B^\complement,
\]
so Item \ref{itm:Iinc_new} holds. We now  argue that $L$ must be finite. Take $\dualp$ to be any finite dual path from $\mathcal{A}$ to $\mathcal{B}$. The path $\dualp$ must cross any circuit that encloses $\mathcal{A}$ and keeps $\mathcal{B}$ in its exterior. The path $\dualp$ can only cross $\incir_j$ at the midpoint of an edge, and since the circuits $\incir_1,\dots,\incir_L$ are edge-disjoint, $L$ can be no greater than the finite number of edges in $\dualp$.   

Next, define $S_L'$ to be
\[
S_{L}' = \{\text{open circuits }\cir \text{ edge-disjoint from } \incir_L:\, \intr(\incir_L) \subseteq \ext(\cir) \text{ and }\mathcal B \subseteq \intr(\cir)\}.
\]
In the case $L = 0$, we replace $\intr(\incir_L)$ with $\mathcal{A}$ in the definition of $S_L'$. In the proof below, we often refer to $\intr(\incir_L)$, but the reader should replace this with $\mathcal{A}$ for the case $L = 0$. 

Assuming that $S_L'$ is nonempty, we show that $S_L'$ is finite. Lemma \ref{lemma:openclosed} implies that any open circuit then either surrounds or is surrounded by  $\wh \DD$. If an open circuit surrounds $\wh \DD$, it encloses both $\mathcal{A}$ and $\mathcal{B}$ and is therefore not in $S_L'$. Thus, $S_L'$ is a subset of open circuits surrounded by $\wh \DD$ and is therefore finite. Let $\incir_{L + 1}$ be the unique circuit from Lemma \ref{join_circuits_lemma} satisfying $\bigcup_{\CC \in S_L'} \intr(\CC) \subseteq \intr(\incir_{L+1})$ and $\incir_{L+1} \subseteq \bigcup_{\CC \in S_L'} \CC$    . It follows immediately that $\mathcal{B} \subseteq \intr(\outcir_{L + 1})$ and that $\outcir_{L + 1}$ is edge-disjoint from $\incir_L$. We now prove Item \ref{itm:Idisj_new}, which asserts $\intr(\incir_{L}) \subseteq \ext(\outcir_{L +1})$.
Refer to Figure \ref{fig:AB_next} for an explanation of why this is nontrivial and a visual representation of the contradiction derived in what follows.

\begin{figure}
    \centering
    \includegraphics[height = 2in]{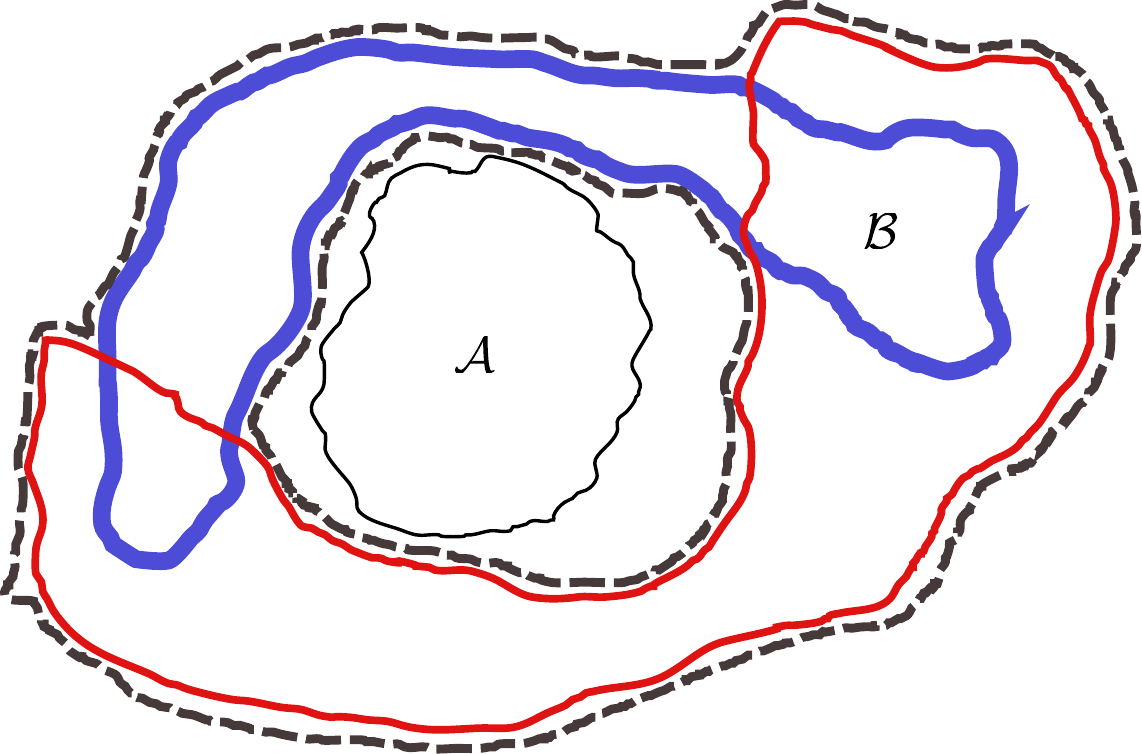}
    \caption{\small In this figure, $\incir_L$ is the circuit enclosing $\mathcal A$ denoted in black/thin. Consider the red/medium thickness and blue/thickest circuits. These both enclose $\BB$ and both keep $\incir_L$ in their exteriors. However, the unique maximal circuit that joins the two circuits as in Lemma \ref{join_circuits_lemma} (the outermore dashed gray circuit) does not keep $\incir_L$ in the exterior. We show that these pairs of circuits cannot exist by showing the existence of the innermore dashed gray circuit built from the two circuits. This contradicts the maximality of $\incir_L$. }
    \label{fig:AB_next}
\end{figure}

Pick arbitrary $\cir_1,\cir_2 \in S_L'$, and let $\cir$ be the unique circuit in Lemma \ref{join_circuits_lemma} satisfying $\intr(\CC_1) \cup \intr(\CC_2) \subseteq \intr(\CC)$ and $\CC \subseteq \CC_1 \cup \CC_2$. By induction, it suffices to show that $\intr(\incir_{L}) \subseteq \ext(\cir)$. Assume, to the contrary, that this is false. The set $\intr(\incir_L)$ cannot contain points on $\cir$ because $\cir$ consists of edges of $\cir_1,\cir_2$, and $\intr(\incir_L)$ is contained in the exterior of both of these. Then, since $\intr(\incir_L)$ is path connected, it cannot contain points in both $\intr(\cir)$ and $\ext(\cir)$ (If $L =0$, we replace $\intr(\incir_L)$ with the path connected set consisting of vertices of $\mathcal{A}$ and all edges connecting them). Hence, our assumption implies that $\intr(\incir_L) \subseteq \intr(\cir)$. Thus, the following set is nonempty:
\[
\wt S = \{\text{open circuits }\wt \cir \subseteq \cir_1 \cup \cir_2 \text{ surrounding }\incir_L \}.
\]
Let $\wt \cir_1$ be the unique circuit from Lemma \ref{meet_circuits_lemma} satisfying $\intr(\wt \cir_1) \subseteq \bigcap_{\wt \cir \in \wt S} \intr(\wt \cir)$ and $\wt \cir_1 \subseteq \bigcup_{\wt \cir \in \wt S} \wt \cir$. By similar reasoning as before, $\mathcal{B}$ must be contained in exactly one of $\intr(\wt \cir_1)$ or $\ext(\wt \cir_1)$. If $\mathcal B\subseteq \ext(\wt \cir_1)$, then $\wt \cir_1 \in S_L$, a contradiction because $S_L$ is empty. Therefore, $\mathcal{B} \subseteq \intr(\wt \cir_1)$. Consider two cases:

\medskip \noindent \textbf{Case 1:} $\cir_1$ and $\wt \cir_1$ have one or fewer vertices in common. Then, since $\wt \cir_1$ consists only of edges of $\cir_1$ and $\cir_2$, we have $\wt \cir_1 = \cir_2$. This is a contradiction because $\wt \cir_1$ surrounds $\incir_{L}$, but $\intr(\incir_{L}) \subseteq \ext(\cir_2)$. 

\medskip \noindent \textbf{Case 2:} $\cir_1$ and $\wt \cir_1$ have at least two vertices in common. We show here that there exists a point along $\cir_1$ in $\intr(\wt \cir_1)$. Once this is shown, we follow $\cir_1$ in each direction along this point until hitting the two vertices of $\wt \cir_1$. Lemma \ref{splitting_lemma} allows us to construct a smaller circuit built from edges of $\cir_1,\cir_2$ that surrounds $\incir_L$, a contradiction to the minimality of $\wt \cir_1$.

We know $\cir_1 \neq \wt \cir_1$ by the same reasoning as in Case 1. Furthermore, the circuit $\cir_1$ cannot surround $\wt \cir_1$ because then, it would surround $\incir_L$. We now have two subcases:

\medskip \noindent \textbf{Subcase 2.1:} $\wt \cir_1$ surrounds $\cir_1$. Then, all points of $\cir_1$ are on or in the interior of $\wt \cir_1$. Not all points on $\cir_1$ can be on $\wt \cir_1$ because no proper subset of a Jordan curve is a Jordan curve and $\wt \cir_1 \neq \cir_1$. 

\medskip \noindent \textbf{Subcase 2.2:} Neither of the circuits $\wt \cir_1,\cir_1$ surrounds the other. Both circuits contain $\mathcal{B}$ in their interior, so Lemma \ref{crossing_lemma} provides the desired point along $\cir_1$ in the interior of $\wt \cir_1$.

\medskip \noindent
Lastly, we construct the circuits $\outcir_{L + 2},\ldots,\outcir_P$. Given $\outcir_j$ for $j \ge L+1$, consider the set 
\[
S_j' = \{\text{open circuits }\cir \text{ enclosing }\mathcal B \text{ that are also surrounded by and edge-disjoint from }\outcir_j \}.
\]

If $S_j'$ is nonempty, let $\outcir_{j + 1}$ be the unique outermost circuit constructed in Lemma \ref{join_circuits_lemma} from the circuits in $S_j'$. Then, $\outcir_{j + 1}$ encloses $\mathcal{B}$ and is edge-disjoint from $\outcir_j$. The points along the edges of $\outcir_{j + 1}$ all lie on or in the interior of $\outcir_j$, so $\outcir_j$ must surround $\outcir_{j + 1}$ by Lemma \ref{surround_lemma}.

The index $P$ is the smallest index $j$ such that $S_j' = \varnothing$. Then, 
\[
\mathcal A^\complement \supseteq \incir_{L + 1} \cup \intr(\outcir_{L + 1})  \supseteq \intr(\outcir_{L + 1}) \supseteq \intr(\outcir_{L + 2}) \supseteq \cdots \supseteq \intr(\outcir_P) \supseteq \mathcal B,
\]
thus proving Item \ref{itm:Idec_new}. 


To establish Item \ref{itm:2donttouch_new}, suppose toward a contradiction that $\incir_j$ and $\incir_{j+2}$ share a vertex $x$.
Since $\incir_j$ and $\incir_{j+2}$ are edge-disjoint, these two circuits exhaust all edges containing $x$, which implies that $x$ is not a vertex on $\incir_{j+1}$.  We now consider four cases:
\begin{itemize}
    \item If $j\le L-2$, then $\incir_{j+1}$ surrounds $\incir_j$, and $\incir_{j+2}$ surrounds $\incir_{j+1}$.
    The first statement implies $x\in\intr(\incir_{j+1})$ by Lemma~\ref{surround_lemma}\ref{surround_lemma_c}, while the second statement implies $x\in\ext(\incir_{j+1})$ by Lemma~\ref{surround_lemma}\ref{surround_lemma_d}.
    These two conclusions are contradictory.
    \item If $j\ge L+1$, then reverse the roles of $\incir_j$ and $\incir_{j+2}$ in the previous bullet.
    \item If $j=L-1$, then $\incir_{j+1}$ surrounds $\incir_j$, and $\intr(\incir_{j+1})\subseteq\ext(\incir_{j+2})$.
    The first statement implies $x\in\intr(\incir_{j+1})$ by Lemma~\ref{surround_lemma}\ref{surround_lemma_c}, so the second statement forces $x\in\ext(\incir_{j+2})$, which contradicts the assumption that $x$ is a vertex of $\incir_{j+2}$.
    \item If $j=L$, then $\intr(\incir_{j})\subseteq\ext(\incir_{j+1})$, and $\incir_{j+1}$ surrounds $\incir_{j+2}$.
    Taking closures in the first statement, we obtain $\intr(\incir_{j})\cup\incir_{j}\subseteq\ext(\incir_{j+1})\cup\incir_{j+1}$.
    Now taking complements yields $\ext(\incir_j)\supseteq\intr(\incir_{j+1})$, so we can reverse the roles of $\incir_j$ and $\incir_{j+2}$ in the previous bullet.
\end{itemize}


\subsection{Construction of the dual paths $\dualp_e$ and $\dualp$ (proof of Items \ref{itm:alledual_new} and \ref{itm:zetaopen_new})} \label{sec:dualcon}
We argue here that there exists a dual path from $\mathcal{A}$ to $\mathcal{B}$ that has exactly $P$ open edges, one crossing each of the circuits $\incir_1,\ldots,\incir_L,\outcir_{L + 1},\ldots,\outcir_P$. This is done by starting at $\mathcal{B}$ and moving backwards to $\mathcal{A}$. There are five stages of the construction:
\begin{enumerate} [label=\textup{(\alph*)}]
\item \label{cp1} Show the existence of a closed dual path from $\mathcal{B}$ to $\outcir_P$.
\item \label{cp2} For $j \in \{L + 1,\ldots, P - 1\}$, given a fixed dual neighbor, $x^\star$, of $\outcir_{j + 1}$, lying in $\ext(\outcir_{j + 1})$, show the existence of a closed dual path from $x^\star$ to $\outcir_{j}$.
\item \label{cp3} Given a fixed dual neighbor $x^\star$ of  $\outcir_{L + 1}$ lying in $\ext(\outcir_{L + 1})$ show the existence of a closed dual path from $x^\star$ to $\incir_L$ (or to $\mathcal{A}$ if $K = 0$). If $P = L$ so that there is no circuit $\outcir_{L+1}$, then
the path is from $\mathcal{B}$ (without specifying the vertex) to $\incir_L$ (or to $\mathcal{A}$ if $L = 0$).
\item \label{cp4} For $j \in \{1,\ldots,L - 1\}$, given a fixed dual neighbor $x^\star$ of  $\incir_{j + 1}$ lying in $\intr(\incir_{j + 1})$, show the existence of a closed dual path from $x^\star$ to $\incir_{j}$.
\item \label{cp5} Given a fixed dual neighbor $x^\star$ of $\incir_1$ lying in $\intr(\incir_1)$, show the existence of a closed dual path from $x^\star$ to $\mathcal{A}$. 
\end{enumerate}

These steps give us a map for constructing the path $\dualp$ in Item \ref{itm:zetaopen_new}: Starting from $\mathcal{B}$, we travel to a dual neighbor of $\outcir_P$. Then, we can cross to the exterior of $\outcir_P$ with a single open edge. From the endpoint of that edge in $\ext(\outcir_P)$, we take a path to a dual neighbor of $\outcir_{P - 1}$, then cross $\outcir_{P - 1}$ with a single edge. Continue this procedure until we reach $\mathcal{A}$.  The construction of the paths $\dualp_e$ from Item \ref{itm:alledual_new} is the same, except that we now start at the edge $e$ and go backwards to $\mathcal{A}$. The choice of the geodesic $\primalp$ will come later. In Section \ref{sec:disj}, we show that $\primalp$ and $\dualp$ can be chosen to be disjoint.

\medskip \noindent \textbf{Part \ref{cp1}}: If $P = L$ (meaning the set $\{\outcir_{L + 1},\ldots,\outcir_P\}$ is empty), then this case is deferred to part \ref{cp3}. Otherwise, we start by showing there is a closed path from $\mathcal{B}$ to $\outcir_P$. Here, we interpret $\mathcal{B}$ as a collection of vertices, and $\outcir_P$ as a collection of edges, so this means that there exists a closed dual path from some dual neighbor of a \textit{vertex} in $\mathcal{B}$ to some dual neighbor of an \textit{edge} in $\outcir_P$.  

Since $\mathcal{B} \subseteq \intr(\outcir_P)$, then every dual neighbor of a point $x \in B$ lies in $\intr(\outcir_P)$. To see this, note that dual vertices cannot lie on $\outcir_P$ so they must either lie in $\intr(\outcir_P)$ or $\ext(\outcir_P)$. If $x \in \intr(\outcir_P)$ and a dual neighbor $x^\star$ lies in $\ext(\outcir_P)$, then the diagonal path connecting the two vertices must cross $\outcir_P$. As $\outcir_P$ lies on the primal lattice, this is not possible. Now, we claim that the set of dual neighbors of vertices in $\mathcal{B}$ is a connected set (connected by dual edges). The set of northwest dual neighbors is connected, since this is just a shifted version of the set $\mathcal{B}$. Likewise, the sets of southwest, northeast, and southeast dual neighbors are all connected. But each of these sets is connected to each other, so the entire set of dual neighbors is connected. 

We perform a modification of the environment as follows. Figure \ref{fig:blocked_W} gives an illustration. 
Set to closed all edges whose midpoint lies on $\outcir_P$ or in $\ext(\outcir_P)$.
Also set to closed all dual edges (along with their primal counterparts) that have both endpoints dual to $\mathcal{B}$. This latter set of edges is connected (since $\BB$ is connected), and we let $W$ be the cluster of closed edges that contains these. Then, there is no  closed dual path from $\mathcal{B}$ to $\outcir_P$ in the original environment if and only if $W$ is bounded in the modified environment.  If $W$ is bounded in  this environment, then Lemma \ref{lemma:separation} guarantees the existence of an open circuit $\CC$ that contains $W$ in its interior. The circuit $\CC$ is surrounded by and edge-disjoint from $\outcir_P$ because all edges on or in the exterior of $\outcir_P$ are closed. The open circuit $\CC$ must also be open in the original environment. 

\begin{figure}
    \centering
    \includegraphics[height = 2in]{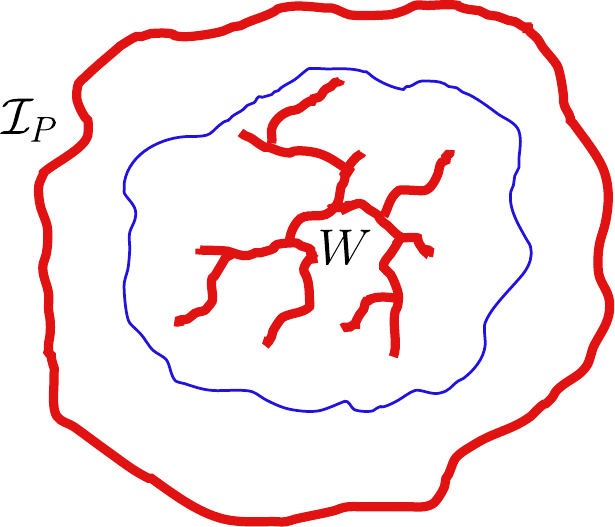}
    \caption{\small Closed edges are denoted by red/thick, and open edges are denoted by blue/thin. The edges on $\incir_P$ have been set to closed, as well as all edges outside $\incir_P$ and all  edges connecting vertices of $\BB$. In this example, the closed component $W$ of $\BB$ is bounded, so $B$ is not connected to $\incir_P$ by a closed path, and there exists an open circuit enclosing $W$.}
    \label{fig:blocked_W}
\end{figure}

Since $\outcir_P$ is the last circuit in the sequence, we derive a contradiction once we show that $\CC$ also contains $\mathcal{B}$ in its interior. Any vertex $x \in B$ cannot lie in $\ext(\CC)$ because otherwise, the diagonal path from $x$ to any of its dual neighbors (which lie in $\intr(\CC)$) must cross $\CC$. But $\CC$ consists of primal edges and so this is not possible. The only remaining possibility for $\mathcal B \not \subseteq \intr(\CC)$  comes when $x$ lies on the circuit $\CC$. Then, exactly two edges of $\CC$ share the vertex $x$. Take one of these edges $e$. By Lemma \ref{lem:dual_inext}, one of the two dual neighbors of $e$ lies in $\ext(\CC)$. But this dual neighbor of the edge $e$ is a dual neighbor of the vertex $x$, which is contained in $\intr(\CC)$ by assumption. 

\medskip \noindent \textbf{Part \ref{cp2}}: 
Similarly as in part \ref{cp1}, modify the environment so that all edges with midpoint on $\outcir_j$ or in $\ext(\outcir_j)$ are set to closed.
Further, since $x^\star$ is a dual neighbor of $\outcir_{j + 1}$, there exists a dual edge $e^\star$ crossing $\outcir_{j + 1}$. Set $e^\star$ and its corresponding primal edge to closed. If there is no closed path from $x^\star$ to $\outcir_j$ in the original environment, then in the modified environment, the closed cluster of dual edges connected to $x^\star$ is bounded. Lemma \ref{lemma:separation} impies the existence of an open circuit $\CC$ that contains $x^\star$ in its interior. $\CC$ is surrounded by and edge-disjoint from $\outcir_j$ because all edges on or in the exterior of $\outcir_j$ were set to closed. The other endpoint of $e^\star$ also lies in $\intr(\CC)$ because $e^\star$ was set to closed and therefore cannot cross $\CC$. But since $x^\star \in \ext(\outcir_{j + 1})$, this other endpoint lies in $\intr(\outcir_{j+1})$ by Lemma \ref{lem:dual_inext}. By Lemma \ref{join_circuits_lemma}, there exists a circuit $\CC'$ which (i) consists entirely of edges of $\outcir_{j + 1}$ and $\CC$ (and therefore is edge-disjoint from and surrounded by $\outcir_j$), and (ii) surrounds $\outcir_{j + 1}$. This contradicts the definition of $\outcir_{j + 1}$ as the outermost next circuit in the sequence after $\outcir_j$. 

\medskip \noindent \textbf{Part \ref{cp3}:} Recall the definition of the circuit $\wh \DD$ at the beginning of this section and the observation that all the circuits $\incir_1,\ldots,\incir_L,\outcir_{L + 1},\ldots,\outcir_P$ are contained in $\intr(\wh \DD)$. 

We consider a modification of the environment as follows. If $L = P$ (meaning there is no circuit $\outcir_{L + 1}$), then consider the connected set of dual vertices that are dual neighbors of at least one vertex in $\mathcal{B}$. Set all the dual edges connecting this set to closed (along with their primal counterparts). Call the cluster of closed dual edges connected to these edges $W_{\mathcal B}$. If $P > L$, then we are given a fixed dual neighbor $x^\star$ of $\outcir_{L + 1}$ lying in $\ext(\outcir_{L + 1})$. The vertex $x^\star$ is the endpoint of an edge crossing $\outcir_{L + 1}$: set that edge (and its primal counterpart) to closed. 
Also set to closed all edges whose midpoint is in $\intr(\outcir_{L+1})$.
Then, $W_{\mathcal B}$ is the cluster of dual closed edges that connects to these edges inside $\outcir_{L + 1}$. 

We construct another closed cluster, denoted $W_{\mathcal A}$, as follows. If $L = 0$ so that there is no circuit $\incir_L$, then we set all dual edges connecting dual neighbors of $\mathcal A$ to closed and let  $W_{\mathcal A}$ be the closed dual cluster of these edges.  
If $L > 0$, then we set all edges whose midpoint lies on $\incir_L$ or in $\intr(\incir_L)$ to closed, and we let $W_{\mathcal A}$ be the closed cluster of the associated dual edges in the modified environment.
We also set all edges whose midpoint is in $\ext(\wh \DD)$ to closed, and let $W_{\wh \DD}$ be the infinite cluster of closed edges connected to $\wh \DD$ in the modified environment. 

Now, assume, by way of contradiction, that in the orignial environment, there is no closed dual path from $x^\star$ (or from $\mathcal{B}$ if $L = P$) to $\incir_{L}$  (or to $\mathcal{A}$ if $L = 0$). Then, in the modified environment, $W_{\mathcal A} \neq W_{\mathcal B}$. Lemma \ref{lem:separate_sets} implies the existence of an open circuit $\CC$, lying in the interior of $\DD$ that contains $W_{\mathcal A}$ in its interior and $W_{\mathcal B}$ in its exterior, or vice versa (it may be helpful to refer back to Figure \ref{fig:two_clusters}). The circuit $\CC$ must also be open in the original environment.  

In the case that $\CC$ contains $W_\BB$ in its interior and $W_\AA$ in its exterior, $\CC$ is edge-disjoint from $\incir_L$ because all the edges on $\incir_L$ were set to closed in the modified environment. Additionally, $\CC$ contains the point $x^\star$ in its interior, which lies in $\ext(\outcir_{L + 1})$ by assumption. $\CC$ and $\outcir_{L + 1}$ also share points in their interior because they both enclose $\mathcal{B}$. Lemma \ref{join_circuits_lemma} implies the existence of another open circuit $\CC'$ that is edge-disjoint from $\incir_L$ and such that $\intr(\outcir_{L +1})\cup \intr(\CC) \subseteq \outcir(\CC')$. This is a contradiction to our construction of the outermost circuit $\outcir_{L + 1}$ in Section \ref{sec:circons}.
In the case that $\CC$ contains $W_{\mathcal A}$ in its interior and $W_{\mathcal B}$ in its exterior, since all edges on or inside $\incir_L$ were set to closed, $\CC$ is edge-disjoint from and surrounds $\incir_L$. This contradicts the definition of $\incir_L$ as the last edge-disjoint innermost circuit enclosing $\mathcal{A}$ but containing $\mathcal{B}$ in its exterior.

\medskip \noindent \textbf{Part \ref{cp4}:} The chosen dual vertex $x^\star$ of $\incir_{j +1}$ is the endpoint of a dual edge that crosses $\incir_{j + 1}$. Set that edge (along with its primal counterpart) to closed.
Additionally, set all edges whose midpoint is in $\ext(\incir_{j + 1})$ to closed.
Set all edges whose midpoint lies on $\incir_j$ or in $\intr(\incir_j)$ to closed.
Then, there is no closed dual path between $x^\star$ and $\incir_{j}$ in the original environment if and only if, in the modified environment, the cluster of dual closed edges containing those edges passing through $\incir_j$ is bounded. In the latter case, Lemma \ref{lemma:separation} implies that there is an open circuit $\CC$ surrounding $\incir_j$ in the modified environment (which is also open in the original environment). This open circuit is edge-disjoint from $\incir_j$.
The point $x^\star$ lies in $\ext(\CC)$ since, in the dual environment, there is an infinite, closed dual path starting from $x^\star$--this path cannot cross $\CC$. 

Similarly as in part \ref{cp2}, Lemma \ref{meet_circuits_lemma} contradicts the construction of $\incir_{j +1}$ as the next innermost circuit.


\medskip \noindent \textbf{Part \ref{cp5}:} This is almost the same as part \ref{cp4}. The only difference is that, instead of setting edges on and inside $\incir_j$ to closed, we set all edges which connect vertices in $\mathcal{A}$ to closed. The rest of the proof goes the same.


\subsection{Existence of closed circuits hitting the closed edges (proof of Items \ref{itm:gamma_on_circuit_new}  and \ref{itm:dualcir_new})} \label{sec:closed_cir}
Take $\primalp$ to be any geodesic from $\mathcal{A}$ to $\mathcal{B}$.
For each $j$, let $x_j$ and $y_j$ be the first and last vertices of $\primalp$ on $\incir_j$. Then, we modify the portion of $\primalp$ between $x_j$ and $y_j$ to follow the open circuit $\incir_j$ (in either direction) if it does not already. 
We can similarly force $y_j = x_{j+1}$ if $\incir_j$ and $\incir_{j +1}$ are not vertex-disjoint. This concludes Item \ref{itm:gamma_on_circuit_new}.  

To prove Item \ref{itm:dualcir_new}, enumerate the closed edges along the path $\gamma$, in order starting from $\mathcal{A}$ as $e_1,\ldots,e_V$, and choose one of the edges $e_i$.  We continue to work on the event $\Omega_\infty$ and this time make use of a deterministically chosen open circuit $\wh \CC$ that encloses both $\mathcal{A}$ and $\mathcal{B}$. 
Modify the environment in the following manner. 
Set all edges whose midpoint is in $\ext(\wh \CC)$ to open. 
Set all edges along $\primalp$ to open, except for the edge $e_i$.
Set all edges connecting two vertices in $\mathcal{A}$ to open, as well as all edges connecting two vertices in $\mathcal{B}$. 
This creates three open clusters of edges: the cluster $W_{\mathcal A}$ consisting of all open edges with an open path to $\mathcal{A}$, the cluster $W_{\mathcal B}$ of open edges with a path to $\mathcal{B}$, and the unbounded cluster $W_{\wh \CC}$ of the circuit $\wh \CC$. 

As an intermediate step, we now argue that $W_{\mathcal A} \neq W_{\mathcal B}$. Suppose, by way of contradiction, that $W_{\mathcal A} = W_{\mathcal B}$. Then, there exists an open path $\wt \gamma$ from $\mathcal{A}$ to $\mathcal{B}$ in the modified environment. 
If $\wt \gamma$ exits the interior of $\wh\CC$, then it must also re-enter (to reach $\mathcal{B}$); in this case, one could reroute $\wt \gamma$ along the open circuit $\wh\CC$ so that it never enters the exterior of $\wh\CC$.
Then, in the original environment, the only (potentially) closed edges along $\wt \gamma$ are the edges $e_j$ for $j \neq i$, since these are the only edges that were switched from closed to open and have empty intersection with the exterior of $\wh\CC$. However, $\wt \gamma$ does not take the closed edge $e_i$, so the passage time of $\wt \gamma$ (in the original environment) is strictly less than the passage time along $\primalp$. This contradicts the fact that $\primalp$ is a geodesic.

Now that we have established $W_{\mathcal A} \neq W_{\mathcal B}$, the dual counterpart of Lemma \ref{lem:separate_sets} implies the existence of a closed circuit $\dincir$ lying in $\intr(\wh \CC)$ that contains $W_{\mathcal A}$ (and therefore also $\mathcal{A}$) in its interior and $W_{\mathcal B}$ (therefore also $\mathcal{B}$) in its exterior, or vice versa. The geodesic from $\mathcal{A}$ to $\mathcal{B}$ thus passes through vertices in $\ext(\UU)$ and vertices in $\intr(\UU)$. The path between these must then cross $\UU$. Since all edges on the geodesic $\primalp$ except for $e_i$ were set to open, the edge $e_i$ is the only edge on $\primalp$ whose dual belongs to $\dincir$.

\subsection{Separate argument in the Bernoulli case (proof of Item \ref{itm:Csequence_new})}
In the case of Bernoulli weights, the dual version of the construction of the circuits $\incir_1,\ldots,\incir_L,\outcir_{L + 1},\ldots,\outcir_P$ allows us to construct successively innermost edge-disjoint closed circuits $\dincir_1,\ldots,\dincir_{U}$ enclosing $\mathcal{A}$, followed by successively outermost edge-disjoint closed circuits $\doutcir_{U + 1},\ldots,\doutcir_V$ enclosing $\mathcal{B}$. 
Since the circuits are edge-disjoint, any geodesic $\primalp$ must pass through each circuit at least once. By a symmetric argument to the construction in Section \ref{sec:dualcon}, we may construct a primal path $\primalp$ that crosses each of the circuits $\dincir_1,\ldots,\dincir_{U},\doutcir_{U + 1},\ldots,\doutcir_V$ exactly once. This path is therefore a geodesic. 

 By Lemma \ref{lemma:openclosed}, for each pair of open and closed circuit in this sequence, they either have disjoint interior, or one surrounds the other. This gives a natural ordering of the sequence $\CC_1,\ldots,\CC_K$: we start with the circuits that enclose $\mathcal{A}$, starting from the innermost, then move to the circuits that enclose $\mathcal{B}$, starting from the outermost, and this implies that, for some index $W \in \{0,\ldots,K\}$, $\intr(\incir_{W}) \cap \intr(\incir_{W + 1}) = \varnothing$. Furthermore, 
 \[
\mathcal A \subseteq \intr(\CC_1) \subseteq \cdots \subseteq \intr(\CC_W),\quad\text{and} \quad \intr(\CC_{W + 1}) \supseteq \cdots \intr(\CC_K) \supseteq B.
\] 
Finally, the fact that $\dincir_j$ and $\dincir_{j+2}$ are vertex-disjoint is proved exactly as was Item \ref{itm:2donttouch_new} at the end of Section~\ref{sec:circons}.

\subsection{Modification of $\primalp$ and $\dualp$ to be disjoint} \label{sec:disj} 
Recall that $\primalp$ is a primal path while $\dualp$ is a dual path. If $\primalp$ and $\dualp$ meet at the midpoint of an open edge, then that edge must lie along one of the circuits $\incir_1,\ldots,\incir_L,\outcir_{L + 1},\ldots,\outcir_{P}$, and so $\primalp$ contains an edge $e$ that lies on this circuit. 
Denote the circuit at which the intersection occurs by $\CC$. There exists an open path between the endpoints of $e$ that goes the opposite direction around $\CC$, thus avoiding $e$. Since $\dualp$ only crosses $\CC$ once, by rerouting $\primalp$ in this manner, we have avoided intersection with $\dualp$ at $e$ without incurring additional passage time. 

If $\primalp$ and $\dualp$ intersect at the midpoint of a closed edge, then we showed that this edge of $\primalp$ is the primal counterpart of an edge on a closed circuit. We similarly reroute $\dualp$ to avoid this edge.

We make the observation here that, because the circuits $\incir_1,\ldots,\incir_P$ (as well as the dual circuits containing the closed edges along $\gamma$) are not necessarily vertex-disjoint, this procedure could have created paths $\primalp$ and/or $\dualp$ that are not self-avoiding. If this is the case, we can simply create self-avoiding paths by deleting some of the edges along the paths.

\subsection{Modifying the geodesic $\primalp$ between any two of its closed edges} \label{JCcons}
We perform a final modification of the geodesic $\gamma$ to place it as close as possible to the dual path $\dualp$.
With the circuits $\incir_1,\ldots,\incir_L,\outcir_{L + 1},\ldots,\outcir_P$ and the geodesic $\primalp$ constructed thus far, for $i \in \{0,\ldots,P\}$, let $e_{i,1},\ldots,e_{i,J_i}$ be the (possibly empty) list of closed edges of the geodesic between the $i$-th circuit ($\AA$ if $i=0$) and the $(i+1)$-th circuit ($\BB$ if $i=P$).
These edges are listed in the order they appear on $\gamma$ as it travels from $\AA$ to $\BB$. 
For each $e_{i,j}$, let $x_{i,j}^1$ be its first vertex along the path, and let $x_{i,j}^2$ be the second vertex. 
We make modifications to 9 types of open subpaths:
\begin{enumerate}[label=\textup{\arabic*.},ref=\textup{\arabic*}]
\item \label{sce_1} 
Path from $\mathcal{A}$ to $\mathcal{B}$ (if the list $\incir_1,\ldots,\incir_L,\outcir_{L + 1},\ldots,\outcir_P$ is empty, and there are no closed edges on $\primalp$).
\item \label{sce_2} 
Path from $\mathcal{A}$ to $\incir_1$ (if there are no closed edges on $\primalp$ before it reaches $\incir_1$).
\item \label{sce_3}
Path from $\mathcal{A}$ to $x_{0,1}^1$.
\item \label{sce_4}
Path from $\incir_P$ to $\mathcal{B}$ (if there are no closed edges on $\primalp$ after it leaves $\incir_P$).
\item \label{sce_5}
Path from $x_{P,J_P}^2$ to $\mathcal{B}$.
\item \label{sce_6} 
Path from $\incir_i$ to $\incir_{i+1}$ (if there are no closed edges of $\primalp$ between the two circuits).
\item \label{sce_7} 
Path from $\incir_i$ to $x_{i,1}^1$. 
\item \label{sce_8} 
Path from $x_{i,J_i}^2$ to $\incir_{i + 1}$. 
\item \label{sce_9} 
Path from $x_{i,j}^2$ to $x_{i,j + 1}^1$. 
\end{enumerate}
By Item~\ref{itm:gamma_on_circuit_new}, every open edge $e\in\gamma$ with $e \notin \incir_1 \cup \cdots \cup \incir_L\cup \incir_{L + 1} \cup \cdots \cup \incir_P$ belows to one of these subpaths.
We may replace any one of these subpaths with another open path of the same starting and ending description, and the modified path will still be a geodesic.
If one of these subpaths is empty (e.g.\ $x_{0,1}^1\in\AA$ in scenario~\ref{sce_2}, $\incir_{i}$ and $\incir_{i+1}$ share a vertex in scenario~\ref{sce_6}, or $x_{i,j}^2 = x_{i,j + 1}^1$ in scenario~\ref{sce_9}), then no modification is made to that subpath.

We reduce the number of cases by careful use of the following definitions.
When the path starts at $\mathcal{A}$ or $\incir_i$ for some $i$, we say the first end of the path is \textit{free}. 
This means that our modification is allowed to change the starting point (to a different element of $\mathcal A$, or to a different vertex of $\incir_i$) provided the new subpath is still open.
The portion of $\gamma$ on the open circuit $\incir_i$ can be rerouted to reach this new starting point, by using an arc of $\incir_i$ that avoids the single intersection point of $\incir_i$ with $\dualp$.
Similarly, when the path ends at $\mathcal{B}$ or $\incir_i$, we say the second end is \textit{free}.
Otherwise we say the end is \textit{fixed}.
This notation will greatly simplify the construction.

For a given pair of ends (either free or fixed), we let $\Pi$ be the set of open paths between them.
When the first end is free, we assume that every $\pi\in\Pi$ intersects $\AA$ or the associated open circuit $\incir_i$ only at its initial vertex.
Similarly, when the second end is free, we assume that every $\pi\in\Pi$ intersects $\BB$ or the associated open circuit $\incir_{i+1}$ only at its terminal vertex. 
These assumptions guarantee that $\pi$ and $\dualp$ are disjoint, since the only open edges on $\dualp$ are dual to some edge on an open circuit.

Ultimately our goal is to choose $\pi \in \Pi$ such that every edge $e\in\pi$ has a closed dual connection to $\dualp$. 
The existence of this dual connection will be proved in Section~\ref{subsec_closed_arms_exist}. 
The preliminary task here is to associate with each $\pi \in \Pi$ a Jordan curve $\jor_\pi$ that lives on nearest-neighbor grid associated to $(\tfrac12\Z)^2$.
By a simple modification of Lemma \ref{area_lemma}, the area enclosed by each such Jordan curve is an integer multiple of $\f{1}{4}$.
So once $\jor_\pi$ is defined for every $\pi\in\Pi$, we will simply choose $\pi$ such that this area is minimal, breaking ties according to some deterministic total ordering of all finite paths. 

We construct the Jordan curve $\jor_\pi$ in three separate and exhaustive cases.
Figure \ref{fig:fixed_free} illustrates the case when one end is free and one end is fixed.
To unify the presentation, we declare that any edge $e$ such that $e\subseteq\AA$ or $e\subseteq\BB$ is automatically open.
The important features of our construction can then be stated as follows\footnote{In \eqref{jor_pi_open}  we mean not only that both vertices of $e$ lie on the Jordan curve $\jor_\pi$, but also that the line segment between the two vertices is a subset of the curve. The word \textit{entirely} is meant to distinguish from the case when only half this line segment belongs to the curve.}:
\eeqs{ \label{jor_pi_open}
\pbox{0.6\textwidth}{\centering every primal edge $e$ that lies entirely on $\jor_\pi$ is open;} \\
\label{jor_pi_property}
\pbox{0.58\textwidth}{\centering every dual vertex $x\in\wh\Z^2\cap\jor_\pi$ is connected to $\dualp$ by a closed dual path that does not intersect the geodesic.}
}
The first feature \eqref{jor_pi_open} will be obvious from construction.
The second feature \eqref{jor_pi_property} will also be obvious since the portion of $\jor_\pi$ on the dual lattice will be entirely closed and include a segment of $\zeta$, except possibly in Subcase 3.2 where we will say more.
The following observation will be implicitly used several times: any circuit that contains $\mathcal{A}$ in its interior and $\mathcal{B}$ in its exterior (or vice versa) must intersect the path $\dualp$.
\begin{figure}
    \centering
    \includegraphics[height = 2in]{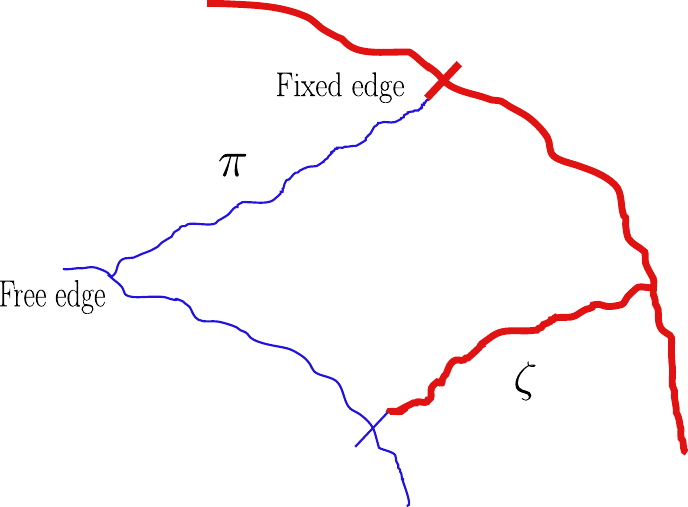}
    \caption{\small The case when one end is free and one end is fixed. Open edges are shown in blue/thin and closed edges are shown in red/thick. The fixed end crosses through a closed dual circuit with a closed primal edge. The path $\zeta$ crosses one of the $\incir_i$ with an open dual edge. We use edges and half-edges to create a Jordan curve seen in the figure.  }
    \label{fig:fixed_free}
\end{figure}

\medskip \noindent \textbf{Case 1: Both ends are free (scenarios \ref{sce_1}, \ref{sce_2}, \ref{sce_4}, \ref{sce_6})}. 
Traverse the open path $\pi$ between its ends. The second end belongs to either an open circuit $\incir_{j+1}$ or to $\BB$.
In the former case, follow $\incir_{j+1}$ (in either direction) until it crosses $\dualp$.
In the latter case, take a self-avoiding path in $\mathcal{B}$ (this path is automatically open by convention) until we reach a primal neighbor of the last vertex of $\dualp$; then connect to this vertex using half of a primal edge and half of a dual edge. 
Now comes the closed dual portion: follow $\dualp$ backwards until reaching either $\mathcal{A}$ or the open circuit containing the first end of $\pi$.
Finally, by mimicking the two cases above in reverse, create a connection to the first end of $\pi$.
A Jordan curve has now been formed, since $\dualp$ intersects each free end only once, and---for scenario \ref{sce_6}---the two open circuits are assumed to be vertex-disjoint (otherwise this subpath of the geodesic is empty).

\medskip \noindent \textbf{Case 2: One end is free, one end is fixed (scenarios \ref{sce_3}, \ref{sce_5}, \ref{sce_7}, \ref{sce_8})}. Without loss of generality, assume that the free end comes second as we travel from $\mathcal{A}$ toward $\mathcal{B}$. 
The fixed end is thus the second vertex $x^2$ of some closed edge $e = \{x^1,x^2\}$.
Let $\dincir$ be the closed dual circuit associated with $e$ (constructed in Section \ref{sec:closed_cir}), and recall that $\UU$ intersects the geodesic only once: at the midpoint of $e$.
To form the Jordan curve, traverse $\pi$ from $x^2$ until meeting the free end. 
Connect this path to $\dualp$ just as in Case 1.
Next create the ``closed dual portion'': follow $\dualp$ backwards until meeting $\dincir$, and then follow $\dincir$ (in either direction) until reaching an endpoint of $e^\star$.
Finally, connect to $x^2$ using half of the dual edge $e^\star$ and then half of the primal edge $e$.
A Jordan curve has now been formed, since $\dualp$ intersects the free end only once.

\medskip \noindent \textbf{Case 3: Both ends are fixed (scenario \ref{sce_9})}. 
The two ends are $x_{i,j}^2$ and $x_{i,j+1}^1$.
Denote the associated edges by $e = \{x_{i,j}^1,x_{i,j}^2\}$ and $e'=\{x_{i,j+1}^1,x_{i,j+1}^2\}$, and let $\dincir$ and $\dincir'$ be the associated closed dual circuits, recalling once more that $\dincir$ and $\dincir'$ intersect the geodesic only at the midpoints of $e$ and $e'$, respectively.

\medskip \noindent \textbf{Subcase 3.1: $\dincir$ and $\dincir'$ are vertex-disjoint.} 
Traverse $\pi$ from $x_{i,j}^2$ to $x_{i,j+1}^1$.
Connect to vertex of $\dincir'$ by using half of the primal edge $e'$ and then (either) half of the dual edge $(e')^\star$.
Now create the ``closed dual portion'': follow $\dincir'$ (in either direction) until meeting $\dualp$, follow $\dualp$ until meeting $\dincir$, and follow $\dincir$ (in either direction) until reaching an endpoint of $e^\star$.
Finally, connect to $x_{i,j}^2$ using half of the dual edge $e^\star$ and then half of the primal edge $e$.
A Jordan curve has now been formed, since we are assuming $\dincir$ and $\dincir'$ are vertex-disjoint.

\medskip \noindent \textbf{Subcase 3.2: $\dincir$ and $\dincir'$ share at least one vertex.} 
We begin as in the previous subcase:
traverse $\pi$ from $x_{i,j}^2$ to $x_{i,j+1}^1$, and connect to a vertex of $\dincir'$ by using half of $e'$ and (either) half of $(e')^\star$.
But the ``closed dual portion'' is formed differently: follow $\dincir'$ (in either direction) until reaching $\dincir$, then follow $\dincir$ (in either direction) until reaching an endpoint of $e^\star$ (it is possible that $\dincir$ hits $\dincir'$ again, but it will not hit the portion of $\dincir'$ used as part of the Jordan curve since we transition to $\dincir$ at the \textit{first} place the two circuits meet).
Finally, connect to $x_{i,j}^2$ using half of the dual edge $e^\star$ and then half of the primal edge $e$.
A Jordan curve has now been formed as noted parenthetically, and \eqref{jor_pi_property} holds because the only dual vertices on $\jor_\pi$ belong to the closed circuits $\dincir$ and $\dincir'$, both of which can be followed to $\dualp$ in whichever direction avoids the single intersection with the geodesic.

\medskip \noindent We emphasize that even for a \textit{given} path $\pi$, there were several choices made to define $\jor_\pi$ (traversing the circuits in either direction, etc.).
We assume that these choices are made so that the resulting curve encloses minimal area (breaking ties deterministically).
Afterward, we modify the geodesic by choosing $\pi\in\Pi$ such that $\jor_\pi$ encloses minimal area.


\subsection{Existence of closed arms (proof of Item 
\ref{itm:dualconn_new})} \label{subsec_closed_arms_exist}

\begin{figure}
    \centering
    \includegraphics[width=0.4\linewidth]{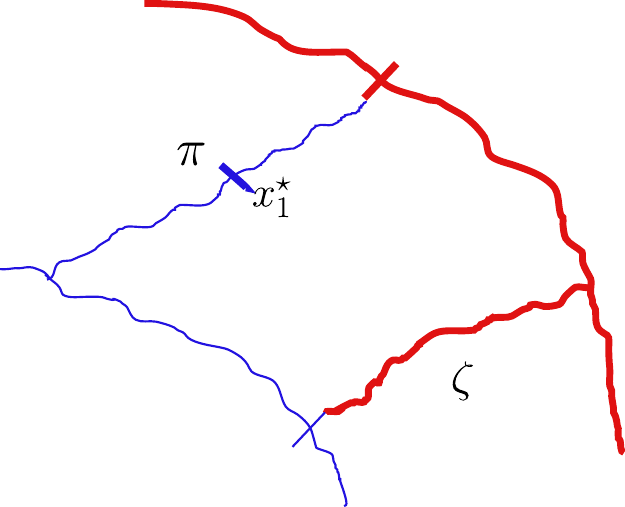} \qquad 
    \includegraphics[width=0.4\linewidth]{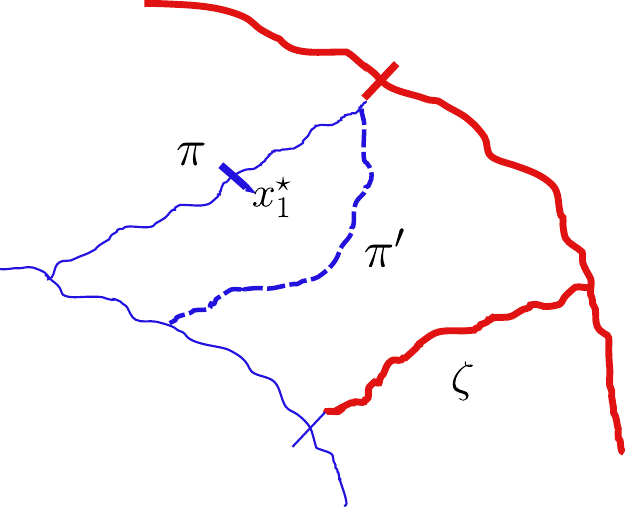}
    \caption{\small On the left, we reproduce Figure \ref{fig:fixed_free}, where $\pi$ is the portion of the path between two successive circuits (open or closed) and $\jor_\pi$ is the associated Jordan curve built from $\pi$, $\zeta$, and portions of the circuits. The point $x_1^\star$ is the endpoint of the open dual edge $e^\star$ that belongs to the interior of $\jor_\pi$. Recall from Section~\ref{JCcons} that $\pi$ was chosen such that $\intr(\jor_\pi)$ has minimal area. We argue that if $x_1^\star$ were not connected to $\zeta$ by a closed dual path, then there would exist an open circuit enclosing $x_1^\star$ that blocks it from $\zeta$. A portion of this open circuit could be used to construct a new path $\pi'$ (depicted on the right) such that $\jor_{\pi'}$ encloses smaller area than $\jor_\pi$, a contradiction to the definition of $\pi$. }
    \label{fig:fixed_free_2}
\end{figure}
See Figure \ref{fig:fixed_free_2} for a visual sketch of the argument.  Let $e \in \primalp$ be an open edge with $e \notin \incir_1 \cup \cdots \cup \incir_L\cup \incir_{L + 1} \cup \cdots \cup \incir_P$. 
Let $\pi\ni e$ be the associated portion of $\primalp$ between successive closed edges or open circuits, as in Section \ref{JCcons}, and let $\jor_\pi$ be the associated Jordan curve enclosing minimal area. 
In each case of the construction, $\pi$ is a segment of $\jor_\pi$. 
In particular, $e$ lies entirely on $\jor_\pi$. 
Let $x_1^\star,x_2^\star$ be the endpoints of the associated dual edge $e^\star$. Since $\jor_\pi$ lives on the union of the primal and dual lattices, either at least one of these points lies on $\jor_\pi$ (in which case the desired path is immediate from \eqref{jor_pi_property}), or Lemma \ref{lem:dual_inext} implies that one of these points belongs to $\intr(\jor_\pi)$ and the other belongs to $\ext(\jor_\pi)$. 
Without loss of generality, assume $x_1^\star\in\intr(\jor_\pi)$.

We perform a modification of the environment as follows. Along the Jordan curve, there are potentially up to $4$ half-edges. 
For all of these, set the associated full edge to closed, if not already closed.  
In addition, set all edges whose midpoint is in $\ext(\jor_\pi)$ to closed.

Suppose the closed dual cluster of $x_1^\star$ is unbounded in the modified environment.
That is, there is a closed dual path $\dualp_1$ starting at $x_1^\star$ and extending to infinity.
Let $y$ be the first intersection point of $\dualp_1$ with $\jor_\pi$, so either $y\in\wh\Z^2$ or $y$ is the midpoint of some primal edge that lies entirely on $\jor_\pi$.
Because of \eqref{jor_pi_open}, the latter is not true, so $y\in\wh\Z^2$.
Since $y$ is the \textit{first} intersection point of $\dualp_1$ with $\jor_\pi$, the portion of $\dualp_1$ from $x_1^\star$ to $y$ does not intersect $\pi$ and is not affected by the modification, so it is closed in the original environment.
In addition, by \eqref{jor_pi_property}, $y$ is connected to $\dualp$ by a closed dual path in the original environment that does not intersect the geodesic.
Therefore, $x_1^\star$ is connected to $\dualp$ by a closed dual path in the original environment that does not intersect the geodesic, as desired.


The only remaining task is to argue that in the modified environment, the closed dual cluster of $x_1^\star$ is unbounded.
Suppose not.
Then Lemma~\ref{lem:separate_sets} gives an open circuit $\CC$ containing $x_1^\star$ in its interior.
This open circuit must lie entirely on or in the interior of $\jor_\pi$ because all edges outside were set to closed. 
Hence $\intr(\CC) \subseteq \intr(\jor_\pi)$ by Lemma~\ref{surround_lemma}.
In particular, both $\jor_\pi$ and $\CC$ contain $x_1^\star$ in their interior and $x_2^\star$ in their exterior, so $e \in \jor_\pi \cap \CC$. 
Let $W$ be the connected component of $\jor_\pi \cap \CC$ containing $e$.
Then, modifying $\pi$ by replacing $W$ with $\CC \setminus W$ results in a new path between the two ends (possibly after changing the starting and/or ending point in the case of free ends).
Let $\pi'$ be the modified path, and let $\jor_{\pi'}$ be the associated Jordan curve.
By examining each of the cases in Section~\ref{JCcons}, it is straightforward to check that $\jor_{\pi'}\subseteq\jor_\pi\cup\CC$.
Since $\CC$ is surrounded by $\jor_\pi$, it now follows from two applications of Lemma~\ref{surround_lemma} that $\jor_{\pi'}$ is surrounded by $\jor_\pi$.
Clearly $\pi\ne\pi'$ (since $W\neq \CC\setminus W$), so
Lemma \ref{area_lemma} tells us that the area of $\intr(\jor_{\pi'})$ is smaller than the area of $\intr(\jor_\pi)$, which contradicts minimality since $\pi'$ is open in the original environment as well. \hfill \qedsymbol

%

\section{Acknowledgments}
We thank Michael Damron for suggesting this problem, many helpful discussions, and invaluable input.
We also thank Jack Hanson for beneficial conversations.
Part of this work was conducted at the International Centre for Theoretical Sciences (ICTS), Bengaluru, India during the program ``First-passage percolation and related models” in July 2022 (code: ICTS/fpp-2022/7).
We thank ICTS for the privilege of attending this meeting and for its hospitality.
We thank the referees for numerous suggestions and corrections; their thorough reports are tremendously appreciated. 

\section{Funding}
E.B. was partially supported by NSF grants DMS-1902734 and DMS-2412473.
D.H. was partially supported by NSF grant DMS-2054559.
X.S. was partially supported by the Wylie Research Fund at the University of Utah.
E.S. was partially supported by the Fernholz Foundation. This work was partly performed while E.S. was a PhD student at the University of Wisconsin--Madison, where he was partially supported by Timo Sepp\"al\"ainen under NSF grants DMS-1854619 and DMS-2152362.

\bibliographystyle{myacm}
\bibliography{erikbib}

 \end{document}